\documentclass{amsart}

\usepackage[title]{appendix}

\usepackage[french,english]{babel}
\usepackage[utf8]{inputenc}
\usepackage[T1]{fontenc}

\usepackage{hyperref}

\usepackage{comment}

\usepackage{palatino}

\usepackage[autostyle]{csquotes}

\usepackage{amsmath}
\usepackage{amsfonts,amssymb,amsmath,latexsym,mathrsfs,bbm,stmaryrd}
\usepackage[all]{xy}
\usepackage[usenames]{color}
\usepackage{epsfig}

\usepackage{graphicx}
\usepackage{subcaption}
\usepackage{float,enumerate}
\usepackage{amsthm}
\usepackage{thmtools}
\usepackage{thm-restate}
\declaretheorem[numberwithin=section]{theorem}
\declaretheorem[sibling=theorem]{proposition}
\declaretheorem[sibling=theorem]{definition}
\declaretheorem[sibling=theorem]{corollary}
\declaretheorem[sibling=theorem]{lemma}

\declaretheorem[sibling=theorem,style=remark]{remark}
\declaretheorem[sibling=theorem,style=remark]{example}

\newtheorem{thmy}{Theorem}


\newtheorem*{condition-non}{Condition A}
\newtheorem{condition-test}{Condition A}
\usepackage[utf8]{inputenc} 
\usepackage[T1]{fontenc}

\usepackage{tikz}
\usetikzlibrary{arrows.meta}
\usetikzlibrary{matrix}
\usepackage{caption}
\usepackage{subcaption}

\usepackage{tikz}
\usepackage{xparse}

\NewDocumentCommand\DownArrow{O{2.0ex} O{black}}{%
	\mathrel{\tikz[baseline] \draw [<-, line width=0.5pt, #2] (0,0) -- ++(0,#1);}
}

\numberwithin{equation}{section}

\usepackage[mathcal]{euscript}
\usepackage{times}

\def\C{\mathbb C}

\def\d{\delta}

\def\1{\mathbbm{1}}

\def\adots{
	\mathinner{\mkern1mu\raise1pt\hbox{.}\mkern2mu\raise4pt\hbox{.}
		\mkern2mu\raise7pt\vbox{\kern7pt\hbox{.}}\mkern1mu}}


\long\def\symbolfootnote[#1]#2{\begingroup%
\def\thefootnote{\fnsymbol{footnote}}\footnote[#1]{#2}\endgroup}


\newcommand{\x}{X}


\def\C{{\mathbb C}}

\def\CA{{\mathcal A}}
\def\CB{{\mathcal B}}

\def\unit{{\bf 1}}
\def\i{{\rm i}}

\def\d{{\rm d}}

\def\<{{\langle}}
\def\>{{\rangle}}

\def\supp{{\text{\rm supp}}}


\setcounter{tocdepth}{1} 


\begin{document}

\title[Brown measure of the sum with a triangular elliptic operator]{The Brown measure of a sum of two free random variables, one of which is triangular elliptic}
\author{
Serban Belinschi, Zhi Yin, and Ping Zhong
}

\address{
\parbox{\linewidth}{Serban Belinschi, 
CNRS, UPS, F-310621 Toulouse, France.
\\
\texttt{serban.belinschi@math.univ-toulouse.fr}}
}

\address{
\parbox{\linewidth}{Zhi Yin,
School of Mathematics and Statistics, 
Central South University,\\
Changsha, Hunan 410083, China.\\
\texttt{hustyinzhi@163.com}}
}

\address{
	\parbox{\linewidth}{Ping Zhong,
	Department of Mathematics and Statistics,
	University of Wyoming,\\
	Laramie, WY 82070, USA.\\
	\texttt{pzhong@uwyo.edu}}
	}

\date{\today}
\maketitle

\begin{abstract}
The triangular elliptic operators are natural extensions of the elliptic deformation of circular operators. We obtain a Brown measure formula for the sum of a triangular elliptic operator $g_{_{\alpha, \beta, \gamma}}$ with a random variable $x_0$, which is $*$-free from $g_{_{\alpha, \beta, \gamma}}$ with amalgamation over certain unital subalgebra. Let $c_t$ be a circular operator. We prove that the Brown measure of $x_0 + g_{_{\alpha, \beta, \gamma}}$ is the push-forward measure of the Brown measure of $x_0 + c_t$ by an explicitly defined map on $\mathbb{C}$ for some suitable $t$.  We show that the Brown measure of $x_0+c_t$ is absolutely continuous with respect to the Lebesgue measure on $\mathbb{C}$ and its density is bounded by $1/(\pi{t})$. This work generalizes earlier results on the addition with a circular operator, semicircular operator, or elliptic operator to a larger class of operators. We extend operator-valued subordination functions, due to Biane and Voiculescu, to certain unbounded operators. This allows us to extend our results to unbounded operators.
\end{abstract}


\tableofcontents

\section{Introduction}

\subsection{Brown measure and its regularization}
The limit objects of random matrix models can often be identified as operators in free probability theory. In \cite{Brown1986}, Brown introduced the Brown measure of operators in the context of semi-finite von Neumann algebras, which is a natural generalization of the eigenvalue counting measure of square matrices with finite dimensions. In his breakthrough paper \cite{Voiculescu1991}, Voiculescu discovered that independent Gaussian random matrices are asymptotically free as the dimension of the matrices goes to infinity. The Brown measure of a free random variable is regarded as a candidate for the limit empirical spectral distribution of suitable random matrix models. 

Let $\mathcal{A}$ be a von Neumann algebra with a faithful, normal, tracial state $\phi$. Recall that the Fuglede-Kadison determinant of $x\in \mathcal{A}$ is defined by 
\[
  \Delta(x)=\exp \left( \int_0^\infty \log t d\mu_{\vert x\vert}(t) \right),
\]
where $\mu_{\vert x\vert}$ is the spectral measure of $\vert x\vert$ with respect to $\phi$ \cite{FugledeKadison1952}. 
It is known \cite{Brown1986} that the function $\lambda\mapsto\log\Delta(x-\lambda)$ is a subharmonic function  whose Riesz measure is the unique probability measure $\mu_x$ on $\mathbb{C}$ with the property that
\[
  \log\Delta(x-\lambda)=\int_\mathbb{C}\log |z-\lambda| d\mu_x(z), \qquad \lambda \in \mathbb{C}. 
\]
The measure $\mu_x$ is called the \emph{Brown measure} of $x,$ and 
it can be recovered as the distributional Laplacian of the function $\log\Delta(x-\lambda)$ as follows
\begin{equation}
\label{eq:brown} 
\mu_x = \frac{1}{2\pi}\nabla^2 \log\Delta(x-\lambda)
= \frac{2}{\pi} \frac{\partial}{\partial
\lambda}  \frac{\partial}{\partial \bar{\lambda}}  \log
\Delta(x-\lambda).
\end{equation}
When $\mathcal{A}=M_n(\C)$ and $\phi$ is the normalized trace on $M_n(\mathbb{C})$,  for $x\in M_n(\C)$, we have
\[
  \mu_x= \frac{1}{n}\left( \delta_{\lambda_1}+\cdots +\delta_{\lambda_n}\right)
\]
where $\lambda_1,\cdots, \lambda_n$ are the eigenvalues of $x$. 

Let $\widetilde{\mathcal A}$ be the set of operators which are affiliated with $\mathcal{A}$. Denote by $\log^+(\mathcal{A})$ the set of operators $T\in\widetilde{\mathcal A}$ satisfying 
\[
\phi(\log^+\vert T\vert)= \int_0^\infty \log^+(t)\,\d\mu_{\vert T\vert}(t)<\infty.
\]
It is known \cite{HaagerupSchultz2007} that the set $\log^+(\mathcal{A})$ is a subspace of $\widetilde{\mathcal A}$. In particular, for $T\in \log^+(\mathcal{A})$ and $\lambda\in\mathbb{C}$, $T-\lambda  \in \log^+(\mathcal{A})$. Moreover, the function $\lambda\mapsto \log\Delta(T-\lambda)$
is again subharmonic in $\mathbb{C}$. 
In the noncommutative probability space $(\mathcal{A},\phi)$, one can extend the definition of the Brown measure to $\log^+(\mathcal{A})$ \cite{Brown1986, HaagerupSchultz2007} using the same formula \eqref{eq:brown}. 

It is useful to consider the regularized function
\begin{equation}
\label{eqn:defn-S}
    S(x,\lambda,\varepsilon)=\phi\bigg(\log \big( (x-\lambda)^*(x-\lambda)+\varepsilon^2\big) \bigg).
\end{equation}
Then, $S(x,\lambda,0)=\lim_{\varepsilon\rightarrow 0} S(x,\lambda,\varepsilon)$. The regularized  Brown measure is defined as
\[
   \mu_{x}^{(\varepsilon)}=\frac{1}{4\pi}\nabla^2 S(x,\lambda,\varepsilon).
\]
It is known that $\mu_{x}^{(\varepsilon)}\rightarrow\mu_x$ weakly as $\varepsilon\rightarrow 0$. Moreover, it turns out that the regularized Brown measure approximates the Brown measure in a much stronger sense for the operators we are interested in. 
\subsection{Main results}
Given two $*$-free random variables $x, y$ in a $W^*$-probability space $(\mathcal{A},\phi)$, it is desirable to find a general method to calculate the Brown measure of $x+y$. In the special case when $y$ is an elliptic operator, the third author obtained Brown measure formulas for an arbitrary $x$ in \cite{Zhong2021Brown}. This generalized previous results of Hall--Ho \cite{HoHall2020Brown} and Ho--Zhong \cite{HoZhong2020Brown}, and introduced a new approach based on the Hermitian reduction method and subordination functions. 
The purposes of this paper are two-folds. First, we adapt techniques developed in \cite{BSS2018,Zhong2021Brown} to study the Brown measure of the sum of a larger family of operators $g_{_{\alpha, \beta,\gamma}}$, called triangular elliptic operators, and another random variable $x_0$ that is $*$-free from $g_{_{\alpha, \beta,\gamma}}$ in the sense of operator-valued free probability. Second, we extend our main results to unbounded operator $x_0$. Our results for the case $\alpha=\beta$ extend main results obtained by the third author \cite{Zhong2021Brown}, and generalize main results in \cite{Ho2021unbounded} obtained by Ho for the special case $x_0$ being self-adjoint, and another result by Bordenave--Caputo--Chafa\"{\i} \cite{Bordenave-Caputo-Chafai-2014cpam} for unbounded normal operators. 

 It turns out the family of Brown measures $x_0+g_{_{\alpha, \beta,\gamma}}$ can be retrieved from the Brown measure $x_0+c_t$ by a family of explicitly constructed self-maps of $\mathbb{C}$, where $c_t$ is a circular operator with variance $t$. Hence, it is crucial to understand the Brown measure $\mu_{x_0+c_t}$. 
 We introduce a new approach to study regularity properties for the Brown measure of $x_0+c_t$ via the regularized Brown measure, and obtain a complete description of the Brown measure $\mu_{x_0+c_t}$. We prove that the density of the regularized Brown measure $\mu_{x_0+c_t}^{(\varepsilon)}$ is bounded by $1/(\pi{t})$. Consequently, we show that $\mu_{x_0+c_t}$ has no singular part, and this settles an open question in \cite{Zhong2021Brown}. 
 
 Our method is completely different from the PDE method used by Ho \cite{Ho2021unbounded}, where he studied the sum of an unbounded self-adjoint operator and an elliptic operator (corresponding to $\alpha=\beta$). Although our approach is an extension of the method used in \cite{Zhong2021Brown}, the details are substantially more technical, and some new ideas are developed to study the distinguished measure $\mu_{x_0+c_t}$. 
 In addition, we extend the subordination functions in operator-valued free probability, due to Biane and Voiculescu, to unbounded operators under some assumptions. The subordination result is of interest by its own, and it allows us to extend our results to unbounded operators. 

Consider a random matrix $A_N=(a_{ij})_{1\leq i,j\leq N}$ such that the joint distribution of random variables $(\Re a_{ij}, \Im a_{ij})_{1\leq i,j\leq N}$ is centered Gaussian with covariance given by 
\[
   \mathbb{E} a_{ij}\overline{a_{kl}}=
     \begin{cases}
       \frac{\alpha}{N}\delta_{ik}\delta_{jl} \quad & \text{if} \,i<j,\\
    \frac{\alpha+\beta}{2N}\delta_{ik}\delta_{jl} \quad & \text{if} \, i=j,\\
       \frac{\beta}{N}\delta_{ik}\delta_{jl} \quad & \text{if} \,i>j,
     \end{cases} 
     \quad \text{and} \quad
     \mathbb{E}a_{ij} a_{kl}=\frac{\gamma}{N}\delta_{il}\delta_{jk},
\]  
where $\alpha,\beta>0$ and $\gamma\in\mathbb{C}$ such that $\vert \gamma\vert\leq \sqrt{\alpha\beta}$. One can show \cite{DykemaHaagerup2001JFA, BSS2018} that $A_N$ converges in $*$-moments to some operator $g_{\alpha, \beta,\gamma}$, called a triangular elliptic operator, in $\mathcal{A}$.

When $\alpha=\beta=t$ and $\gamma=0$, then $A_N$ converges in $*$-moments to the circular operator $c_t$ with variance $t$, which has the same distribution as $\sqrt{t/2}(s_1+is_2)$,
where $\{s_1, s_2\}$ is a semicircular system. Let $\{g_{t_1}, g_{t_2}\}$ be freely independent semicircular operators with mean zero and variances $t_1, t_2$ respectively. Choose $t_1=(t+\vert \gamma\vert)/2, t_2=(t-\vert \gamma\vert)/2$ and $\theta\in [0,2\pi]$ such that $e^{i2\theta}=\gamma/\vert \gamma\vert$. When $\alpha=\beta=t$ and $\vert \gamma\vert\leq t$, then $A_N$ converges in $*$-moments to twisted elliptic operator $g_{t,\gamma}$, which has the same distribution as $e^{i\theta}(g_{t_1}+ig_{t_2})$.

Consider the open set
\begin{equation}
\label{defn:Xi-t}
 \Xi_t=\left\{ \lambda\in\mathbb{C} : \phi \left[\big( (x_0-\lambda)^*(x_0-\lambda)\big)^{-1}\right]>\frac{1}{t}  \right\}.
\end{equation}
Fix $\lambda\in \Xi_t$ and let $w=w(0;\lambda,t)$ be a function of  $\lambda$ taking positive values such that
\[
  \phi \left[ ((x_0-\lambda)^*(x_0-\lambda)+w^2)^{-1} \right]=\frac{1}{t},
\]
and let $w(0;\lambda,t)=0$ for $\lambda\in\mathbb{C}\backslash\Xi_t$. 
We denote
\[
  \Phi_{t,\gamma} (\lambda)= \lambda+\gamma\cdot p_\lambda^{(0)}( w  ), 
\]
 where 
  \[
    p_\lambda^{(0)}( w ) =
     -\phi\bigg[ (x_0-\lambda)^*\big( (x_0-\lambda)(x_0-\lambda)^*+w(0;\lambda,t)^2 \big)^{-1}\bigg].
 \]
\begin{theorem}
	\label{thm:1.1-introduction}
\cite[Theorem 4.2 and 5.4]{Zhong2021Brown}
The Brown measure of $x_0+c_t$ has no atom and it is supported in the closure of $\Xi_t$. It is absolutely continuous with respect to the Lebesgue measure in $\Xi_t$. Moreover, the density in $\Xi_t$ is strictly positive and can be expressed explicitly in terms of $w(0;\lambda,t)$. 

For any $\gamma\in\mathbb{C}$ such that $\vert \gamma\vert\leq t$, the Brown measure of $x_0+g_{t,\gamma}$ is the push-forward measure of the Brown measure of $x_0+c_t$ under the map $\Phi_{t,\gamma}$. 
\end{theorem}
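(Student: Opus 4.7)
The plan is to handle the two assertions in parallel via a Hermitian reduction of $x_0+c_t$ (respectively $x_0+g_{t,\gamma}$) combined with operator-valued subordination for the free circular/elliptic summand. Throughout I work with the $2\times 2$ block
$$
X(\lambda,\varepsilon)=\begin{pmatrix} i\varepsilon & x_0+c_t-\lambda \\ (x_0+c_t-\lambda)^* & i\varepsilon \end{pmatrix}
$$
inside $M_2(\mathbb{C})\otimes\mathcal{A}$; its $M_2(\mathbb{C})$-valued Cauchy transform packages both $S(x_0+c_t,\lambda,\varepsilon)$ and the regularized density $\mu_{x_0+c_t}^{(\varepsilon)}$ obtained by Hermitian reduction.

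For Part~1, since $c_t$ is $*$-free from $x_0$ and its operator-valued covariance on $M_2(\mathbb{C})$ is the flip rescaled by $t$, the Biane--Voiculescu subordination collapses the matrix-valued fixed-point problem to a single scalar equation for $w=w(\varepsilon;\lambda,t)\geq \varepsilon$:
$$
\phi\!\left[\big((x_0-\lambda)^*(x_0-\lambda)+w^2\big)^{-1}\right]=\frac{w^2-\varepsilon^2}{tw^2}.
$$
Passing to $\varepsilon\downarrow 0$ reproduces exactly the defining equation of $w(0;\lambda,t)$ on $\Xi_t$ and forces $w\equiv 0$ outside $\overline{\Xi_t}$. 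Substituting the subordinated $w$ back, $S(x_0+c_t,\lambda,\varepsilon)$ becomes an explicit function of $w$ and $\phi[\log((x_0-\lambda)^*(x_0-\lambda)+w^2)]$; applying $(1/4\pi)\nabla_\lambda^2$ and sending $\varepsilon\to 0$ (with $\phi[(\cdots)^{-1}]$ uniformly controlled on compact subsets of $\Xi_t$) yields an explicit strictly positive density on $\Xi_t$. Outside $\overline{\Xi_t}$ one checks directly that $\lambda\mapsto S(x_0+c_t,\lambda,0)$ is harmonic, so the Brown measure vanishes there; atomlessness follows because the fixed-point equation precludes any logarithmic singularity of $\log\Delta(x_0+c_t-\lambda)$.

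For Part~2, I decompose $g_{t,\gamma}=e^{i\theta}(g_{t_1}+ig_{t_2})$ with $t_1=(t+|\gamma|)/2$, $t_2=(t-|\gamma|)/2$, and repeat the Hermitian reduction. The resulting subordination produces a pair of scalar equations which, after elimination, identify the radial parameter as the \emph{same} $w(0;\lambda,t)$ but with the effective spectral parameter shifted from $\lambda$ to $\lambda+\gamma\,p_\lambda^{(0)}(w)=\Phi_{t,\gamma}(\lambda)$. Matching the logarithmic potentials $\log\Delta(x_0+g_{t,\gamma}-\mu)$ and $\log\Delta(x_0+c_t-\lambda)$ under the substitution $\mu=\Phi_{t,\gamma}(\lambda)$ then delivers the push-forward identity $\mu_{x_0+g_{t,\gamma}}=(\Phi_{t,\gamma})_\ast\mu_{x_0+c_t}$.

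The hardest part will be the boundary analysis on $\partial\Xi_t$: showing that $\varepsilon\mapsto w(\varepsilon;\lambda,t)$ is continuous down to $\varepsilon=0$ uniformly in $\lambda$ on compact sets, and that $w(0;\lambda,t)$ is smooth enough inside $\Xi_t$ to justify differentiating under $\phi$. A secondary obstacle is ensuring $\Phi_{t,\gamma}$ is well behaved on $\Xi_t$ (so the push-forward in Part~2 is unambiguous); I expect this to follow from monotonicity of $p_\lambda^{(0)}(w)$ viewed as a $\bar\lambda$-derivative of the plurisubharmonic Fuglede--Kadison potential of $x_0+c_t$.
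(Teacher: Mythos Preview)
Your plan for Part~1 is essentially the paper's own route: Hermitian reduction plus operator-valued subordination for the circular summand, collapsing to a scalar fixed-point equation for $w(\varepsilon;\lambda,t)$, then differentiating. One small correction: the subordination equation the paper obtains is
\[
w=\varepsilon+tw\,\phi\!\left[\big((x_0-\lambda)^*(x_0-\lambda)+w^2\big)^{-1}\right],
\]
i.e.\ $\phi[(\cdots)^{-1}]=(w-\varepsilon)/(tw)$, not $(w^2-\varepsilon^2)/(tw^2)$. Both limits agree at $\varepsilon=0$, so this does not affect the density formula, but it matters for the regularized-density computations.

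Part~2 has a genuine gap. Matching the logarithmic potentials $\log\Delta(x_0+g_{t,\gamma}-\mu)$ and $\log\Delta(x_0+c_t-\lambda)$ under the substitution $\mu=\Phi_{t,\gamma}(\lambda)$ does \emph{not} by itself deliver a push-forward identity: if $S_2\circ\Phi=S_1$ then $\nabla^2 S_1$ is related to $\nabla^2 S_2$ through the Jacobian of $\Phi$, which is not what push-forward says. What the subordination actually gives you is the equality of \emph{first} derivatives,
\[
\frac{\partial S_1}{\partial\lambda}(\lambda)=\frac{\partial S_2}{\partial z}(\Phi_{t,\gamma}(\lambda)),\qquad
\frac{\partial S_1}{\partial\overline\lambda}(\lambda)=\frac{\partial S_2}{\partial\overline z}(\Phi_{t,\gamma}(\lambda)),
\]
and the push-forward is then extracted by a Green's-theorem computation that exploits the specific structure $\Phi_{t,\gamma}(\lambda)=\lambda+\gamma\,\partial S_1/\partial\lambda$: the pulled-back $1$-form $-Q\,dz_1+P\,dz_2$ differs from $-Q^{(1)}d\lambda_1+P^{(1)}d\lambda_2$ by an exact form, so the boundary integrals (hence the Riesz masses) agree. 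This is the missing mechanism in your sketch.

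There is a second, related issue. Your closing paragraph anticipates needing $\Phi_{t,\gamma}$ to be ``well behaved'' and hopes this follows from plurisubharmonicity. The paper does not prove, and does not need, regularity of $\Phi_{t,\gamma}$ itself. Instead it runs the entire Green's-theorem argument at the level of the \emph{regularized} Brown measures $\mu^{(\varepsilon)}$, using the regularized map $\Phi^{(\varepsilon)}_{t,\gamma}(\lambda)=\lambda+\gamma\,p_\lambda^{(0)}(w(\varepsilon;\lambda,t))$, which is shown to be a global homeomorphism of $\mathbb{C}$ for every $\varepsilon>0$ (its inverse is written down explicitly via the subordination relation). One then passes to the limit using locally uniform convergence $\Phi^{(\varepsilon)}_{t,\gamma}\to\Phi_{t,\gamma}$ together with weak convergence of the regularized Brown measures. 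Without this detour you would be stuck precisely where you predicted: $\Phi_{t,\gamma}$ can be singular, and no amount of potential-theoretic monotonicity will rescue a direct argument.
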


The push-forward property of a similar type was first observed for some special cases when $g_{t,\gamma}$ is a semicircular element, and the self-adjoint operator $x_0$ in \cite{HoZhong2020Brown} by PDE methods. In a follow-up work \cite{HoHall2020Brown}, it was proved for the sum of an imaginary multiple of semicircular elements and a self-adjoint operator $x_0$. 

When $\alpha\neq \beta$, one can show \cite{DykemaHaagerup2004-DT, BSS2018} that the limit operator $y=g_{_{\alpha, \beta,\gamma}}$ of $A_N$ lives in some operator-valued $W^*$-probability space $(\mathcal{A},\mathbb{E},\mathcal{B})$ where the unital subalgebra $\mathcal{B}$ can be identified as $\mathcal{B}=L^\infty[0,1]$ and $\mathbb{E}:\mathcal{A}\rightarrow\mathcal{B}$ is the conditional expectation (see Section \ref{section:tri-elliptic-review}). Given $x\in\mathcal{A}$, by identifying $\mathbb{E}(x)=f$ for some $f\in L^\infty[0,1]$, 
the tracial state $\phi:\mathcal{A}\rightarrow\mathbb{C}$ is determined by 
\[
  \phi(x)=\phi(\mathbb{E}(x))=\int_0^1f(s)ds.
\]
Let $x_0\in\log^+(\mathcal{A})$ be an operator that is $*$-free from the family of operators $\{g_{_{\alpha, \beta,\gamma}}\}_{\vert \gamma\vert\leq \sqrt{\alpha\beta}}$ with amalgamation over $\mathcal{B}$ such that $x_0$ is $*$-free from $\mathcal{B}$ with respect to the tracial state $\phi$. Our main interest is the Brown measure of $x_0+g_{\alpha, \beta,\gamma}$. 

We now describe our main results. Given $\alpha,\beta>0$, we denote
\[
 	{\Xi_{\alpha,\beta}}=\left\{ \lambda: \phi (\vert x_0-\lambda\vert^{-2}) > \frac{\log\alpha-\log\beta}{\alpha-\beta}  \right\}.
\] 
For $\lambda\in{\Xi_{\alpha,\beta}}$, let $s(\lambda)$ be the positive number $s$ determined by
\begin{equation*}
  \frac{\log\alpha-\log\beta}{\alpha-\beta} =
 \phi\left\{ \left[ 
 (\lambda-x_0)^*(\lambda-x_0) + s^2
 \right] ^{-1} \right\},
\end{equation*}
and for $\lambda\in \mathbb{C}\backslash {\Xi_{\alpha,\beta}}$, we put $s(\lambda)=0$. 
For $\gamma\in\mathbb{C}$ such that $\vert \gamma\vert\leq \sqrt{\alpha\beta}$, we define the map $\Phi_{\alpha, \beta,\gamma}:\mathbb{C}\rightarrow\mathbb{C}$ by
\[
  \Phi_{\alpha, \beta,\gamma}(\lambda)=\lambda+\gamma \phi \left\{ (\lambda-x_0)^* \left[ (\lambda-x_0)(\lambda-x_0)^* + s(\lambda)^2\right]^{-1}\right\}.
\]

\begin{theorem}[See Theorem \ref{thm:Brown-formula-gamma-0} and Theorem \ref{thm:Brown-push-forward-property}]
	\label{thm:1.2-Brown-push-forward-property}
We assume that $\mathcal{M}, \mathcal{N}$ are unital von Neumann subalgebras of $\mathcal{A}$ such that: 
\begin{itemize}
\item $\mathcal{M}, \mathcal{N}$ are free with amalgamation over $\mathcal{B}$ with respect to the conditional expectation $\mathbb{E}$; 
\item the tracial state $\phi$ on $\mathcal{A}$ satisfies $\phi(x)=\phi(\mathbb{E}(x))$ for any $x\in \mathcal{A}$;
\item $\mathbb{E}(x)=\phi(x)$ for any $x\in \mathcal{N}$.
\end{itemize}
Let $x_0\in\log^+(\mathcal{N})$ and $\{g_{_{\alpha, \beta,\gamma}}\}_{\vert \gamma\vert\leq \sqrt{\alpha\beta}}$ be a family of operators in $\mathcal{M}.$ Given $\alpha,\beta>0$ such that $\alpha\neq\beta$, 
the support of the Brown measure $\mu_{x_0+g_{_{\alpha, \beta,0}}}$ of $x_0+g_{_{\alpha, \beta,0}}$ is the closure of the set ${\Xi_{\alpha,\beta}}$. The Brown measure $\mu_{x_0+g_{_{\alpha, \beta,0}}}$ is absolutely continuous and the density is strictly positive in ${\Xi_{\alpha,\beta}}$. Moreover, the density formula can be expressed in terms of $s(\lambda)$. 

Given $\gamma\in\mathbb{C}$ such that $\vert \gamma\vert\leq \sqrt{\alpha\beta}$,  the Brown measure $\mu_{x_0+g_{_{\alpha, \beta,\gamma}}}$ is the push-forward measure of $\mu_{x_0+g_{_{\alpha, \beta,0}}}$ under the map $\Phi_{\alpha, \beta,\gamma}$. If, in addition, the map  $\Phi_{\alpha, \beta,\gamma}$ is non-singular at any $\lambda\in{\Xi_{\alpha,\beta}}$, then it is also one-to-one in ${\Xi_{\alpha,\beta}}$.
\end{theorem}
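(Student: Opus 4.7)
The plan is to extend the Hermitian reduction with operator-valued subordination from \cite{Zhong2021Brown}, which establishes Theorem \ref{thm:1.1-introduction} in the purely elliptic case $\alpha=\beta$, to the genuinely operator-valued triangular setting over $\mathcal{B}=L^\infty[0,1]$. For $y = x_0 + g_{_{\alpha\beta,\gamma}}$ and $\lambda\in\mathbb{C}$, I would first work under the assumption that $x_0$ is bounded and $*$-free from $g_{_{\alpha\beta,\gamma}}$ with amalgamation over $\mathcal{B}$. The central object is the regularized function $S(y,\lambda,\varepsilon)$ from \eqref{eqn:defn-S}, which I compute via the $2\times 2$ Hermitization of $y-\lambda$ and the Matrix Dyson equation determined by the covariance map
\[
  \eta(b)(s) \;=\; \alpha\int_0^s b(u)\,du + \beta\int_s^1 b(u)\,du, \qquad b\in L^\infty[0,1].
\]
Tracing the operator-valued fixed-point system against $\phi$ reduces it to a scalar one. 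The general case $x_0\in\log^+(\mathcal{A})$ is then handled through the unbounded extension of Biane--Voiculescu subordination promised in the abstract, together with truncation $x_0^{(n)}\to x_0$.

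\textbf{Brown measure for $\gamma=0$.} A direct integration of the characteristic density gives
\[
  \int_0^1 \frac{du}{\alpha(1-u)+\beta u} \;=\; \frac{\log\alpha-\log\beta}{\alpha-\beta},
\]
which is precisely the constant appearing in the definition of $\Xi_{\alpha,\beta}$. Inside $\Xi_{\alpha,\beta}$, the scalar fixed-point equation derived from the Matrix Dyson equation has a unique positive solution, which must be $s(\lambda)$ because the map $s\mapsto \phi[(|\lambda-x_0|^2+s^2)^{-1}]$ is strictly decreasing. Substituting this solution back gives a closed expression for $\log\Delta(y-\lambda)=\lim_{\varepsilon\to 0^+}\tfrac{1}{2}S(y,\lambda,\varepsilon)$; applying \eqref{eq:brown} then yields the density formula and shows it is strictly positive on $\Xi_{\alpha,\beta}$. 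On the complement $\mathbb{C}\setminus\overline{\Xi_{\alpha,\beta}}$, the regularizing parameter vanishes and $\log\Delta(y-\lambda)$ is harmonic in $\lambda$, so the Brown measure is supported in $\overline{\Xi_{\alpha,\beta}}$.

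\textbf{Push-forward relation.} For general $\gamma$, the Hermitization of $y=x_0+g_{_{\alpha\beta,\gamma}}$ has its off-diagonal block depending on $\gamma$ only through a twist that effectively shifts the spectral parameter. Writing out the scalar fixed-point equation, one expects to find that the positive parameter $s$ is identical to the $\gamma=0$ case (since the balance governing $s$ depends on $|y-\lambda|^2$-type data, not on $\gamma$), while $\lambda$ is shifted by exactly $\gamma\cdot\phi\{(\lambda-x_0)^*[(\lambda-x_0)(\lambda-x_0)^*+s(\lambda)^2]^{-1}\}$, i.e., by the $\gamma$-contribution of $\Phi_{\alpha\beta,\gamma}(\lambda)-\lambda$. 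This yields the identity
\[
  \log\Delta\bigl((x_0+g_{_{\alpha\beta,\gamma}})-\lambda\bigr) \;=\; \log\Delta\bigl((x_0+g_{_{\alpha\beta,0}})-\Phi_{\alpha\beta,\gamma}(\lambda)\bigr), \qquad \lambda\in\Xi_{\alpha,\beta},
\]
and passing distributional Laplacians through gives the push-forward formula $\mu_{x_0+g_{_{\alpha\beta,\gamma}}} = (\Phi_{\alpha\beta,\gamma})_\ast \mu_{x_0+g_{_{\alpha\beta,0}}}$.

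\textbf{Injectivity and main obstacle.} If $\Phi_{\alpha\beta,\gamma}$ has non-vanishing Jacobian at every point of $\Xi_{\alpha,\beta}$, then it is a local diffeomorphism, and a conservation-of-mass argument forces injectivity: the source density is strictly positive on $\Xi_{\alpha,\beta}$, both the source and push-forward are probability measures, and any overlap of sheets would produce total mass strictly greater than one. The main obstacle I anticipate is the unbounded setting, which demands two delicate limits executed in tandem: the passage $\varepsilon\to 0^+$ in $S(y,\lambda,\varepsilon)$ and the passage $n\to\infty$ along a truncation $x_0^{(n)}\to x_0$, while simultaneously keeping the operator-valued subordination identities alive. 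This is where the unbounded Biane--Voiculescu subordination result is essential, and where the approximation arguments must be set up so that the scalar fixed-point equation, the identification of $s(\lambda)$, and the convergence of the regularized Brown measures to $\mu_{x_0+g_{_{\alpha\beta,\gamma}}}$ all survive to the limit.
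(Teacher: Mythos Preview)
Your overall architecture is right—Hermitian reduction plus operator-valued subordination over $L^\infty[0,1]$, reduction to a scalar fixed-point equation for $s(\lambda)$, identification of $\Xi_{\alpha,\beta}$—and matches the paper. But the push-forward step contains a genuine gap.

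\textbf{The determinant identity is false, and would not yield a push-forward anyway.} You claim
\[
  \log\Delta\bigl((x_0+g_{_{\alpha\beta,\gamma}})-\lambda\bigr)=\log\Delta\bigl((x_0+g_{_{\alpha\beta,0}})-\Phi_{\alpha\beta,\gamma}(\lambda)\bigr),
\]
but what subordination actually gives is an equality of \emph{gradients}: with $z=\Phi_{\alpha\beta,\gamma}^{(\varepsilon)}(\lambda)$ one has $\partial_\lambda S(x_0+g_{_{\alpha\beta,0}},\lambda,\varepsilon)=\partial_z S(x_0+g_{_{\alpha\beta,\gamma}},z,\varepsilon)$ (and the conjugate identity). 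The two regularized potentials differ by terms that are not constant in $\lambda$, so the scalar identity fails. Even if it held, taking the Laplacian of $S_0\circ\Phi$ produces a pull-back with Jacobian factors, not the push-forward $(\Phi)_\ast\mu_0$. The paper extracts the push-forward from the gradient identity via a separate differential-form/Green's theorem computation (their Lemma~\ref{lemma:subharmonic-pushforward}): writing the Riesz mass of a region as a boundary line integral of the $1$-form $-Q\,dz_1+P\,dz_2$ and checking that the pulled-back $1$-form differs from the source $1$-form by an exact term. This is the missing ingredient in your argument.

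\textbf{Regularization is not optional.} The map $\Phi_{\alpha\beta,\gamma}$ can be singular, so you cannot run the push-forward argument directly at $\varepsilon=0$. The paper establishes the push-forward first for the \emph{regularized} Brown measures $\mu^{(\varepsilon)}$ under the regularized maps $\Phi_{\alpha\beta,\gamma}^{(\varepsilon)}$, which are honest homeomorphisms of $\mathbb{C}$ (inverted explicitly via the subordination relation), and then passes to the limit using uniform convergence of $\Phi^{(\varepsilon)}\to\Phi$ on compacta. Your sketch jumps straight to the limiting objects.

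\textbf{Injectivity.} Your mass-conservation argument is plausible but delicate near $\partial\Xi_{\alpha,\beta}$. The paper instead observes that if $\Phi(\lambda_1)=\Phi(\lambda_2)=z$ and the map is non-singular, then the limiting subordination identity forces $p_{\lambda_1}^{(0)}(s(\lambda_1))=p_{\lambda_2}^{(0)}(s(\lambda_2))$, and since $z=\lambda_i+\gamma\,p_{\lambda_i}^{(0)}(s(\lambda_i))$ this gives $\lambda_1=\lambda_2$ directly.

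\textbf{Unbounded $x_0$.} No truncation $x_0^{(n)}\to x_0$ is needed at the Brown-measure level: once the subordination relation is extended to unbounded $X$ (which the paper does), all subsequent computations go through verbatim because $w(\varepsilon;\lambda,t)>0$ keeps every resolvent bounded.
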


The above Theorem \ref{thm:1.2-Brown-push-forward-property} recovers the Brown measure formula obtained in \cite[Section 4]{HaagerupAagaard2004} and \cite[Section 6]{BSS2018} as a special case for $x_0=0$; see Example \ref{example:tri-elliptic-0}. 
We note that the map $\Phi_{_{\alpha, \beta,\gamma}}$ could be singular in general. This restriction does not allow us to calculate the Brown measure $\mu_{x_0+g_{_{\alpha, \beta,\gamma}}}$ directly. We adapt an approach introduced in \cite{Zhong2021Brown} as follows. We show that the regularized Brown measure $\mu_{x_0+g_{_{\alpha, \beta,\gamma}}}^{(\varepsilon)}$ is the pushforward measure of 
the Brown measure $\mu_{x_0+g_{_{\alpha, \beta,0}}}^{(\varepsilon)}$ by some regularized map $\Phi_{_{\alpha, \beta,\gamma}}^{(\varepsilon)}$. We then show that the pushforward connection is preserved by passing to the limit. Hence, the following diagram commutes. 

	\begin{center}
	\begin{tikzpicture}
	\matrix (m) [matrix of math nodes,row sep=3em,column sep=4em,minimum width=2em]
	{
		\mu_{x_0+g_{_{\alpha, \beta,0}}}^{(\varepsilon)} & \mu_{x_0+g_{_{\alpha, \beta,\gamma}}}^{(\varepsilon)} \\
		\mu_{x_0+g_{_{\alpha, \beta,0}}} & \mu_{x_0+g_{_{\alpha, \beta,\gamma}}}\\};
	\path[-stealth]
	(m-1-1) edge node [left] {$\varepsilon\rightarrow 0$} (m-2-1)
	edge [double] node [below] {$\Phi_{\alpha, \beta,\gamma}^{(\varepsilon)}$} (m-1-2)
	(m-2-1.east|-m-2-2) edge [double] node [below] {{$\Phi_{\alpha, \beta,\gamma}$}}
	(m-2-2)
	(m-1-2) edge node [right] {$\varepsilon\rightarrow 0$} (m-2-2)
	(m-2-1);
	\end{tikzpicture}
\end{center}

In \cite{Zhong2021Brown}, it was left open if the Brown measure of $\mu_{x_0+c_t}$ has any singular continuous part. We show that it is absolutely continuous with respect to the Lebesgue measure on $\mathbb{C},$ and the statement of Theorem \ref{thm:1.1-introduction} still holds for unbounded operators. 
\begin{theorem}[See Theorem \ref{thm:BrownFormula-x0-ct-general} and Theorem \ref{thm:Brown-pushforwd-elliptic-case}]
	\label{thm:1.3-introduction}
	The Brown measure $\mu_{x_0+c_t}$ is absolutely continuous with respect to the Lebesgue measure on $\mathbb{C}$ and 
	the result in Theorem \ref{thm:1.1-introduction} holds for any $x_0\in\log^+(\mathcal{A})$ that is $*$-free from $\{c_t, g_{t,\gamma}\}$. Moreover, the density function $\rho_{t,x_0}$ of the Brown measure $\mu_{x_0 + c_t}$ has an upper bound
	\[
	   \rho_{t,x_0}(\lambda)\leq \frac{1}{\pi t}. 
	\] 
\end{theorem}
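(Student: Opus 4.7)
The plan is to work with the regularized Brown measure $\mu_{x_0+c_t}^{(\varepsilon)}$, which always admits a density $\rho_{t,x_0}^{(\varepsilon)}$, to establish the uniform pointwise bound $\rho_{t,x_0}^{(\varepsilon)}\leq 1/(\pi t)$, and then to pass to the weak limit as $\varepsilon\to 0^+$. Once this is done, the absolute continuity of $\mu_{x_0+c_t}$ with the same density bound will follow from Portmanteau, and the remaining statements of Theorem~\ref{thm:1.1-introduction} will transfer to $x_0\in\log^+(\mathcal{A})$ by invoking the extended subordination functions.

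First I would derive a closed-form expression for $\rho_{t,x_0}^{(\varepsilon)}(\lambda)$. Using the operator-valued Hermitian reduction of \cite{Zhong2021Brown} together with the extension of Biane--Voiculescu operator-valued subordination to unbounded operators (built earlier in the paper), the quantity $S(x_0+c_t,\lambda,\varepsilon)$ of \eqref{eqn:defn-S} reduces to an expression involving $x_0$ alone through a scalar subordination function $w=w(\varepsilon;\lambda,t)>0$ characterized implicitly by an equation of the shape
\[
\phi\Bigl[\bigl((x_0-\lambda)^*(x_0-\lambda)+w^2\bigr)^{-1}\Bigr] \;=\; \frac{w^2-\varepsilon^2}{t\,w^2}.
\]
Applying $\rho_{t,x_0}^{(\varepsilon)}=(1/\pi)\,\partial_\lambda\partial_{\bar\lambda}S$ via the chain rule in $\lambda,\bar\lambda$ through $w$, and using the implicit relation to simplify $\partial_\lambda w$ and $\partial_{\bar\lambda}w$, produces an explicit formula for $\rho_{t,x_0}^{(\varepsilon)}$ in terms of traces of resolvents of $|x_0-\lambda|^2$ and of $w$.

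Second, I would rearrange this formula so as to isolate the $1/(\pi t)$ contribution and expose the sign of the remainder. The target form is
\[
\rho_{t,x_0}^{(\varepsilon)}(\lambda) \;=\; \frac{1}{\pi t} \;-\; \frac{\varepsilon^2}{\pi t}\,Q(\lambda,w,\varepsilon),
\]
where $Q\geq 0$ is a ratio of positive trace expressions whose non-negativity follows from a Cauchy--Schwarz-type inequality for the tracial state $\phi$ applied to positive resolvents of $|x_0-\lambda|^2+w^2$. This would yield $\rho_{t,x_0}^{(\varepsilon)}(\lambda)\leq 1/(\pi t)$ pointwise and uniformly in $\varepsilon>0$. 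Since $\mu_{x_0+c_t}^{(\varepsilon)}\to\mu_{x_0+c_t}$ weakly and all regularized densities are uniformly bounded by $1/(\pi t)$, the Portmanteau theorem gives $\mu_{x_0+c_t}(U)\leq\liminf_{\varepsilon\to 0^+}\mu_{x_0+c_t}^{(\varepsilon)}(U)\leq \mathrm{Leb}(U)/(\pi t)$ for every open $U\subset\mathbb{C}$; outer regularity then extends the inequality to all Borel sets, proving absolute continuity with density bounded by $1/(\pi t)$. The remaining assertions of Theorem~\ref{thm:1.1-introduction} in the unbounded setting follow because the arguments of \cite{Zhong2021Brown} rely only on subordination identities which, under the extension carried out earlier in the paper, continue to hold for $x_0\in\log^+(\mathcal{A})$.

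The main obstacle is the algebraic rearrangement in the second step: a direct differentiation through the implicit equation for $w$ yields a cumbersome formula whose sign is not transparent, and only after identifying the right organization of terms---essentially recognizing $w(\varepsilon;\lambda,t)$ as the critical point of a convex variational problem, so that a second-order Taylor/Jensen argument applies to the resolvent trace---does the bound $1/(\pi t)$ fall out as the $\varepsilon=0$ limit of a manifestly non-negative correction. Finding this rearrangement, rather than the subsequent analytic passage to the limit or the extension to $\log^+(\mathcal{A})$, is where the real technical content lies.
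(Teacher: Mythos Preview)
Your overall architecture matches the paper's: compute the regularized density $\rho_{t,x_0}^{(\varepsilon)}$ explicitly via the scalar subordination $w=w(\varepsilon;\lambda,t)$, establish the uniform bound $\rho_{t,x_0}^{(\varepsilon)}\le 1/(\pi t)$, and pass to the weak limit via Portmanteau to conclude absolute continuity with the same bound. Two points need correction.

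First, the implicit equation for $w$ is $w=\varepsilon+tw\,\phi\bigl[(|x_0-\lambda|^2+w^2)^{-1}\bigr]$, i.e.\ $\phi(h^{-1})=(w-\varepsilon)/(tw)$ with $h=(\lambda-x_0)^*(\lambda-x_0)+w^2$, not $(w^2-\varepsilon^2)/(tw^2)$.

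Second, and more substantively, the rearrangement you propose, $\rho_{t,x_0}^{(\varepsilon)}=\tfrac{1}{\pi t}-\tfrac{\varepsilon^2}{\pi t}\,Q$ with $Q\ge 0$ a ratio of trace expressions, cannot hold in that form: sending $\varepsilon\to 0$ would force $\rho_{t,x_0}\equiv 1/(\pi t)$ on $\Xi_t$, whereas for general $x_0$ the limiting density is strictly below $1/(\pi t)$ on parts of $\Xi_t$ (equality is special, e.g.\ $x_0=0$). The paper does not attempt such a decomposition. Writing $k=(\lambda-x_0)(\lambda-x_0)^*+w^2$, the explicit formula obtained is
\[
\pi\rho_{t,x_0}^{(\varepsilon)}=\frac{|\phi(h^{-2}(\lambda-x_0))|^2}{\phi(h^{-2})+\varepsilon/(2tw^3)}+w^2\phi(h^{-1}k^{-1}),
\]
and this is bounded directly by two Cauchy--Schwarz applications. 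One rewrites $\phi(h^{-2}(\lambda-x_0))=\phi(k^{-1}(\lambda-x_0)h^{-1})$ and applies Cauchy--Schwarz to obtain $|\phi(h^{-2}(\lambda-x_0))|^2\le\phi(h^{-1}k^{-1})\,\phi(h^{-2}(\lambda-x_0)^*(\lambda-x_0))$; after substituting $(\lambda-x_0)^*(\lambda-x_0)=h-w^2$ the two terms combine to $\phi(h^{-1}k^{-1})\phi(h^{-1})/\phi(h^{-2})$. Then $\phi(h^{-1})=(w-\varepsilon)/(tw)<1/t$ from the subordination equation, together with $\phi(h^{-1}k^{-1})\le\phi(h^{-2})$ (Cauchy--Schwarz and $\phi(h^{-2})=\phi(k^{-2})$), give the bound $1/t$. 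So the Cauchy--Schwarz you anticipate is indeed the key ingredient, but it is applied to the density formula itself rather than to a remainder after subtracting $1/(\pi t)$.
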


In \cite{Bordenave-Caputo-Chafai-2014cpam}, Bordenave, Caputo and Chafa\"{\i} investigated the spectrum of the Markov generator of random walk on a randomly weighted oriented graph. They identified the limit distribution as the Brown measure of $a+c$, where $a$ is a Gaussian distributed normal operator and $c$ is the standard circular operator, freely independent from $a$. Hence, 
Theorem \ref{thm:1.3-introduction} implies that $\mu_{a+c}$ is absolutely continuous and its density is bounded by $1/\pi$. 
Our main results also provide potential applications to unify the deformed i.i.d. random matrix model, the deformed Wigner random matrix model, and the deformed elliptic random matrix model. We hope to study this question somewhere else.

By comparing the precise formulas in Theorem \ref{thm:1.1-introduction} with Theorem \ref{thm:1.2-Brown-push-forward-property}, we can view the main results in \cite{Zhong2021Brown} as the special case of our results when $\alpha=\beta=t$, where $\frac{\alpha-\beta}{\log\alpha-\log\beta}$ is regarded as $t$.
\begin{corollary}
Under the same assumption as Theorem \ref{thm:1.2-Brown-push-forward-property}, 
given $\alpha,\beta>0$, set $t=\frac{\alpha-\beta}{\log\alpha-\log\beta}$. Suppose $x_0$ is $*$-free from $\{c_t, g_{t,\gamma}\}_{\vert \gamma\vert\leq t}$ in $(\mathcal{A},\phi)$. Then, 
\begin{enumerate}[{\rm (1)}]
\item the Brown measure of $x_0+g_{_{\alpha, \beta,0}}$ is the same as the Brown measure of $x_0+c_t$. In particular, it is absolutely continuous in $\mathbb{C}$ and its density is bounded by $1/(\pi {t})$;
\item the push-forward map $\Phi_{\alpha, \beta,\gamma}$ between Brown measures $\mu_{x_0+g_{_{\alpha, \beta,0}}}$ and 
		$\mu_{x_0+g_{_{\alpha, \beta,\gamma}}}$ is the same as the push-forward map $\Phi_{t,\gamma}$ between Brown measures $\mu_{x_0+c_t}$ and $\mu_{x_0+g_{t,\gamma}}$. 
\end{enumerate}
\end{corollary}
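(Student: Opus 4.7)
The plan is to observe that with the choice $t=(\alpha-\beta)/(\log\alpha-\log\beta)$, the implicit scalar equations defining the objects $s(\lambda)$ (from Theorem \ref{thm:1.2-Brown-push-forward-property}) and $w(0;\lambda,t)$ (from Theorem \ref{thm:1.1-introduction}) literally coincide, and hence so do the sets $\Xi_{\alpha,\beta}$ and $\Xi_t$. Once this is established, both the density formulas and the push-forward maps match after a sign bookkeeping, and the corollary reduces to quoting Theorems \ref{thm:1.1-introduction}, \ref{thm:1.2-Brown-push-forward-property} and \ref{thm:1.3-introduction}.

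\textbf{Matching the subharmonic level sets and the implicit parameters.} First I would substitute $1/t=(\log\alpha-\log\beta)/(\alpha-\beta)$ into the definition $\Xi_{\alpha,\beta}=\{\lambda:\phi(|x_0-\lambda|^{-2})>(\log\alpha-\log\beta)/(\alpha-\beta)\}$ to conclude $\Xi_{\alpha,\beta}=\Xi_t$ as open subsets of $\mathbb{C}$. The same substitution turns the equation
\[
\frac{\log\alpha-\log\beta}{\alpha-\beta}=\phi\bigl[((\lambda-x_0)^*(\lambda-x_0)+s^2)^{-1}\bigr]
\]
into the equation defining $w(0;\lambda,t)$, and both equations have a unique positive solution on the corresponding open set (and value $0$ on the complement). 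Hence $s(\lambda)=w(0;\lambda,t)$ identically on $\mathbb{C}$.

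\textbf{Identification of the Brown measures.} With $s=w$, the explicit density formulas produced by Theorem \ref{thm:1.2-Brown-push-forward-property} for $\mu_{x_0+g_{_{\alpha\beta,0}}}$ and by Theorem \ref{thm:1.1-introduction} for $\mu_{x_0+c_t}$ are built from the same basic ingredients ($w$, $x_0$, and $\lambda$-derivatives thereof), so they agree pointwise on the common open set $\Xi_t=\Xi_{\alpha,\beta}$. Both measures vanish outside the closure of this set and are atomless, so they coincide as probability measures on $\mathbb{C}$. The bound $\rho_{t,x_0}(\lambda)\leq 1/(\pi t)$ is then immediate from Theorem \ref{thm:1.3-introduction}, which also guarantees absolute continuity without any singular continuous part.

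\textbf{Identification of the push-forward maps.} For (2) I would just verify the algebraic identity $\Phi_{\alpha\beta,\gamma}=\Phi_{t,\gamma}$ by direct comparison. Using $(\lambda-x_0)^*=-(x_0-\lambda)^*$, the formula for $\Phi_{\alpha\beta,\gamma}$ rewrites as
\[
\Phi_{\alpha\beta,\gamma}(\lambda)=\lambda-\gamma\,\phi\bigl[(x_0-\lambda)^*\bigl((x_0-\lambda)(x_0-\lambda)^*+s(\lambda)^2\bigr)^{-1}\bigr],
\]
which is $\lambda+\gamma\, p_\lambda^{(0)}(w)=\Phi_{t,\gamma}(\lambda)$ since $s(\lambda)=w(0;\lambda,t)$. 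Combined with part (1), Theorem \ref{thm:1.2-Brown-push-forward-property} then identifies the push-forward from $\mu_{x_0+g_{_{\alpha\beta,0}}}=\mu_{x_0+c_t}$ under $\Phi_{\alpha\beta,\gamma}=\Phi_{t,\gamma}$ with the push-forward described in Theorem \ref{thm:1.1-introduction}.

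There is essentially no substantive obstacle: the content is wholly contained in the two main theorems, and the corollary is a bookkeeping exercise verifying that the defining scalar equations and the explicit formulas match under the single reparametrization $1/t=(\log\alpha-\log\beta)/(\alpha-\beta)$. The only mild care needed is the sign convention $(\lambda-x_0)^*=-(x_0-\lambda)^*$ when comparing $\Phi_{\alpha\beta,\gamma}$ with $\Phi_{t,\gamma}$.
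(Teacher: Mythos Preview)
Your proposal is correct and follows the same approach the paper takes: the corollary is stated without a formal proof, but the sentence preceding it and the Remark after Theorem~\ref{thm:Brown-formula-gamma-0} indicate precisely your argument—substitute $1/t=(\log\alpha-\log\beta)/(\alpha-\beta)$, observe that $\Xi_{\alpha,\beta}=\Xi_t$ and $s(\lambda)=w(0;\lambda,t)$, and conclude that the density formulas and push-forward maps coincide. The sign bookkeeping you flag for $\Phi_{\alpha\beta,\gamma}$ versus $\Phi_{t,\gamma}$ is handled correctly.
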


The paper has six more sections. We review some results from free probability theory in Section 2. We study the subordination functions for unbounded operators in Section 3. We calculate the Brown measure formulas and prove the push-forward property for regularized Brown measures in Section 4 and Section 5. In Section 6, we study some further properties of the push-forward map. In Section 7, we study the Brown measure of an addition with an elliptic operator.  


\section{Preliminaries}
\label{Prelim}
\subsection{Free probability and subordination functions}
\label{FreeProb}
An \emph{operator-valued $W^*$-probability space} $(\CA, \mathbb{E}, \CB)$ consists of a von Neumann algebra $\CA$, a unital $*$-subalgebra $\CB\subset \CA$, and  a \emph{conditional expectation}
$\mathbb{E}:\CA\rightarrow \mathcal{B}$. Thus, $\mathbb{E}$ is a unital linear positive map satisfying:
(1) $\mathbb{E}(b)=b$ for all $b\in\CB$, and
(2) $\mathbb{E}(b_1xb_2)=b_1\mathbb{E}(x)b_2$ for all $x\in\CA$, $b_1, b_2\in\CB$. 
Let $(\CA_i)_{i\in I}$ be a family of subalgebras such that $\CB\subset \CA_i\subset \CA$. We say that $(\CA_i)_{i\in I}$ are \emph{free with amalgamation} over $\CB$ with respect to the conditional expectation $\mathbb{E}$ \emph{(}or free with amalgamation in $(\CA, \mathbb{E}, \CB)) $ if for every $n \in \mathbb{N},$
\[
\mathbb{E}(x_1x_2\cdots x_n)=0
\]
whenever $x_j\in \CA_{i_j} (j=1, 2, \cdots, n)$ for some indices $i_1, i_2, \cdots, i_n \in I$ such that 
$i_1\neq i_2, i_2\neq i_3, \cdots, i_{n-1}\neq i_n$, and $\mathbb{E}(x_j)=0$ for all $j=1, 2, \cdots, n$. 

Let $(\CA, \mathbb{E}, \CB)$ be an operator-valued $W^*$-probability space. The elements in $\CA$ are called (noncommutative) random variables.  We call 
\[
\mathbb{H}^+(\CB)=\{ b\in\CB: \exists \varepsilon>0, \Im (b)\geq \varepsilon \unit \}
\]
the Siegel upper half-plane of $\CB$, where we use the notation $\Im (b)=\frac{1}{2i}(b-b^*)$.
We set $\mathbb{H}^-(\CB)=\{-b: b\in  \mathbb{H}^+(\CB)\}$. The $\mathcal{B}$-valued Cauchy transform $G_X$ of any self-adjoint operator $X\in \CA$ is defined by
\[
G_X(b)=\mathbb{E}[ (b-X)^{-1} ],  \quad  b\in  \mathbb{H}^+(\CB).
\] 
Note that the $\mathcal{B}$-valued Cauchy transform $G_X$ is a map from $\mathbb{H}^+(\CB)$ to $\mathbb{H}^-(\CB)$, and it is one-to-one in $\{b\in \mathbb{H}^+(\CB): ||b^{-1}||<\varepsilon \}$ for sufficiently small $\varepsilon$. Voiculescu's amalgamated $R$-transform is now defined for $X\in\CA$ by
\[
R_X(b)=G_X^{\langle -1 \rangle}(b)-b^{-1}
\]
for $b$ being invertible element of $\CB$ suitably close to zero. 

Let $X, Y$ be two self-adjoint bounded operators that are free with amalgamation in $(\mathcal{A}, \mathbb{E},\mathcal{B})$.  The $R$-transform linearizes the free convolution in the sense that if $X, Y$ are free with amalgamation in $(\CA, \mathbb{E}, \CB)$, then 
\[
R_{X+Y}(b)=R_X(b)+R_Y(b)
\]
for $b$ in some suitable domain. There exist two analytic self-maps $\Omega_1,\Omega_2$
of the upper half-plane $\mathbb{H}^+(\mathcal{B})$
so that
\begin{equation}\label{eqn:subord-operator}
(\Omega_1(b)+\Omega_2(b)-b)^{-1}=G_X(\Omega_1(b))=G_Y(\Omega_2(b))=G_{X+Y}(b),
\end{equation}
for all $b\in \mathbb{H}^+(\mathcal {B})$. We refer the reader to \cite{SpeicherAMS1998, DVV-operator-valued-1992} for basic operator-valued free probability theory and \cite{BelinschiTR2018-sub-operator-valued,  Biane1998, DVV-general} for operator-valued subordination functions.

\subsection{Hermitian reduction method and subordination functions}
Our approach is based on a Hermitian reduction method and (operator-valued) subordination functions. 
The Hermitian reduction method was used for the calculation of the Brown measure of quasi-nilpotent DT operators in Aagaard--Haagerup's work \cite{HaagerupAagaard2004}. The idea was refined in a recent work \cite{BSS2018}. The usage of such an idea in Brown measure calculation also appears earlier in some physical literatures (see \cite{FeinbergZee1997-a, Nowak1997-non-Hermitian} for example). The idea of the Hermitian reduction method has a connection to the work of Girko \cite{Girko1984} on circular law, though it is written in a different form. 

Let $(\CA, \mathbb{E}, \CB)$ be an operator-valued $W^*$-probability space, and let $\phi:\mathcal{A}\rightarrow\mathbb{C}$ be a tracial state on $\CA$ such that
$\phi(x)=\phi(\mathbb{E}(x))$ for any $x\in\CA$. 
We equip
the algebra $M_2(\CA)$, the $2\times 2$ matrices with entries from $\CA$, with the conditional expectation
$M_2(\mathbb{E}):M_2(\CA)\rightarrow M_2(\CB)$ given by \begin{equation}
\label{eq:conditional} M_2(\mathbb{E})
\begin{bmatrix} a_{11} &
a_{12} \\ a_{21} & a_{22} \end{bmatrix} = \begin{bmatrix} \mathbb{E}(a_{11}) & \mathbb{E}(a_{12}) \\
\mathbb{E}(a_{21}) & \mathbb{E}(a_{22}) \end{bmatrix},
\end{equation}
and $M_2(\phi):M_2(\CA)\rightarrow M_2(\mathbb{C})$ by
\begin{equation}
\label{eq:conditional-2} M_2(\phi)
\begin{bmatrix} a_{11} &
a_{12} \\ a_{21} & a_{22} \end{bmatrix} = \begin{bmatrix} \phi(a_{11}) & \phi(a_{12}) \\
\phi(a_{21}) & \phi(a_{22}) \end{bmatrix}.
\end{equation}
Then the triple  $(M_2(\mathcal{A}), M_2(\mathbb{E}), M_2(\mathcal{B}))$ is a $W^*$-probability space such that $M_2(\phi)\circ M_2(\mathbb{E})=M_2(\phi)$. 
Given $x\in\CA$,
let
\begin{equation}
\label{eq:X} \x=
\begin{bmatrix} 0 & x \\ x^\ast & 0 \end{bmatrix} \in M_2(\CA),
\end{equation}
which is a self-adjoint element in $M_2(\CA).$ 
The $M_2(\mathcal{B})$-valued Cauchy transform is defined as
\begin{equation}\label{def-matr-valued-Cauchy}
G_X(b)=M_2(\mathbb E)\left[(b-X)^{-1}\right], \qquad b\in \mathbb{H}^+( M_2(\mathcal{B})),
\end{equation}
which is an analytic function on $\mathbb{H}^+( M_2(\mathcal{B}))$.
In particular, for $\varepsilon>0$ the element
\begin{equation}
\label{defn:Theta}
\Theta
(\lambda, \varepsilon) =  \begin{bmatrix} i\varepsilon & \lambda \\
\bar{\lambda}  & i\varepsilon \end{bmatrix} \in  M_2(\C) \subset M_2(\mathcal{B})
\end{equation}
belongs to the domain of $G_X$, and
\begin{align*}
&(\Theta
(\lambda,\varepsilon) -\x)^{-1}\\
=  &\begin{bmatrix}
-i \varepsilon \big((\lambda-x)(\lambda-x)^\ast+\varepsilon^2\big)^{-1}
& (\lambda-x)\big((\lambda-x)^\ast (\lambda-x)+\varepsilon^2\big)^{-1} \\
(\lambda-x)^\ast \big(
(\lambda-x)(\lambda-x)^\ast+\varepsilon^2\big)^{-1}  & -i \varepsilon
\big( (\lambda-x)^\ast(\lambda-x)+\varepsilon^2 \big)^{-1}
\end{bmatrix}.
\end{align*}
Hence, by taking the entry-wise trace on the $M_2(\mathcal{B})$-valued Cauchy transform, we have
\begin{equation} \label{eq:ciemnosc} 
 \begin{aligned}
&M_2(\phi)\big(G_X
(\Theta
(\lambda, \varepsilon) ) \big)
=M_2(\phi) \big( (\Theta
(\lambda, \varepsilon) - \x)^{-1}\big)\\
&=\begin{bmatrix}
g_{X, 11}(\lambda,\varepsilon) & g_{X, 12}(\lambda,\varepsilon)\\
g_{X, 21}(\lambda,\varepsilon) & g_{X, 22}(\lambda,\varepsilon)
\end{bmatrix},
 \end{aligned}
\end{equation} 
where 
\begin{equation}
 \label{eqn:Cauchy-transform-entries-scalar}
 \begin{aligned}
  	g_{X,11}(\lambda, \varepsilon) & =  -i\varepsilon\phi\left(\big((\lambda-x)(\lambda-x)^*+\varepsilon^2
	\big)^{-1}\right),\\
	g_{X,12}(\lambda, \varepsilon) &=\phi\left((\lambda-x)\big((\lambda-x)^* (\lambda-x)+\varepsilon^2
	\big)^{-1}\right), \\
	g_{X,21}(\lambda, \varepsilon)&=\phi\left((\lambda-x)^*\big((\lambda-x)(\lambda-x)^*+
	\varepsilon^2\big)^{-1}\right),\\
	g_{X,22}(\lambda, \varepsilon) & =  -i\varepsilon\phi\left(\big( (\lambda-x)^*(\lambda-x)+\varepsilon^2 
	\big)^{-1}\right).
 \end{aligned}
\end{equation}
Recall that $S(x,\lambda,\varepsilon)$ is defined in \eqref{eqn:defn-S}. We see immediately that 
\[
ig_{X,11}(\lambda,\varepsilon)=\frac{1}{2}\frac{\partial S(x,\lambda,\varepsilon)}{\partial \varepsilon},
\qquad g_{X,21}(\lambda,\varepsilon)=\frac{\partial S(x,\lambda,\varepsilon)}{\partial \lambda}.
\]
Hence, the entries of operator-valued Cauchy transform carry important information about the Brown measure. 

Consider now two operators $x,y\in\mathcal{A}$ that are $*$-free with amalgamation over $\mathcal{B}$.  We have
to understand the $ M_2( \mathcal{B})$-valued distribution of
$$\begin{bmatrix} 0 & x+y \\ (x+y)^* & 0 \end{bmatrix}=X+Y=
\begin{bmatrix} 0 & x \\ x^* & 0 \end{bmatrix}+
\begin{bmatrix} 0 & y \\ y^* & 0 \end{bmatrix}$$
in terms of the $M_2(\mathcal{B})$-valued distributions of $X$ and of $Y$. Note that $X$ and $Y$ are free in $(M_2(\mathcal{A}), M_2(\mathbb{E}),M_2(\mathcal{B}))$ with amalgamation over $M_2(\mathcal{B})$.
The subordination functions in this context are two analytic self-maps  $\Omega_1,\Omega_2$
of the upper half-plane $\mathbb{H}^+(M_2(\mathcal{B}))$
so that
\begin{equation}\label{eqn:subord}
(\Omega_1(b)+\Omega_2(b)-b)^{-1}=G_X(\Omega_1(b))=G_Y(\Omega_2(b))=G_{X+Y}(b),
\end{equation}
for every $b\in \mathbb{H}^+(M_2(\mathcal{B}))$. We shall be concerned with a special
type of $b$, namely $b=\Theta(\lambda, \varepsilon)$.
More precisely, we want to understand the entries of the $M_2(\mathbb{C})$-valued Cauchy-transform
\[
  M_2(\phi)(G_{X+Y}(\Theta(\lambda,\varepsilon)))=M_2(\phi) \left(
    \begin{bmatrix}
      i\varepsilon & \lambda-x-y\\
      (\lambda-x-y)^* & i\varepsilon
    \end{bmatrix}^{-1}
  \right).
\]
 The idea of calculating the Brown measure of $x+y$ is to separate the information of $X$ and $Y$ in some tractable way using subordination functions \eqref{eqn:subord}.
 
 
 \section{Extension of subordination functions to unbounded operators}
 Let $(\mathcal{A}, \mathbb{E}, \mathcal{B})$ be an operator-valued $W^*$-probability space. Consider arbitrary $x,y\in\tilde{\mathcal A}$ which are $*$-free over $\mathcal B$ with respect to $\mathbb{E}$, meaning that $W^*(\mathcal B,x)$ and $W^*(\mathcal B,y)$ are free with respect to $\mathbb{E}$ (when $x$ is unbounded, $W^*(\mathcal B,x)$ 
denotes the von Neumann algebra generated by $\mathcal B$, $u$, and the spectral projections of $\sqrt{x^*x}$ from the polar decomposition $x=u\sqrt{x^*x}$). 

As in the previous section, we denote $X=\begin{bmatrix} 0 & x \\ x^* & 0 \end{bmatrix}$, $Y=\begin{bmatrix} 0 & y \\ y^* & 0 \end{bmatrix}$, and 
$G_X(b)=M_2(\mathbb E)\left[(b-X)^{-1}\right]$, $b\in\mathbb H^+(M_2(\mathcal B))$. In addition, for use in this section, we also introduce
\[
F_X(b)=G_X(b)^{-1},\quad h_X(b)=F_X(b)-b,\qquad b\in\mathbb H^+(M_2(\mathcal B)).
\]
It is known that $\Im F_X(b)\ge\Im b$, so that $\Im h_X(b)\ge0,b\in\mathbb H^+(M_2(\mathcal B))$ (see \cite[Remark 2.5]{BelinschPopaVinnikov2012}). 
By the similar result for bounded operators, then, $X, Y$ are free with amalgamation over $M_2(\mathcal B)$ with respect to 
$M_2(\mathbb E)\begin{bmatrix} a_{11} & a_{12} \\ a_{21} & a_{22} \end{bmatrix}
=\begin{bmatrix} \mathbb{E}(a_{11}) & \mathbb{E}(a_{12}) \\ \mathbb{E}(a_{21}) & \mathbb{E}(a_{22}) \end{bmatrix}$.

The following result is an extension of Voiculescu's subordination functions \cite{DVV-general} in the operator-valued framework (see also \cite{Biane1997b}). The method is adapted from the approach used in \cite{BelinschiTR2018-sub-operator-valued}. It is essentially covered by \cite[Corollary 6.6]{BelinschiV2017hyperbolic} and by Williams \cite{Williams2017}, but we provide here a more elementary argument. The reader is referred to \cite[Section 2]{BelinschiTR2018-sub-operator-valued} for some details. 

\begin{theorem}
\label{thm:subordination-unbounded}
Assume that either $\mathcal B$ is a finite dimensional $W^*$-algebra, or $y$ is a bounded operator
and $\Im M_2(\mathbb E)\left[(b_0-X)^{-1}\right]<0$ for some $b_0\in M_2(\mathcal B),\Im b_0>0$. Then there exist analytic maps 
\[
\Omega_1,\Omega_2\colon\mathbb H^+(M_2(\mathcal B))\to\mathbb H^+(M_2(\mathcal B))
\]
such that
\begin{equation}
\label{eqn:subordination-operator-in-thm}
\begin{aligned}
  \Omega_1(b)+\Omega_2(b)-b&=M_2(\mathbb E)\!\left[(\Omega_1(b)-X)^{-1}\right]^{-1}\!\!\\
  &=M_2(\mathbb E)\!\left[(\Omega_2(b)-Y)^{-1}\right]^{-1}\!\!
   =M_2(\mathbb E)\!\left[(b-\!X-\!Y)^{-1}\right]^{-1}\!.
\end{aligned}
\end{equation}
\end{theorem}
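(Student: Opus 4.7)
The plan is to recast the subordination problem as a holomorphic fixed-point equation on $\mathbb{H}^+(M_2(\mathcal{B}))$ and apply the Earle--Hamilton theorem, following the strategy of \cite{BelinschiTR2018-sub-operator-valued} but modified to accommodate unboundedness of $x,y$. The input objects are defined even in the unbounded setting: for any self-adjoint $Z$ affiliated with $M_2(\mathcal{A})$ and any $b\in\mathbb{H}^+(M_2(\mathcal{B}))$, one has $\|(b-Z)^{-1}\|\le\|(\Im b)^{-1}\|$, so the $M_2(\mathcal{B})$-valued Cauchy transform $G_Z(b):=\mathbb{E}[(b-Z)^{-1}]$ is a bounded holomorphic function on $\mathbb{H}^+(M_2(\mathcal{B}))$ with $\Im G_Z(b)\le 0$. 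Writing $F_Z(b):=G_Z(b)^{-1}$ and $h_Z(b):=F_Z(b)-b$ where these make sense, the system \eqref{eqn:subordination-operator-in-thm} reduces to the single equation $K_b(\Omega_1)=\Omega_1$ with
\[
  K_b(w)\ :=\ h_Y\!\bigl(h_X(w)+b\bigr)+b,\qquad \Omega_2\ :=\ h_X(\Omega_1)+b,
\]
and the standard Nevanlinna bounds $\Im h_X,\Im h_Y\ge 0$ together with $\Im b>0$ make $K_b$ a holomorphic self-map of $\mathbb{H}^+(M_2(\mathcal{B}))$ with $\Im K_b(w)\ge\Im b$.

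The heart of the proof is verifying the Earle--Hamilton hypothesis---that $K_b$ sends $\mathbb{H}^+(M_2(\mathcal{B}))$ into a subset of strictly positive distance from the boundary---using whichever of the two alternative assumptions is in force. When $\mathcal{B}$ is finite dimensional, $M_2(\mathcal{B})$ is finite dimensional and the Cayley transform realizes $\mathbb{H}^+(M_2(\mathcal{B}))$ as a bounded symmetric domain, so the inequality $\Im K_b(w)\ge\Im b$ upgrades by compactness to the required strict-interior condition on any suitably chosen bounded open set. When instead $y$ is bounded and $\Im G_X(b_0)<0$ strictly at some $b_0$, boundedness of $y$ yields the asymptotic $F_Y(w)=w-\mathbb{E}(Y)+O(\|w\|^{-1})$ and hence a global norm bound on $h_Y$ throughout $\mathbb{H}^+(M_2(\mathcal{B}))$, so the image of $K_b$ is norm bounded; simultaneously the strict Nevanlinna inequality $\Im F_X(b_0)>\Im b_0$ (equivalent to $\Im G_X(b_0)<0$ after inversion) ensures $G_X$ is invertible in $M_2(\mathcal{B})$, so $F_X$ and $h_X$ are genuinely defined, and combining the norm bound on the image of $K_b$ with $\Im K_b(w)\ge\Im b$ produces the strict-interior condition. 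In both cases Earle--Hamilton yields a unique fixed point $\Omega_1(b)$, the implicit function theorem applied to $w-K_b(w)=0$ gives analytic dependence on $b$, and $\Omega_2(b):=h_X(\Omega_1(b))+b$ automatically takes values in $\mathbb{H}^+(M_2(\mathcal{B}))$.

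Finally, the identification $F_X(\Omega_1(b))=F_Y(\Omega_2(b))=F_{X+Y}(b)$ in \eqref{eqn:subordination-operator-in-thm} is established by comparing the three analytic $M_2(\mathcal{B})$-valued functions of $b$ as $\|b^{-1}\|\to 0$, where the amalgamated $R$-transform identity $R_{X+Y}=R_X+R_Y$, valid since $X,Y$ are free over $M_2(\mathcal{B})$ and interpreted in the unbounded setting as a formal identity of mixed-moment power series in $b^{-1}$, pins down all three sides, which then coincide globally by analytic continuation. I expect the main technical obstacle to be the strict-interior condition in the second alternative: in the absence of any global norm bound on $h_X$, one must carefully couple the single-point strict Nevanlinna inequality at $b_0$ (propagated by analyticity) with the global boundedness of $h_Y$ arising from $y$ bounded to rule out boundary escape of the iterates of $K_b$. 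This is exactly the step where the more elementary Earle--Hamilton argument replaces the heavier hyperbolic-geometric machinery of \cite{BelinschiV2017hyperbolic} and \cite{Williams2017}.
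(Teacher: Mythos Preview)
Your treatment of the second alternative ($y$ bounded, strict inequality $\Im G_X(b_0)<0$) is essentially the paper's Case two: use the hypothesis to make $h_X$ globally defined, use boundedness of $Y$ to get a uniform norm bound on $h_Y$ on each slab $\{\Im w\ge\varepsilon\}$ (this is \cite[Lemma~2.12]{BelinschiTR2018-sub-operator-valued}, not merely the large-$w$ asymptotic you quote), and apply Earle--Hamilton. That part is fine modulo sharpening the justification.

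The finite-dimensional case, however, has a genuine gap. The inequality $\Im K_b(w)\ge\Im b$ does \emph{not} give the Earle--Hamilton strict-interior condition, even after passing to the bounded model via the Cayley transform: in one complex dimension the translation $w\mapsto w+ic$ on $\mathbb{C}^+$ satisfies $\Im K_b(w)\ge c$ yet has no fixed point, and under the Cayley transform the horoball $\{\Im w\ge c\}$ is a disk internally tangent to the boundary of the unit disk, so its closure is not contained in the open disk. The phrase ``upgrades by compactness'' hides exactly this failure. There is also a prior issue: with both $X$ and $Y$ unbounded and no strict-negativity hypothesis on $G_X$ or $G_Y$, you have not shown that $G_X(b),G_Y(b)$ are invertible, so $h_X,h_Y$ and hence $K_b$ may not even be globally defined on $\mathbb{H}^+(M_2(\mathcal{B}))$. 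The paper sidesteps both problems: it truncates $X,Y$ to bounded $X_N,Y_N$, uses the known subordination $\boldsymbol{\Omega}_N$ for bounded operators, and then extracts a limit by normal families---this is where finite-dimensionality is actually used, to convert weak convergence of the $h_{Y_N}$ to norm convergence and to make $\{\boldsymbol{\Omega}_N\}$ a normal family in the classical sense. Finally, your closing appeal to $R_{X+Y}=R_X+R_Y$ ``as a formal identity of mixed-moment power series'' cannot be made rigorous when $X,Y$ are unbounded (the moments need not exist); the identification with $F_{X+Y}$ should instead be inherited from the truncated bounded case in the limit.
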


\begin{remark}\label{all}
The condition $\Im M_2(\mathbb E)\left[(b_0-X)^{-1}\right]<0$ for some $b_0\in M_2(\mathcal B),\Im b_0>0$ is in fact equivalent to
$\Im M_2(\mathbb E)\left[(b-X)^{-1}\right]<0$ for all $b\in\mathbb H^+( M_2(\mathcal B)).$ This is a fairly straightforward consequence of the classical
maximum principle for analytic functions. Indeed, first pick any state $\varphi$ on $M_2(\mathcal B)$. Define the map
\[
g\colon z\mapsto\varphi\left(M_2(\mathbb E)\left[(z\Im b_0+\Re b_0-X)^{-1}\right]\right).
\] 
It sends $\mathbb C^+$ inside the closure $($in $\mathbb C)$ of the lower half-plane.
Observing that 
\[
\Im g(i)=\varphi\left(\Im M_2(\mathbb E)\left[(b_0-X)^{-1}\right]\right)<0,
\] it follows that $g(\mathbb C^+)\subseteq\mathbb C^-$, so the 
statement holds for all $b=z\Im b_0+\Re b_0,z\in\mathbb C^+$. For $M>0$ suffciently large, $\Im (iM\Im b_0+z\Re b_0)>0$ for all $z$ in the disk centered at zero 
and of radius equal to two. The same reasoning applied to the map
\[
z\mapsto\varphi\left(M_2(\mathbb E)\left[(iM\Im b_0+z\Re b_0-X)^{-1}\right]\right)
\]
allows us to conclude 
for all $b=z\Im b_0,z\in\mathbb C^+$ as well, and yet one more application allows us to conclude for $z\Im b_0+c$ for any $c=c^*\in M_2(\mathcal B)$. 
Finally, pick an arbitrary $a\in M_2(\mathcal B),a>0$, and pick $M>0$ such that $M\Im b_0>a$. Then the map
$z\mapsto(ziM\Im b_0+(1-z)ia)$ takes values in $\mathbb H^+( M_2(\mathcal B))$ when $z$ is restricted to a small complex neighborhood of the real interval $[0,1]$
$($this is simply because $\Im(ziM\Im b_0+(1-z)ia)=\Re zMb_0+(1-\Re z)a=a+\Re z(M \Im b_0-a)$, which is clearly positive if $\Re z\ge0$ and remains positive for very small 
negative $\Re z$ since $a>0)$. We apply yet again the same reasoning to the map $z\mapsto\varphi\left(M_2(\mathbb E)\left[(ziM\Im b_0+(1-z)ia-X)^{-1}\right]\right)$
defined on a small neighborhood of $[0,1]$ in $\mathbb C$ in order to conclude for $za,z\in\mathbb C^+,$ and hence for all $c+ia$, $c=c^*\in M_2(\mathcal B)$.
 \end{remark}

\begin{proof}
{\bf Case one:} assume that both $x$ and $y$ are unbounded and dim$(\mathcal B)<\infty$. We shall obtain the $\Omega_1, \Omega_2$ as fixed points of an analytic map. 
We will use a slightly different trick, based on an observation due to Hari Bercovici \cite{Bercovici2004}. Namely, while generally neither of the maps $b\mapsto\Omega_j(b),j=1,2,$ is injective on $\mathbb H^+(M_2(\mathcal B))$, the map 
$\mathbb H^+(M_2(\mathcal B))\ni b\mapsto(\Omega_1(b),\Omega_2(b))\in \mathbb H^+(M_2(\mathcal B))\times \mathbb H^+(M_2(\mathcal B))$ {\em is} injective, and it satisfies 
$(\Omega_1(b)-h_Y(\Omega_2(b)),\Omega_2(b)-h_X(\Omega_1(b)))=(b,b)$, as the reader can verify by performing a few algebraic manipulations of \eqref{eqn:subord-operator}.
This has recently been expressed in \cite[Corollary 3.4]{BBH} as a result regarding inverses of analytic maps of two complex variables. We reproduce below an operator-valued
version of this result as

\begin{lemma}\label{f}
With the notations from Theorem \ref{thm:subordination-unbounded},
if $\boldsymbol{\Phi}\colon \mathbb H^+(M_2(\mathcal B))\times\mathbb H^+(M_2(\mathcal B))\allowbreak\to M_2(\mathcal B)\times M_2(\mathcal B)$ is given by
$\boldsymbol{\Phi}(w_1,w_2)=(w_1-h_Y(w_2),w_2-h_X(w_1))$, then there exists an analytic map $\boldsymbol{\Omega}\colon \mathbb H^+(M_2(\mathcal B))\times\mathbb H^+
(M_2(\mathcal B))\to \mathbb H^+(M_2(\mathcal B))\times\mathbb H^+(M_2(\mathcal B))$ such that $\boldsymbol{\Phi}\circ\boldsymbol{\Omega}={\rm Id}_{\mathbb H^+
(M_2(\mathcal B))\times\mathbb H^+(M_2(\mathcal B))}$. Differently stated, there exist two analytic maps $\Omega_1,\Omega_2\colon\mathbb H^+(M_2(\mathcal B))\times
\mathbb H^+(M_2(\mathcal B))\to \mathbb H^+(M_2(\mathcal B))$ such that
\begin{equation}
\boldsymbol{\Phi}(\Omega_1(b_1,b_2),\Omega_2(b_1,b_2))=(b_1,b_2),\quad (b_1,b_2)\in \mathbb H^+(M_2(\mathcal B))\times\mathbb H^+(M_2(\mathcal B)).
\end{equation}
\end{lemma}

The above result is proved in \cite{BBH} with $M_2(\mathcal B)$ replaced by $\mathbb C$. The proof in our first case (when $\mathcal B$ is finite dimensional) is virtually identical, but 
for the sake of completeness, and because of the technical difficulties that appear when one deals with several complex variables, we will provide it below.
\begin{proof}[Proof of Lemma \ref{f}]
We prove the lemma in two steps. First, we show that there exists some large $M>0$ such that the derivative $D\boldsymbol{\Phi}(iM \unit_{M_2(\mathcal B)},iM \unit_{M_2(\mathcal B)})$
is close to the identity map and thus, by the classical inverse mapping theorem for analytic maps, $\boldsymbol{\Phi}$ has an analytic inverse on some neighborhood
of $\boldsymbol{\Phi}(iM \unit_{M_2(\mathcal B)},iM \unit_{M_2(\mathcal B)})$ which sends $\boldsymbol{\Phi}(iM \unit_{M_2(\mathcal B)},iM \unit_{M_2(\mathcal B)})$ to 
$(iM \unit_{M_2(\mathcal B)},iM \unit_{M_2(\mathcal B)})$. Indeed, note that the derivative of $\boldsymbol{\Phi}$ is
\[
D\boldsymbol{\Phi}(w_1,w_2)=\begin{bmatrix} {\rm id}_{M_2(\mathcal B)} & -h_Y'(w_2) \\ -h_X'(w_1)  & {\rm id}_{M_2(\mathcal B)}\end{bmatrix}.
\]
We only need to show that $\|h'_X(iM\unit_{M_2(\mathcal B)})\|,\|h'_Y(iM\unit_{M_2(\mathcal B)})\|$ are small if $M$ is large. 
Recalling the definition of $h_X$ from above, one observes that
\[
h'_X\!(w)\!(\alpha)\!=\!M_2(\mathbb E)\!\left[(w\!-\!X)^{\!-1}\!\right]^{\!-1}\!\!M_2(\mathbb E)\!\left[(w\!-\!X)^{\!-1}\!\alpha(w\!-\!X)^{\!-1}\!\right]
\!M_2(\mathbb E)\!\left[(w\!-\!X)^{\!-1}\!\right]^{\!-1}\!\!-\alpha,
\]
for all $w\in H^+(M_2(\mathcal B)),\alpha\in M_2(\mathcal B)$. Observe that for any state $\psi$ on the algebra $M_2(\mathcal A)$ to which $X$ is affiliated,
$\psi(iM(iM\unit_{M_2(\mathcal B)}-X)^{-1})$ tends to $1$ as $M\to+\infty$. Indeed, since $X$ is self-adjoint, it has a resolution of unity, call it $e$. Then 
the map $z\mapsto \psi((z\unit_{M_2(\mathcal B)}-X)^{-1})$ is the Cauchy transform of the Borel probability measure $\psi\circ e$:
\[
\psi((z\unit_{M_2(\mathcal B)}-X)^{-1})=\int_\mathbb R\frac{1}{z-t}\,{\rm d}(\psi\circ e)(t),\quad z\in\mathbb C^+.
\]
Then the dominated convergence theorem tells us that $\lim_{M\to+\infty}\psi(iM(iM\unit_{M_2(\mathcal B)}-X)^{-1})=1$. Since the conditional expectation $M_2(\mathbb E)$
is weakly continuous, it follows that $\lim_{M\to+\infty}iMM_2(\mathbb E)\left[(iM\unit_{M_2(\mathcal B)}\!-\!X)^{-1}\right]=\unit_{M_2(\mathcal B)}$, weakly, but, since 
$\mathcal B$ is finite-dimensional, also in the norm topology on $M_2(\mathcal B)$. Thus,
\[
\lim_{M\to+\infty}\frac{M_2(\mathbb E)\left[(iM\unit_{M_2(\mathcal B)}\!-\!X)^{-1}\right]^{-1}}{iM}=\unit_{M_2(\mathcal B)},
\]
in the norm topology on $M_2(\mathcal B)$. Concerning $M_2(\mathbb E)\!\left[(w\!-\!X)^{\!-1}\!\alpha(w\!-\!X)^{\!-1}\!\right]$, let us observe first that
$\left\|iM(iM\unit_{M_2(\mathcal B)}\!-\!X)^{-1}\right\|\leq1$, and then 
\[\left\|\Re[iM(iM\unit_{M_2(\mathcal B)}\!-\!X)^{-1}]\right\|=\big\|M^2(M^2\unit_{M_2(\mathcal B)}+X^2)^{-1}\big\|
\le1,\]
 and 
\[
\left\|\Im[iM(iM\unit_{M_2(\mathcal B)}\!-\!X)^{-1}]\right\|=\big\|MX(M^2\unit_{M_2(\mathcal B)}+X^2)^{-1}\big\|\le\frac12.
\]
All these facts are direct consequences of continuous functional calculus for self-adjoint operators. Without loss of generality, we may assume that $\alpha>0$
in $M_2(\mathcal B)$. The boundedness of the quantities above together with the finite dimensionality of $\mathcal B$ imply that the cluster set as $M\to\infty$
(in the space of linear self-maps of $M_2(\mathcal B)$) of $M_2(\mathbb E)\left[iM(iM\unit_{M_2(\mathcal B)}\!-\!X)^{-1}\bullet (iM\unit_{M_2(\mathcal B)}\!-\!X)^{-1}iM\right]$
is a compact set. We claim that this cluster set contains only the identity map. Indeed, to begin with, it is clear that the norm of any cluster point is no more than one,
since by above 
\begin{align*}
 &\|M_2(\mathbb E)\left[iM(iM\unit_{M_2(\mathcal B)}\!-\!X)^{-1}\alpha (iM\unit_{M_2(\mathcal B)}\!-\!X)^{-1}iM\right]\|\\
  &\qquad\qquad\qquad\qquad\qquad\qquad\le\|iM(iM\unit_{M_2(\mathcal B)}\!-\!X)^{-1}\|^2
\|\alpha\|\le\|\alpha\|
\end{align*}
for all $M>0$. Next, we show that 
$\|M_2(\mathbb E)[\Re\{iM(iM\unit_{M_2(\mathcal B)}\!-\!X)^{-1}\}\alpha\Im\{ (iM\unit_{M_2(\mathcal B)}\!-\!X)^{-1}iM\}]\|$ tends to zero as $M\to+\infty$
for any fixed $\alpha>0$. 
One has
\begin{eqnarray*}
\lefteqn{\left[\Re\{iM(iM\unit_{M_2(\mathcal B)}\!-\!X)^{-1}\}\alpha\Im\{ (iM\unit_{M_2(\mathcal B)}\!-\!X)^{-1}iM\}\right]^*}\\
& & \mbox{}\times\left[\Re\{iM(iM\unit_{M_2(\mathcal B)}\!-\!X)^{-1}\}\alpha\Im\{ (iM\unit_{M_2(\mathcal B)}\!-\!X)^{-1}iM\}\right]\\
&\!\!\!=&\!\!MX\!(M^2\unit_{M_2(\mathcal B)}\!+\!X^2)^{-1}\alpha[M^2(M^2\unit_{M_2(\mathcal B)}\!+\!X^2)^{-1}]^2\alpha MX(M^2\unit_{M_2(\mathcal B)}\!+\!X^2)^{-1}\\
&\!\!\!\le&\!\!MX\!(M^2\unit_{M_2(\mathcal B)}\!+\!X^2)^{-1}\alpha\left\|[M^2(M^2\unit_{M_2(\mathcal B)}\!+\!X^2)^{-1}]^2\right\|\!\alpha MX(M^2\unit_{M_2(\mathcal B)}\!+\!X^2)^{-1}\\
&\!\!\!\le&\!\!MX\!(M^2\unit_{M_2(\mathcal B)}\!+\!X^2)^{-1}\alpha^2 MX(M^2\unit_{M_2(\mathcal B)}\!+\!X^2)^{-1}\\
&\!\!\!\le&\!\!\|\alpha\|^2[MX\!(M^2\unit_{M_2(\mathcal B)}\!+\!X^2)^{-1}]^2.
\end{eqnarray*}
For any state $\psi$ on $M_2(\mathcal A)$, one has
\[
\psi\left([MX\!(M^2\unit_{M_2(\mathcal B)}\!+\!X^2)^{-1}]^2\right)=\int_\mathbb R\frac{M^2t^2}{(M^2+t^2)^2}\,{\rm d}(\psi\circ e)(t).
\]
Since $M^2t^2(M^2+t^2)^{-2}\to0$ as $M\to+\infty$ pointwise and $|M^2t^2(M^2+t^2)^{-2}|\le1/4$ for all $t\in\mathbb R$, the dominated convergence theorem
applied to the probability measure $\psi\circ e$ allows us to conclude that $\lim_{M\to+\infty}\psi\left([MX\!(M^2\unit_{M_2(\mathcal B)}\!+\!X^2)^{-1}]^2\right)=0$
for any state $\psi$ on $M_2(\mathcal A)$. This, together with the above-displayed operator inequalities and the finite dimensionality of $\mathcal B$, allows us to conclude that
\begin{eqnarray*}
\lefteqn{\lim_{M\to+\infty}M_2(\mathbb E)\left[\left[\Re\{iM(iM\unit_{M_2(\mathcal B)}\!-\!X)^{-1}\}\alpha\Im\{ (iM\unit_{M_2(\mathcal B)}\!-\!X)^{-1}iM\}\right]^*\frac{}{}\right.}\\
& & \left.\frac{}{}\mbox{}\times\left[\Re\{iM(iM\unit_{M_2(\mathcal B)}\!-\!X)^{-1}\}\alpha\Im\{ (iM\unit_{M_2(\mathcal B)}\!-\!X)^{-1}iM\}\right]\right]=0,
\end{eqnarray*}
in norm, for all $\alpha>0,\alpha\in M_2(\mathcal B)$. Recall that $\Psi(AA^*)\ge\Psi(A)\Psi(A)^*$ for any completely positive map $\Psi$ on an operator algebra. 
Applying this fact to $\Psi=M_2(\mathbb E)$ and $A=\Re\{iM(iM\unit_{M_2(\mathcal B)}\!-\!X)^{-1}\}\alpha\Im\{ (iM\unit_{M_2(\mathcal B)}\!-\!X)^{-1}iM\}$,
it follows that
\[
\lim_{M\to+\infty}M_2(\mathbb E)\left[\Re\{iM(iM\unit_{M_2(\mathcal B)}\!-\!X)^{-1}\}\alpha\Im\{ (iM\unit_{M_2(\mathcal B)}\!-\!X)^{-1}iM\}\right]=0,
\]
as claimed. 

The methods from above can be used to show that 
\[
\lim_{M\to+\infty}M_2(\mathbb E)\left[\Im\{iM(iM\unit_{M_2(\mathcal B)}\!-\!X)^{-1}\}\alpha\Im\{ (iM\unit_{M_2(\mathcal B)}\!-\!X)^{-1}iM\}\right]=0.
\]
We leave this as an exercise.

Thus, we have shown that
\begin{eqnarray*}
\lefteqn{\lim_{M\to+\infty}M_2(\mathbb E)\left[iM(iM\unit_{M_2(\mathcal B)}\!-\!X)^{-1}\alpha (iM\unit_{M_2(\mathcal B)}\!-\!X)^{-1}iM\right]}\\
&= & \lim_{M\to+\infty}M_2(\mathbb E)\left[\Re\{iM(iM\unit_{M_2(\mathcal B)}\!-\!X)^{-1}\}\alpha\Re\{ (iM\unit_{M_2(\mathcal B)}\!-\!X)^{-1}iM\}\right],
\end{eqnarray*}
provided that either of those two quantities exists. In particular, they would coincide along any subsequence tending to infinity.
Let us show that the right-hand side exists and equals $\alpha$. Applying again $\Psi(AA^*)\ge\Psi(A)\Psi(A)^*$, we obtain
\begin{eqnarray*}
\lefteqn{M_2(\mathbb E)\left[\Re\{iM(iM\unit_{M_2(\mathcal B)}\!-\!X)^{-1}\}\alpha\Re\{ (iM\unit_{M_2(\mathcal B)}\!-\!X)^{-1}iM\}\right]}\\
&= & M_2(\mathbb E)\left[M^2(M^2\unit_{M_2(\mathcal B)}\!+\!X^2)^{-1}\alpha (M^2\unit_{M_2(\mathcal B)}\!+\!X^2)^{-1}M^2\right]\\
&\ge&M_2(\mathbb E)\left[M^2(M^2\unit_{M_2(\mathcal B)}\!+\!X^2)^{-1}\alpha^\frac12\right]M_2(\mathbb E)\left[\alpha^\frac12M^2(M^2\unit_{M_2(\mathcal B)}\!+\!X^2)^{-1}\right]\\
& = & M_2(\mathbb E)\left[M^2(M^2\unit_{M_2(\mathcal B)}\!+\!X^2)^{-1}\right]\alpha M_2(\mathbb E)\left[M^2(M^2\unit_{M_2(\mathcal B)}\!+\!X^2)^{-1}\right].
\end{eqnarray*}
From above, the right-hand side converges to $\alpha$ as $M\to+\infty$. Thus, any cluster point at infinity of 
$M_2(\mathbb E)\left[\Re\{iM(iM\unit_{M_2(\mathcal B)}\!-\!X)^{-1}\}\bullet\Re\{ (iM\unit_{M_2(\mathcal B)}\!-\!X)^{-1}iM\}\right]$ 
is a completely positive map $P$ which satisfies the condition that $P(\alpha)\ge\alpha$ for any $\alpha>0$. However, as already seen,
\[
\left\|M_2(\mathbb E)\left[\Re\{iM(iM\unit_{M_2(\mathcal B)}\!-\!X)^{-1}\}\alpha\Re\{ (iM\unit_{M_2(\mathcal B)}\!-\!X)^{-1}iM\}\right]\right\|\le\|\alpha\|
\]
for all $\alpha\in M_2(\mathcal B)$. Thus, necessarily $\|P(\alpha)\|\le\|\alpha\|$ as well. This forces $P={\rm Id}_{M_2(\mathcal B)}$.

We have now shown that 
\begin{eqnarray*}
\lefteqn{\lim_{M\to+\infty}M_2(\mathbb E)\left[(iM\unit_{M_2(\mathcal B)}\!-\!X)^{-1}\right]^{-1}}\\
& & \!\!\!\!\!\!\!\mbox{}\times M_2(\mathbb E)\left[(iM\unit_{M_2(\mathcal B)}\!-\!X)^{-1}\alpha
(iM\unit_{M_2(\mathcal B)}\!-\!X)^{-1}\right]M_2(\mathbb E)\left[(iM\unit_{M_2(\mathcal B)}\!-\!X)^{-1}\right]^{-1}\\
& = & \alpha
\end{eqnarray*}
for all $\alpha\in M_2(\mathcal B),\alpha>0$. By continuity, this holds as well for all $\alpha\ge0$. Since any element in a $C^*$-algebra is a linear combination of 
at most four positive operators, the above automatically holds for all $\alpha\in M_2(\mathcal B)$. Thus, for all $\alpha\in M_2(\mathcal B)$,
one has $\lim_{M\to+\infty}h_X'(iM\unit_{M_2(\mathcal B)})(\alpha)=0$, with a similar statement for $h_Y$. As in a finite dimensional algebra pointwise convergence and 
norm convergence coincide, it follows that there exists an $M_0>0$ depending only on $X,Y\in M_2(\mathcal A)$ such that 
\[
D\boldsymbol{\Phi}(iM_1\unit_{M_2(\mathcal B)},iM_2\unit_{M_2(\mathcal B)})=\begin{bmatrix} {\rm id}_{M_2(\mathcal B)} & -h_Y'(iM_2\unit_{M_2(\mathcal B)}) \\ 
-h_X'(iM_1\unit_{M_2(\mathcal B)})  & {\rm id}_{M_2(\mathcal B)}\end{bmatrix}
\]
is invertible for all $M_1,M_2>M_0$. This shows that for any $(iM_1\unit_{M_2(\mathcal B)},iM_2\unit_{M_2(\mathcal B)})$ thus chosen,
there exists a neighborhood of it in $\mathbb H^+(M_2(\mathcal B))\times\mathbb H^+(M_2(\mathcal B))$ such that the map $\boldsymbol{\Phi}$ 
is an analytic diffeomorphism from that neighborhood onto its image. This guarantees the existence of the map $\boldsymbol{\Omega}$ on the image via 
$\boldsymbol{\Phi}$ of any such a neighborhood.

The second step of the proof of our lemma is to show that any such locally defined map $\boldsymbol{\Omega}$ extends (necessarily uniquely) to all of 
$\mathbb H^+(M_2(\mathcal B))\times\mathbb H^+(M_2(\mathcal B))$. We do this via an approximation argument.
For $N>0$, observe that $\chi_{[-N,N]}(Y)$ is well-defined and belongs to $M_2(W^*(\mathcal B,y))$. Indeed, $-N\le Y\le N\iff Y^2\le N^2\iff
\begin{bmatrix} yy^* & 0 \\ 0 & y^*y \end{bmatrix}\le N^2\iff yy^*\le N^2.$ Now, $y=\mathfrak u\sqrt{y^*y}$, $y^*=\mathfrak v\sqrt{yy^*}$
for some partial isometries $\mathfrak{u,v}\in W^*(\mathcal B,y)$, so that $\begin{bmatrix}0&y\\y^*&0 \end{bmatrix}=Y=\mathfrak U\sqrt{Y^*Y}
=\begin{bmatrix} 0 & \mathfrak u\\ \mathfrak v & 0  \end{bmatrix}\begin{bmatrix} \sqrt{yy^*} & 0 \\ 0 & \sqrt{y^*y} \end{bmatrix}$.
Thus, $Y_N:=\chi_{[-N,N]}(Y)Y$ is free from $X$ over $M_2(\mathcal B)$ with respect to $M_2(\mathbb E)$ as well.
A similar statement holds for $X_N=\chi_{[-N,N]}(X)X$, and $X_N,Y_N\in M_2(\mathcal A)$ are  bounded self-adjoint operators, free from each other
with respect to $M_2(\mathbb E)$. If we denote $\boldsymbol{\Phi}_N(w_1,w_2)=(w_1-h_{Y_N}(w_2),w_2-h_{X_N}(w_1))$, then 
the existence of an $\boldsymbol{\Omega}_N\colon \mathbb H^+(M_2(\mathcal B))\times\mathbb H^+
(M_2(\mathcal B))\to\mathbb H^+(M_2(\mathcal B))\times\mathbb H^+(M_2(\mathcal B))$ such that 
$\boldsymbol{\Phi}_N\circ\boldsymbol{\Omega}_N={\rm Id}_{\mathbb H^+(M_2(\mathcal B))\times\mathbb H^+(M_2(\mathcal B))}$ follows.
Indeed, for an arbitrary pair $(b_1,b_2)\in\mathbb H^+(M_2(\mathcal B))\times\mathbb H^+(M_2(\mathcal B)),$
define 
\[
f_{(b_1,b_2),N}\colon\mathbb H^+(M_2(\mathcal B))\times\mathbb H^+(M_2(\mathcal B))\to\mathbb H^+(M_2(\mathcal B))\times\mathbb H^+(M_2(\mathcal B))
\]
by $f_{(b_1,b_2),N}(w_1,w_2)=(b_1+h_{Y_N}(w_2),b_2+h_{X_N}(w_1))$. 
We immediately observe that $\Im f_{(b_1,b_2),N}(w_1,w_2)\ge(\Im b_1,\Im b_2)>(0,0)$.
Pick $\varepsilon\in(0,+\infty)$ such that $\Im b_j>2\varepsilon\cdot\unit_{M_2(\mathcal B)}$. 
It has been shown in \cite[Lemma 2.12]{BelinschiTR2018-sub-operator-valued} that there exists an $M=M(X_N,Y_N,\varepsilon)>0$ such that
$\|h_{X_N}(w)\|<M,\|h_{Y_N}(w)\|<M$ for all $w\in \mathbb H^+(M_2(\mathcal B))$, $\Im w\ge\varepsilon\cdot\unit_{M_2(\mathcal B)}$
($M$ does depend on $N$ and may be tending to infinity if $N\to\infty$). This shows that $f_{(b_1,b_2),N}$
maps the set $\{(w_1,w_2)\in\mathbb H^+(M_2(\mathcal B))\times\mathbb H^+(M_2(\mathcal B))\colon\Im w_j\ge\varepsilon\cdot\unit_{M_2(\mathcal B)},j=1,2\}$
strictly inside (meaning at positive Euclidean distance from the topological boundary of) the bounded set
\[
\left\{(w_1,w_2)\in\mathbb H^+(M_2(\mathcal B))\times\mathbb H^+(M_2(\mathcal B))\colon
\|w_j\|<M,\Im w_j\ge2\varepsilon\cdot\unit_{M_2(\mathcal B)},j=1,2\right\}.
\]
The Earle-Hamilton Theorem guarantees that $f_{(b_1,b_2),N}$ has a unique attracting fixed point inside the above-displayed set,
call it $\boldsymbol{\Omega}_N(b_1,b_2)$. Elementary arithmetics shows that indeed $f_{(b_1,b_2),N}(\boldsymbol{\Omega}_N(b_1,b_2))=\boldsymbol{\Omega}_N(b_1,b_2)$
implies $\boldsymbol{\Phi}_N(\boldsymbol{\Omega}_N(b_1,b_2))=(b_1,b_2).$ One could prove the analyticity of the correspondence $(b_1,b_2)\mapsto
\boldsymbol{\Omega}_N(b_1,b_2)$ by direct means, but in this case it is clearly not necessary because the correspondence 
\[
(b_1,b_2)\mapsto
f_{(b_1,b_2),N}^{\circ k}(w_1,w_2)
\]
is obviously analytic for any fixed $(w_1,w_2)\in\mathbb H^+(M_2(\mathcal B))\times\mathbb H^+(M_2(\mathcal B)),k\in\mathbb N,$
and thus $\boldsymbol{\Omega}_N(b_1,b_2)=\lim_{k\to\infty}f_{(b_1,b_2),N}^{\circ k}(w_1,w_2)$ is a limit of analytic functions ($\mathcal B$ being finite dimensional,
a simple normal families argument forces the above pointwise limit to be analytic). We have now proved that for any $N\in\mathbb N$, 
there exists an analytic map $\boldsymbol{\Omega}_N\colon\mathbb H^+(M_2(\mathcal B))\times\mathbb H^+(M_2(\mathcal B))\to
\mathbb H^+(M_2(\mathcal B))\times\mathbb H^+(M_2(\mathcal B))$ such that 
\[
\boldsymbol{\Phi}_N(\boldsymbol{\Omega}_N(b_1,b_2))=(b_1,b_2),\quad (b_1,b_2)\in\mathbb H^+(M_2(\mathcal B))\times\mathbb H^+(M_2(\mathcal B)).
\]

To conclude the proof of the lemma, observe that $h_{X_N}$ converges to $h_X$ as $N\to\infty$. Indeed, it is enough to show that
$\lim_{N\to\infty}M_2(\mathbb E)\left[(w-X_N)^{-1}\right]=M_2(\mathbb E)\left[(w-X)^{-1}\right].$ 
Pick an arbitrary element $w\in\mathbb H^+( M_2(\mathcal B))$.
One has
\begin{eqnarray*}
\lefteqn{M_2(\mathbb E)\left[(w-X_N)^{-1}\right]-M_2(\mathbb E)\left[(w-X)^{-1}\right]}\\
& = & M_2(\mathbb E)\left[(w-X_N)^{-1}-(w-X)^{-1}\right]\\
& = & M_2(\mathbb E)\left[(w-X_N)^{-1}(X-X_N)(w-X)^{-1}\right]\\
& = & M_2(\mathbb E)\left[(w-X)^{-1}X(\unit_{M_2(\mathcal B)}-\chi_{[-N,N]}(X))(w-X\chi_{[-N,N]}(X))^{-1}\right].
\end{eqnarray*}
It is quite clear that $(w-X)^{-1}X(\unit_{M_2(\mathcal B)}-\chi_{[-N,N]}(X))(w-X\chi_{[-N,N]}(X))^{-1}$ tends to zero in the strong operator topology as $N\to\infty$. Indeed,
using well-known operator inequalities and estimates on resolvents,
\begin{eqnarray*}
\lefteqn{\left[(w-X)^{-1}X(\unit_{M_2(\mathcal B)}-\chi_{[-N,N]}(X))(w-X\chi_{[-N,N]}(X))^{-1}\right]}\\
&  & \mbox{}\times\left[(w-X)^{-1}X(\unit_{M_2(\mathcal B)}-\chi_{[-N,N]}(X))(w-X\chi_{[-N,N]}(X))^{-1}\right]^*\\
& \le & \|w^{-2}\|(w-X)^{-1}X(\unit_{M_2(\mathcal B)}-\chi_{[-N,N]}(X))X(w^*-X)^{-1}.
\end{eqnarray*}
The operator $(w-X)^{-1}X=(w-X)^{-1}w-1$ is bounded and does not depend on $N$. Borel functional calculus for self-adjoint operators tells us that the
operator $\unit_{M_2(\mathcal B)}-\chi_{[-N,N]}(X)$ tends to zero in the strong operator topology, hence also in the weak operator topology. 
Thus, for any vector $\xi$, one has 
\begin{eqnarray*}
\lefteqn{\left\|\left[(w-X)^{-1}X(\unit_{M_2(\mathcal B)}-\chi_{[-N,N]}(X))(w-X\chi_{[-N,N]}(X))^{-1}\right]^*\xi\right\|^2_2}\\
& \!\!= & \left\langle\left[(w-X)^{-1}X(\unit_{M_2(\mathcal B)}-\chi_{[-N,N]}(X))(w-X\chi_{[-N,N]}(X))^{-1}\right]\frac{}{}\right.\\
& & \left.\mbox{}\times\left[(w-X)^{-1}X(\unit_{M_2(\mathcal B)}-\chi_{[-N,N]}(X))(w-X\chi_{[-N,N]}(X))^{-1}\right]^*\xi,\xi\right\rangle\\
&\!\! \le & \|w^{-2}\|\left\langle(w-X)^{-1}X(\unit_{M_2(\mathcal B)}-\chi_{[-N,N]}(X))X(w^*-X)^{-1}\xi,\xi\right\rangle\\
&\!\!=& \|w^{-2}\|\left\langle(\unit_{M_2(\mathcal B)}\!-\chi_{[-N,N]}(X))X(w^*-\!X)^{-1}\xi,X(w^*-\!X)^{-1}\xi\right\rangle\!
\end{eqnarray*}
which goes to zero as $N\rightarrow \infty$. 
Since the conditional expectation $M_2(\mathbb E)$ is continuous in the strong operator topology, we conclude that $\lim_{N\to\infty}
M_2(\mathbb E)\big[(w-X)^{-1}X(\unit_{M_2(\mathcal B)}-\chi_{[-N,N]}(X))(w-X\chi_{[-N,N]}(X))^{-1}\big]=0$ in the strong operator topology, which on the finite dimensional
$M_2(\mathcal B)$ coincides with the norm topology. Thus, 
\[
\lim_{N\to\infty}M_2(\mathbb E)\!\left[(w-\!X_N)^{-1}\right]\allowbreak=M_2(\mathbb E)\left[(w-X)^{-1}\right]
\]
in norm, as claimed. By a normal families argument, this limit is uniform on compact subsets of $\mathbb H^+(M_2(\mathcal B)$), so 
\[
\lim_{N\to\infty}h_{X_N}=h_X,\quad\lim_{N\to\infty}h_{Y_N}=h_Y,
\]
uniformly on compact subsets of $\mathbb H^+(M_2(\mathcal B)$). It follows that in the norm topology,
\[
\lim_{N\to\infty}\boldsymbol{\Phi}_N=\boldsymbol{\Phi},
\]
uniformly on compact subsets of $\mathbb H^+(M_2(\mathcal B))\times\mathbb H^+(M_2(\mathcal B))$. We have shown 
in the first step that $\boldsymbol{\Phi}$ has a local inverse on a sufficiently small neighborhood of $(iM_1\unit_{M_2(\mathcal B)},iM_2\unit_{M_2(\mathcal B)})$ for all $M_1,M_2
\in(0,+\infty)$ sufficiently large. The same argument shows that so does $\boldsymbol{\Phi}_N.$ However, we have shown that $\boldsymbol{\Phi}_N$
has also a unique global inverse $\boldsymbol{\Omega}_N$. Since the uniform convergence of $\boldsymbol{\Phi}_N$ to $\boldsymbol{\Phi}$ on the small neighborhood
of $(iM_1\unit_{M_2(\mathcal B)},iM_2\unit_{M_2(\mathcal B)})$ implies also the convergence of the local inverse (on a possibly smaller neighborhood), one obtains that
locally $\lim_{N\to\infty}\boldsymbol{\Omega}_N=\boldsymbol{\Omega}$. By normality of the family $\{\boldsymbol{\Omega}_N\}_{N\in\mathbb N}$, 
we conclude that 
\[
\lim_{N\to\infty}\boldsymbol{\Omega}_N=\boldsymbol{\Omega}\text{ on }\mathbb H^+(M_2(\mathcal B))\times\mathbb H^+(M_2(\mathcal B)),
\]
and 
\[
\boldsymbol{\Phi}\circ\boldsymbol{\Omega}={\rm Id}_{\mathbb H^+(M_2(\mathcal B))\times\mathbb H^+(M_2(\mathcal B))}.
\]
This concludes the proof of our lemma.
\end{proof}

Now we can conclude the proof of our theorem in the first case, namely that of a finite dimensional $\mathcal B$:
the relation $\boldsymbol{\Phi}\circ\boldsymbol{\Omega}={\rm Id}_{\mathbb H^+(M_2(\mathcal B))\times\mathbb H^+(M_2(\mathcal B))}$
translates to $\Omega_1(b_1,b_2)-h_Y(\Omega_2(b_1,b_2))=b_1$ and $\Omega_2(b_1,b_2)-h_X(\Omega_1(b_1,b_2))=b_2$
(recall that $\boldsymbol{\Omega}=(\Omega_1,\Omega_2)$). 
Setting $b_1=b_2=b$, we have $\Omega_1(b,b)=h_Y(\Omega_2(b,b))+b=F_Y(\Omega_2(b,b))+b-\Omega_2(b,b)$
and similarly $\Omega_2(b,b)=h_X(\Omega_1(b,b))+b=F_X(\Omega_1(b,b))+b-\Omega_1(b,b)$. 
This forces 
\[
\Omega_1(b,b)+\Omega_2(b,b)-b=F_X(\Omega_1(b,b))=F_Y(\Omega_2(b,b)),\quad b\in\mathbb H^+(M_2(\mathcal B)).
\]
Using again the approximation of $X$ by $X_N$ and $Y$ by $Y_N$, and the fact (known from \cite{BelinschiTR2018-sub-operator-valued, DVV-general}) 
that for bounded variables one has $\Omega_{1,N}(b,b)+\Omega_{2,N}(b,b)-b=F_{X_N}(\Omega_{1,N}(b,b))=F_{Y_N}(\Omega_{2,N}(b,b))=F_{X_N+Y_N}(b)$,
we conclude that
\[
\Omega_1(b,b)+\Omega_2(b,b)-b=F_X(\Omega_1(b,b))=F_Y(\Omega_2(b,b))=F_{X+Y}(b),\quad b\in\mathbb H^+(M_2(\mathcal B)),
\]
We have now established that analytic subordination functions exist for any $\mathcal B$-free $x,y\in\tilde{\mathcal A}$ if dim$(\mathcal B)<\infty$,
and thus concluded the proof of the first case.

\bigskip

{\bf Case two:} assume that $y$ is bounded and $x$ is unbounded, but such that the Cauchy transform $M_2(\mathbb E)\left[(b-X)^{-1}\right]$ takes values in the operator lower 
half-plane whenever $b\in\mathbb H^+(\mathcal B)$ (this is allowed thanks to Remark \ref{all} and the hypothesis of Case two). That immediately implies $h_X$ is a well-defined 
analytic map, with values in the closure of $\mathbb H^+(M_2(\mathcal B))$. 

Pick $b_1,b_2\in\mathbb H^+(M_2(\mathcal B))$ (since there are no supplementary conceptual difficulties
involved in proving the full analogue of the result from Case one and Lemma \ref{f}, we choose to prove it, even though we will only apply it with $b_1=b_2=b$). Define 
\[
f_{(b_1,b_2)}(w)=b_1+h_Y(b_2+h_X(w)), \quad w\in\mathbb H^+(M_2(\mathcal B)).
\] 
Since $\Im\mathbb E\left[(w-X)^{-1}\right]<0$ for all $w$ in the operator upper half-plane, it follows that $h_X$ is an analytic map defined on $\mathbb H^+(M_2(\mathcal B))$ 
and taking values in its closure in $M_2(\mathcal B)$. Thus, $b_2+h_X(\mathbb H^+(M_2(\mathcal B)))\subsetneq\mathbb H^+(M_2(\mathcal B))+{b_2}/{2}$. 
By \cite[Lemma 2.12]{BelinschiTR2018-sub-operator-valued}  it follows that $f_{(b_1,b_2)}(\mathbb H^+(M_2(\mathcal B)))$ is a bounded set which is bounded away in 
norm from the complement of $\mathbb H^+(M_2(\mathcal B))$. Specifically, the lemma shows that if $\varepsilon\in(0,+\infty)$ is fixed, then for all 
$w\in \mathbb H^+(M_2(\mathcal B))+i\varepsilon\unit_{M_2(\mathcal B)}$, one has 
\[
h_Y(\mathbb H^+(M_2(\mathcal B))+i\varepsilon \unit_{M_2(\mathcal B)})\subset\{v\in\mathbb H^+(M_2(\mathcal B))\colon\|v\|\le 4\|Y\|(1+2\varepsilon^{-1}\|Y\|)\}.
\] 
Thus, if we choose 
$\varepsilon>0$ such that $\Im b_j>\varepsilon1$, $j=1,2$, then 
\[
  \begin{aligned}
  f_{(b_1,b_2)}(\mathbb H^+(M_2(\mathcal B)))\!&\subset\!b_1+h_Y(\mathbb H^+(M_2(\mathcal B))+i\varepsilon\unit_{M_2(\mathcal B)})\\
     &\subset\!\{v\in\mathbb H^+(M_2(\mathcal B))+i\varepsilon\unit_{M_2(\mathcal B)}\colon\|v\|\le 4\|Y\|
(1+2\varepsilon^{-1}\|Y\|)\}.
  \end{aligned}
\]
Thus, one may apply the Earle-Hamilton Theorem (see \cite[Section 2.1]{BelinschiTR2018-sub-operator-valued}) to the function $f_{(b_1,b_2)}$ defined on the set 
$\{v\in\mathbb H^+(M_2(\mathcal B))+i\frac\varepsilon2\unit_{M_2(\mathcal B)}\colon\|v\|\le 4\|Y\|(1+2\varepsilon^{-1}\|Y\|)+2\|b_1\|+2\|b_2\|\}$ in order to conclude that
$\{f_{(b_1,b_2)}^{\circ n}\}_{n\in\mathbb N}$ converges in norm to a fixed point $\Omega_1(b_1,b_2)$ which depends analytically on $b_1$ and $b_2$. 
One simply defines $\Omega_2(b_1,b_2)=b_2+h_X(\Omega_1(b_1,b_2))$. Choosing $b_1=b_2=b$ proves the second case of our theorem. (Note that the requirement
$\Im M_2(\mathbb E)[(b_0-X)^{-1}]<0$ was only necessary in order to guarantee the existence of $h_X$.)
\end{proof}

\begin{remark}
The results in Theorem \ref{thm:subordination-unbounded} have a fully matricial extension following \cite{DVV-general, DVVfreeanallysis2004}. For any $n\in\mathbb{N}$, we define the map 
$G_{X, n}(b)=(\mathbb{E}\otimes Id_{\mathbb{C}^{n\times n}})[(b-X\otimes I_n)^{-1}]$ for $b\in \mathcal{B}\otimes M_n(\mathbb{C})$ such that $\Im (b)>\varepsilon$ for some $\varepsilon>0$. 
The map is a noncommutative (nc) function and a remarkable observation due to Voiculescu is that the family $\{ G_{X, n} \}$
can retrieve the distribution of $X$. In this framework, by applying the same proof to Theorem \ref{thm:subordination-unbounded}, we deduce that there are nc functions $\Omega_{j, n} (j=1, 2)$ such that
\[
   G_{X+Y, n}(b)=G_{X, n}( \Omega_{1, n}(b))=G_{Y, n}( \Omega_{2, n}(b)).
\]
The interested reader is referred to \cite{BelinschiTR2018-sub-operator-valued, DVV-general, DVVfreeanallysis2004} for some details. 
\end{remark}


\section{The sum with a triangular elliptic operator}\label{section:triangular-elliptic}
\subsection{A review of the triangular elliptic operator}\label{section:tri-elliptic-review}
By enlarging the operator-valued $W^*$-probability space $(\mathcal{A},\mathbb{E},\mathcal{B})$ if necessary, we assume that $\mathcal{M}, \mathcal{N}$ are unital von Neumann subalgebras of $\mathcal{A}$ such that: 
\begin{itemize}
\item $\mathcal{M}, \mathcal{N}$ are free with amalgamation over $\mathcal{B}$ with respect to the conditional expectation $\mathbb{E}$; 
\item the tracial state $\phi$ on $\mathcal{A}$ satisfies $\phi(x)=\phi(\mathbb{E}(x))$ for any $x\in \mathcal{A}$;
\item $\mathbb{E}(x)=\phi(x)$ for any $x\in \mathcal{N}$.
\end{itemize}
The space we work on is the special case when $\mathcal{B}= L^\infty [0,1]$. The tracial state $\phi: \mathcal{A} \rightarrow \mathbb{C}$ satisfy $\phi (y) = \phi({\mathbb{E}}(y))$ for $y \in \mathcal{A}$, and 
$$\phi (f) = \int_ 0^1 f(s)ds$$ 
for $f \in \mathcal{B}$. The existence of such a subalgebra $\mathcal{N}$ can be verified as follows: Let 
$\mathcal{B}\subset \mathcal{M}$ and 
let ${\mathcal{N}}$ be a unital subalgebra that is $*$-free from $\mathcal{B}$ with respect to the tracial state $\phi$ and let ${\mathcal{N}}_1$ be the unital subalgebra generated by $\mathcal{N}\cup \mathcal{B}$. We then choose $\mathcal{A}=\mathcal{M}*_{\mathcal{B}} {\mathcal{N}}_1$ and identify $\mathcal{N}$ as some subalgebra in ${\mathcal{N}}_1\subset\mathcal{A}$. We note that $\mathcal{N}$ is $*$-free from $\mathcal{B}$ in $(\mathcal{A},\phi)$. 

Let us review some basic properties of DT-operators introduced by Dykema--Haagerup \cite{DykemaHaagerup2004-DT, DykemaHaagerup2001JFA}. 
Assume that $(X_i)_{i=1}^\infty\subset \mathcal{M}$ is a standard semicircular family of random variables that are $*$-free from $\mathcal{B}$ in the $W^*$-probability space $(\mathcal{A},\phi)$. Then for $i\in\mathbb{N}$, one can construct a quasi-nilpotent DT operator $T_i$ as the norm limit of 
\[
   T^{(i)}_n=\sum_{j=1}^n 1_{\left[ \frac{j-1}{n}, \frac{j}{n} \right]} X_i 1_{\left[ \frac{j}{n},1 \right]}
\]
as $n\rightarrow \infty$, where $1_{[a,b]}$ is viewed as an element in $\mathcal{B}$. 
It follows that $T_i \in W^*(\mathcal{B}\cup \{X_i\})$ for all $i\in\mathbb{N}$ and $(T_i)_{i=1}^\infty$ are free with amalgamation over $\mathcal{B}$ in $(\mathcal{A},\mathbb{E},\mathcal{B})$. It is shown that $(T_i, T_i^*)_{i=1}^\infty$ is a centered $\mathcal{B}$-Gaussian family in \cite[Appendix A]{DykemaHaagerup2001JFA}.

Recall that $\alpha,\beta > 0$, and $\gamma \in \mathbb{C}$ such that $\vert \gamma\vert\leq \sqrt{\alpha \beta}$. The \emph{triangular elliptic element} $y=g_{_{\alpha, \beta,\gamma}} \in \mathcal{M}$ is an operator, introduced in \cite[Section 6]{BSS2018}, whose only nonzero free cumulants are given by
\begin{equation}
  \label{eqn:cumulant-y-4}
 \begin{aligned}
\kappa(y, f y) (t) &= \gamma \int_0^1 f(s)ds, \\
\kappa(y^*, fy^*)(t) &= \bar{\gamma} \int_0^1 f(s)ds,\\
\kappa(y, fy^*)(t) &= \alpha \int_t^1 f(s)ds+ \beta \int_0^t f(s)ds,\\
\kappa(y^*, fy)(t) &= \alpha \int_0^t f(s)ds+ \beta \int_t^1 f(s)ds
\end{aligned}
\end{equation}
for every $f \in \mathcal{B}$. We say that such triangular elliptic operator $y$ has parameter $(\alpha,\beta,\gamma)$. One can construct the triangular elliptic operator ($\mathcal{B}$-circular element as studied by Dykema \cite{Dykema-2005-Bcircular}) as follows. Let $T$ be a quasi-nilpotent DT operator. By \cite[Appendix A]{DykemaHaagerup2001JFA}, 
the distribution of the pair $T, T^*$ is a $\mathcal{B}$-Gaussian distribution determined by the free cumulants given by
\[
    \kappa (T, fT^*)(t)=\int_t^1 f(s)ds, \qquad \kappa (T^*, fT)(t)=\int_0^t f(s)ds,
\]
and $\kappa (T, fT)=\kappa(T^*, fT^*)=0$.  Write $\gamma=e^{2\theta i}\vert \gamma\vert$. Then the triangular elliptic operator $y=g_{\alpha, \beta,\gamma}$ has the same $*$-moments as $e^{i\theta}(\sqrt{\alpha}T+\sqrt{\beta}T^*)$. In particular, a quasi-nilpotent DT operator is a triangular elliptic element with parameter $\alpha=1, \beta=\gamma=0$. 
When $\alpha=\beta=t$, the operator $g_{\alpha, \alpha,0}$ is just the Voiculescu's circular operator $c_t$ of variance $t$, and the operator $g_{\alpha, \alpha,\gamma}$ is the so-called \emph{twisted elliptic operator} $g_{t,\gamma}$ as studied in \cite[Section 2.4]{Zhong2021Brown}.

Let $x_0\in \log^+(\mathcal{N})$ that is $*$-freely independent from $g_{\alpha, \beta,\gamma}$ with amalgamation over $\mathcal{B}$. The main object of this paper is to calculate the Brown measure of $x_0+g_{\alpha, \beta,\gamma}$.  
In the  $M_2(\mathcal{B})$-valued $W^*$-probability space$(M_2(\mathcal{A}), M_2(\mathbb{E}),M_2(\mathcal{B}))$, we denote
\[X = \begin{bmatrix}
0 & x_0\\
x_0^* & 0 \end{bmatrix}, \;\; Y = \begin{bmatrix}
0 & y\\
y^* & 0 \end{bmatrix}.
\]
Then $X, Y$ are free over $M_2(\mathcal{B})$. This can be deduced from the connection between $\mathcal{B}$-valued free cumulants and $M_2(\mathcal{B})$-valued free cumulants similar to \cite[Section 9.3, Proposition 13]{MingoSpeicherBook}. 

The following result adapts the identification used in \cite[Lemma 5.6]{Aagaard2004jfa}, where the case $x_0=0$ was considered. 
\begin{proposition}
Given $\alpha\geq \beta=t>0$, $\vert \gamma\vert\leq t$, and a bounded operator $x_0\in \mathcal{N}$, let $g_{t,\gamma}$ be a twisted elliptic operator $*$-free from $\mathcal{M}\cup\mathcal{N}$ with respect to $\phi$. Then,
\[
  x_0+g_{\alpha, \beta,\gamma} \mathop{\sim}\limits^{\mathrm{*-dist.}}
   x_0+g_{(\alpha-\beta), 0,0}+ g_{t,\gamma},
\]
where $g_{(\alpha-\beta), 0,0}$ denotes a triangular elliptic operator in $\mathcal{M}$ with parameters $(\alpha-\beta,0,0)$. 

In particular, when $\alpha=\beta$, the operator $g_{\alpha, \beta,\gamma}$ is $*$-free from $\mathcal{N}$ in the scalar-valued $W^*$-probability space $(\mathcal{A},\phi)$.
\end{proposition}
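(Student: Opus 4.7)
The plan is to establish the stronger statement that the pairs $(x_0, g_{_{\alpha\beta,\gamma}})$ and $(x_0,\, g_{(\alpha-\beta)0,0}+g_{t,\gamma})$ have identical joint $*$-distributions with amalgamation over $\mathcal{B}$; equality of scalar $*$-distributions for the corresponding sums then follows. Since $g_{_{\alpha\beta,\gamma}}$, $g_{(\alpha-\beta)0,0}$, and $g_{t,\gamma}$ are all $\mathcal{B}$-Gaussian (their $\mathcal{B}$-cumulants of order other than two vanish), the task reduces to (i) producing $\mathcal{B}$-freeness of $x_0$ from $g_{(\alpha-\beta)0,0}+g_{t,\gamma}$ and (ii) matching the second-order $\mathcal{B}$-cumulants recorded in \eqref{eqn:cumulant-y-4}.

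For (i), the three operators $x_0$, $g_{(\alpha-\beta)0,0}$, $g_{t,\gamma}$ must be shown to be mutually $\mathcal{B}$-free. The pair $(x_0,g_{(\alpha-\beta)0,0})$ is $\mathcal{B}$-free because $x_0\in\mathcal{N}$, $g_{(\alpha-\beta)0,0}\in\mathcal{M}$, and $\mathcal{M},\mathcal{N}$ are $\mathcal{B}$-free. For $g_{t,\gamma}$, plugging $\alpha=\beta=t$ into \eqref{eqn:cumulant-y-4} shows its $\mathcal{B}$-valued cumulants are constant in $t'\in[0,1]$, hence of the form $c\cdot 1_{\mathcal{B}}$; combined with the scalar $*$-freeness of $g_{t,\gamma}$ from $\mathcal{M}\cup\mathcal{N}\supset\mathcal{B}$, a standard vanishing-of-mixed-cumulants argument upgrades this to $\mathcal{B}$-freeness of $g_{t,\gamma}$ from $W^*(\mathcal{M}\cup\mathcal{N})$, which contains both $x_0$ and $g_{(\alpha-\beta)0,0}$. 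Mutual $\mathcal{B}$-freeness then gives $\mathcal{B}$-freeness of $x_0$ from the sum $g_{(\alpha-\beta)0,0}+g_{t,\gamma}$.

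For (ii), additivity of $\mathcal{B}$-cumulants over $\mathcal{B}$-free summands and \eqref{eqn:cumulant-y-4} applied with parameters $(\alpha-\beta,0,0)$ and $(t,t,\gamma)$ give, for $f\in\mathcal{B}$ and $t'\in[0,1]$,
\[
\kappa^{\mathcal{B}}(g_{(\alpha-\beta)0,0}+g_{t,\gamma},\, f(g_{(\alpha-\beta)0,0}+g_{t,\gamma})^*)(t') = (\alpha-\beta)\!\int_{t'}^{1}\! f(s)\,ds + t\!\int_0^{1}\! f(s)\,ds.
\]
Since $t=\beta$, the right-hand side rewrites as $\alpha\int_{t'}^1 f(s)\,ds+\beta\int_0^{t'}f(s)\,ds$, matching the $\kappa(y,fy^*)$ formula in \eqref{eqn:cumulant-y-4}. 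The $(y,fy)$- and $(y^*,fy^*)$-cumulants of the sum come entirely from $g_{t,\gamma}$ and equal $\gamma\int_0^1 f(s)\,ds$ and $\bar\gamma\int_0^1 f(s)\,ds$; the $(y^*,fy)$-cumulant is computed symmetrically to the first. All higher-order $\mathcal{B}$-cumulants vanish on both sides by $\mathcal{B}$-Gaussianity.

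The second claim is then immediate: when $\alpha=\beta$ the element $g_{(\alpha-\beta)0,0}=0$, the displayed identity reduces to $x_0+g_{_{\alpha\beta,\gamma}}$ and $x_0+g_{t,\gamma}$ having the same joint $*$-distribution with $\mathcal{N}$, and the scalar $*$-freeness of $g_{t,\gamma}$ from $\mathcal{N}$ transfers to $g_{_{\alpha\beta,\gamma}}$. The main technical point is expected to be the scalar-to-$\mathcal{B}$-valued freeness upgrade for $g_{t,\gamma}$ in step (i); this rests on the scalar-valuedness of its $\mathcal{B}$-cumulants, which is exactly the $\alpha=\beta$ reduction of \eqref{eqn:cumulant-y-4}.
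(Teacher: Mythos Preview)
Your cumulant computation in step (ii) is correct and matches the paper's opening observation. The gap is in step (i).

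You invoke \eqref{eqn:cumulant-y-4} with $\alpha=\beta=t$ to assert that the $\mathcal{B}$-cumulants of the \emph{external} $g_{t,\gamma}$ are scalar, and then claim that scalar $\mathcal{B}$-cumulants together with $\phi$-freeness from $\mathcal{M}\cup\mathcal{N}$ ``upgrades'' to $\mathcal{B}$-freeness. Two problems arise. First, the proposition endows $g_{t,\gamma}$ only with a scalar $*$-distribution and $\phi$-freeness from $\mathcal{M}\cup\mathcal{N}$; no conditional expectation $\mathbb{E}$ is specified on the algebra generated by $g_{t,\gamma}$ and $\mathcal{M}\cup\mathcal{N}$, so speaking of its $\mathcal{B}$-cumulants or of $\mathcal{B}$-freeness there is not meaningful. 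The formulas \eqref{eqn:cumulant-y-4} define the \emph{internal} triangular elliptic operators in $\mathcal{M}$, not this external copy. Second, even granting an ambient $\mathcal{B}$-valued structure, the implication ``scalar $\mathcal{B}$-cumulants of $a$ $+$ $a$ is $\phi$-free from $\mathcal{C}$ $\Rightarrow$ $a$ is $\mathcal{B}$-free from $\mathcal{C}$'' reverses the direction of the Nica--Shlyakhtenko--Speicher characterization and is not a consequence of it: $\phi$-freeness pins down only the traces of mixed $\mathcal{B}$-moments, not the $\mathcal{B}$-valued moments themselves, so mixed $\mathcal{B}$-cumulants need not vanish.

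The paper avoids this by working internally. It realizes $g_{\alpha\beta,\gamma}$ (inside $\mathcal{M}$) as a $\mathcal{B}$-free sum $g_{t,\gamma}'+g_{(\alpha-\beta)0,0}$ with $g_{t,\gamma}'\in\mathcal{M}$ built from the DT operators, applies the NSS characterization in the correct direction (scalar $\mathcal{B}$-cumulants $\Rightarrow$ $g_{t,\gamma}'$ is $\phi$-free from $\mathcal{B}$), and then uses the concrete realization $T_j\in W^*(\mathcal{B}\cup\{X_j\})$ with the $\phi$-free semicircular system $\{X_j\}$ to conclude that $g_{t,\gamma}'$ is $\phi$-free from $x_0+g_{(\alpha-\beta)0,0}$. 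Only at this last, purely scalar, stage is the external $g_{t,\gamma}$ substituted for $g_{t,\gamma}'$.
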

\begin{proof}
It is clear that $g_{\alpha, \beta,\gamma}$ has the same$*$-distribution 
as $g_{t,\gamma}+g_{(\alpha-\beta),0,0}$, where $g_{t,\gamma}$ and $g_{(\alpha-\beta), 0,0}$ are free with amalgamation over $\mathcal{B}$. We express $g_{t,\gamma}$ as a linear combination $e^{2\theta i} \sqrt{t}(T_i+T_i^*)$ for some quasi-nilpotent DT operators $T_i$.
The operator-valued free cumulants of $g_{t,\gamma}$ in $(\mathcal{A}, \mathbb{E}, \mathcal{B})$ are complex numbers. By applying \cite[Theorem 3.6]{NSS2002-characterization}, it follows that $g_{t,\gamma}$ is $\mathbb{C}$-Gaussian and is $*$-free from $\mathcal{B}$ with respect to $\phi$. Since $T_j\in W^*(\mathcal{B}\cup\{X_j\})$ and  $(X_i)_{i=1}^\infty\subset \mathcal{M}$ is a standard semicircular family of random variables that are $*$-free from $\mathcal{B}$ in the $W^*$-probability space $(\mathcal{A},\phi)$,  it follows that $g_{t,\gamma}=e^{2\theta i} \sqrt{t}(T_i+T_i^*)$ is $*$-free with respect to $\phi$ from other quasi-nilpotent DT operator $T_j$ in $(\mathcal{A}, \phi)$ for $j\neq i$. Hence, $g_{t,\gamma}$ is also $*$-free from $x_0+g_{(\alpha-\beta), 0,0}$ with respect to $\phi$. 
\end{proof}

Hence, if $\vert \gamma\vert\leq \min\{\alpha,\beta\}$, the Brown measure of $x_0+g_{\alpha, \beta,\gamma}$ is the same as the Brown measure of $y_0+g_{t,\gamma}$ for $y_0=x_0+g_{|\alpha-\beta|,0,0}$  where $g_{|\alpha-\beta|, 0,0}$ denotes a triangular elliptic operator with parameter $(\vert\alpha-\beta\vert,0,0)$ and has the same distribution as a scalar multiple of a quasi-nilpotent DT operator. By \cite{Zhong2021Brown}, we can express the Brown measure formula in terms of certain subordination functions determined by $y_0$ if $\vert \gamma\vert\leq \min\{\alpha,\beta\}$ and $x_0$ is bounded. We will extend the method in \cite{Zhong2021Brown} such that we are able to express the Brown measure formula for $x_0+g_{\alpha, \beta,\gamma}$ in terms of some functions determined directly by $x_0$ for any $\gamma$ such that $\vert \gamma\vert\leq \sqrt{\alpha\beta}$. In addition, we show that the main results hold for any $x_0\in \log^+(\mathcal{N})$.

\subsection{The operator-valued subordination function}
For parameter ${\bf t}=(\alpha,\beta, \gamma)$ where $\alpha,\beta>0$ and $\gamma\in\mathbb{C}$ such that
$\vert \gamma\vert\leq \sqrt{\alpha\beta}$, we denote $y = g_{_{\alpha, \beta,\gamma}}$, and we take the convention that $y=0$ if $\alpha=\beta=\gamma=0$.  For $\lambda\in\mathbb{C}$, we have
\begin{equation}
\label{eqn:defn-S-x0-y-epsilon}
S(x_0+y, \lambda, \varepsilon) = \log \Delta \left[ (x_0+ y-\lambda)^*(x_0+y-\lambda)+ \varepsilon^2  \right], \; \; \varepsilon>0. 
\end{equation}
We introduce the following notations
\begin{equation}
\label{eqn:p-lambda-def}
\begin{split}
p_\lambda^{{\bf t}}(\varepsilon) &= \phi \left\{ (\lambda-x_0-y)^* \left[ (\lambda -x_0-y)(\lambda -x_0-y)^* + \varepsilon^2 \right]^{-1}\right \}, \\
p_{\bar{\lambda}}^{{\bf t}}(\varepsilon) &= \phi \left\{ (\lambda-x_0-y) \left[ (\lambda -x_0-y)^*(\lambda -x_0-y) + \varepsilon^2 \right]^{-1}\right \},\\
q_\varepsilon^{{\bf t}}(\lambda) &=  \varepsilon\phi \left\{ \left[ (\lambda -x_0-y)^*(\lambda -x_0-y) + \varepsilon^2 \right]^{-1}\right \},
\end{split}
\end{equation}
and 
\begin{equation}
\label{eqn:defn-P-Q-derivatives}
\begin{split}
P_\lambda^{{\bf t}}(\varepsilon) &= {\mathbb{E}} \left\{ (\lambda-x_0-y)^* \left[ (\lambda -x_0-y)(\lambda -x_0-y)^* + \varepsilon^2 \right]^{-1}\right \}, \\
P_{\bar{\lambda}}^{{\bf t}}(\varepsilon) &= {\mathbb{E}} \left\{ (\lambda-x_0-y) \left[ (\lambda -x_0-y)^*(\lambda -x_0-y) + \varepsilon^2 \right]^{-1}\right \},\\
Q_\varepsilon^{{\bf t}}(\lambda) &= \varepsilon {\mathbb{E}} \left\{ \left[ (\lambda -x_0-y)(\lambda -x_0-y)^* + \varepsilon^2 \right]^{-1}\right \},\\
\widetilde{Q}_\varepsilon^{{\bf t}}(\lambda) &=\varepsilon {\mathbb{E}} \left\{ \left[ (\lambda -x_0-y)^*(\lambda -x_0-y)+ \varepsilon^2 \right]^{-1}\right \}.
\end{split}
\end{equation}
Note that $p_\lambda^{{\bf t}}(\varepsilon)$, $p_{\bar{\lambda}}^{{\bf t}}(\varepsilon)$ and $q_\varepsilon^{{\bf t}}(\lambda)$ are derivatives of $S(x_0+y,\lambda,\varepsilon)$ with respect to $\lambda, \overline{\lambda}$ and $\varepsilon$ (up to some constant). They are also related to entries of the Cauchy transform \eqref{eqn:Cauchy-transform-entries-scalar} of the Hermitian reduction for $x_0+y$. 
It is clear that, for $\lambda\in \mathbb{C}$ and $\varepsilon>0$, we have
\begin{equation}
\phi (P_\lambda^{{\bf t}}(\varepsilon))= p_\lambda^{{\bf t}}(\varepsilon), \; \phi(P_{\bar{\lambda}}^{{\bf t}}(\varepsilon) )= p_{\bar{\lambda}}^{{\bf t}}(\varepsilon),
\end{equation}
and, by the tracial property, we have
\begin{equation}
\phi (Q_\varepsilon^{{\bf t}}(\lambda))= \phi (\widetilde{Q}_\varepsilon^{{\bf t}}(\lambda))= q_\varepsilon^{{\bf t}}(\lambda).
\end{equation}
Therefore, we have 
\[
G_{X+Y}\left( \begin{bmatrix}
i\varepsilon & \lambda\\
\overline{\lambda} & i\varepsilon
\end{bmatrix} \right)=
M_2(\mathbb{E})\left(\bigg(X+Y-\begin{bmatrix}
i\varepsilon & \lambda\\
\overline{\lambda} & i\varepsilon
\end{bmatrix}\bigg)^{-1}\right)
=
\renewcommand{\arraystretch}{1.3}
\begin{bmatrix}
-i Q_\varepsilon^{\bf t}(\lambda) & P_{\bar{\lambda}}^{{\bf t}}(\varepsilon) \\ 
P_\lambda^{{\bf t}}(\varepsilon)  &  -i \widetilde{Q}_\varepsilon^{{\bf t}}(\lambda)
\end{bmatrix}.
\]
For any $\varepsilon_1, \varepsilon_2\in \mathcal{B}$, set 
\begin{equation}
\label{eqn:defn-g-ij-entries}
\begin{split}
{g}_{11}(\lambda,\varepsilon_1,\varepsilon_2) &=-i \varepsilon_2{\mathbb{E}}\left\{  \left[ (\lambda-x_0)(\lambda-x_0)^* + \varepsilon_1\varepsilon_2\right]^{-1}\right\},\\
{g}_{12}(\lambda,\varepsilon_1,\varepsilon_2) &= {\mathbb{E}}\left\{ (\lambda-x_0) \left[ (\lambda-x_0)^*(\lambda-x_0)+ \varepsilon_1\varepsilon_2\right]^{-1}\right\},\\
{g}_{21}(\lambda,\varepsilon_1,\varepsilon_2) &= {\mathbb{E}}\left\{ (\lambda-x_0)^* \left[ (\lambda-x_0)(\lambda-x_0)^* + \varepsilon_1\varepsilon_2\right]^{-1}\right\},\\
{g}_{22} (\lambda,\varepsilon_1,\varepsilon_2)&=-i \varepsilon_1 {\mathbb{E}}\left\{  \left[ (\lambda-x_0)^*(\lambda-x_0) + \varepsilon_1\varepsilon_2\right]^{-1}\right\}.\\
\end{split}
\end{equation}
For convenience, we also denote
\[
   g_{ij}=g_{ij}(\lambda,\varepsilon_1,\varepsilon_2), \qquad i,j\in\{1,2\}.
\]
Then, we have 
\[
G_X\left( \begin{bmatrix}
i\varepsilon_1 & \lambda\\
\overline{\lambda}  & i\varepsilon_2
\end{bmatrix} \right)=
\begin{bmatrix}
{g}_{11} & {g}_{12}\\
{g}_{21} &{g}_{22}
\end{bmatrix}.
\]

\begin{proposition}
	\label{prop:R-transfomr-Y}
	The operator $Y$ is an operator-valued semicircular element in the operator-valued $W^*$-probability space $(M_2(\mathcal{A}),M_2(\mathbb{E}), M_2(\mathcal{B}))$. For any $b=\begin{bmatrix}
	a_{11} & a_{12}\\
	a_{21} & a_{22}
	\end{bmatrix}\in M_2(\mathcal{B})$, the $R$-transform of $Y$ is given by
	\[
	R_Y(b)= M_2(\mathbb{E}) (YbY)=
	 \begin{bmatrix}
	\kappa (y, a_{22}y^*) & \kappa (y, a_{21}y)\\
	\kappa (y^*, a_{12}y^*) & \kappa (y^*, a_{11}y)
	\end{bmatrix}.
	\]
\end{proposition}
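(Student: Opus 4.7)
The plan is to reduce the statement to the $\mathcal{B}$-level characterization of operator-valued semicircularity, compute the variance map for $Y$ by hand, and identify its entries with the cumulants listed in \eqref{eqn:cumulant-y-4}. An element is $\mathcal{B}$-semicircular iff all its free cumulants vanish except the second order, and in that case the $R$-transform coincides with the variance map $b \mapsto \mathbb{E}(ZbZ)$. So the proof should have two parts: (i) verify that only the second-order $M_2(\mathcal{B})$-valued cumulants of $Y$ are nonzero, and (ii) evaluate $M_2(\mathbb{E})(YbY)$ and match entries with the formula.

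For part (i), I would argue as follows. The scalar cumulants \eqref{eqn:cumulant-y-4} describe a $\mathcal{B}$-Gaussian family $\{y,y^*\}$, i.e.\ all mixed free cumulants of $y$ and $y^*$ of order $\neq 2$ vanish. Writing $Y = e_{12}\otimes y + e_{21}\otimes y^*$ where $e_{ij}$ are the matrix units in $M_2(\mathbb{C}) \subset M_2(\mathcal{B})$, the $M_2(\mathcal{B})$-valued cumulants of $Y$ applied to arguments $b_1,\ldots,b_n \in M_2(\mathcal{B})$ expand as a finite sum of $\mathcal{B}$-valued cumulants of the family $(y,y^*)$ sandwiched by matrix-unit coefficients (this is exactly the passage cited in the paper from \cite[Section 9.3, Proposition 13]{MingoSpeicherBook}). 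Since the $\mathcal{B}$-valued cumulants vanish in every order different from $2$, the same holds for the $M_2(\mathcal{B})$-valued cumulants of $Y$. This proves that $Y$ is operator-valued semicircular.

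For part (ii), I would just compute directly. A short matrix multiplication gives
\[
 Y b Y = \begin{bmatrix} y\, a_{22}\, y^* & y\, a_{21}\, y \\ y^*\, a_{12}\, y^* & y^*\, a_{11}\, y \end{bmatrix},
\]
and applying $M_2(\mathbb{E})$ entrywise turns each entry into the corresponding second-order free cumulant, since for a $\mathcal{B}$-semicircular pair $\mathbb{E}(y b' y^{(*)}) = \kappa(y, b' y^{(*)})$ and similarly for the other combinations. This identifies $M_2(\mathbb{E})(YbY)$ with the matrix in the statement. Finally, because only the second-order cumulants survive, the $R$-transform of $Y$ collapses to this variance map, i.e.\ $R_Y(b) = M_2(\mathbb{E})(YbY)$ for every $b \in M_2(\mathcal{B})$.

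The only delicate point is part (i): one must be careful that the matrix-unit coefficients do not interfere with the vanishing of cumulants of odd order or of higher even order, and that the second-order ones reassemble into a genuine $M_2(\mathcal{B})$-bilinear covariance. Once that combinatorial bookkeeping is in place (which is essentially the content of the Mingo--Speicher reference), the remainder of the proof is a direct $2\times 2$ matrix computation.
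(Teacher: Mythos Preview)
Your proposal is correct and follows essentially the same route as the paper: the paper's proof is a one-line reference to the passage from $\mathcal{B}$-valued cumulants to $M_2(\mathcal{B})$-valued cumulants in \cite[Section~9.3, Proposition~13]{MingoSpeicherBook}, adapted to the operator-valued setting, and you have simply unpacked that reference by (i) noting that the $\mathcal{B}$-Gaussianity of $(y,y^*)$ forces all $M_2(\mathcal{B})$-cumulants of $Y$ of order $\neq 2$ to vanish and (ii) computing the covariance $M_2(\mathbb{E})(YbY)$ explicitly. Your added observation that $\mathbb{E}(y\,b'\,y^{(*)})=\kappa(y,b'y^{(*)})$ because $\mathbb{E}(y)=0$ is exactly the link needed to match entries, and the caution you flag about the matrix-unit bookkeeping is precisely what the cited Mingo--Speicher result handles.
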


\begin{proof}
It follows from a result connecting matrix-valued free cumulants with free cumulants as in \cite[Section 9.3, Proposition 13]{MingoSpeicherBook}, but we have to modify it appropriately by replacing scalar-valued free cumulants by operator-valued free cumulants in $(\mathcal{A},\mathbb{E},\mathcal{B})$. 
\end{proof}

\begin{lemma}
	\label{lemma:subordination-OA-1}
	Let $y= g_{_{\alpha, \beta,\gamma}} \in {\mathcal{M}}$ and $x_0\in \widetilde{\mathcal{N}}$ be a random variable that is $*$-free from $y$ with amalgamation over $\mathcal{B}$. For any $\varepsilon>0$ and $z \in \mathbb{C}$, set
	\begin{equation} 
	\label{eqn:lambda-to-z}
	 \lambda =z-  \kappa(y, P_z^{\bf t} (\varepsilon) y),
	\end{equation}
	where $P_z^{\bf t}(\varepsilon)$ is defined in \eqref{eqn:defn-P-Q-derivatives}. 
	For 
	$\Theta(z,\varepsilon)=  \begin{bmatrix}
	i \varepsilon & z\\
	\bar{z} & i \varepsilon \end{bmatrix}$ we have subordination relation
	\[
	G_{X+Y}(\Theta(z,\varepsilon)) = G_X(\Omega_1(\Theta(z,\varepsilon))), 
	\]
	where
	\begin{equation}
	\label{eqn:4.9-in-thm}
	\Omega_1 (\Theta(z,\varepsilon)) = \begin{bmatrix}
	i \varepsilon_1 & \lambda \\
	\bar{\lambda} & i \varepsilon_2 \end{bmatrix},
	\end{equation}
	and
	\begin{equation}
	  \label{eqn:formula-epsilon}
		 \varepsilon_1 = \varepsilon+  \kappa (y, \widetilde{Q}_\varepsilon^{\bf t} (z)y^*),\quad \varepsilon_2 = \varepsilon+  \kappa( y^*, Q_\varepsilon^{{\bf t}}(z) y).
	\end{equation}
	In other words, the subordination relation $G_{X+Y}(b) = G_X(\Omega_1(b))$ for $b=\Theta(z,\varepsilon)$ can be written as
	\begin{equation}
	\label{eqn:4.10-in-thm}
		\mathbb{E}\left( \begin{bmatrix}
		 i\varepsilon & z-(x_0+y)\\
		 \overline{z}-(x_0+y)^* & i\varepsilon
		\end{bmatrix}^{-1} \right)
		  =\mathbb{E}\left(\begin{bmatrix}
		   i\varepsilon_1 & \lambda-x_0\\
		   \overline{\lambda}-x_0^* & i\varepsilon_2
		  \end{bmatrix}^{-1} \right). 
	\end{equation}
	Moreover, the above subordination relation is equivalent to
		\begin{equation}
		\label{eqn:sub-operator-1}
	\begin{split}
	{g}_{11}  = -i  Q_\varepsilon^{\bf t} (z), \; {g}_{22} = -i  \widetilde{Q}_\varepsilon^{\bf t} (z), \; {g}_{12}  = P_{\bar{z}}^{\bf t} (\varepsilon), \; {g}_{21} = P_z^{\bf t} (\varepsilon),
	\end{split}
	\end{equation}
where ${\bf t}=(\alpha,\beta,\gamma)$ and $g_{ij}$ was defined in \eqref{eqn:defn-g-ij-entries}. 
\end{lemma}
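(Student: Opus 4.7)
The structural fact that drives the proof is Proposition~\ref{prop:R-transfomr-Y}: since $Y$ is $M_2(\mathcal B)$-valued semicircular, its operator-valued $R$-transform $R_Y(b)=M_2(\mathbb{E})(YbY)$ is a globally defined bounded linear map on $M_2(\mathcal B)$. This is what will allow the subordination function $\Omega_1$ produced by Theorem~\ref{thm:subordination-unbounded} to be written in closed form rather than only implicitly. As a preliminary step I verify the hypotheses of that theorem: $y=g_{_{\alpha\beta,\gamma}}$ is bounded, and for the choice $b_0=i\tau\mathbf{1}$ with $\tau$ large the identity $\Im(i\tau-X)^{-1}=-\tau(\tau^2+X^2)^{-1}$ combined with faithfulness of $M_2(\mathbb{E})$ yields the required strict negativity $\Im M_2(\mathbb{E})[(b_0-X)^{-1}]<0$. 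Theorem~\ref{thm:subordination-unbounded} then produces analytic subordination functions $\Omega_1,\Omega_2\colon\mathbb{H}^+(M_2(\mathcal B))\to\mathbb{H}^+(M_2(\mathcal B))$ satisfying~(\ref{eqn:subordination-operator-in-thm}).

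Next, I derive the closed form $\Omega_1(b)=b-R_Y(G_{X+Y}(b))$. Rearranging~(\ref{eqn:subordination-operator-in-thm}) gives $\Omega_1(b)=b+F_Y(\Omega_2(b))-\Omega_2(b)$, where $F_Y(w)=\mathbb{E}[(w-Y)^{-1}]^{-1}$. For an $M_2(\mathcal B)$-valued semicircular $Y$ the fixed-point equation $G_Y(w)=(w-R_Y(G_Y(w)))^{-1}$ (the defining relation of the $R$-transform when all higher cumulants vanish) rearranges to $F_Y(w)-w=-R_Y(G_Y(w))$. Evaluating at $w=\Omega_2(b)$ and using $G_Y(\Omega_2(b))=G_{X+Y}(b)$ from~(\ref{eqn:subordination-operator-in-thm}) yields the claimed identity. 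Global validity on $\mathbb{H}^+(M_2(\mathcal B))$ is automatic, since $R_Y$ is globally defined and both sides are analytic on this connected domain.

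With the closed form in hand, the remainder is direct substitution. Setting $b=\Theta(z,\varepsilon)$ and using the already-computed matrix
\[
G_{X+Y}(\Theta(z,\varepsilon))=\begin{bmatrix} -iQ_\varepsilon^{{\bf t}}(z) & P_{\bar z}^{{\bf t}}(\varepsilon) \\ P_z^{{\bf t}}(\varepsilon) & -i\widetilde Q_\varepsilon^{{\bf t}}(z) \end{bmatrix}
\]
together with the entrywise formula for $R_Y$ from Proposition~\ref{prop:R-transfomr-Y}, I read off the entries of $\Theta(z,\varepsilon)-R_Y(G_{X+Y}(\Theta(z,\varepsilon)))$. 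The $(1,2)$-entry is precisely the $\lambda$ of~(\ref{eqn:lambda-to-z}); the two diagonal entries are $i\varepsilon_1$ and $i\varepsilon_2$ of~(\ref{eqn:formula-epsilon}); and the $(2,1)$-entry matches $\bar\lambda$ by the adjoint compatibility $\overline{\kappa(y,P_z^{{\bf t}}(\varepsilon)y)}=\kappa(y^*,P_{\bar z}^{{\bf t}}(\varepsilon)y^*)$. The equivalence with system~(\ref{eqn:sub-operator-1}) is then immediate by reading off the matrix identity $G_{X+Y}(\Theta(z,\varepsilon))=G_X(\Omega_1(\Theta(z,\varepsilon)))$ entry by entry against the definitions~(\ref{eqn:defn-g-ij-entries}). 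I expect the only real technical point to be a clean global justification of the semicircular closed form for $\Omega_1$ in the unbounded-$x_0$ setting; everything downstream is mechanical cumulant bookkeeping.
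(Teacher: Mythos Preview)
Your proof is correct and follows essentially the same route as the paper: verify the hypotheses of Theorem~\ref{thm:subordination-unbounded}, derive the closed form $\Omega_1(b)=b-R_Y(G_{X+Y}(b))$ from the semicircularity of $Y$ (the paper phrases this via $\Omega_2(b)=R_Y(G_{X+Y}(b))+G_{X+Y}(b)^{-1}$ and the relation $\Omega_1+\Omega_2-b=G_{X+Y}(b)^{-1}$, which is equivalent to your Schwinger--Dyson argument), and then read off entries using Proposition~\ref{prop:R-transfomr-Y}. One small point worth tightening: invoking faithfulness of $M_2(\mathbb{E})$ alone does not quite give the \emph{strict} inequality $\Im M_2(\mathbb{E})[(i\tau-X)^{-1}]<0$ needed in Theorem~\ref{thm:subordination-unbounded}, since for unbounded $X$ the operator $\tau(\tau^2+X^2)^{-1}$ is positive but not bounded below, and faithfulness only prevents its expectation from vanishing; the paper instead uses the standing hypothesis $\mathbb{E}|_{\mathcal N}=\phi$ to compute the imaginary part explicitly as the diagonal scalar matrix $\mathrm{diag}\big(\phi((1+x_0x_0^*)^{-1}),\,\phi((1+x_0^*x_0)^{-1})\big)$, which is strictly positive by faithfulness of the trace.
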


\begin{proof}	
We first verify that Theorem \ref{thm:subordination-unbounded} applies to our context. Recall that we have  $(\mathcal{B}, \phi)=(L^\infty [0,1], ds)$ and $\mathbb{E}|_{\mathcal{N}}=\phi$. It follows that
\begin{align*}
    -\mathbb{E}[ (i -X)^{-1} ]
      &=-\mathbb{E}\left(\begin{bmatrix} i & -x_0 \\ -x_0^* & i\end{bmatrix}^{-1}\right)\\
    &= \begin{bmatrix} i\phi((1+x_0x_0^*)^{-1}) & \phi((1+x_0x_0^*)^{-1}x_0) \\ \phi(x_0^*(1+x_0x_0^*)^{-1}) & i\phi((1+x_0^*x_0)^{-1})\end{bmatrix}.
\end{align*}
The imaginary part of the above matrix is  
\[
-\Im\mathbb{E}[ (i -X)^{-1} ]=\begin{bmatrix} \phi((1+x_0x_0^*)^{-1}) & 0 \\ 0 & \phi((1+x_0^*x_0)^{-1})\end{bmatrix},
\]
which is strictly positive by the faithfulness of the trace. Hence, the condition for Case two in Theorem \ref{thm:subordination-unbounded} is satisfied. 

	For $ b=  \Theta(z,\varepsilon)$, by \eqref{eqn:subordination-operator-in-thm} in Theorem \ref{thm:subordination-unbounded},
	the subordination functions satisfy 
	\begin{equation}
	\label{eqn:3.7-proof}
	\Omega_1(b)+\Omega_2(b)=b+ (G_{X+Y}(b)  )^{-1}.
	\end{equation}
 The subordination relation	$G_Y(\Omega_2(b))=G_{X+Y}(b)$
	yields
	\[
	\Omega_2(b)=R_Y \big(G_{X+Y}(b) \big) +(G_{X+Y}(b)  )^{-1},
	\]
	provided that $\lVert b^{-1}\rVert$ is small enough. 
	It follows that
    \begin{equation}
     \label{eqn:Omega-composition-formula}
    		\Omega_1(b)=b-R_Y(G_{X+Y}(b)),
    \end{equation}
	and this is true for any $b\in \mathbb{H}^+(M_2(\mathbb{B}))$ because $R_Y\circ G_{X+Y}$ is defined for any $b\in \mathbb{H}^+(M_2(\mathbb{B}))$  thanks to Proposition \ref{prop:R-transfomr-Y}. Consequently, \eqref{eqn:Omega-composition-formula} holds for any $b=\Theta(z,\varepsilon)$ with $\varepsilon>0$. 
	
	By definitions \eqref{eqn:defn-P-Q-derivatives}, the Cauchy transform can be expressed as
	\begin{equation}
	\label{eq:Cauchy-X+Y}
	\begin{split}
	G_{X+Y} (b) &= M_2(\mathbb{E})\left[(b-X-Y)^{-1} \right]\\
	&=  
	\renewcommand{\arraystretch}{1.3}
	\begin{bmatrix}
	-i  Q_\varepsilon^{\bf t} (z) & P_{\bar{z}}^{\bf t}(\varepsilon)\\
	P_z^{\bf t}(\varepsilon) & -i  \widetilde{Q}_\varepsilon^{\bf t}(z)  \end{bmatrix}.
	\end{split}
	\end{equation}
	The formula for $R$-transform of $Y$ (Proposition \ref{prop:R-transfomr-Y}) implies  
	\begin{equation*}
	\begin{split}
	R_Y (G_{X+Y}(b)) = \begin{bmatrix}
	-i  \kappa (y, \widetilde{Q}_\varepsilon^{\bf t} (z)y^*) & \kappa(y, P_{z}^{\bf t} (\varepsilon)y) \\
	\kappa(y^*, P_{\bar{z}}^{\bf t}(\varepsilon) y^*) & -i  \kappa( y^*, Q_\varepsilon^{{\bf t}}(z) y)  \end{bmatrix}. 
	\end{split}
	\end{equation*}
	Hence 
	\begin{equation}
	  \label{eqn:subordination-formula-3.18}
	\begin{split}
	\Omega_1(b) &= b- R_Y (G_{X+Y}(b)) \\
	&= \begin{bmatrix}
	i \varepsilon+ i  \kappa (y, \widetilde{Q}_\varepsilon^{\bf t} (z)y^*) & z-\kappa(y, P_{z}^{\bf t} (\varepsilon)y) \\
	\bar{z} -\kappa(y^*, P_{\bar{z}}^{\bf t}(\varepsilon) y^*) & i \varepsilon+i  \kappa( y^*, Q_\varepsilon^{{\bf t}}(z) y)  \end{bmatrix}\\
	&= \begin{bmatrix}
	i \varepsilon_1 & \lambda\\
	\bar{\lambda} & i \varepsilon_2 \end{bmatrix},
	\end{split}
	\end{equation}
	where $\lambda = z-\kappa(y, P_{z}^{\bf t} (\varepsilon)y)\in \mathbb{C}$ (note that $\kappa(y, P_{z}^{\bf t} (\varepsilon)y)= \overline{\kappa(y^*, P_{\bar{z}}^{\bf t}(\varepsilon) y^*) }$).
	Therefore, we have 
	\begin{equation}
	\label{eqn:Cauchy-X-Omega-1}
	\begin{split}
	G_X (\Omega_1(b)) &= M_2(\mathbb{E}) \left[ ( \Omega_1(b)-X)^{-1}\right]\\
	&= \begin{bmatrix}
	{g}_{11} & {g}_{12} \\
	{g}_{21} & {g}_{22} \end{bmatrix} \in M_2(\mathcal{B}).
	\end{split}
	\end{equation}
	This establishes \eqref{eqn:4.9-in-thm} and \eqref{eqn:4.10-in-thm}.
	By comparing \eqref{eq:Cauchy-X+Y} with \eqref{eqn:Cauchy-X-Omega-1}, we have
	\begin{equation*}
	\begin{split}
{g}_{11}  = -i  Q_\varepsilon^{\bf t} (z), \; {g}_{22} = -i  \widetilde{Q}_\varepsilon^{\bf t} (z), \; {g}_{12}  = P_{\bar{z}}^{\bf t} (\varepsilon), \; {g}_{21} = P_z^{\bf t} (\varepsilon).
	\end{split}
	\end{equation*}
This finishes the proof. 
\end{proof}

\begin{lemma}
	\label{lemma:D-epsilon-product-det}
	Fix $z\in\mathbb{C}$ and $\varepsilon>0$. 
	Using notations in Lemma \ref{lemma:subordination-OA-1}, let
	\[
	D = {\mathbb{E}} \left[ ((\lambda-x_0)(\lambda-x_0)^*+\varepsilon_1\varepsilon_2)^{-1}\right],
	\]
	and
	\[
	\widetilde{D} = {\mathbb{E}} \left[ ((\lambda-x_0)^*(\lambda-x_0)+\varepsilon_1\varepsilon_2)^{-1}\right].
	\]
	If $\mathcal{N}$ is $*$-free from $\mathcal{B}$ in $(\mathcal{A}, \phi)$, 
	then $\varepsilon_1 \varepsilon_2$ and $D$ are constant functions in $\mathcal{B}=L^\infty [0,1].$ Consequently,
	\begin{equation}
	\begin{split}
	D & =\widetilde{D}= \phi \left[ ((\lambda-x_0)^*(\lambda-x_0)+\varepsilon_1\varepsilon_2)^{-1}\right]. 
	\end{split}
	\end{equation}
\end{lemma}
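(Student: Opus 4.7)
The plan is to derive a first-order differential equation for $t\mapsto\varepsilon_1(t)\varepsilon_2(t)$ on $[0,1]$, show that its right-hand side vanishes by exploiting the $*$-freeness of $\mathcal N$ from $\mathcal B$, and then conclude that $D$ reduces to the scalar trace by a second application of freeness. First I would substitute the identities $Q_\varepsilon^{\bf t}(z)=\varepsilon_2 D$ and $\widetilde Q_\varepsilon^{\bf t}(z)=\varepsilon_1\widetilde D$ coming from \eqref{eqn:sub-operator-1} together with the cumulant formulas \eqref{eqn:cumulant-y-4} into the integral equations \eqref{eqn:formula-epsilon} to obtain
\begin{align*}
\varepsilon_1(t) &= \varepsilon + \alpha\int_t^1 \varepsilon_1(s)\widetilde D(s)\,ds + \beta\int_0^t \varepsilon_1(s)\widetilde D(s)\,ds,\\
\varepsilon_2(t) &= \varepsilon + \alpha\int_0^t \varepsilon_2(s)D(s)\,ds + \beta\int_t^1 \varepsilon_2(s)D(s)\,ds.
\end{align*}
Differentiating in $t$ yields $\varepsilon_1'(t)=(\beta-\alpha)\varepsilon_1(t)\widetilde D(t)$ and $\varepsilon_2'(t)=(\alpha-\beta)\varepsilon_2(t)D(t)$, and since $\mathcal B=L^\infty[0,1]$ is commutative the product rule gives
\[
(\varepsilon_1\varepsilon_2)'(t) = (\alpha-\beta)\,\varepsilon_1(t)\varepsilon_2(t)\,\bigl(D(t)-\widetilde D(t)\bigr).
\]

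The crux of the argument, and the step I expect to be the main technical obstacle, is the pointwise identity $D=\widetilde D$ in $\mathcal B$. The key point is that $(\lambda-x_0)(\lambda-x_0)^*$ and $(\lambda-x_0)^*(\lambda-x_0)$ both lie in $W^*(\mathcal N)$, have the same scalar spectral distribution under $\phi$ by traciality, and are each $*$-free from $\mathcal B$. By the universal property of the free product over $\mathbb C$, the joint $\phi$-distribution of $\bigl((\lambda-x_0)(\lambda-x_0)^*,\mathcal B\bigr)$ is carried onto that of $\bigl((\lambda-x_0)^*(\lambda-x_0),\mathcal B\bigr)$ by the canonical identification interchanging the two positive elements and fixing $\mathcal B$. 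Hence, for every $b\in\mathcal B$ and every noncommutative polynomial $P$ in two variables,
\[
\phi\!\left[P\bigl((\lambda-x_0)(\lambda-x_0)^*,\varepsilon_1\varepsilon_2\bigr)\,b\right] = \phi\!\left[P\bigl((\lambda-x_0)^*(\lambda-x_0),\varepsilon_1\varepsilon_2\bigr)\,b\right].
\]
The integral representations force $\varepsilon_1(t),\varepsilon_2(t)\ge\varepsilon$, so $\varepsilon_1\varepsilon_2\ge\varepsilon^2>0$, making the two resolvents defining $D$ and $\widetilde D$ bounded; they can then be approximated in norm by joint polynomials in the positive operator and $\varepsilon_1\varepsilon_2$ via functional calculus. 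Passing to the limit gives $\phi[Db]=\phi[\widetilde Db]$ for every $b\in\mathcal B$, and since $\phi\circ\mathbb E=\phi$ and $D,\widetilde D\in\mathcal B$, this forces $D=\widetilde D$.

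Plugging $D=\widetilde D$ back into the derivative formula gives $(\varepsilon_1\varepsilon_2)'\equiv 0$, so $\varepsilon_1\varepsilon_2=w$ is a positive scalar constant. Consequently $((\lambda-x_0)(\lambda-x_0)^*+w)^{-1}$ and $((\lambda-x_0)^*(\lambda-x_0)+w)^{-1}$ both belong to $W^*(\mathcal N)$, and the $*$-freeness of $\mathcal N$ from $\mathcal B$ implies $\mathbb E|_{W^*(\mathcal N)}=\phi(\,\cdot\,)\,1_{\mathcal B}$. Hence $D$ and $\widetilde D$ collapse to the scalar $\phi\bigl[((\lambda-x_0)^*(\lambda-x_0)+\varepsilon_1\varepsilon_2)^{-1}\bigr]$, the equality of the two scalar values being a final consequence of traciality. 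This proves the claim.
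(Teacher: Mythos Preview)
Your approach is essentially the same as the paper's: differentiate the integral equations for $\varepsilon_1,\varepsilon_2$ coming from the cumulant formulas, reduce $(\varepsilon_1\varepsilon_2)'=0$ to the identity $D=\widetilde D$, and prove the latter from traciality plus $*$-freeness of $\mathcal N$ from $\mathcal B$. The paper establishes $D=\widetilde D$ by the moment identity $\mathbb E[(x^*xb)^n]=\mathbb E[(xx^*b)^n]$, a geometric series, and analytic continuation in $b$; your ``universal property of the free product'' phrasing is a clean repackaging of the same ingredients.

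There is one genuine technical gap: the norm approximation of $((\lambda-x_0)(\lambda-x_0)^*+\varepsilon_1\varepsilon_2)^{-1}$ by joint polynomials in the positive operator and $\varepsilon_1\varepsilon_2$ fails when $x_0$ is unbounded, since any nonconstant polynomial in the unbounded operator $(\lambda-x_0)(\lambda-x_0)^*$ is itself unbounded. The paper closes this by first proving the resolvent identity for bounded $x\in\mathcal N$ (series plus analytic continuation) and then passing to $x_0\in\widetilde{\mathcal N}$ via the truncations $x_N=u|x|\chi_{[0,N]}(|x|)$ and strong convergence of $(x_Nx_N^*+b)^{-1}$ to $(xx^*+b)^{-1}$. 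You could equally well fix your argument by noting that the free-product isomorphism you invoke extends to a trace-preserving $*$-isomorphism of the $W^*$-closures, hence intertwines the two resolvents directly; but the line as written does not stand on its own in the unbounded case.
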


\begin{proof}
	We note that ${Q}_\varepsilon^{\bf t} (z), \widetilde{Q}_\varepsilon^{\bf t} (z)$ are strictly positive functions in $\mathcal{B}= L^\infty [0,1]$ by the definition \eqref{eqn:defn-P-Q-derivatives}. Hence, $\kappa (y, \widetilde{Q}_\varepsilon^{\bf t} (z)y^*)>0$ and $\kappa( y^*, Q_\varepsilon^{{\bf t}}(z) y)>0$. Consequently, by the defining identity \eqref{eqn:formula-epsilon}
	\[
	    \varepsilon_1 = \varepsilon+  \kappa (y, \widetilde{Q}_\varepsilon^{\bf t} (z)y^*)>\varepsilon.
	\]
	Similarly,  $\varepsilon_2>\varepsilon$. 
	Since for any $x\in\mathcal{N}$, we have
	\begin{equation}
	\label{eqn:3.18-in-proof}
		\mathbb{E} [p(x,x^*)] =\phi[ p(x,x^*)]
	\end{equation}
	where $p$ is an arbitrary polynomial of two indeterminates.
	Given any $x\in\mathcal{N}$, $b\in\mathcal{B}$ and $n\in\mathbb{N}$, since $\mathcal{N}$ is $*$-free from $\mathcal{B}$ in $(\mathcal{A},\phi)$, we hence have 
	\[
	   \mathbb{E}( (x^*xb)^n )=	   \mathbb{E}( (xx^*b)^n ),
	\] 
	where we used the tracial property of $\phi$. If $b\in\mathcal{B}$ is invertible and $\Vert b\Vert$ is large enough, we can write 
	\[
	(xx^*+b)^{-1}=\sum_{n=0}^\infty (-1)^n b^{-1}(xx^*b^{-1})^n.
	\] 
		It follows that, if $\Vert b\Vert$ is large enough, we have 
	\begin{equation}\label{eq:E-bound}
	\mathbb{E} \left[ (xx^*+b)^{-1}\right] = \mathbb{E} \left[ (x^*x+b)^{-1}\right].
	\end{equation}
	The function $b\mapsto {\mathbb{E}} \left[ (xx^*+b)^{-1}\right]$ is a holomorphic function at $b$ that is strictly positive in the sense that $b\geq \delta>0$ for some $\delta\in \mathbb{R}$ and $b\in \mathbb{H}^+(\mathcal{B})$. Hence, the identify \eqref{eq:E-bound} holds for any $b\geq\delta>0$ by the uniqueness of holomorphic functions. 
	
	For any $x \in \widetilde{\mathcal{N}},$ let $x = u\vert {x}\vert$ be the polar decomposition of $x$. Consider the truncated operator $x_N:= u\vert x\vert \cdot \chi_{[0, N]}(\vert {x}\vert).$ It is known that $x_N \in \mathcal{N}$ 
	and $x_N$ converges to $x$ respect to the strong operator topology. We may assume that $\mathcal{A}$ is a subalgebra of the set of all bounded operators $B(\mathrm{H})$ acting on some Hilbert space $\mathrm{H}$. By the identity $A^{-1}-B^{-1} = B^{-1}(B-A)A^{-1}$, we have
	\begin{equation*}
	\begin{split}
	 &\left\Vert \left[ (xx^*+b)^{-1} - (x_Nx_N^*+b)^{-1} \right] h \right\Vert\\
	  & = \left\| (x_Nx_N^*+b)^{-1}   (x_Nx_N^*- xx^*) (xx^*+b)^{-1} h \right\|\\
	& \leq  \left\Vert (x_Nx_N^*+b)^{-1} \right\Vert \cdot \left\Vert (x_Nx_N^*- xx^*) (xx^*+b)^{-1} h \right\Vert
	\end{split}
	\end{equation*}
	for any vector $h\in\mathrm{H}$. Note that $\left\Vert (x_Nx_N^*+b)^{-1} \right\Vert \leq \Vert {b^{-1}}\Vert$ and $x_Nx_N^*$ converges to $xx^*$ in the strong operator topology. Hence,  $(x_Nx_N^*+b)^{-1}$ converges to $(xx^*+b)^{-1}$ in the strong operator topology. Similarly, we have
	$(x_N^*x_N+b)^{-1}$ converges to $(x^*x+b)^{-1}.$
	In particular, by letting $x= \lambda - x_0$ and combining \eqref{eq:E-bound}, we have 
	 \begin{equation}
	   \label{eqn:D-identity-b}
	  	{\mathbb{E}} \left[ ((\lambda-x_0)(\lambda-x_0)^*+b)^{-1}\right]
	  ={\mathbb{E}} \left[ ((\lambda-x_0)^*(\lambda-x_0)+b)^{-1}\right].
	 \end{equation}
	 Since $\varepsilon_1\varepsilon_2 \in \mathcal{B}$ and $\varepsilon_1\varepsilon_2\geq \varepsilon^2$, this implies that \eqref{eqn:D-identity-b} holds for $b=\varepsilon_1\varepsilon_2$. Hence, 
	$D=\widetilde{D}$.
	
	We next show that $\varepsilon_1\varepsilon_2$ is a constant. 
	By differentiating respect to $t$, we have 
	\[
	\varepsilon_1'=( \varepsilon+  k (y, \widetilde{Q}_\varepsilon^{\bf t} (z)y^*))'=-(\alpha-\beta) \widetilde{Q}_\varepsilon^{\bf t} (z)
	\]
	and
	\[
	\varepsilon_2'=(  \varepsilon+  \kappa( y^*, Q_\varepsilon^{{\bf t}}(z) y))'=(\alpha-\beta)Q_\varepsilon^{{\bf t}}(z).
	\]
Lemma \ref{lemma:subordination-OA-1} implies that
\[
   \widetilde{Q}_\varepsilon^{\bf t} (z)= i g_{22}= \varepsilon_1 \widetilde{D}= \varepsilon_1 D
\]
and similarly ${Q}_\varepsilon^{\bf t} (z)= i g_{11}= \varepsilon_2 D$.
Hence
\[
 (\varepsilon_1 \varepsilon_2)'
 = (\alpha-\beta)\varepsilon_1\varepsilon_2 (D-D)=0.
\]
Hence $\varepsilon_1\varepsilon_2$ is a constant function in $L^\infty [0,1]$. 
Then \eqref{eqn:3.18-in-proof} implies that $D$ is given by
\[
  D={\mathbb{E}} \left[ ((\lambda-x_0)(\lambda-x_0)^*+\varepsilon_1\varepsilon_2)^{-1}\right]
  =\phi \left[ ((\lambda-x_0)(\lambda-x_0)^*+\varepsilon_1\varepsilon_2)^{-1}\right].
\]
This finishes the proof. 
\end{proof}

\begin{lemma}\label{lem:compute-tilde-g}
	Let $g_{ij}$ be entries of the $2\times 2$ matrix-valued Cauchy transform in \eqref{eqn:Cauchy-X-Omega-1} as defined in  \eqref{eqn:defn-g-ij-entries} and assume $\alpha\neq \beta$. Then 
	${g}_{12}$ and ${g}_{21}$ are constant functions given by
	$${g}_{12} = \phi \left\{ (\lambda-x_0) \left[ (\lambda-x_0)^*(\lambda-x_0)+ \varepsilon_1\varepsilon_2\right]^{-1}\right\},$$
	and 
	$${g}_{21} = \phi \left\{ (\lambda-x_0)^* \left[ (\lambda-x_0)(\lambda-x_0)^* + \varepsilon_1\varepsilon_2\right]^{-1}\right\}.$$
	Moreover, 
	\begin{equation}
	\label{eqn:formula-g-11-22}
	\begin{split}
	{g}_{11}(t) = \frac{i \varepsilon D (\alpha- \beta)}{\beta e^{(\alpha-\beta) D} -\alpha} e^{(\alpha-\beta)Dt}, \;\; {g}_{22}(t) = \frac{i \varepsilon D ( \beta-\alpha)}{\alpha e^{(\beta-\alpha) D} -\beta} e^{(\beta-\alpha)Dt}.
	\end{split}
	\end{equation}
\end{lemma}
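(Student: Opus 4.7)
The plan is to handle the off-diagonal and diagonal entries separately. For $g_{12}$ and $g_{21}$, observe that by Lemma \ref{lemma:D-epsilon-product-det} the product $c := \varepsilon_1\varepsilon_2$ is a positive scalar constant, so
\[
g_{12} = \mathbb{E}\bigl[(\lambda - x_0)\bigl((\lambda - x_0)^*(\lambda - x_0) + c\bigr)^{-1}\bigr].
\]
I would then repeat the approximation scheme used in the proof of Lemma \ref{lemma:D-epsilon-product-det}: truncate $x_0$ to bounded operators via its polar decomposition, expand the resolvent as a Neumann series for sufficiently large $c$, use $*$-freeness of $\mathcal{N}$ and $\mathcal{B}$ in $(\mathcal{A},\phi)$ to replace $\mathbb{E}$ by $\phi$ on each term, then extend by analyticity in $c$ and strong-operator continuity as the truncation parameter goes to infinity. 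This yields the stated scalar formula for $g_{12}$, and the analogous argument applies to $g_{21}$.

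For the diagonal entries, Lemma \ref{lemma:subordination-OA-1} combined with the definitions in \eqref{eqn:defn-g-ij-entries} and the equality $D = \widetilde{D}$ from Lemma \ref{lemma:D-epsilon-product-det} gives
\[
g_{11} = -i\varepsilon_2 D, \quad g_{22} = -i\varepsilon_1 D, \quad Q_\varepsilon^{{\bf t}}(z) = \varepsilon_2 D, \quad \widetilde{Q}_\varepsilon^{{\bf t}}(z) = \varepsilon_1 D,
\]
where $D$ is a scalar constant. The task thus reduces to computing $\varepsilon_1(t), \varepsilon_2(t) \in L^\infty[0,1]$ explicitly. Substituting $Q_\varepsilon^{{\bf t}} = \varepsilon_2 D$ and $\widetilde{Q}_\varepsilon^{{\bf t}} = \varepsilon_1 D$ into the defining identities \eqref{eqn:formula-epsilon} and expanding the cumulants via \eqref{eqn:cumulant-y-4} yields the integral equations
\[
\varepsilon_1(t) = \varepsilon + D\Bigl[\alpha\!\int_t^1\!\varepsilon_1(s)\,ds + \beta\!\int_0^t\!\varepsilon_1(s)\,ds\Bigr], \quad \varepsilon_2(t) = \varepsilon + D\Bigl[\alpha\!\int_0^t\!\varepsilon_2(s)\,ds + \beta\!\int_t^1\!\varepsilon_2(s)\,ds\Bigr].
\]

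Differentiating in $t$ (legitimate because $D$ is constant in $t$) produces the linear ODEs $\varepsilon_1'(t) = -(\alpha - \beta) D\, \varepsilon_1(t)$ and $\varepsilon_2'(t) = (\alpha - \beta) D\, \varepsilon_2(t)$, whose solutions are $\varepsilon_1(t) = C_1 e^{-(\alpha-\beta)Dt}$ and $\varepsilon_2(t) = C_2 e^{(\alpha-\beta)Dt}$. Plugging these exponentials back into the integral equations at $t = 0$ and using $\alpha \neq \beta$ determines $C_1$ and $C_2$ uniquely; a short algebraic manipulation gives the denominators $\beta e^{(\alpha-\beta)D} - \alpha$ and $\alpha e^{(\beta-\alpha)D} - \beta$ appearing in the lemma, and multiplying by $-iD$ produces the claimed formulas for $g_{11}(t)$ and $g_{22}(t)$. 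The only delicate step is the Neumann-series/approximation argument for the off-diagonal entries; once the identities $Q_\varepsilon^{{\bf t}} = \varepsilon_2 D$ and $\widetilde{Q}_\varepsilon^{{\bf t}} = \varepsilon_1 D$ are in hand, the diagonal part is a clean ODE computation with no hidden subtleties.
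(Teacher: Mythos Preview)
Your proposal is correct and follows essentially the same approach as the paper. The only difference worth noting is that for the off-diagonal entries you can argue more directly: once $\varepsilon_1\varepsilon_2$ is a positive scalar, the operator $(\lambda-x_0)\bigl[(\lambda-x_0)^*(\lambda-x_0)+\varepsilon_1\varepsilon_2\bigr]^{-1}$ is a bounded element of $\mathcal{N}$, and the standing assumption $\mathbb{E}|_{\mathcal{N}}=\phi|_{\mathcal{N}}$ immediately gives $g_{12}=\phi\{\cdots\}$ without any Neumann-series or truncation argument; your diagonal computation via the integral equations for $\varepsilon_1,\varepsilon_2$ is exactly the paper's ODE for $g_{11}$ rewritten using $g_{11}=-i\varepsilon_2 D$.
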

\begin{proof}
By Lemma \ref{lemma:D-epsilon-product-det}, the product $\varepsilon_1\varepsilon_2$ is a constant. Recall that $\mathbb{E}(x)=\phi(x)$ for any $x\in \mathcal{N}$. It follows that $g_{12}, g_{21}$ are constants given by the formulas above. 

We next observe that
\[
   g_{11}=-i\varepsilon_2 D, \qquad g_{22}=-i\varepsilon_1 D.
\]
Taking the derivative as in the proof of Lemma \ref{lemma:D-epsilon-product-det}, we have 
\[
  g_{11}'=-iD(\varepsilon_2)'=-iD(\alpha-\beta)(Q_\varepsilon^{\bf t}(z))
    =(\alpha-\beta)D g_{11}
\]
where we used $g_{11}=-iQ_\varepsilon^{\bf t}(z)$ from Lemma \ref{lemma:subordination-OA-1}. Hence, by solving the above differential equation, we see that $g_{11}(t)=C e^{(\alpha-\beta)Dt}$ for some constant $C$. We rewrite the definition of $\varepsilon_2$ using the subordination relation in Lemma \ref{lemma:subordination-OA-1} as
\[
  \varepsilon_2= \varepsilon+  \kappa( y^*, Q_\varepsilon^{{\bf t}}(z) y)=\varepsilon+i\kappa(y^*, g_{11}y).
\]
Hence, $g_{11}(t)=C e^{(\alpha-\beta)Dt}$ and 
\[
  g_{11}(t)=-i\varepsilon_2 D=-(i\varepsilon-\kappa(y^*, g_{11}y)) D.
\]
The cumulant formula \eqref{eqn:cumulant-y-4} of $y$ reads
   \[
      \kappa(y^*, g_{11}y)=\alpha \int_0^t  g_{11}(s)ds+\beta\int_t^1  g_{11}(s)ds,
   \]
which determines the value $C$ by solving the integral equation. A direct verification shows 
\[
 C=\frac{i \varepsilon D (\alpha- \beta)}{\beta e^{(\alpha-\beta) D} -\alpha}.
\]
Similarly, one can obtain the formula for $g_{22}$. 
\end{proof}

\begin{corollary}	
	\label{cor:epsilon-1-2-formulas}
For $\alpha\neq\beta$, the functions $\varepsilon_1$ and $\varepsilon_2$ in $\mathcal{B} = L^\infty [0,1]$ are given by
\begin{equation}
\label{eqn:epsilon-1-formula}
	  \varepsilon_1(t)= \frac{\varepsilon(\alpha-\beta)}{(\alpha-\beta e^{(\alpha-\beta)D})}
	e^{(1-t)(\alpha-\beta)D},
\end{equation}
and
\[
  \varepsilon_2(t)= \frac{\varepsilon(\alpha-\beta)}{(\alpha-\beta e^{(\alpha-\beta)D})}
  e^{t(\alpha-\beta)D}.
\]
Consequently,
\begin{equation}
\label{eqn:epsilon-1-2-product-formula}
	  \varepsilon_1\varepsilon_2=\varepsilon^2\left( \frac{\alpha-\beta}{\alpha-\beta e^{(\alpha-\beta)D}} \right)^2 e^{(\alpha-\beta)D}.
\end{equation}
\end{corollary}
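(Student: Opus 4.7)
The plan is to derive the formulas for $\varepsilon_1$ and $\varepsilon_2$ directly from the explicit expressions for $g_{11}$ and $g_{22}$ already obtained in Lemma \ref{lem:compute-tilde-g}, and then multiply to obtain the product formula.

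First, I would recall from the proof of Lemma \ref{lem:compute-tilde-g} the identities $g_{11} = -i\varepsilon_2 D$ and $g_{22} = -i\varepsilon_1 D$, which come from the definition \eqref{eqn:defn-g-ij-entries} combined with the fact that $\varepsilon_1\varepsilon_2 \in \mathcal{B}$ is constant (Lemma \ref{lemma:D-epsilon-product-det}) so that $D = \widetilde{D}$. From \eqref{eqn:formula-g-11-22}, solving $\varepsilon_2 = ig_{11}/D$ immediately yields
\[
\varepsilon_2(t) = \frac{\varepsilon(\alpha-\beta)}{\alpha - \beta e^{(\alpha-\beta)D}}\, e^{t(\alpha-\beta)D},
\]
after clearing the factor of $i$ and rewriting $\beta e^{(\alpha-\beta)D}-\alpha = -(\alpha - \beta e^{(\alpha-\beta)D})$.

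Next, the analogous computation $\varepsilon_1 = ig_{22}/D$ gives a preliminary expression involving $e^{(\beta-\alpha)Dt}$ and the denominator $\alpha e^{(\beta-\alpha)D} - \beta$. To recast this in the form stated in \eqref{eqn:epsilon-1-formula}, I would multiply numerator and denominator by $e^{(\alpha-\beta)D}$, which converts the denominator to $\alpha - \beta e^{(\alpha-\beta)D}$ and combines with the exponential to produce the factor $e^{(1-t)(\alpha-\beta)D}$. This is routine algebra, so I would present it in one display.

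Finally, the product formula \eqref{eqn:epsilon-1-2-product-formula} follows simply by multiplying the two expressions: the prefactors square, and the exponential factors $e^{(1-t)(\alpha-\beta)D} \cdot e^{t(\alpha-\beta)D} = e^{(\alpha-\beta)D}$ collapse. There is essentially no obstacle here; the only point requiring care is the sign rearrangement when moving from the form of the denominator in \eqref{eqn:formula-g-11-22} (namely $\beta e^{(\alpha-\beta)D}-\alpha$ or $\alpha e^{(\beta-\alpha)D} - \beta$) into the symmetric normalized form $\alpha - \beta e^{(\alpha-\beta)D}$ used throughout the corollary, so that the two exponents combine cleanly in the product.
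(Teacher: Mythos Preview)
Your proposal is correct and takes essentially the same approach as the paper: the paper's proof is a one-line reference to the identities $g_{11}=-i\varepsilon_2 D$, $g_{22}=-i\varepsilon_1 D$ together with \eqref{eqn:formula-g-11-22}, and you simply make the intervening algebra explicit.
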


\begin{proof}
It follows directly from $g_{11}=-i\varepsilon_2 D$, $g_{22}=-i\varepsilon_1 D$ and the formulas for $g_{11}$ and $g_{22}$ as in	
	 (\ref{eqn:formula-g-11-22}). 
\end{proof}

\begin{remark}
	\label{remark:D-inequality}
For any $\varepsilon>0$, we have $D<\frac{\log\alpha-\log\beta}{\alpha-\beta}$ where $D$ is defined in Lemma \ref{lemma:D-epsilon-product-det}. Indeed, as we shown in the proof of Lemma \ref{lemma:D-epsilon-product-det}, we have $\varepsilon_1(t)>\varepsilon>0$, hence by the formula \eqref{eqn:epsilon-1-formula} for $\varepsilon_1$, we see that $\frac{\alpha-\beta}{\alpha-\beta e^{(\alpha-\beta)D}}>0$ which yields that $D<\frac{\log\alpha-\log\beta}{\alpha-\beta}$ whenever $\alpha\neq \beta$. 
\end{remark}

\subsection{Some functions associated with the subordination function and their regularities}
In this subsection, we will establish the subordination relation via a homeomorphism $\Phi_{\alpha, \beta, \gamma}^{(\varepsilon)}$ of the complex plane for any $\varepsilon>0$; see Equation \ref{eqn:defn-Phi-regularization-map} for the definition.  

It follows from Corollary \ref{cor:epsilon-1-2-formulas} that
\begin{equation}
\label{eqn:phi-epsilon-1}
\phi(\varepsilon_1)=\phi(\varepsilon_2)=\frac{\varepsilon (e^{(\alpha-\beta)D}-1)}{D(\alpha-\beta e^{(\alpha-\beta)D})}.
\end{equation}
Then, since $D$ is constant and $g_{11}=-i\varepsilon_2 D$, we have 
 \begin{equation}
 	\label{eqn:formula-g11-D}
 	i\phi(g_{11})
 	=\phi(\varepsilon_2) D
 	=\phi(\varepsilon_2) \phi \left\{  \left[ (\lambda-x_0)(\lambda-x_0)^* + \varepsilon_1\varepsilon_2\right]^{-1}\right\}.
 \end{equation}
Since $\frac{\alpha-\beta}{\alpha-\beta e^{(\alpha-\beta)D}}>0$,
we then set
\begin{equation}
\label{eqn:epsilon-0}
\varepsilon_0=\sqrt{\varepsilon_1\varepsilon_2}=
\frac{\varepsilon (\alpha-\beta)}{\alpha-\beta e^{(\alpha-\beta)D}}e^{(\alpha-\beta)D/2}.
\end{equation}
We can then rewrite $\phi(\varepsilon_i)$ as 
\[
\phi(\varepsilon_1)=\phi(\varepsilon_2)=\varepsilon_0\frac{(e^{(\alpha-\beta)D}-1)}{D(\alpha-\beta)} e^{-(\alpha-\beta)D/2}.
\]
Our approach is to fix $\varepsilon>0$ and $\lambda\in\mathbb{C}$, and we will show that $z, \varepsilon_1, \varepsilon_2$ can be regarded as functions of $\lambda$ and $\varepsilon$. 

In light of \eqref{eqn:epsilon-1-2-product-formula}, the constant $D$ satisfies the following implicit formula
\begin{equation}
\label{eqn:det-D-implicit-formula}
D= \phi \left\{  \left[ (\lambda-x_0)^*(\lambda-x_0) + \varepsilon^2\left( \frac{\alpha-\beta}{\alpha-\beta e^{(\alpha-\beta)D}} \right)^2 e^{(\alpha-\beta)D}\right]^{-1}\right\}. 
\end{equation}
We shall show that \eqref{eqn:det-D-implicit-formula} determines a unique solution for $D<\frac{\log\alpha-\log\beta}{\alpha-\beta}$. 
Put $\sigma=(\alpha-\beta)D$, and denote 
\[
F(\sigma,\lambda,\varepsilon)=\frac{1}{\sigma}\cdot \phi\left\{ \left[ 
(\lambda-x_0)^*(\lambda-x_0) + \varepsilon^2 \frac{(\alpha-\beta)^2}{(\alpha-\beta e^{\sigma})^2 e^{-\sigma}}
\right] ^{-1} \right\}.
\]
Then \eqref{eqn:det-D-implicit-formula} is rewritten as
\begin{equation}
\label{eqn:F-sigma-det-time}
F(\sigma,\lambda,\varepsilon)=\frac{1}{\alpha-\beta}.
\end{equation}

\begin{lemma}
	\label{lemma:F-momoton}
	Assume that $\alpha>\beta>0$, let $0<\sigma<\log (\alpha/\beta)$. For any fixed $\varepsilon>0$ and $\lambda\in\mathbb{C}$, the function $\sigma\mapsto F(\sigma,\lambda,\varepsilon)$ is 
	a decreasing function on $(0,\infty)$, and 
	\[
	\lim_{\sigma\rightarrow 0^+} F(\sigma,\lambda,\varepsilon)=\infty, \qquad
	\lim_{\sigma\rightarrow \log(\alpha/\beta)}F(\sigma,\lambda,\varepsilon)=0. 
	\]
\end{lemma}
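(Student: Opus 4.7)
The plan is to split $F(\sigma,\lambda,\varepsilon)$ into two factors and show each is positive and strictly decreasing on the indicated interval, so that their product is. Set
\[
c(\sigma)=\varepsilon^{2}\frac{(\alpha-\beta)^{2}e^{\sigma}}{(\alpha-\beta e^{\sigma})^{2}},\qquad 0<\sigma<\log(\alpha/\beta),
\]
so that $F(\sigma,\lambda,\varepsilon)=\sigma^{-1}\,\phi\!\left[\bigl((\lambda-x_{0})^{*}(\lambda-x_{0})+c(\sigma)\bigr)^{-1}\right]$. Both factors are positive on $(0,\log(\alpha/\beta))$ since $c(\sigma)>0$ and $x_{0}\in\log^{+}(\mathcal{A})$ makes the resolvent a bounded positive operator (bounded by $c(\sigma)^{-1}$), so faithfulness of $\phi$ gives a strictly positive finite value.

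First I would show $c$ is strictly increasing by a direct derivative computation:
\[
c'(\sigma)=\varepsilon^{2}(\alpha-\beta)^{2}\cdot\frac{e^{\sigma}\bigl(\alpha+\beta e^{\sigma}\bigr)}{(\alpha-\beta e^{\sigma})^{3}}>0,
\]
where positivity uses $\alpha-\beta e^{\sigma}>0$ on $(0,\log(\alpha/\beta))$. Next, since $A:=(\lambda-x_{0})^{*}(\lambda-x_{0})\ge 0$ is self-adjoint affiliated with $\mathcal{A}$, the map $c\mapsto(A+c)^{-1}$ is operator-monotone decreasing in the scalar $c>0$, hence $c\mapsto\phi[(A+c)^{-1}]$ is strictly decreasing (strict because $\phi$ is faithful). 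Composing with the strictly increasing $\sigma\mapsto c(\sigma)$ and multiplying by the strictly decreasing positive factor $\sigma^{-1}$ yields that $F(\cdot,\lambda,\varepsilon)$ is strictly decreasing on $(0,\log(\alpha/\beta))$.

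For the boundary behavior, as $\sigma\to\log(\alpha/\beta)^{-}$ one has $\alpha-\beta e^{\sigma}\to 0^{+}$, so $c(\sigma)\to\infty$; dominated convergence (with bound $c(\sigma)^{-1}\to 0$) gives $\phi[(A+c(\sigma))^{-1}]\to 0$, and since $\sigma^{-1}$ is bounded on a neighborhood of $\log(\alpha/\beta)$, we conclude $F(\sigma,\lambda,\varepsilon)\to 0$. As $\sigma\to 0^{+}$, $c(\sigma)\to\varepsilon^{2}$, so $\phi[(A+c(\sigma))^{-1}]\to\phi[(A+\varepsilon^{2})^{-1}]$, a strictly positive finite number; dividing by $\sigma\to 0^{+}$ produces the limit $+\infty$.

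The argument is essentially elementary, and I do not expect a real obstacle: the only point requiring minor care is the justification that $\phi[(A+c)^{-1}]$ is a well-defined finite positive quantity for unbounded $x_{0}\in\log^{+}(\mathcal{A})$, which is immediate since $A+c\ge c\,\unit$ implies $\|(A+c)^{-1}\|\le c^{-1}$, placing $(A+c)^{-1}$ in $\mathcal{A}$ and making the trace finite; faithfulness then ensures the strict positivity used above.
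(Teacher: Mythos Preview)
Your proof is correct and follows essentially the same approach as the paper: both show that $(\alpha-\beta e^{\sigma})^{2}e^{-\sigma}$ is decreasing on $(0,\log(\alpha/\beta))$ (equivalently, your $c(\sigma)$ is increasing), deduce that $\phi[(A+c(\sigma))^{-1}]$ is decreasing, and combine with the decreasing factor $\sigma^{-1}$. Your treatment is somewhat more detailed---you compute $c'(\sigma)$ explicitly and spell out the boundary limits and the well-definedness for unbounded $x_{0}$, whereas the paper's proof leaves these as routine verifications.
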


\begin{proof}
	It is easy to check that $\sigma\mapsto (\alpha-\beta e^{\sigma})^2 e^{-\sigma}$ 
	is increasing if $\sigma>\log(\alpha/\beta)$ and is decreasing if $\sigma<\log(\alpha/\beta)$. Hence, $\frac{\partial F(\sigma,\lambda,\varepsilon)}{\partial\sigma}<0$ when $\sigma<\log(\alpha/\beta)$. 
\end{proof}

\begin{proposition}
	\label{prop:epsilon0-epsilon-analyticity}
	Fix $\lambda\in\mathbb{C}$ and $\varepsilon>0$, the equation \eqref{eqn:det-D-implicit-formula} determines $D$ uniquely, and the function $D=D(\lambda,\varepsilon)$ is a $C^\infty$-function of $(\lambda, \varepsilon)$ over $\mathbb{C}\times (0,\infty)$ determined by \eqref{eqn:det-D-implicit-formula}. Consequently, the function $\varepsilon_0$ defined by \eqref{eqn:epsilon-0} is a $C^\infty$-function of $(\lambda, \varepsilon)$ over $\mathbb{C}\times (0,\infty)$, which we denote by $\varepsilon_0=\varepsilon_0(\lambda,\varepsilon)$. 
\end{proposition}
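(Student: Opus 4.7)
The plan is to derive uniqueness from the strict monotonicity already established in Lemma \ref{lemma:F-momoton}, and then upgrade it to joint $C^\infty$-regularity via the implicit function theorem applied to a smooth operator-valued resolvent. Existence and uniqueness of $D$ are immediate: by Lemma \ref{lemma:F-momoton}, for every fixed $(\lambda,\varepsilon)\in\mathbb{C}\times(0,\infty)$ the map $\sigma\mapsto F(\sigma,\lambda,\varepsilon)$ is continuous and strictly decreasing from $+\infty$ down to $0$ on the interval $(0,\log(\alpha/\beta))$. Since $1/(\alpha-\beta)$ lies in this image interval, \eqref{eqn:F-sigma-det-time} has a unique root $\sigma=\sigma(\lambda,\varepsilon)\in(0,\log(\alpha/\beta))$; equivalently, \eqref{eqn:det-D-implicit-formula} has the unique solution $D=\sigma/(\alpha-\beta)$, compatible with Remark \ref{remark:D-inequality}.

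Next I would show that $F$ is jointly $C^\infty$ in $(\sigma,\lambda,\varepsilon)$. Writing
\[
c(\sigma,\varepsilon)=\varepsilon^2\frac{(\alpha-\beta)^2}{(\alpha-\beta e^{\sigma})^2 e^{-\sigma}},
\]
the scalar $c$ is real-analytic and strictly positive on $(0,\log(\alpha/\beta))\times(0,\infty)$. The expansion $(\lambda-x_0)^*(\lambda-x_0)=x_0^*x_0-\bar{\lambda}x_0-\lambda x_0^*+|\lambda|^2$ depends polynomially on the real variables $(\Re\lambda,\Im\lambda)$. For $c>0$ the self-adjoint positive operator $(\lambda-x_0)^*(\lambda-x_0)+c$ has spectrum in $[c,+\infty)$, so its inverse is bounded by $1/c$ and depends smoothly on all real parameters via the resolvent identity $(A+c)^{-1}-(A'+c')^{-1}=(A+c)^{-1}(A'-A+c'-c)(A'+c')^{-1}$, with iterated Fréchet derivatives of the usual resolvent-product form. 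Since $\phi$ is a continuous trace, joint smoothness transfers to $\phi$ of the resolvent, and hence to $F(\sigma,\lambda,\varepsilon)=\sigma^{-1}\phi\bigl[((\lambda-x_0)^*(\lambda-x_0)+c(\sigma,\varepsilon))^{-1}\bigr]$.

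The implicit function theorem then completes the argument. The proof of Lemma \ref{lemma:F-momoton} establishes that $\partial_\sigma F<0$ on $(0,\log(\alpha/\beta))$, so the partial derivative of $F-1/(\alpha-\beta)$ in $\sigma$ is nonzero at the unique solution. The implicit function theorem produces a local $C^\infty$-function $\sigma=\sigma(\lambda,\varepsilon)$ near every $(\lambda_0,\varepsilon_0)\in\mathbb{C}\times(0,\infty)$; by the global uniqueness from the first step, these local branches agree on overlaps and assemble into a single $C^\infty$-function on $\mathbb{C}\times(0,\infty)$. Setting $D(\lambda,\varepsilon)=\sigma(\lambda,\varepsilon)/(\alpha-\beta)$ yields the claimed smoothness, and the formula \eqref{eqn:epsilon-0} expresses $\varepsilon_0$ as a composition of $D$ with elementary smooth functions, the denominator $\alpha-\beta e^{(\alpha-\beta)D}$ being nonzero by Remark \ref{remark:D-inequality}.

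The main obstacle is the operator-theoretic smoothness assertion in the second paragraph: since $x_0\in\log^+(\mathcal{N})$ may be unbounded, one must justify carefully that differentiation in $\lambda$ commutes with the trace and that the resolvent Fréchet derivatives are well defined on the affiliated operator $(\lambda-x_0)^*(\lambda-x_0)$. The uniform lower bound $c(\sigma,\varepsilon)\ge c_0>0$ on compact subsets of parameters, combined with the a priori estimate $\|((\lambda-x_0)^*(\lambda-x_0)+c)^{-1}\|\le 1/c$, reduces the analysis to norm-continuous manipulations of bounded operators, which is standard.
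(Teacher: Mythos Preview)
Your proposal is correct and follows essentially the same approach as the paper: uniqueness via the strict monotonicity of Lemma \ref{lemma:F-momoton}, then smoothness via the implicit function theorem using $\partial_\sigma F<0$. The paper's proof is terser---it simply asserts that $F$ is real analytic in $(\Re\lambda,\Im\lambda)$ and complex analytic in $(\sigma,\varepsilon)$---whereas you spell out the resolvent-identity justification for smoothness in the possibly unbounded case, which is a reasonable elaboration of what the paper leaves implicit.
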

\begin{proof}
	Without losing generality, we may assume that $\alpha>\beta$ (the proof for the case $\alpha<\beta$ is similar by symmetric consideration).
	It is known from Remark \ref{remark:D-inequality} that $D<\frac{\log\alpha-\log\beta}{\alpha-\beta},$ which means $\sigma=(\alpha-\beta)D<\log(\alpha/\beta)$. The uniqueness then follows from Lemma \ref{lemma:F-momoton}. The function $(\sigma, \lambda,\varepsilon)\mapsto F(\sigma,\lambda,\varepsilon)$ is real analytic in $(\Re\lambda,\Im\lambda)$ and complex analytic in $(\sigma,\varepsilon)$.   Hence $D(\lambda,\varepsilon)$ is a $C^\infty$-function by the implicit function theorem. 
\end{proof}

	For any $\varepsilon>0$ and $\lambda \in \mathbb{C},$
	the following equations
	\begin{equation}
	\label{eqn:system-D-epsilon}
	\begin{cases} D & =   \phi \left\{  \left[ (\lambda-x_0)^*(\lambda-x_0) + \varepsilon_0^2\right]^{-1}\right\}\\ 
	\varepsilon_0^2 & = \frac{\varepsilon^2 (\alpha-\beta)^2}{(\alpha-\beta e^{(\alpha-\beta)D})^2} \cdot e^{(\alpha-\beta)D}
	\end{cases} 
	\end{equation}
	determine uniquely $\varepsilon_0=\varepsilon_0(\lambda,\varepsilon)>0$ and $D=D(\lambda,\varepsilon)<\frac{\log\alpha-\log\beta}{\alpha-\beta}$.

We now define $\Phi_{\alpha, \beta, \gamma}^{(\varepsilon)}:\mathbb{C}\rightarrow\mathbb{C}$ as follows:
\begin{equation}
\label{eqn:defn-Phi-regularization-map}
\Phi_{\alpha, \beta, \gamma}^{(\varepsilon)}(\lambda)=\lambda+\gamma \phi \left\{ (\lambda-x_0)^* \left[ (\lambda-x_0)(\lambda-x_0)^* + \varepsilon_0(\lambda,\varepsilon)^2\right]^{-1}\right\}.
\end{equation}
We summarize the relation between $\lambda$ and $z$:
\begin{itemize}
 \item Fix $\varepsilon>0$, for any $z\in\mathbb{C}$, let $\lambda =z-  \kappa(y, P_z^{\bf t} (\varepsilon) y)$ as in Lemma \ref{lemma:subordination-OA-1}, then
 by the cumulant formula, we have 
  \begin{align}
   \lambda&=z-\int_0^1 \big[P_z^{\bf t} (\varepsilon)\big](t)dt
   =z-\phi(P_z^{\bf t} (\varepsilon))\nonumber\\
     &=z-\gamma \phi \left\{ ({z}-x_0-y)^* \left[ ({z}-x_0-y)({z}-x_0-y)^* + \varepsilon^2\right]^{-1}\right\}, \label{eqn:4.30-in-work}
  \end{align}
    where $y=g_{\alpha, \beta,\gamma}$. 
 \item Fix $\varepsilon>0$, for any $\lambda\in\mathbb{C}$, we set $z=\Phi_{\alpha, \beta, \gamma}^{(\varepsilon)}(\lambda)$. We would like to see if
 $\lambda$ can be retrieved by the previous construction \eqref{eqn:4.30-in-work}. 
\end{itemize}

The following result shows that there is a natural one-to-one correspondence between $\lambda$ and $z$ via the subordination relation \eqref{eqn:4.30-in-work}
 and $z=\Phi_{\alpha, \beta, \gamma}^{(\varepsilon)}(\lambda)$. 
\begin{lemma}
	\label{lemma:Phi-varepsilon-alphabeta-injective}
	The map $\Phi_{\alpha, \beta, \gamma}^{(\varepsilon)}$ is a homeomorphism of the complex plane for any $\varepsilon>0$. Its inverse map is 
   \begin{equation*}
   	 J^{(\varepsilon)}_{\alpha, \beta,\gamma}(z)=z-\gamma \phi \left\{ ({z}-x_0-y)^* \left[ ({z}-x_0-y)({z}-x_0-y)^* + \varepsilon^2\right]^{-1}\right\},
   \end{equation*}
	where $y=g_{\alpha, \beta,\gamma}$. 
\end{lemma}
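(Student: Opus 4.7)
The plan is to establish the composition identity $\Phi^{(\varepsilon)}_{\alpha\beta,\gamma}\circ J^{(\varepsilon)}_{\alpha\beta,\gamma}=\mathrm{id}_{\mathbb{C}}$ through the operator-valued subordination of Lemma~\ref{lemma:subordination-OA-1}, deduce from it that $J^{(\varepsilon)}_{\alpha\beta,\gamma}$ is injective and proper, and then invoke the invariance of domain theorem to upgrade it to a self-homeomorphism of $\mathbb{C}$. Smoothness of $\Phi^{(\varepsilon)}_{\alpha\beta,\gamma}$ from Proposition~\ref{prop:epsilon0-epsilon-analyticity} will then identify it with the continuous inverse.

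First, for $z\in\mathbb{C}$, I apply Lemma~\ref{lemma:subordination-OA-1} at $b=\Theta(z,\varepsilon)$ to produce a subordination triple $(\lambda,\varepsilon_1,\varepsilon_2)$ with $\lambda=z-\kappa(y,P_z^{\mathbf{t}}(\varepsilon)y)$. The triangular-elliptic cumulant formula~\eqref{eqn:cumulant-y-4} makes $\kappa(y,P_z^{\mathbf{t}}(\varepsilon)y)$ collapse to the scalar $\gamma\phi(P_z^{\mathbf{t}}(\varepsilon))=\gamma\phi\bigl\{(z-x_0-y)^*[(z-x_0-y)(z-x_0-y)^*+\varepsilon^2]^{-1}\bigr\}$, so $\lambda=J^{(\varepsilon)}_{\alpha\beta,\gamma}(z)$. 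Corollary~\ref{cor:epsilon-1-2-formulas} shows that $(D,\varepsilon_1\varepsilon_2)$ attached to this $\lambda$ solves the implicit system~\eqref{eqn:system-D-epsilon}, and the uniqueness in Proposition~\ref{prop:epsilon0-epsilon-analyticity} forces $\varepsilon_1\varepsilon_2=\varepsilon_0(\lambda,\varepsilon)^2$. Using the subordination identity $g_{21}=P_z^{\mathbf{t}}(\varepsilon)$ together with the formula for $g_{21}$ from Lemma~\ref{lem:compute-tilde-g}, I obtain
\[
\phi(P_z^{\mathbf{t}}(\varepsilon))=\phi\bigl\{(\lambda-x_0)^*[(\lambda-x_0)(\lambda-x_0)^*+\varepsilon_0(\lambda,\varepsilon)^2]^{-1}\bigr\},
\]
hence $z=\lambda+\gamma\phi(P_z^{\mathbf{t}}(\varepsilon))=\Phi^{(\varepsilon)}_{\alpha\beta,\gamma}(\lambda)$, which is the desired composition identity.

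The identity $\Phi^{(\varepsilon)}_{\alpha\beta,\gamma}\circ J^{(\varepsilon)}_{\alpha\beta,\gamma}=\mathrm{id}$ immediately makes $J^{(\varepsilon)}_{\alpha\beta,\gamma}$ injective. For properness, I would apply the standard functional-calculus bound $\|T^*(TT^*+\varepsilon^2)^{-1}\|\le 1/(2\varepsilon)$, which comes from $\sup_{t\ge 0}t/(t+\varepsilon^2)^2=1/(4\varepsilon^2)$, with $T=z-x_0-y$; this yields the uniform estimate $|J^{(\varepsilon)}_{\alpha\beta,\gamma}(z)-z|\le|\gamma|/(2\varepsilon)$, so $|J^{(\varepsilon)}_{\alpha\beta,\gamma}(z)|\to\infty$ as $|z|\to\infty$. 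Continuity of $J^{(\varepsilon)}_{\alpha\beta,\gamma}$ is standard from resolvent calculus applied to scalar shifts of $x_0+y$. By the Brouwer invariance of domain theorem, an injective continuous map $\mathbb{C}\to\mathbb{C}$ is a homeomorphism onto an open subset of $\mathbb{C}$, and properness makes this image also closed, hence equal to $\mathbb{C}$. Thus $J^{(\varepsilon)}_{\alpha\beta,\gamma}$ is a self-homeomorphism whose inverse is $\Phi^{(\varepsilon)}_{\alpha\beta,\gamma}$, already known to be $C^\infty$ by Proposition~\ref{prop:epsilon0-epsilon-analyticity}.

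The main obstacle I anticipate is the identification $\varepsilon_1\varepsilon_2=\varepsilon_0(\lambda,\varepsilon)^2$: the left-hand side is produced by the subordination construction of Lemma~\ref{lemma:subordination-OA-1} as a function of $z$, while the right-hand side is defined by a different implicit system in $\lambda$, and reconciling them relies critically on Corollary~\ref{cor:epsilon-1-2-formulas} together with the uniqueness in Proposition~\ref{prop:epsilon0-epsilon-analyticity}. Once this bridging identification is secured, the topological conclusion via invariance of domain is routine.
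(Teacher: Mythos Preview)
Your proof is correct. The identification $\varepsilon_1\varepsilon_2=\varepsilon_0(\lambda,\varepsilon)^2$ via the uniqueness in Proposition~\ref{prop:epsilon0-epsilon-analyticity} is indeed the crux, and once it is in hand your derivation of $\Phi^{(\varepsilon)}_{\alpha\beta,\gamma}\circ J^{(\varepsilon)}_{\alpha\beta,\gamma}=\mathrm{id}$ from the subordination identity $g_{21}=P_z^{\mathbf t}(\varepsilon)$ is clean and complete; the properness plus invariance-of-domain wrap-up is standard.

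The paper argues somewhat differently. It works at the operator-valued level, defining $H_1(b)=b+R_Y(G_X(b))$ so that $H_1\circ\Omega_1=\mathrm{id}$ on $\mathbb H^+(M_2(\mathcal B))$ and hence $\Omega_1\circ H_1=\mathrm{id}$ on the range of $\Omega_1$. The same bound $\|T^*(TT^*+\varepsilon^2)^{-1}\|\le 1/(2\varepsilon)$ is used, but to conclude that $J^{(\varepsilon)}_{\alpha\beta,\gamma}$ is \emph{surjective}, which in turn shows that every $p(\lambda)=\begin{bmatrix} i\varepsilon_1 & \lambda\\ \bar\lambda & i\varepsilon_2\end{bmatrix}$ lies in $\Omega_1(\{\Theta(z,\varepsilon):z\in\mathbb C\})$. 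One then computes $H_1(p(\lambda))=\Theta(\Phi^{(\varepsilon)}_{\alpha\beta,\gamma}(\lambda),\varepsilon)$ and applies $\Omega_1\circ H_1=\mathrm{id}$ on the range to obtain the \emph{other} composition $J^{(\varepsilon)}_{\alpha\beta,\gamma}\circ\Phi^{(\varepsilon)}_{\alpha\beta,\gamma}=\mathrm{id}$. So the paper establishes the opposite one-sided inverse via the algebraic left-inverse $H_1$ of $\Omega_1$, whereas you establish $\Phi\circ J=\mathrm{id}$ directly at the scalar level and then invoke a purely topological argument. Your route avoids the operator-valued $H_1$ machinery and the slightly awkward step of showing the range of $\Omega_1$ is large enough; the paper's route keeps the argument inside the subordination framework and does not appeal to invariance of domain. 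Both rely on the same two ingredients: the subordination relations of Lemma~\ref{lemma:subordination-OA-1} and the uniform bound on $J-\mathrm{id}$.
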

\begin{proof}
	Using notation in the proof of Lemma \ref{lemma:subordination-OA-1}, for $b\in \mathbb{H}^+(\mathcal{B})$,  we have (see \ref{eqn:Omega-composition-formula})
	\[
	\Omega_1(b)=b-R_Y(G_{X+Y}(b))=b-R_Y(G_X(\Omega_1(b))).
	\] 
	Set $H_1(b)=b+R_Y(G_X(b))$, then $H_1(\Omega_1(b))=b$ for any $b\in \mathbb{H}^+(\mathcal{B})$. Hence,  for any $p\in \Omega_1( \mathbb{H}^+(\mathcal{B}) )$, we have $\Omega_1(H_1(p))=p$. Suppose that there are $z, z' \in \mathbb{C}$ such that
	\begin{equation*}
	 J^{(\varepsilon)}_{\alpha, \beta,\gamma}(z) =  J^{(\varepsilon)}_{\alpha, \beta,\gamma}(z').
	\end{equation*}
Then, by Lemma \ref{lemma:subordination-OA-1} and Equation \eqref{eqn:4.30-in-work}, we have 
\begin{equation*}
	\Omega_1(\Theta(z,\varepsilon) )= \Omega_1(\Theta(z',\varepsilon)).
	\end{equation*}
It follows that $\Theta(z,\varepsilon) = \Theta(z',\varepsilon)$ and thus, $z = z'.$ Therefore, $ J^{(\varepsilon)}_{\alpha, \beta,\gamma}$ is an injective map. 

	Observe that 
\[
\Vert({z}-x_0-y)^*\big( ({z}-x_0-y)({z} -x_0-y)^*+\varepsilon^2 \big)^{-1}\Vert
\leq \frac{1}{2\varepsilon}.
\]
Hence $J^{(\varepsilon)}_{\alpha, \beta,\gamma}(z)\approx z$ for large $z$. In particular,  $J^{(\varepsilon)}_{\alpha, \beta,\gamma}(\infty) = \infty.$ Hence 
$J^{(\varepsilon)}_{\alpha, \beta,\gamma}$ can be considered as a $C^\infty$ map from $\mathbb{C} \cup \{\infty\} = \mathbb{S}^2$ to itself. Suppose that $J^{(\varepsilon)}_{\alpha, \beta,\gamma}$
is not surjective, then there exists $z_0 \in \mathbb{C}$ such that $z_0$ does not belong to the image of $J^{(\varepsilon)}_{\alpha, \beta,\gamma}.$ Hence, $J^{(\varepsilon)}_{\alpha, \beta,\gamma}$ is a continuous map from $\mathbb{S}^2$ into $\mathbb{S}^2 /\{z_0\} \cong \mathbb{R}^2.$ The Borsuk-Ulam theorem for dimension two (see \cite{RotmanBook} for example) states that if $f:\mathbb{S}^2\rightarrow\mathbb{R}^2$ is a continuous function then there exists $z \in \mathbb{S}^2$ such that $f(z) = f (-z)$. This contradicts to the injectivity of $J^{(\varepsilon)}_{\alpha, \beta,\gamma}$.
We then deduce that $J^{(\varepsilon)}_{\alpha, \beta,\gamma}$ is a surjective self-map of $\mathbb{S}^2$. 

By Lemma \ref{lemma:subordination-OA-1} (see Equation \ref{eqn:subordination-formula-3.18}), we deduce that $\Omega_1\big( \{\Theta(z,\varepsilon):z\in\mathbb{C} \}\big)$ 
contains any element of the form 
\[
p(\lambda)=\begin{bmatrix}
i \varepsilon_1 & \lambda\\
\overline{\lambda} & i\varepsilon_2
\end{bmatrix}, \qquad \lambda\in\mathbb{C},
\]
where $\varepsilon_1,\varepsilon_2$ are defined as in Lemma \ref{lemma:subordination-OA-1}. 
Hence, $\varepsilon_1,\varepsilon_2$ are determined by \eqref{eqn:system-D-epsilon} and are given by explicit formulas in Corollary \ref{cor:epsilon-1-2-formulas}.

Suppose $z=\Phi_{\alpha, \beta, \gamma}^{(\varepsilon)}(\lambda)$ for $\lambda\in\mathbb{C}$. 
We want to show that $\lambda=J_{\alpha, \beta,\gamma}^{(\varepsilon)}(z)$. 
Suppose $p(\lambda)=\Omega_1(b)$, where $b=\Theta(z',\varepsilon)$ for some $z'\in\mathbb{C}$. Then
$b=H_1(\Omega_1(b))=H_1(p(\lambda))$. One can check that
\[
  H_1(p(\lambda))=\Theta(z,\varepsilon)=\begin{bmatrix}
  i\varepsilon & z\\
  \overline{z} & i\varepsilon
  \end{bmatrix},
\]
which yields that $b=\Theta(z',\varepsilon)=\Theta(z,\varepsilon)$. Hence, $z=z'$ and $p(\lambda)=\Omega_1(\Theta(z,\varepsilon))$. Therefore,
\[
  \lambda=z-\kappa(y, P_z^{\bf t}(\varepsilon) y)=J_{\alpha, \beta,\gamma}^{(\varepsilon)}(z). 
\]
This finishes the proof. 
\end{proof}

The relation \eqref{eqn:Phi-regularization-preliminary-2-z} between $\lambda$ and $z$ in Lemma \ref{lemma:subordination-OA-1} plays a fundamental role in our work. 
We will show that the regularized Brown measure $\mu_{x_0+g_{_{\alpha, \beta,\gamma}}}^{(\varepsilon)}$ is the push-forward measure of the regularized Brown measure $\mu_{x_0+g_{_{\alpha, \beta,0}}}^{(\varepsilon)}$ under the map $\Phi_{\alpha, \beta, \gamma}^{(\varepsilon)}$. 

\begin{theorem}
	 For $\alpha\neq \beta$, 
	let $y= g_{_{\alpha, \beta,\gamma}} \in \mathcal{M}$ and $x_0\in \widetilde{N}$ be a random variable that is $*$-free from $y$ with amalgamation over $\mathcal{B}$ in $(\mathcal{A}, \mathbb{E}, \mathcal{B})$, and $\mathcal{N}$ is $*$-free from $\mathcal{B}$ in $(\mathcal{A}, \phi)$.  
	For any $\varepsilon>0$ and $\lambda \in \mathbb{C},$
	let $\varepsilon_0>0$ and $D<\frac{\log\alpha-\log\beta}{\alpha-\beta}$ be the solution for the system of equations \eqref{eqn:system-D-epsilon},
	and put
	\begin{equation}
	 \label{eqn:Phi-regularization-preliminary-2-z}
	  z=\lambda+	\gamma \cdot \phi \left\{ (\lambda-x_0)^* \left[ (\lambda -x_0)(\lambda -x_0)^* + \varepsilon_0^2 \right]^{-1}\right \}.
	\end{equation}
	We have the subordination equation 
	\begin{equation}
	  \label{eqn:sub-4.29}
			M_2(\phi) [G_{X+Y}(b)] =M_2(\phi)[ G_X(\Omega_1(b))]
	\end{equation}
	where 
	\begin{equation}
	\Omega_1 \left(\begin{bmatrix}
	i \varepsilon & z\\
	\bar{z} & i \varepsilon \end{bmatrix} \right) = \begin{bmatrix}
	i \varepsilon_1 & \lambda \\
	\bar{\lambda} & i \varepsilon_2 \end{bmatrix},
	\end{equation}
	and $\varepsilon_1, \varepsilon_2$ are given by Corollary \ref{cor:epsilon-1-2-formulas}. 
	Moreover, the subordination equation \eqref{eqn:sub-4.29} 
	for $b=\begin{bmatrix}
	i \varepsilon & z\\
	\bar{z} & i \varepsilon \end{bmatrix}$ gives
	\begin{equation}
	\label{eqn:p-lambda-t}
	p_\lambda^{\bf t_0} (\varepsilon_0) = p_z^{\bf t} (\varepsilon), \;\; p_{\bar{\lambda}}^{\bf t_0} (\varepsilon_0) = p_{\bar{z}}^{\bf t} (\varepsilon),
	\end{equation}
	where ${\bf t}=(\alpha,\beta,\gamma)$ and ${\bf t_0}=(0,0,0)$ following notation in \eqref{eqn:p-lambda-def}, and
	\begin{equation}
	\label{eqn:q-epsilon-t}
	q_\varepsilon^{\bf t}(z) = \frac{\varepsilon(e^{(\alpha-\beta)D}-1) }{\alpha-\beta e^{(\alpha-\beta) D}}.
	\end{equation}
\end{theorem}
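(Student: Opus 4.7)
The plan is to assemble the ingredients already developed — Lemma \ref{lemma:subordination-OA-1}, Lemma \ref{lemma:D-epsilon-product-det}, Lemma \ref{lem:compute-tilde-g}, Corollary \ref{cor:epsilon-1-2-formulas}, Proposition \ref{prop:epsilon0-epsilon-analyticity} and Lemma \ref{lemma:Phi-varepsilon-alphabeta-injective} — and to reverse the $z \mapsto \lambda$ direction of Lemma \ref{lemma:subordination-OA-1} so that we may instead start from a given $\lambda$. First I would fix $\lambda \in \mathbb{C}$ and $\varepsilon > 0$ and solve the system \eqref{eqn:system-D-epsilon} for $\varepsilon_0$ and $D$; this has a unique solution by Proposition \ref{prop:epsilon0-epsilon-analyticity}. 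Then I define $z$ by the given formula \eqref{eqn:Phi-regularization-preliminary-2-z}, which is precisely $z = \Phi_{\alpha\beta,\gamma}^{(\varepsilon)}(\lambda)$.

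Next I would apply Lemma \ref{lemma:subordination-OA-1} to this $z$. Since the cumulant formula \eqref{eqn:cumulant-y-4} gives $\kappa(y, P_z^{{\bf t}}(\varepsilon) y) = \gamma\, \phi(P_z^{{\bf t}}(\varepsilon)) = \gamma\, p_z^{{\bf t}}(\varepsilon)$, the lemma produces $\tilde\lambda = z - \gamma\, p_z^{{\bf t}}(\varepsilon) = J_{\alpha\beta,\gamma}^{(\varepsilon)}(z)$. By Lemma \ref{lemma:Phi-varepsilon-alphabeta-injective}, $J_{\alpha\beta,\gamma}^{(\varepsilon)}$ inverts $\Phi_{\alpha\beta,\gamma}^{(\varepsilon)}$, so $\tilde\lambda = \lambda$. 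The associated diagonal entries $\tilde\varepsilon_1, \tilde\varepsilon_2$ are given by Corollary \ref{cor:epsilon-1-2-formulas} in terms of a constant $\tilde D$ satisfying the implicit equation \eqref{eqn:det-D-implicit-formula} with $\lambda$ fixed; this is exactly the system \eqref{eqn:system-D-epsilon}, whose unique solution is our $(D, \varepsilon_0)$. Hence $\tilde D = D$, $\sqrt{\tilde\varepsilon_1 \tilde\varepsilon_2} = \varepsilon_0$, and the diagonal entries of $\Omega_1(\Theta(z,\varepsilon))$ are exactly $i\varepsilon_1, i\varepsilon_2$ as claimed. Taking $M_2(\phi)$ of the operator-valued subordination $G_{X+Y}(\Theta(z,\varepsilon)) = G_X(\Omega_1(\Theta(z,\varepsilon)))$ yields \eqref{eqn:sub-4.29}.

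Finally, I would read off the scalar identities \eqref{eqn:p-lambda-t} and \eqref{eqn:q-epsilon-t} from the matrix equality. By Lemma \ref{lem:compute-tilde-g}, the off-diagonal entries $g_{21}$ and $g_{12}$ are constants in $L^\infty[0,1]$ equal to $p_\lambda^{{\bf t_0}}(\varepsilon_0)$ and $p_{\bar\lambda}^{{\bf t_0}}(\varepsilon_0)$ respectively; combined with the identification \eqref{eqn:sub-operator-1}, which matches them with $P_z^{{\bf t}}(\varepsilon)$ and $P_{\bar z}^{{\bf t}}(\varepsilon)$, taking $\phi$ of both sides yields \eqref{eqn:p-lambda-t}. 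For \eqref{eqn:q-epsilon-t}, I would use the diagonal relation $g_{22} = -i\varepsilon_1 D = -i\widetilde{Q}_\varepsilon^{{\bf t}}(z)$ from Lemma \ref{lemma:subordination-OA-1}, together with the already computed value $\phi(\varepsilon_1) = \varepsilon(e^{(\alpha-\beta)D} - 1)/\bigl(D(\alpha - \beta e^{(\alpha-\beta)D})\bigr)$ in \eqref{eqn:phi-epsilon-1}; multiplying by $D$ and using $q_\varepsilon^{{\bf t}}(z) = \phi(\widetilde{Q}_\varepsilon^{{\bf t}}(z))$ produces \eqref{eqn:q-epsilon-t}. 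The only real obstacle is the consistency check that the two implicit specifications of $\varepsilon_0$ — one as a function of $\lambda$ via \eqref{eqn:system-D-epsilon}, the other as $\sqrt{\varepsilon_1\varepsilon_2}$ arising from $z$ via Corollary \ref{cor:epsilon-1-2-formulas} — coincide, and this is handled cleanly by the uniqueness statement of Proposition \ref{prop:epsilon0-epsilon-analyticity} once both descriptions are recognized as solving the same implicit equation.
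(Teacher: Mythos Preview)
Your proposal is correct and follows essentially the same route as the paper's proof: invoke Lemma~\ref{lemma:Phi-varepsilon-alphabeta-injective} to pass back and forth between $\lambda$ and $z$, use Lemma~\ref{lem:compute-tilde-g} together with the subordination relation \eqref{eqn:sub-operator-1} to identify the off-diagonal entries, and compute $q_\varepsilon^{\bf t}(z)$ from $\phi(\varepsilon_i)D$ via \eqref{eqn:phi-epsilon-1}. The only cosmetic difference is that the paper uses $g_{11}$ and $\phi(\varepsilon_2)$ where you use $g_{22}$ and $\phi(\varepsilon_1)$, which is immaterial since $\phi(\varepsilon_1)=\phi(\varepsilon_2)$; your explicit consistency check via the uniqueness in Proposition~\ref{prop:epsilon0-epsilon-analyticity} is a welcome clarification of a step the paper leaves implicit.
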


\begin{proof}
By Lemma \ref{lemma:Phi-varepsilon-alphabeta-injective}, fix $\varepsilon>0$, $\lambda$ and $z$ are determined by each other by two maps $\Phi_{_{\alpha, \beta,\gamma}}^{(\varepsilon)}$ and $J_{{\alpha, \beta,\gamma}}^{(\varepsilon)}$. 
	We note that, by Lemma \ref{lem:compute-tilde-g} and the subordination relation \eqref{eqn:sub-operator-1}, we have
	\[
	z=\lambda+\kappa (y, P_z^{\bf t}(\varepsilon) y)=
	\lambda+\kappa (y, g_{21} (\lambda,\varepsilon_1,\varepsilon_2) y)
	=\lambda+\gamma \cdot p_\lambda^{\bf t_0}(\varepsilon_0),
	\]
	where we used the fact that $g_{21}(\lambda,\varepsilon_1,\varepsilon_2)$ is constant and $\varepsilon_0^2=\varepsilon_1\varepsilon_2$ (see Lemma \ref{lem:compute-tilde-g} and \eqref{eqn:p-lambda-def}) to derive the second identity. 
	The formula for subordination function $\Omega_1$ is the same as Lemma \ref{lemma:subordination-OA-1}. 
	The \eqref{eqn:p-lambda-t} follows by taking the trace for \eqref{eqn:sub-operator-1}. Note that, by the first subordination relation in \eqref{eqn:sub-operator-1}, we have
	\begin{align*}
	i\phi(g_{11})=\phi (Q_\varepsilon^{\bf t}(z))=q_\varepsilon^{\bf t}(z),
	\end{align*}
	where $Q_\varepsilon^{\bf t}(z)$ and $q_\varepsilon^{\bf t}(z)$ are defined in \eqref{eqn:p-lambda-def}. We also have 
	 $i\phi(g_{11})=\phi(\varepsilon_2)D$. By \eqref{eqn:phi-epsilon-1}, we then have
	\[
	q_\varepsilon^{\bf t}(z)=\phi(\varepsilon_2)D=\frac{\varepsilon(e^{(\alpha-\beta)D}-1) }{\alpha-\beta e^{(\alpha-\beta) D}}.
	\]
	Finishing the proof. 
\end{proof}

\subsection{The regularized Brown measures and regularized map}

The following result is a reformulation of the argument from \cite[Theorem 5.2]{Zhong2021Brown}. For completeness, we provide proof. 
\begin{lemma}
	\label{lemma:subharmonic-pushforward}
Let  $S_1, S_2$ be two $C^\infty$ subharmonic function on $\mathbb{C}$. Given $\gamma\in\mathbb{C}$, denote the map $\Phi$ by
\[
   \Phi(\lambda)=\lambda+\gamma\frac{\partial S_1(\lambda)}{\partial\lambda}.
\]
Assume that: 
\begin{enumerate}[\rm(1)]
\item the map $\Phi$ is a homeomorphism of the complex plane, and
\item for any $\lambda\in\mathbb{C}$, we have 
\begin{equation*}
\frac{\partial S_1(\lambda)}{\partial\lambda}=\frac{\partial S_2(z)}{\partial z}, \quad
\frac{\partial S_1(\lambda)}{\partial\overline\lambda}=\frac{\partial S_2( z)}{\partial \overline z},
\end{equation*}
where $z=\Phi(\lambda)$. 
\end{enumerate}
Then, the Riesz measure $\mu_2$ of $S_2$ is the push-forward measure of the Riesz measure $\mu_1$ of $S_1$ under the map $\Phi$. In other words, for any Borel measurable set $E$, we have 
\[
    \mu_2(E)=\mu_1(\Phi^{-1}(E)). 
\]
\end{lemma}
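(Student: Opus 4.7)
The plan is to reduce the pushforward identity to a chain-rule computation, by testing against an arbitrary $\psi\in C_c^\infty(\mathbb{C})$ and performing two integrations by parts. Since $S_1,S_2\in C^\infty$, the Riesz measures are absolutely continuous with smooth densities, $\mu_j=\tfrac{2}{\pi}\partial_z\partial_{\overline z}S_j\cdot dA$, so it suffices to check that $\int\psi\,d\mu_2=\int(\psi\circ\Phi)\,d\mu_1$ for every $\psi\in C_c^\infty(\mathbb{C})$. First I would integrate by parts once and then change variables via $z=\Phi(\lambda)$, obtaining
\[
\int_{\mathbb{C}}\!\psi\,d\mu_2=-\frac{2}{\pi}\!\int_{\mathbb{C}}\!\partial_z\psi(z)\,\partial_{\overline z}S_2(z)\,dA(z)=-\frac{2}{\pi}\!\int_{\mathbb{C}}\!\partial_z\psi(\Phi(\lambda))\,\partial_{\overline\lambda}S_1(\lambda)\,|J_\Phi(\lambda)|\,dA(\lambda),
\]
where the area formula for the $C^\infty$ homeomorphism $\Phi$ is used together with hypothesis~(2) to replace $\partial_{\overline z}S_2(\Phi(\lambda))$ by $\partial_{\overline\lambda}S_1(\lambda)$, and $J_\Phi=|\partial_\lambda\Phi|^2-|\partial_{\overline\lambda}\Phi|^2$ is the real Jacobian.

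The next step is to exploit the explicit form $\Phi=\lambda+\gamma\,\partial_\lambda S_1$, which gives $\overline{\partial_\lambda\Phi}=1+\overline\gamma\,\partial_{\overline\lambda}^2 S_1$ and $\overline{\partial_{\overline\lambda}\Phi}=\overline\gamma\,\partial_\lambda\partial_{\overline\lambda}S_1$. Inverting the $2\times 2$ chain-rule system yields the identity $\partial_z\psi(\Phi(\lambda))\,J_\Phi=\overline{\partial_\lambda\Phi}\,\partial_\lambda(\psi\circ\Phi)-\overline{\partial_{\overline\lambda}\Phi}\,\partial_{\overline\lambda}(\psi\circ\Phi)$. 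The key algebraic observation is that the two factors acting on $\partial_{\overline\lambda}S_1$ are themselves Wirtinger derivatives of $S_1$-built expressions,
\[
\overline{\partial_\lambda\Phi}\cdot\partial_{\overline\lambda}S_1=\partial_{\overline\lambda}\!\Bigl[S_1+\tfrac{\overline\gamma}{2}(\partial_{\overline\lambda}S_1)^2\Bigr],\qquad
\overline{\partial_{\overline\lambda}\Phi}\cdot\partial_{\overline\lambda}S_1=\tfrac{\overline\gamma}{2}\,\partial_\lambda(\partial_{\overline\lambda}S_1)^2.
\]
With this in hand I would integrate by parts once more to shift $\partial_\lambda$ and $\partial_{\overline\lambda}$ from $\psi\circ\Phi$ (compactly supported because $\Phi$ is a homeomorphism) onto these $S_1$-expressions; the two correction terms involving $(\partial_{\overline\lambda}S_1)^2$ appear as $\partial_\lambda\partial_{\overline\lambda}$ and $\partial_{\overline\lambda}\partial_\lambda$ of the same quantity and cancel by equality of mixed partials, leaving exactly $\tfrac{2}{\pi}\int(\psi\circ\Phi)\,\partial_\lambda\partial_{\overline\lambda}S_1\,dA=\int(\psi\circ\Phi)\,d\mu_1$.

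The main obstacle I expect is reconciling the area formula, which carries $|J_\Phi|$, with the chain-rule identity, which is stated in terms of $J_\Phi$ without the absolute value. This is addressed by noting that $\Phi$ is a $C^\infty$ homeomorphism of $\mathbb{C}$ equal to the identity plus a gradient term, and so has topological degree $+1$; combined with Sard's theorem, this forces $J_\Phi\geq 0$ almost everywhere, so $|J_\Phi|=J_\Phi$ inside the integrals and the computation goes through without sign ambiguity. The remainder is routine bookkeeping: the two integrations by parts produce no boundary terms since $\psi\circ\Phi$ is compactly supported, and the resulting distributional identity, holding for every $\psi\in C_c^\infty(\mathbb{C})$, yields $\mu_2=\Phi_\ast\mu_1$ by the Riesz representation theorem.
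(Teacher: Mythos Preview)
Your proof is correct and rests on the same key computation as the paper's: the ``extra'' terms produced by the change of variables assemble into an exact expression and cancel. The paper packages this via real differential $1$-forms and Green's theorem (writing $\Phi^*d_2 = d_1 + d(\text{exact})$ and integrating over $\partial D$), whereas you use Wirtinger derivatives, test functions, and two integrations by parts; your identity $\overline{\partial_\lambda\Phi}\cdot\partial_{\overline\lambda}S_1=\partial_{\overline\lambda}\bigl[S_1+\tfrac{\overline\gamma}{2}(\partial_{\overline\lambda}S_1)^2\bigr]$, $\overline{\partial_{\overline\lambda}\Phi}\cdot\partial_{\overline\lambda}S_1=\tfrac{\overline\gamma}{2}\partial_\lambda(\partial_{\overline\lambda}S_1)^2$ is exactly the complex-coordinate version of the paper's observation that the pullback differs from $-Q^{(1)}d\lambda_1+P^{(1)}d\lambda_2$ by a closed form.

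The one soft spot you correctly flag is the sign $|J_\Phi|=J_\Phi$. Your assertion ``identity plus a gradient term $\Rightarrow$ degree $+1$'' is not self-evident from the lemma's abstract hypotheses; what actually gives degree $+1$ in the paper's setting is that $\Phi(\lambda)-\lambda$ stays bounded (so $\Phi\sim\mathrm{id}$ at infinity). The paper's own proof tacitly uses the same orientation assumption when invoking Green's formula on $\Phi(D)$, so this is not a gap relative to the paper, but if you want the argument airtight you should either add $\Phi\sim\mathrm{id}$ at infinity as a hypothesis or note it holds in the intended application.
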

\begin{proof}
Let $\gamma=\gamma_1+i\gamma_2$. For $\lambda=\lambda_1+i\lambda_2$ and $z=z_1+iz_2$, we denote the vector fields 
\[
   P^{(1)}(\lambda_1,\lambda_2)=\frac{1}{2}\frac{\partial S_1(\lambda)}{\partial\lambda_1}, \quad
    Q^{(1)}(\lambda_1,\lambda_2)=\frac{1}{2}\frac{\partial S_1(\lambda)}{\partial\lambda_2}
\]
and
\[
  P^{(2)}(z_1,z_2)=\frac{1}{2}\frac{\partial S_2(\lambda)}{\partial z_1}, \quad
   Q^{(2)}(z_1,z_2)=\frac{1}{2}\frac{\partial S_2(\lambda)}{\partial z_2}.
\]
Then, $z=\Phi(\lambda)$ can be expressed as
\[
  \begin{aligned}
   z_1&=\lambda_1+\left( \gamma_1 P^{(1)}(\lambda_1,\lambda_2)+\gamma_2 Q^{(1)}(\lambda_1,\lambda_2) \right),\\
   z_2&=\lambda_2+\left( \gamma_2 P^{(1)}(\lambda_1,\lambda_2)-\gamma_1 Q^{(1)}(\lambda_1,\lambda_2) \right).
  \end{aligned}
\]
Denote the differential $1-$form 
\[
  d_2=-Q^{(2)}(z_1, z_2)dz_1+P^{(1)}(z_1,z_2)dz_2.
\]
Let $d_1$ be the pulled-back $1-$form of $d_2$ under the map $\Phi$. In this case, it means that we change the variable from $(z_1, z_2)$ to $(\lambda_1, \lambda_2)$ because $\Phi$ is one-to-one. We have 
\begin{align*}
	 d_1&=-Q^{(2)}(z_1, z_2)d\big(\lambda_1+ \gamma_1 P^{(1)}+\gamma_2 Q^{(1)} \big)\\
	   &\qquad\qquad +P^{(2)}(z_1, z_2)d\big(\lambda_2+ \gamma_2 P^{(1)}-\gamma_1 Q^{(1)} \big)\\
	   &=-Q^{(1)}d\big(\lambda_1+ \gamma_1 P^{(1)}+\gamma_2 Q^{(1)} \big)\\
	   &\qquad\qquad +P^{(1)}d\big(\lambda_2+ \gamma_2 P^{(1)}-\gamma_1 Q^{(1)} \big).
\end{align*}
We can rewrite it as
\[
   d_1=-Q^{(1)}d\lambda_1+P^{(1)}d\lambda_2
     +d\left[ -\gamma_1 P^{(1)}Q^{(1)} +\frac{1}{2}\big( (P^{(1)})^2-(Q^{(1)})^2 \big) \right].
\]
Hence, for any simply connected domain ${D}$ with piecewise smooth boundary, we have 
\[
  \int_{\partial\Phi(D)}d_2 =\int_{\partial D} d_1. 
\]
By Green's formula and the definitions of $1-$form $d_1$ and $1-$form $d_2$, we have 
\[
  \mu_2(\Phi(D))=\frac{1}{2\pi}\int_{\Phi(D)} \nabla^2 S_2 dz_1dz_2
    =\frac{1}{2\pi}\int_{\partial\Phi(D)}d_2
\]
and similarly, 
\[
  \mu_1(D)=\frac{1}{2\pi}\int_{\partial D} d_1.
\]
This finishes the proof. 
\end{proof}

\begin{theorem}
	\label{thm:Brown-regularized-push-forward-property}
	For $\alpha\neq\beta$, 
	the regularized Brown measure $\mu_{x_0+g_{_{\alpha, \beta,\gamma}}}^{(\varepsilon)}$ is the push-forward measure of the regularized Brown measure $\mu_{x_0+g_{_{\alpha, \beta,0}}}^{(\varepsilon)}$ under the map $\Phi_{\alpha, \beta, \gamma}^{(\varepsilon)}$. 
\end{theorem}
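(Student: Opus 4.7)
The plan is to invoke Lemma \ref{lemma:subharmonic-pushforward} with
\[
S_1(\lambda):=S(x_0+g_{_{\alpha\beta,0}},\lambda,\varepsilon),\qquad S_2(z):=S(x_0+g_{_{\alpha\beta,\gamma}},z,\varepsilon),\qquad \Phi:=\Phi_{\alpha\beta,\gamma}^{(\varepsilon)}.
\]
Both $S_i$ are $C^\infty$ subharmonic functions on $\mathbb{C}$ (since $\varepsilon>0$ keeps the argument of the logarithm uniformly bounded away from $0$), and by \eqref{eqn:defn-S} their Riesz measures are precisely the regularized Brown measures $\mu_{x_0+g_{_{\alpha\beta,0}}}^{(\varepsilon)}$ and $\mu_{x_0+g_{_{\alpha\beta,\gamma}}}^{(\varepsilon)}$ appearing in the statement. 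It therefore suffices to verify the two hypotheses of Lemma \ref{lemma:subharmonic-pushforward}.

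Hypothesis (1), that $\Phi_{\alpha\beta,\gamma}^{(\varepsilon)}$ is a homeomorphism of $\mathbb{C}$, is exactly Lemma \ref{lemma:Phi-varepsilon-alphabeta-injective}. To verify that $\Phi$ has the form $\lambda\mapsto\lambda+\gamma\,\partial S_1(\lambda)/\partial\lambda$, I would perform a standard Wirtinger-derivative computation based on the tracial identity $\phi(\log(A^*A+\varepsilon^2))=\phi(\log(AA^*+\varepsilon^2))$ to obtain
\[
\frac{\partial S_1(\lambda)}{\partial\lambda}=p_\lambda^{(\alpha,\beta,0)}(\varepsilon),\qquad \frac{\partial S_2(z)}{\partial z}=p_z^{(\alpha,\beta,\gamma)}(\varepsilon),
\]
in the notation of \eqref{eqn:p-lambda-def} with the superscripts indicating the parameter triple ${\bf t}$. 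Specializing \eqref{eqn:p-lambda-t} to $\gamma=0$ (whereupon $z=\lambda$) yields $p_\lambda^{(\alpha,\beta,0)}(\varepsilon)=p_\lambda^{(0,0,0)}(\varepsilon_0(\lambda,\varepsilon))$; this specialization is unambiguous because the defining system \eqref{eqn:system-D-epsilon} for $\varepsilon_0$ is manifestly $\gamma$-independent. Comparing with \eqref{eqn:defn-Phi-regularization-map} immediately gives $\Phi_{\alpha\beta,\gamma}^{(\varepsilon)}(\lambda)=\lambda+\gamma\,\partial S_1(\lambda)/\partial\lambda$.

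For hypothesis (2), with $z=\Phi_{\alpha\beta,\gamma}^{(\varepsilon)}(\lambda)$ I would chain the full subordination identity \eqref{eqn:p-lambda-t} with its $\gamma=0$ specialization:
\[
\frac{\partial S_2(z)}{\partial z}=p_z^{(\alpha,\beta,\gamma)}(\varepsilon)=p_\lambda^{(0,0,0)}(\varepsilon_0(\lambda,\varepsilon))=p_\lambda^{(\alpha,\beta,0)}(\varepsilon)=\frac{\partial S_1(\lambda)}{\partial\lambda},
\]
with the conjugate identity for the $\overline{\lambda}$-derivatives obtained by complex conjugation (equivalently, from the second half of \eqref{eqn:p-lambda-t}). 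Lemma \ref{lemma:subharmonic-pushforward} then delivers the desired push-forward relation. The main conceptual content has already been absorbed into the upstream machinery --- the operator-valued subordination producing \eqref{eqn:p-lambda-t}, the bijectivity of $\Phi_{\alpha\beta,\gamma}^{(\varepsilon)}$ via Lemma \ref{lemma:Phi-varepsilon-alphabeta-injective}, and the Green's-theorem computation inside Lemma \ref{lemma:subharmonic-pushforward}. The one point to watch is the $\gamma$-independence of $\varepsilon_0$, which is what allows the clean identification $p_\lambda^{(\alpha,\beta,0)}(\varepsilon)=p_\lambda^{(0,0,0)}(\varepsilon_0(\lambda,\varepsilon))$ used in both hypotheses; once this is observed, the argument is essentially a bookkeeping assembly.
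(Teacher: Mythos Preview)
Your proposal is correct and follows essentially the same approach as the paper: both arguments apply Lemma \ref{lemma:subharmonic-pushforward} to $S_1=S(x_0+g_{\alpha\beta,0},\lambda,\varepsilon)$ and $S_2=S(x_0+g_{\alpha\beta,\gamma},z,\varepsilon)$, invoke Lemma \ref{lemma:Phi-varepsilon-alphabeta-injective} for hypothesis~(1), and verify hypothesis~(2) by specializing the subordination identity \eqref{eqn:p-lambda-t} to $\gamma=0$ (using the $\gamma$-independence of $\varepsilon_0$) and then chaining it with the general-$\gamma$ case. The paper routes the computation through an auxiliary $S_0(\lambda)=S(x_0,\lambda,\varepsilon_0)$ before identifying $\partial S_0/\partial\lambda=\partial S_1/\partial\lambda$, whereas you go directly to $S_1$, but this is a cosmetic difference.
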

\begin{proof}
	Recall that $S(x,\lambda,\varepsilon)=\phi\bigg(\log \big( (x-\lambda)^*(x-\lambda)+\varepsilon^2\big) \bigg)$. 
We set  $S_1(\lambda)=S(x_0+g_{\alpha, \beta,0}, \lambda,\varepsilon)$ and $S_2(z)=S(x_0+g_{\alpha, \beta,\gamma}, z,\varepsilon)$, and $S_0(\lambda)=S(x_0,\lambda,\varepsilon_0)$,
where $\varepsilon_0=\varepsilon_0(\lambda,\varepsilon)$ and $z=
\Phi_{\alpha, \beta, \gamma}^{(\varepsilon)}(\lambda)$ defined as \eqref{eqn:defn-Phi-regularization-map}. 
 Then the map $\Phi^{(\varepsilon)}_{\alpha, \beta, \gamma}$ can be rewritten as
\[
   \Phi^{(\varepsilon)}_{\alpha, \beta, \gamma}(\lambda)=\lambda+\gamma \frac{\partial S_0}{\partial\lambda}. 
\]
The map $\Phi^{(\varepsilon)}_{\alpha, \beta, \gamma}$ is a homeomorphism by Lemma \ref{lemma:Phi-varepsilon-alphabeta-injective}. By Definition \eqref{eqn:p-lambda-def}, we have 
\[
    p_\lambda^{\bf t_0} (\varepsilon_0)=\frac{\partial S_0(\lambda)}{\partial\lambda},
    \qquad p_z^{\bf t}(\varepsilon) =\frac{\partial S_2(z)}{\partial z},
\]
where ${\bf t_0}=(0,0,0)$ and ${\bf t}=(\alpha,\beta,\gamma)$. 
In addition, since $\Phi_{_{\alpha, \beta, 0}}^{(\varepsilon)}(\lambda)=\lambda$, we also have 
\[
   p_{\lambda}^{\bf t(0)}(\varepsilon)=\frac{\partial S_1(\lambda)}{\partial\lambda},
\]
where ${\bf t(0)}=(\alpha,\beta, 0)$. 
Hence, by choosing $\gamma=0$ and an arbitrary eligible $\gamma$, the subordination relation $p_\lambda^{\bf t_0} (\varepsilon_0) = p_z^{\bf t} (\varepsilon)$ from \eqref{eqn:p-lambda-t} shows 
\[
  \frac{\partial S_0(\lambda)}{\partial\lambda}= \frac{\partial S_1(\lambda)}{\partial\lambda}=\frac{\partial S_2(z)}{\partial z}
\]
and 
\[
  \frac{\partial S_0(\lambda)}{\partial\overline\lambda}= \frac{\partial S_1(\lambda)}{\partial\overline\lambda}=\frac{\partial S_2(z)}{\partial \overline z}. 
\]
Hence, $\Phi^{(\varepsilon)}_{\alpha, \beta, \gamma}(\lambda)=\lambda+\gamma \frac{\partial S_1(\lambda)}{\partial \lambda}$ and  conditions in Lemma \ref{lemma:subharmonic-pushforward} are satisfied, which yields the desired result. 
\end{proof}


\section{The Brown measure formulas}

In this section, we calculate the Brown measure of $x_0+g_{_{\alpha, \beta,0}},$ where $x_0\in\log^+(\mathcal{N})\subset \widetilde{\mathcal{N}}$ is a random variable that is $*$-free from $g_{_{\alpha, \beta,0}} \in \mathcal{M}$ with amalgamation over $\mathcal{B}$ in the operator-valued $W^*$-probability space $(\mathcal{A}, \mathbb{E}, \mathcal{B})$. Recall that $\mathcal{N}$ is $*$-free from $\mathcal{B}$ in $(\mathcal{A}, \phi).$

\subsection{The limit of subordination functions}
\label{section:sub-oa-limit}
Throughout this subsection, we choose $\alpha\neq \beta$. 
Recall that the subordination relation $G_{X+Y}(b) = G_X(\Omega_1(b))$ gives 
\begin{equation*}
\begin{split}
{g}_{11}  = -i  Q_\varepsilon^{\bf t} (z), \; {g}_{22} = -i  \widetilde{Q}_\varepsilon^{\bf t} (z), \; {g}_{12}  = P_{\bar{z}}^{\bf t} (\varepsilon), \; {g}_{21} = P_z^{\bf t} (\varepsilon),
\end{split}
\end{equation*}
where ${\bf t}=(\alpha,\beta,\gamma)$ and $g_{ij}$ was defined in \eqref{eqn:defn-g-ij-entries}. 

We now turn to study the limit of $\varepsilon_0$ as $\varepsilon$ goes to zero.  Set
\[
H(\lambda)=(\phi (\vert x_0-\lambda\vert^{-2}) )^{-1}=\left(\int_0^\infty \frac{1}{u^2}d\mu_{\vert x_0-\lambda\vert}(u) \right)^{-1}.
\]
If $\alpha>\beta>0$, then $\sigma=(\alpha-\beta)D>0$. We have 
\[
\frac{1}{\alpha-\beta}=F(\sigma,\lambda,\varepsilon)
=\frac{1}{\sigma}\cdot \phi\left\{ \left[ 
(\lambda-x_0)^*(\lambda-x_0) + \varepsilon^2 \frac{(\alpha-\beta)^2}{(\alpha-\beta e^{\sigma})^2 e^{-\sigma}}
\right] ^{-1} \right\}
\leq \frac{1}{\sigma H},
\]
which implies that
\[
\sigma\leq \frac{\alpha-\beta}{H}.
\]
Similarly, if $0\leq \alpha<\beta$, then
\[
\sigma\geq \frac{\alpha-\beta}{H}.
\]
We now set
\begin{equation}
\label{eqn:denf-Omega}
{\Xi_{\alpha,\beta}}=\left\{ \lambda: H< \frac{\alpha-\beta}{\log\alpha-\log\beta}  \right\},
\end{equation}
and denote $\sigma_0(\lambda)=\min\{ \log\left(\frac{\alpha}{\beta}\right), \frac{\alpha-\beta}{H}\}$
if $\alpha>\beta>0$. 

For any $\varepsilon\geq 0$, denote the function $f_\varepsilon$ of $\lambda$ by 
\[
f_\varepsilon(\lambda)=\phi\big((\vert x_0-\lambda\vert^2+\varepsilon)^{-1} \big).
\]
For any $\varepsilon>0$, the function $f_\varepsilon$ is a continuous function of $\lambda$. As the increasing limit of the $(f_\varepsilon)_{\varepsilon>0}$, we deduce that $f_0$ is lower semi-continuous. Hence, the set ${\Xi_{\alpha,\beta}}$ is open. 
\begin{lemma}
	\label{lemma:limit-s-lambda}
	If $\lambda\in{\Xi_{\alpha,\beta}}$, then $\lim_{\varepsilon\rightarrow 0^+}\varepsilon_0(\lambda,\varepsilon)$ exists and the limit is a finite positive number $s$ determined by
	\begin{equation}
	\label{eqn:fixed-pt-limit-s-0}
	\frac{\log\alpha-\log\beta}{\alpha-\beta} =
	\phi\left\{ \left[ 
	(\lambda-x_0)^*(\lambda-x_0) + s^2
	\right] ^{-1} \right\}.
	\end{equation}
	If $\lambda\notin {\Xi_{\alpha,\beta}}$, then $\lim_{\varepsilon\rightarrow 0^+}\varepsilon_0(\lambda,\varepsilon)=0$. 
	Denote the limit by $s=s(\lambda)$ for $\lambda\in\mathbb{C}$, then $s(\lambda)$ is a $C^\infty$ function of $\lambda\in{\Xi_{\alpha,\beta}}$. 
\end{lemma}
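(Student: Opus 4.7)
The plan is to fix $\lambda \in \mathbb{C}$ and analyze the system \eqref{eqn:system-D-epsilon} as $\varepsilon \to 0^+$ through the auxiliary function
\[
R(\lambda, r) := \phi\{[(\lambda-x_0)^*(\lambda-x_0) + r^2]^{-1}\}, \quad r > 0,
\]
together with its boundary value $R(\lambda, 0^+) = \phi(|\lambda - x_0|^{-2})$ (possibly $+\infty$) by monotone convergence. The first equation of \eqref{eqn:system-D-epsilon} reads $D = R(\lambda, \varepsilon_0)$, and $R(\lambda, \cdot)$ is strictly decreasing on $(0, \infty)$ with $R(\lambda, \infty) = 0$. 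By Remark \ref{remark:D-inequality}, $D$ is confined to the interval $I := (0, \frac{\log\alpha - \log\beta}{\alpha - \beta})$ throughout.

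I would first show that $\varepsilon_0$ stays bounded as $\varepsilon \to 0^+$: if $\varepsilon_0 \to \infty$ along a subsequence, then $D \to 0$, so the second equation of \eqref{eqn:system-D-epsilon} gives $\varepsilon^2 = \varepsilon_0^2(1 + o(1)) \to \infty$, contradicting $\varepsilon \to 0$. By compactness, every subsequence of $\varepsilon_0(\lambda, \varepsilon)$ admits a further subsequence converging to some $\varepsilon_0^* \in [0, \infty)$, with $D \to D^* \in \overline{I}$. For $\lambda \in \Xi_{\alpha,\beta}$, the case $\varepsilon_0^* = 0$ is impossible because it would force $D^* = \phi(|\lambda-x_0|^{-2}) > \frac{\log\alpha - \log\beta}{\alpha - \beta}$, outside $\overline{I}$; hence $\varepsilon_0^* > 0$. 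Passing to the limit in the second equation of \eqref{eqn:system-D-epsilon} then forces $(\alpha - \beta e^{(\alpha-\beta)D^*})^2 = 0$, i.e.\ $D^* = \frac{\log\alpha - \log\beta}{\alpha - \beta}$, so $R(\lambda, \varepsilon_0^*) = \frac{\log\alpha - \log\beta}{\alpha - \beta}$; this is exactly \eqref{eqn:fixed-pt-limit-s-0}, whose positive solution is unique by strict monotonicity of $R(\lambda, \cdot)$. Every subsequential limit therefore equals $s$, so $\varepsilon_0(\lambda, \varepsilon) \to s$.

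For $\lambda \notin \Xi_{\alpha,\beta}$, one has $\phi(|\lambda - x_0|^{-2}) \leq \frac{\log\alpha - \log\beta}{\alpha - \beta}$, so $D < \phi(|\lambda - x_0|^{-2}) \leq \frac{\log\alpha - \log\beta}{\alpha - \beta}$ strictly for every $\varepsilon_0 > 0$. If a subsequential limit $\varepsilon_0^* > 0$ existed, the factor $(\alpha - \beta e^{(\alpha-\beta)D})^2 e^{-(\alpha-\beta)D}$ would be bounded below by a positive constant near the limit, and the second equation of \eqref{eqn:system-D-epsilon} would force $\varepsilon_0^* = 0$, a contradiction. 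Hence $\varepsilon_0(\lambda, \varepsilon) \to 0$.

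Finally, the smoothness of $s$ on $\Xi_{\alpha,\beta}$ follows from the implicit function theorem applied to $R(\lambda, s) = \frac{\log\alpha - \log\beta}{\alpha - \beta}$: for $s > 0$ the resolvent $[(\lambda-x_0)^*(\lambda-x_0) + s^2]^{-1}$ is jointly $C^\infty$ in $(\Re\lambda, \Im\lambda, s)$ in norm (holomorphic in $\lambda$, smooth in $s$), so $R$ is $C^\infty$ on $\mathbb{C} \times (0, \infty)$, and $\partial_s R(\lambda, s) = -2s\,\phi\{[|\lambda - x_0|^2 + s^2]^{-2}\} < 0$ supplies the nondegeneracy hypothesis. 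The main technical point is the simultaneous limit analysis: as $\varepsilon \to 0$ the vanishing of $\varepsilon$ must be balanced by the blow-up of $(\alpha - \beta e^{(\alpha-\beta)D})^{-2}$, and the confinement $D \in I$ together with the strict monotonicity of $R(\lambda, \cdot)$ is precisely what pins down the limit uniquely.
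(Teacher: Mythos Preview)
Your proof is correct and follows essentially the same approach as the paper's: a subsequential-limit analysis of the system \eqref{eqn:system-D-epsilon}, using the a priori confinement $D<\frac{\log\alpha-\log\beta}{\alpha-\beta}$ from Remark~\ref{remark:D-inequality} to rule out $\varepsilon_0^*=0$ when $\lambda\in\Xi_{\alpha,\beta}$, then forcing $(\alpha-\beta e^{(\alpha-\beta)D^*})=0$ from the second equation, and finally invoking the implicit function theorem for smoothness. The paper phrases the same argument through the variable $\sigma=(\alpha-\beta)D$ and the function $F(\sigma,\lambda,\varepsilon)$ (assuming a subsequential limit $\sigma\to\theta<\log(\alpha/\beta)$ and deriving $\varepsilon_0\to 0$ first), while you work directly with $D$ and the resolvent map $R(\lambda,r)$ and add an explicit boundedness step for $\varepsilon_0$; these are cosmetic differences only.
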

\begin{proof}
	We recall that $F(\sigma,\lambda,\varepsilon)=1/(\alpha-\beta)$ (see Equation \eqref{eqn:F-sigma-det-time}). For simplicity, we denote $F(\sigma) := F(\sigma,\lambda,\varepsilon).$ Moreover, 
		for $\varepsilon>0$, $\sigma=(\alpha-\beta)D$ is the unique solution of $F(\sigma)$ such that $\sigma<\log(\alpha/\beta)$.
	By symmetric consideration, we may assume that $\alpha>\beta>0$. 
	Then, $\lambda\in\Xi_{\alpha,\beta}$ if and only if 
	\[
	\log\left(\frac{\alpha}{\beta}\right)< \frac{\alpha-\beta}{H},
	\]
	which implies $\sigma_0(\lambda)=\log(\alpha/\beta)$. By Proposition \ref{prop:epsilon0-epsilon-analyticity}, $\sigma=\sigma(\lambda,\varepsilon)$ is a $C^\infty$-function of $(\lambda, \varepsilon)$ over $\mathbb{C} \times (0, \infty)$. We first claim that $\lim_{\varepsilon\rightarrow 0^+}\sigma(\lambda,\varepsilon)=\log(\alpha/\beta)$. Assume that there exists some $\varepsilon_k\rightarrow 0^+$ such that $\sigma=\sigma(\lambda, \varepsilon_k)=(\alpha-\beta)D(\lambda, \varepsilon_k)$ converges to some $\theta<\log(\alpha/\beta)$. Then, we have 
	\begin{align*}
	\lim_{\varepsilon_k\rightarrow 0^+} \varepsilon_0(\lambda,\varepsilon_k)=\lim_{ \varepsilon_k \rightarrow 0^+}
	\frac{\varepsilon_k (\alpha-\beta)}{\alpha-\beta e^{(\alpha-\beta)D}}e^{(\alpha-\beta)D/2}=0.
	\end{align*}
	Consequently, 
	\[
	\lim_{ \varepsilon_k\rightarrow 0^+} \sigma F(\sigma)= H^{-1}.
	\]
	On the other hand,
	\[
	\lim_{ \varepsilon_k\rightarrow 0^+} \sigma F(\sigma)=\frac{\theta}{\alpha-\beta}<\frac{\log\alpha-\log\beta}{\alpha-\beta}.
	\]
	This contradicts to $\lambda\in{\Xi_{\alpha,\beta}}$. Hence, $\lim_{\varepsilon\rightarrow 0^+}\sigma=\log(\alpha/\beta)= \sigma_0(\lambda)$ if $\lambda\in{\Xi_{\alpha,\beta}}$. By using the identity $D=\sigma F(\sigma)=\frac{\sigma}{\alpha-\beta}$, we have
	\[
	\frac{\log\alpha-\log\beta}{\alpha-\beta} =\lim_{\varepsilon\rightarrow 0^+}\sigma F(\sigma)=\lim_{\varepsilon\rightarrow 0^+}\phi\big\{ [\vert \lambda-x_0\vert^2+\varepsilon_0^2]^{-1} \big\}
	\]
	which implies that $\lim_{\varepsilon\rightarrow 0^+}\varepsilon_0$ exists and is determined by \eqref{eqn:fixed-pt-limit-s-0}.
	
	For any $s>0$ and $\lambda\in\mathbb{C}$, the map $s\mapsto  \phi\left\{ \left[ 
	\vert \lambda-x_0\vert^2 + s^2
	\right] ^{-1} \right\}$ is $C^\infty$ on $[x, \infty)$, and for any $s>x$, the map $\lambda\mapsto \phi\left\{ \left[ 
	|\lambda -x_0|^2 + s^2
	\right] ^{-1} \right\}$ is $C^\infty$ on $\mathbb{C}$. Then, the implicit function theorem implies that as unique solution of \eqref{eqn:fixed-pt-limit-s-0}, the function $s(\lambda)$ is $C^\infty$ on ${\Xi_{\alpha,\beta}}$. 
	
	Similarly, if $\lambda\notin {\Xi_{\alpha,\beta}}$, by assuming $\lim_{\varepsilon\rightarrow 0^+}\varepsilon_0(\lambda,\varepsilon)\neq 0$, we must have $\lim_{\varepsilon\rightarrow 0^+} \sigma=\log(\alpha/\beta)$. This yields that $H<\frac{\alpha-\beta}{\log\alpha-\log\beta}$. This contradiction implies that if $\lambda\notin {\Xi_{\alpha,\beta}}$, then $\lim_{\varepsilon\rightarrow 0^+}\varepsilon_0(\lambda,\varepsilon)=0$.
\end{proof}

\begin{remark}
	\label{remark:3.11-relation-with-semicircle}
	The function $s(\lambda)$ can be viewed as the boundary value of scalar-valued subordination function parameterized by $\lambda\in\mathbb{C}$.   
	See Definition \ref{eqn:def-mu1-scalar-subordination} and \cite[Definition 3.3 and Proposition 3.5]{Zhong2021Brown} for details. 
\end{remark}

\begin{lemma}
	\label{lemma:convergence-uniform-epsilon-0}
	The convergence  $\lim_{\varepsilon\rightarrow 0}\varepsilon_0(\lambda,\varepsilon)=s(\lambda)$ is uniform over any compact subset in $\lambda\in\mathbb{C}$. If $x_0$ is bounded, the convergence is uniform over $\mathbb{C}$. 
\end{lemma}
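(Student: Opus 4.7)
My plan is to combine monotonicity of $\varepsilon_0(\lambda,\varepsilon)$ in $\varepsilon$ with continuity of the pointwise limit $s$ on all of $\mathbb{C}$ and invoke Dini's theorem; for the bounded-$x_0$ case I would supplement this with a direct estimate at infinity.

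First I would show that for each fixed $\lambda\in\mathbb{C}$, the map $\varepsilon\mapsto\varepsilon_0(\lambda,\varepsilon)$ is strictly increasing. Setting $\sigma=(\alpha-\beta)D$, the system \eqref{eqn:system-D-epsilon} decouples into $\sigma=S_\lambda(\varepsilon_0)$, where $S_\lambda(t):=(\alpha-\beta)\phi[(|\lambda-x_0|^2+t^2)^{-1}]$ is strictly decreasing in $t$, together with $\varepsilon_0^2=\varepsilon^2 g(\sigma)$, where $g(\sigma):=\frac{(\alpha-\beta)^2 e^\sigma}{(\alpha-\beta e^\sigma)^2}$ is strictly increasing on $(-\infty,\log(\alpha/\beta))$ by a direct derivative computation. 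If $\varepsilon_1<\varepsilon_2$ gave $\varepsilon_0^{(1)}\geq\varepsilon_0^{(2)}$, then $\sigma^{(1)}\leq\sigma^{(2)}$, and hence $(\varepsilon_0^{(1)})^2=\varepsilon_1^2 g(\sigma^{(1)})<\varepsilon_2^2 g(\sigma^{(2)})=(\varepsilon_0^{(2)})^2$, a contradiction.

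Next I would verify that $s$ is continuous on all of $\mathbb{C}$. By Lemma \ref{lemma:limit-s-lambda}, $s$ is $C^\infty$ on the open set $\Xi_{\alpha,\beta}$ and identically $0$ off $\Xi_{\alpha,\beta}$, so the only issue is a boundary point $\lambda_*\in\partial\Xi_{\alpha,\beta}$. Suppose $\lambda_n\to\lambda_*$ with $s(\lambda_n)\to s_*\in(0,+\infty]$. The case $s_*=+\infty$ is excluded by dominated convergence in \eqref{eqn:fixed-pt-limit-s-0}, which would force $0=\frac{\log\alpha-\log\beta}{\alpha-\beta}$. When $s_*$ is finite, the same limit yields $\phi[(|\lambda_*-x_0|^2+s_*^2)^{-1}]=\frac{\log\alpha-\log\beta}{\alpha-\beta}$; since $s_*>0$, the left side is strictly smaller than $\phi[|\lambda_*-x_0|^{-2}]$, so the latter exceeds $\frac{\log\alpha-\log\beta}{\alpha-\beta}$, placing $\lambda_*$ in the open set $\Xi_{\alpha,\beta}$ and contradicting $\lambda_*\in\partial\Xi_{\alpha,\beta}$. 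Therefore $s(\lambda_n)\to 0 = s(\lambda_*)$.

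With Proposition \ref{prop:epsilon0-epsilon-analyticity} guaranteeing continuity of each $\varepsilon_0(\cdot,\varepsilon)$ in $\lambda$, Dini's theorem applied to the monotone family $\{\varepsilon_0(\cdot,\varepsilon)\}_{\varepsilon>0}$ delivers uniform convergence to $s$ on any compact $K\subset\mathbb{C}$. For the bounded case, the inequality $\phi[|\lambda-x_0|^{-2}]\leq (|\lambda|-\|x_0\|)^{-2}$ for $|\lambda|>\|x_0\|$ shows $\Xi_{\alpha,\beta}$ is bounded, so $s\equiv 0$ outside some disk $\{|\lambda|\leq R\}$; on the exterior, the same bound gives $\sigma=(\alpha-\beta)D\to 0$ uniformly as $|\lambda|\to\infty$, hence $g(\sigma)$ remains bounded by some $G$ independent of $\lambda$, and the second equation of \eqref{eqn:system-D-epsilon} yields $\varepsilon_0(\lambda,\varepsilon)\leq\varepsilon\sqrt{G}$. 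Combining this exterior estimate with Dini on $\{|\lambda|\leq R\}$ produces uniform convergence on all of $\mathbb{C}$. I expect the main obstacle to be the boundary continuity of $s$ in the second step: the implicit function theorem degenerates at $\partial\Xi_{\alpha,\beta}$, so the argument has to lean directly on strict monotonicity of $s\mapsto\phi[(|\lambda-x_0|^2+s^2)^{-1}]$ and the very definition of $\Xi_{\alpha,\beta}$ to rule out a jump.
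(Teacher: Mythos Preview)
Your proposal is correct and follows essentially the same approach as the paper: establish continuity of $s$ on all of $\mathbb{C}$ (smoothness on $\Xi_{\alpha,\beta}$ plus a contradiction argument at the boundary), prove monotonicity of $\varepsilon\mapsto\varepsilon_0(\lambda,\varepsilon)$, apply Dini's theorem on compacts, and for bounded $x_0$ supplement with a direct tail estimate showing $\varepsilon_0(\lambda,\varepsilon)\le C\varepsilon$ for $|\lambda|$ large. The only cosmetic differences are that the paper phrases the monotonicity via the function $F(\sigma,\lambda,\varepsilon)$ of Lemma~\ref{lemma:F-momoton} rather than your pair $(S_\lambda,g)$, obtains the boundary contradiction by bounding below with a fixed $\delta$ rather than passing to the limit in both variables, and states the tail estimate as $\varepsilon_0<2\varepsilon$ rather than $\varepsilon_0\le\varepsilon\sqrt{G}$; both arguments tacitly assume $\alpha>\beta$ without loss of generality.
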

\begin{proof}
We first show that $s(\lambda)$ is a continuous function in $\mathbb{C}$. 
	Since $s(\lambda)$ is $C^\infty$ in the open set $\Xi_{\alpha, \beta}$, to show $s(\lambda)$ is continuous, it remains to show that for any $\lambda_0\in \mathbb{C}\backslash\Xi_{\alpha, \beta}$ and a sequence $\{\lambda_n\}\subset \Xi_{\alpha, \beta}$ converging to $\lambda_0$, we have 
	\[
	\lim_{n\rightarrow\infty} s(\lambda_n)= s(\lambda_0)=0. 
	\]
	Assume this is not true, by dropping to a subsequence if necessary, we may assume that there exists $\delta>0$ such for all $n$, $s(\lambda_n)>\delta$. In this case, note that
	\[
	\phi\left\{ \left[ 
	(\lambda_n-x_0)^*(\lambda_n-x_0) + s(\lambda_n)^2
	\right] ^{-1} \right\}<\phi\left\{ \left[ 
	(\lambda_n-x_0)^*(\lambda_n-x_0) + \delta^2
	\right] ^{-1} \right\}.
	\]
	By using \ref{eqn:fixed-pt-limit-s-0} and passing to the limit, we have
	\[
	\frac{\log\alpha-\log\beta}{\alpha-\beta}\leq \phi\left\{ \left[ 
	(\lambda_0-x_0)^*(\lambda_0-x_0) + \delta^2
	\right] ^{-1} \right\}
	\]
	which yields that $s(\lambda_0)\geq \delta>0$. It contradicts to our choice $\lambda_0\in \mathbb{C}\backslash\Xi_{\alpha, \beta}$. Hence $\lim_{n\rightarrow\infty} s(\lambda_n)\rightarrow s(\lambda_0)=0$ and the function $s(\lambda)$ is continuous in $\mathbb{C}$. 
	
	Without losing generality, we now assume that $\alpha>\beta$. For any $\lambda\in\mathbb{C}$, for $0<\varepsilon_1<\varepsilon_2$, note that $F(\sigma,\lambda,\varepsilon)$ decreases as $\varepsilon$ increases, hence we have 
	\[
	\frac{1}{\alpha-\beta}=F(\sigma_1, \lambda,\varepsilon_1)=F(\sigma_2,\lambda,\varepsilon_2)
	<F(\sigma_2,\lambda,\varepsilon_1)
	\]
	where $\sigma_i=\sigma(\lambda,\varepsilon_i) \, (i=1,2)$. By Lemma \ref{lemma:F-momoton}, we deduce that
	$\sigma_1>\sigma_2$. Recall that $\sigma(\lambda,\varepsilon_i) = (\alpha-\beta)D(\lambda,\varepsilon_i)$ and 
	\[ 
	D(\lambda,\varepsilon_i)= \phi \left\{  \left[ (\lambda-x_0)^*(\lambda -x_0) + \varepsilon_0(\lambda,\varepsilon_i)^2\right]^{-1}\right\}.
	\]
	Hence, $\varepsilon_0(\lambda,\varepsilon_1)<\varepsilon_0(\lambda,\varepsilon_2)$. We then conclude that
	$\varepsilon_0(\lambda,\varepsilon)$ converges to $s(\lambda)$ uniformly in any compact subset of $\mathbb{C}$ as $\varepsilon$ tends to zero by Dini's theorem.  
	
	Assume now that $x_0$ is a bounded operator. Since $\Xi_{\alpha, \beta}$ is a bounded set, to prove the convergence is uniform over $\mathbb{C}$, it suffices to show that $\varepsilon_0(\lambda,\varepsilon)<2\varepsilon$ if $\vert \lambda\vert$ is sufficiently large. Recall that
	\[
	\varepsilon_0(\lambda,\varepsilon)=\frac{\varepsilon(\alpha-\beta)}{\alpha-\beta e^\sigma} e^{\sigma/2},
	\]
	where $\sigma=\sigma(\lambda,\varepsilon)$. 
	Hence, $\varepsilon_0(\lambda,\varepsilon)\geq 2\varepsilon$ if and only if
	\begin{equation}
	\label{eqn:3.33-in-proof}
	\alpha-\beta>2\left( \alpha e^{-\sigma/2} -\beta e^{\sigma/2} \right).
	\end{equation}
	But $\sigma=(\alpha-\beta)D\leq (\alpha-\beta)/(M^2+4\varepsilon^2)$ if $\vert \lambda\vert>M+||x_0||$ and $\varepsilon_0(\lambda,\varepsilon)\geq 2\varepsilon$ by the formula for $D$ in \eqref{eqn:system-D-epsilon}. It follows that $\sigma$ could be arbitrarily small by choosing $M$ large enough. 
	Observe that \eqref{eqn:3.33-in-proof} can not hold when $\sigma$ is sufficiently small. Hence, we may choose $M$ large enough, so that $\varepsilon_0(\lambda,\varepsilon)<2\varepsilon$ for any $\vert \lambda\vert>M+\Vert x_0\Vert$. 
\end{proof}

\subsection{The Brown measure formula}
We adapt the proof of \cite[Theorem 4.6]{Zhong2021Brown} to calculate the density formula $\mu_{x_0+g_{_{\alpha, \beta,0}}}$ in ${\Xi_{\alpha,\beta}}$.
\begin{theorem}
\label{thm:Brown-formula-gamma-0}
The support of the Brown measure $\mu_{x_0+g_{_{\alpha, \beta,0}}}$ of $x_0+g_{_{\alpha, \beta,0}}$ is the closure of the set ${\Xi_{\alpha,\beta}}$ defined by \eqref{eqn:denf-Omega}, which can be rewritten as
\[
 	{\Xi_{\alpha,\beta}}=\left\{ \lambda: \phi (\vert x_0-\lambda\vert^{-2}) > \frac{\log\alpha-\log\beta}{\alpha-\beta}  \right\}.
\]
Let $s(\lambda)$ be the function determined by
\begin{equation}
 \label{eqn:4.1-in-proof}
	   \frac{\log\alpha-\log\beta}{\alpha-\beta} =
	\phi\left\{ \left[ 
	(\lambda-x_0)^*(\lambda-x_0) + s(\lambda)^2
	\right] ^{-1} \right\}
\end{equation}
as in Lemma \ref{lemma:limit-s-lambda}, and set 
\[h(\lambda,s)=(\lambda-x_0)^*(\lambda-x_0)+s^2\] and \[k(\lambda,s)=(\lambda-x_0)(\lambda-x_0)^*+s^2.\] 
The probability measure $\mu_{x_0+g_{_{\alpha, \beta,0}}}$ is absolutely continuous in the open set ${\Xi_{\alpha,\beta}}$ with density given by
\begin{equation}
\label{eqn:density-Brown-tri-circular}
  \frac{1}{\pi}s^2(\lambda)\phi(h^{-1}k^{-1})+\frac{1}{\pi}\frac{|\phi((\lambda-x_0)(h^{-1})^2)|^2}{\phi((h^{-1})^2)}.
\end{equation}
\end{theorem}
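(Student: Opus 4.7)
The core strategy is to apply the operator-valued subordination from Lemma~\ref{lemma:subordination-OA-1} specialized to $\gamma=0$ (so that \eqref{eqn:Phi-regularization-preliminary-2-z} collapses to $z=\lambda$), exploit the scalar reduction afforded by Lemma~\ref{lem:compute-tilde-g} and Lemma~\ref{lemma:D-epsilon-product-det}, and then pass to the limit $\varepsilon\to 0^+$ using Lemma~\ref{lemma:limit-s-lambda} and Lemma~\ref{lemma:convergence-uniform-epsilon-0}. The subordination identity $g_{21}=P_\lambda^{\mathbf{t}(0)}(\varepsilon)$, together with $\varepsilon_0^2=\varepsilon_1\varepsilon_2$ and the fact that $g_{21}$ is constant in $\mathcal{B}$, reads
\begin{equation*}
\frac{\partial S(x_0+g_{_{\alpha\beta,0}},\lambda,\varepsilon)}{\partial\lambda}=\phi\Bigl\{(\lambda-x_0)^*\bigl[(\lambda-x_0)(\lambda-x_0)^*+\varepsilon_0^2\bigr]^{-1}\Bigr\},
\end{equation*}
and after letting $\varepsilon\to 0^+$ yields the same formula with $\varepsilon_0$ replaced by $s(\lambda)$ on $\Xi_{\alpha,\beta}$.

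To extract the density I differentiate once more in $\bar\lambda$ and separate the explicit and implicit contributions. Writing $h=(\lambda-x_0)^*(\lambda-x_0)+s^2$ and $k=(\lambda-x_0)(\lambda-x_0)^*+s^2$, the explicit part simplifies via the intertwining identities $(\lambda-x_0)h^{-1}=k^{-1}(\lambda-x_0)$ and $(\lambda-x_0)h^{-1}(\lambda-x_0)^*=1-s^2k^{-1}$, leaving $s^2\phi(h^{-1}k^{-1})$. For the implicit contribution I differentiate the defining equation \eqref{eqn:4.1-in-proof} for $s(\lambda)$ to get
\begin{equation*}
\frac{\partial s^2}{\partial\lambda}=-\frac{\phi((\lambda-x_0)^*h^{-2})}{\phi(h^{-2})},
\end{equation*}
so that, using the conjugation symmetry $\phi((\lambda-x_0)^*h^{-2})=\overline{\phi((\lambda-x_0)h^{-2})}$, the implicit term collapses to $|\phi((\lambda-x_0)h^{-2})|^2/\phi(h^{-2})$. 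Multiplying by $1/\pi$ produces \eqref{eqn:density-Brown-tri-circular}. Strict positivity on $\Xi_{\alpha,\beta}$ is immediate since $s>0$ there and $\phi$ is faithful, so $\phi(h^{-1}k^{-1})>0$.

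For the support statement I would argue that on the open complement $\mathbb{C}\setminus\overline{\Xi_{\alpha,\beta}}$ one has $s(\lambda)\equiv 0$ and $\phi(|x_0-\lambda|^{-2})<(\log\alpha-\log\beta)/(\alpha-\beta)$, so that the limit $\partial_\lambda S(x_0+g_{_{\alpha\beta,0}},\lambda,0)$ coincides with $\phi((\lambda-x_0)^{-1})$ (the resolvent being well defined in the unbounded sense since $|x_0-\lambda|^{-1}\in L^2(\mathcal{A},\phi)$), which is holomorphic in $\lambda$ on this open set; hence $\partial_{\bar\lambda}\partial_\lambda S=0$ there and the Brown measure carries no mass on $\mathbb{C}\setminus\overline{\Xi_{\alpha,\beta}}$. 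Combined with strict positivity on $\Xi_{\alpha,\beta}$, this identifies the support as $\overline{\Xi_{\alpha,\beta}}$. The main technical obstacle will be rigorously justifying the two limits: the uniform passage $\varepsilon\to 0^+$ in the density formula on compact subsets of $\Xi_{\alpha,\beta}$ (where Lemma~\ref{lemma:convergence-uniform-epsilon-0} is the key tool), and the well-definedness together with holomorphic dependence of $\phi((\lambda-x_0)^{-1})$ on the complement, where $x_0\in\log^+(\mathcal{N})$ being potentially unbounded forces a truncation argument in the spirit of the proof of Lemma~\ref{lemma:D-epsilon-product-det} before one may commute $\phi$ with $\partial_{\bar\lambda}$.
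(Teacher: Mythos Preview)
Your computation of the density on $\Xi_{\alpha,\beta}$ matches the paper's argument essentially step for step: the specialization $\gamma=0$ in Lemma~\ref{lemma:subordination-OA-1} collapses $z$ to $\lambda$, the scalar reduction via Lemma~\ref{lem:compute-tilde-g} gives $\partial_\lambda S = \phi(h^{-1}(\lambda-x_0)^*)$ with $h$ built from $\varepsilon_0$, Lemma~\ref{lemma:limit-s-lambda} replaces $\varepsilon_0$ by $s(\lambda)$, and the $\partial_{\bar\lambda}$-differentiation with the intertwining identities and implicit differentiation of \eqref{eqn:4.1-in-proof} yields \eqref{eqn:density-Brown-tri-circular}. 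That part is fine.

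The gap is in your treatment of $\mathbb{C}\setminus\overline{\Xi_{\alpha,\beta}}$. You assert that $\phi((\lambda-x_0)^{-1})$ is holomorphic in $\lambda$ there, but the only hypothesis available is $\phi(|x_0-\lambda|^{-2})<\infty$, which guarantees that $(\lambda-x_0)^{-1}$ exists as an affiliated operator but \emph{not} that $\lambda$ lies in the bounded-operator resolvent set of $x_0$. Without that, there is no resolvent identity and no holomorphicity. Worse, distributionally $\frac{1}{\pi}\partial_{\bar\lambda}\phi((\lambda-x_0)^{-1})$ is precisely $\mu_{x_0}$, so your claim is equivalent to $\mu_{x_0}$ vanishing on the complement --- which is exactly what must be proved, not assumed. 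A truncation argument will not close this circle.

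The paper handles this differently: it first observes that on $\mathbb{C}\setminus\overline{\Xi_{\alpha,\beta}}$ the limit $\lim_{\varepsilon\to0}p_\lambda^{\mathbf t}(\varepsilon)$ agrees with $\lim_{\varepsilon\to0}p_\lambda^{\mathbf t_0}(\varepsilon)$, so that $\mu_{x_0+g_{\alpha\beta,0}}$ and $\mu_{x_0}$ coincide there as distributions. Then it invokes the Weil-type inequality $\int_{\mathbb C}|z-\lambda|^{-2}\,d\mu_{x_0}(z)\le\phi(|x_0-\lambda|^{-2})$ (as in \cite[Theorem~4.5]{Zhong2021Brown}; see also the argument in Corollary~\ref{cor:no-atoms}) to conclude that the uniform bound $\phi(|x_0-\lambda|^{-2})\le(\log\alpha-\log\beta)/(\alpha-\beta)$ on an open set forces $\mu_{x_0}$ to have no mass there. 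This is the missing ingredient you need to replace the holomorphicity claim.
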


We need the following result. It should be well-known, but we include a proof for the sake of completeness.
\begin{lemma}\label{lem:tech-thm}
\begin{enumerate}[\rm (1)]
\item Let $x \in \widetilde{\mathcal{N}},$ for any $a\in (0,\infty)$ we have 
\begin{equation} 
x (x^* x+ a)^{-1} = (xx^* + a)^{-1} x.
\end{equation}
\item For $h(\lambda, s)$ and $k(\lambda, s)$ given in Theorem \ref{thm:Brown-formula-gamma-0}, we have
\begin{equation}\label{eq:partial-unbounded1}
\frac{\partial}{\partial \overline{\lambda}}\phi \left( h^{-1} \right) = -\phi \left[ h^{-1} \left( (\lambda-x_0) +2s \frac{\partial s}{\partial\overline{\lambda}} \right)  h^{-1}\right],
\end{equation}
and 
\begin{equation}\label{eq:partial-unbounded2}
\begin{split}
\frac{\partial}{\partial \overline{\lambda}}\phi\left(  h^{-1} (\lambda-x_0)^*\right)
=\phi\left[ h^{-1}- h^{-1}\left( (\lambda-x_0)+2s \frac{\partial s}{\partial\overline{\lambda}} \right)h^{-1} \right]. 
\end{split}
\end{equation}
\end{enumerate}
\end{lemma}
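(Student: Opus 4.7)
The plan is to handle the two parts separately. For part (1), I would use the polar decomposition $x = u|x|$ of the unbounded operator $x \in \widetilde{\mathcal N}$, where $u \in \mathcal N$ is a partial isometry with $u^*u$ equal to the support projection of $|x|$ (so that $u^*u|x| = |x|$). Applying the functional calculus identity $f(xx^*) = u f(|x|^2) u^* + f(0)(1 - uu^*)$, valid for Borel $f:[0,\infty)\to\mathbb C$, with $f(t) = (t+a)^{-1}$ yields
\[
(xx^* + a)^{-1} = u(|x|^2 + a)^{-1} u^* + a^{-1}(1 - uu^*).
\]
Multiplying on the right by $x = u|x|$ and using $(1 - uu^*)u = 0$ collapses this to $u(|x|^2 + a)^{-1}|x|$; exactly the same quantity is obtained from $x(x^*x + a)^{-1}$ by the functional calculus for $|x|^2 = x^*x$. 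Alternatively one can work with the algebraic identity $x(x^*x + a) = (xx^* + a)x$ on a suitable common dense domain and then invert on each side, but the polar-decomposition route avoids domain bookkeeping.

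For part (2), the strategy is a direct chain-rule calculation once I verify that the relevant scalars are differentiable and that $\partial_{\bar\lambda}$ can be pulled inside $\phi$. Throughout the argument we work on $\Xi_{\alpha,\beta}$, where $s = s(\lambda) > 0$, so $h \ge s^2 > 0$ and $h^{-1}$ is a bounded element of $\mathcal A$ with $\|h^{-1}\| \le s^{-2}$. A standard functional-calculus estimate gives $\|(\lambda-x_0)h^{-1}\| = \sup_{t\ge 0}\sqrt{t}/(t+s^2) \le 1/(2s)$, and therefore $h^{-1}(\lambda-x_0)^* = ((\lambda-x_0)h^{-1})^*$ is bounded as well; these bounds are uniform on compact subsets of $\Xi_{\alpha,\beta}$. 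Hence the difference quotients defining $\partial_{\bar\lambda}$ of the relevant operator-valued functions converge in operator norm, and the normality of $\phi$ allows interchange of the derivative with $\phi$. With that in hand, the computations
\[
\partial_{\bar\lambda} h = (\lambda-x_0) + 2s\,\partial_{\bar\lambda} s, \qquad \partial_{\bar\lambda} h^{-1} = -h^{-1}\bigl[(\lambda-x_0) + 2s\,\partial_{\bar\lambda} s\bigr] h^{-1}
\]
(using $\partial_{\bar\lambda}(\lambda-x_0) = 0$ and $\partial_{\bar\lambda}(\lambda-x_0)^* = 1$) give the first identity after applying $\phi$, and give the second by first applying the Leibniz rule to $h^{-1}(\lambda-x_0)^*$, which contributes the extra $\phi(h^{-1})$ term on the right-hand side via $\partial_{\bar\lambda}(\lambda-x_0)^* = 1$.

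The main obstacle is the unboundedness of $x_0$: one must ensure that all operator-valued expressions being differentiated remain in $\mathcal A$ and that domain issues do not invalidate the formal resolvent identity. Part (1) of the lemma is the essential analytic tool behind these estimates, since it lets one freely move between the resolvents $h^{-1}$ and $k^{-1}$ when bounding products such as $h^{-1}(\lambda-x_0) h^{-1}$ via $h^{-1}(\lambda-x_0)^* = (\lambda-x_0)^* k^{-1}$ and its adjoint. Once these boundedness and continuity properties are in place, the identities themselves are one-line consequences of the Leibniz rule and the resolvent identity $\partial h^{-1} = -h^{-1}(\partial h)h^{-1}$.
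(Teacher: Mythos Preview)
Your proposal is correct. For part~(1) you take a different route than the paper: the paper simply observes that the identity is equivalent to $(xx^*+a)x(x^*x+a)^{-1}=x$ and verifies this via the one-line computation
\[
xx^*x(x^*x+a)^{-1}=x(x^*x+a-a)(x^*x+a)^{-1}=x-ax(x^*x+a)^{-1},
\]
carried out inside the $*$-algebra $\widetilde{\mathcal N}$ of affiliated operators. Your polar-decomposition argument is a valid alternative and is perhaps more transparent analytically, though it requires the functional-calculus fact $f(xx^*)=uf(|x|^2)u^*+f(0)(1-uu^*)$, whereas the paper needs nothing beyond the fact that $\widetilde{\mathcal N}$ is an algebra.

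For part~(2) your approach and the paper's are essentially identical: the paper writes out the resolvent identity
\[
h(\lambda,\overline{\lambda'})^{-1}-h(\lambda,\overline{\lambda})^{-1}=-h(\lambda,\overline{\lambda'})^{-1}\bigl[(\overline{\lambda'}-\overline{\lambda})(\lambda-x_0)+s(\lambda,\overline{\lambda'})^2-s(\lambda,\overline{\lambda})^2\bigr]h(\lambda,\overline{\lambda})^{-1}
\]
and passes to the limit, using exactly the boundedness facts you cite ($\|h^{-1}\|\le s^{-2}$ and $\|(\lambda-x_0)h^{-1}\|\le(2s)^{-1}$). One small remark: your line $\partial_{\bar\lambda} h=(\lambda-x_0)+2s\,\partial_{\bar\lambda}s$ should be read as a formal shorthand, since $h$ is unbounded and its difference quotients do not converge in norm; the rigorous object is the difference quotient of $h^{-1}$ itself, whose norm convergence you correctly justify via the bounded products $h^{-1}(\lambda-x_0)h^{-1}$. (Also, norm continuity of $\phi$ already suffices to pass the limit inside---normality is not needed here.)
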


\begin{proof}
For (1), it is equivalent to show that $(xx^*+a)x(x^*x+a)^{-1}=x$. Since the affiliated algebra $\widetilde{\mathcal{N}}$ is an algebra, this follows from the following calculation
\[
   xx^*x(x^*x+a)^{-1}=x(x^*x+a-a)(x^*x+a)^{-1}=x-ax(x^*x+a)^{-1}.
\] 

For (2), if $x_0$ is bounded, we can obtain the result by applying \cite[Lemma 3.2]{HaagerupT2005Annals}.  For unbounded $x_0$, we denote
\[
 \begin{aligned}
 h=h(\lambda,\overline{\lambda},s)&=(\lambda-x_0)^*(\lambda-x_0)+s^2,\\ k=k(\lambda,\overline{\lambda},s)&=(\lambda-x_0)(\lambda-x_0)^*+s^2,
 \end{aligned}
\]
where $s=s(\lambda,\overline{\lambda})$ determined by \eqref{eqn:4.1-in-proof}. 
 By the identity $A^{-1} - B^{-1} = B^{-1}(B-A) A^{-1}$ we have
 \begin{align*}
  &h(\lambda,\overline{\lambda'},s)^{-1}
     -h(\lambda,\overline{\lambda},s)^{-1}\\
    &=-h(\lambda,\overline{\lambda'},s)^{-1}
     ((\overline{\lambda'}-\overline{\lambda})(\lambda-x_0) + s(\lambda,\overline{\lambda'})^2-s(\lambda,\overline{\lambda})^2 ) h(\lambda,\overline{\lambda},s)^{-1}
 \end{align*}
 Since $s$ is a $C^\infty$ function of $\lambda$ and $s>0$, it follows that
 \[
   \lim_{\overline{\lambda'}\rightarrow\overline{\lambda}}
     h(\lambda,\overline{\lambda'},s(\lambda,\overline{\lambda'}))^{-1}
      = h(\lambda,\overline{\lambda},s(\lambda,\overline{\lambda}))^{-1}.
 \]
Therefore, by using the notation of complex derivative, the partially derivation with respect to $\overline{\lambda}$ can be calculated as
\begin{align*}
\frac{\partial}{\partial \overline{\lambda}}\phi\left( h^{-1} \right) & =
   \lim_{\overline{\lambda'}\rightarrow\lambda}
     \frac{\phi(h(\lambda,\overline{\lambda'},s)^{-1})
     	-\phi(h(\lambda,\overline{\lambda},s)^{-1}))}{\overline{\lambda'}-\overline{\lambda}}\\
    &=-\lim_{\overline{\lambda'}\rightarrow\lambda}
    \phi\big[ h(\lambda,\overline{\lambda'},s)^{-1}
    (\lambda-x_0)  h(\lambda,\overline{\lambda},s)^{-1} \big]\\
    &\qquad -
     \lim_{\overline{\lambda'}\rightarrow\lambda} \left[   \frac{s(\lambda,\overline{\lambda'})^2-s(\lambda,\overline{\lambda})^2 )}{\overline{\lambda'}-\overline{\lambda}} 
      \cdot \phi\big[  h(\lambda,\overline{\lambda'},s)^{-1}
        h(\lambda,\overline{\lambda},s)^{-1}  \big]
      \right]\\
    &= -\phi \left[ h^{-1} \left( (\lambda-x_0) +2s \frac{\partial s}{\partial\overline{\lambda}} \right)  h^{-1}\right].\\ 
\end{align*}
The proof for \eqref{eq:partial-unbounded2} is similar.
\end{proof}

\begin{proof}[Proof of Theorem \ref{thm:Brown-formula-gamma-0}]
 Set ${\bf t}=(\alpha,\beta,0)$, ${\bf t_0}=(0,0,0)$ and  $b=\begin{bmatrix}
 i\varepsilon & \lambda\\
 \overline{\lambda} & i\varepsilon
\end{bmatrix}$, we have
\[
 M_2(\phi) [G_{X+Y}(b)]=\begin{bmatrix}
 -i  q_\varepsilon^{\bf t} (\lambda) & p_{\bar{\lambda}}^{\bf t}(\varepsilon)\\
 p_\lambda^{\bf t}(\varepsilon) & -i  {q}_\varepsilon^{\bf t}(\lambda)  \end{bmatrix}
\]
Applying \eqref{eqn:p-lambda-t} for $\gamma=0$, we have 
\[
  p_\lambda^{\bf t}(\varepsilon)=p_\lambda^{\bf t_0}(\varepsilon_0)=
    \phi \left\{ (\lambda-x_0)^* \left[ (\lambda -x_0)(\lambda -x_0)^* + \varepsilon_0^2 \right]^{-1}\right \}.
\]
By Lemma \ref{lemma:limit-s-lambda}, we have 
\begin{equation}
\label{eqn:limit-partial-lambda-in-proof}
	\begin{aligned}
	  &\lim_{\varepsilon\rightarrow 0^+}\phi \left\{ (\lambda-x_0)^* \left[ (\lambda -x_0)(\lambda -x_0)^* + \varepsilon_0^2 \right]^{-1}\right \}\\
=&\phi \left\{ (\lambda-x_0)^* \left[ (\lambda -x_0)(\lambda -x_0)^* + s(\lambda)^2 \right]^{-1}\right \}\\
=&\phi \left\{  \left[ (\lambda -x_0)^*(\lambda -x_0) + s(\lambda)^2 \right]^{-1} (\lambda-x_0)^*\right \}\\
 =&\phi\left(  h^{-1}(\lambda-x_0)^*\right).
	\end{aligned}
\end{equation}
By \cite[Lemma 4.19]{Bordenave-Chafai-circular} (up to a sign change), the Brown measure of $x_0+g_{_{\alpha, \beta,0}}$ is equal to 
\begin{equation}
\label{eqn:partial-lambda-in-proof}
	 \begin{aligned}
 \mu_{x_0+g_{\alpha, \beta, 0}}&=\frac{1}{\pi} \lim_{\varepsilon\rightarrow 0^+}\frac{\partial}{\partial \overline{\lambda}}p_\lambda^{\bf t}(\varepsilon)=\frac{1}{\pi}\frac{\partial}{\partial \overline{\lambda}}\phi\left(  h^{-1} (\lambda-x_0)^* \right)
	 \end{aligned}
\end{equation}
in the distribution sense. Applying Lemma \ref{lem:tech-thm}, we have 
\begin{equation}
\label{eqn:4.5-in-proof}
\begin{aligned}
\frac{\partial}{\partial \overline{\lambda}}\phi\left( h^{-1} (\lambda-x_0)^*  \right)
=\phi\left[ h^{-1} \bigg( \unit-(\lambda-x_0)h^{-1}(\lambda-x_0)^* \bigg) \right]\\
\qquad\qquad -2s(\lambda)\frac{\partial s(\lambda)}{\partial\overline{\lambda}} \phi\left[ (\lambda-x_0)^* (h^{-1})^2 \right]. 
\end{aligned}
\end{equation}
Applying the identity $x(x^*x+\varepsilon)^{-1}x^*=(xx^*+\varepsilon)^{-1}xx^*$, we obtain
\begin{align}
  \phi\left[ h^{-1} \bigg( \unit-(\lambda-x_0)h^{-1}(\lambda-x_0)^* \bigg) \right]
     =s(\lambda)^2 \phi(h^{-1}k^{-1}). 
\end{align}
The function $s(\lambda)$ is the implicit function determined by \eqref{eqn:4.1-in-proof} when $\lambda\in\Xi_{\alpha, \beta}$, which can be rewritten as (by tracial property)
\[
      \phi(h^{-1})=\frac{\log\alpha-\log\beta}{\alpha-\beta}.
\]
Applying implicit differentiation $\frac{\partial}{\partial\overline{\lambda}}$, we have 
\[
    \phi\left[ h^{-1} \left( (\lambda-x_0) +2s(\lambda)\frac{\partial s(\lambda)}{\partial\overline{\lambda}} \right)  h^{-1}\right]=0
\]
which yields
\begin{equation}
  \label{eqn:4.7-in-proof}
	  -2s(\lambda)\frac{\partial s(\lambda)}{\partial\overline{\lambda}}
	 =\frac{\phi( (\lambda-x_0)(h^{-1})^2)}{\phi((h^{-1})^2)}.
\end{equation}
Since $\overline{\phi( (\lambda-x_0)(h^{-1})^2) }=\phi( (\lambda-x_0)^*(h^{-1})^2)$, we hence get the density formula \ref{eqn:density-Brown-tri-circular} by plugging \eqref{eqn:4.6-in-proof} and \eqref{eqn:4.7-in-proof} to \eqref{eqn:4.5-in-proof}. 

Note that if $\lambda\notin\overline{{\Xi_{\alpha,\beta}}}$, then $s(\lambda)=0$ in some neighborhood of $\lambda$, and 
\begin{equation}
\label{eqn:bounded-L-inverse-2}
\phi (\vert \lambda-x_0\vert^{-2}) \leq \frac{\log\alpha-\log\beta}{\alpha-\beta},
\end{equation}
which implies that the above limit \eqref{eqn:limit-partial-lambda-in-proof} is finite by an estimation using Cauchy-Schwartz inequality. 
 It follows that we can take the limit
\begin{equation}
\label{eqn:4.6-in-proof}
	 \lim_{\varepsilon\rightarrow 0^+}p_\lambda^{\bf t}(\varepsilon)=
	\phi \left\{ (\lambda-x_0)^* \left[ (\lambda -x_0)(\lambda -x_0)^*  \right]^{-1}\right \}
	=\lim_{\varepsilon\rightarrow 0} p_\lambda^{\bf t_0}(\varepsilon). 
\end{equation}
Hence, $\mu_{x_0+g_{_{\alpha, \beta,0}}}$ and $\mu_{x_0}$ coincide in the open set $\mathbb{C}\backslash\overline{\Xi_{\alpha, \beta}}$. 
By the inequality \eqref{eqn:bounded-L-inverse-2}
and \cite[Theorem 4.5]{Zhong2021Brown}, for any $\lambda\in \mathbb{C}\backslash \overline{{\Xi_{\alpha,\beta}}}$, then $\lambda$ is not in the support of $\mu_{x_0}$. 
 Consequently the density of $\mu_{x_0+g_{_{\alpha, \beta,0}}}$ is zero on $\mathbb{C}\backslash \overline{{\Xi_{\alpha,\beta}}}$ by \eqref{eqn:partial-lambda-in-proof} and \eqref{eqn:4.6-in-proof}. 
\end{proof}

\begin{remark}
Let $t=\frac{\alpha-\beta}{\log\alpha-\log\beta}$. The proof for Theorem \ref{thm:Brown-formula-gamma-0} shows that the partial derivatives of $\log S(x_0+g_{\alpha,\beta,0}, \lambda, 0)$ are the same as 
the partial derivatives of $\log S(x_0+c_t, \lambda,0)$ by comparing the proof of Theorem \ref{thm:BrownFormula-x0-ct-general} (see also \cite[Theorem 4.2]{Zhong2021Brown} for bounded case). Hence, the Brown measure of $x_0+g_{\alpha,\beta,0}$ is the same as the Brown measure of $x_0+c_t$. In particular, by \cite[Theorem 4.6]{Zhong2021Brown},
the measure $\mu_{x_0+g_{_{\alpha, \beta,0}}}$ is absolutely continuous with respect to the Lebesgue measure on $\mathbb{C}$. We refer to Corollary \ref{cor:no-atoms} and Theorem \ref{thm:BrownFormula-x0-ct-general} for $x_0$ to be unbounded operator. 
\end{remark}

\subsection{The push-forward map}
In this section, we show that the Brown measure $\mu_{x_0+g_{_{\alpha, \beta,\gamma}}}$ is the push-forward measure of $\mu_{x_0+g_{_{\alpha, \beta,0}}}$ under the map $\Phi_{\alpha,\beta, \gamma}$ defined by
\[
   \Phi_{\alpha, \beta,\gamma}(\lambda)=\lambda+\gamma \phi \left\{ (\lambda-x_0)^* \left[ (\lambda-x_0)(\lambda-x_0)^* + s(\lambda)^2\right]^{-1}\right\},
\]
where $s(\lambda)=\lim_{\varepsilon\rightarrow 0}\varepsilon_0(\lambda,\varepsilon)$ as in Lemma \ref{lemma:limit-s-lambda}. However, this map might be singular. We explain how the strategy in \cite[Section 5]{Zhong2021Brown} applies to our case. 

\begin{lemma}
The function $\Phi^{(\varepsilon)}_{\alpha, \beta, \gamma}$ converges uniformly to $\Phi_{\alpha, \beta, \gamma}$ in any compact subset of $\mathbb{C}$ as $\varepsilon$ tends to zero. The convergence is uniform over $\mathbb{C}$ if $x_0$ is bounded. 
\end{lemma}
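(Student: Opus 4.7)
The plan is to express $\Phi^{(\varepsilon)}_{\alpha\beta,\gamma}(\lambda)-\Phi_{\alpha\beta,\gamma}(\lambda)$ in a form amenable to a single clean spectral estimate, then combine Lemma \ref{lemma:convergence-uniform-epsilon-0} with Dini's theorem. Set $\widetilde A_\lambda=(\lambda-x_0)^*(\lambda-x_0)$ and $R_u=(\widetilde A_\lambda+u^2)^{-1}$. Using Lemma \ref{lem:tech-thm}(1) to move the resolvent of $(\lambda-x_0)(\lambda-x_0)^*$ past $(\lambda-x_0)^*$, together with the standard resolvent identity, one gets
\[
\Phi^{(\varepsilon)}_{\alpha\beta,\gamma}(\lambda)-\Phi_{\alpha\beta,\gamma}(\lambda)=\gamma\bigl(s(\lambda)^{2}-\varepsilon_0(\lambda,\varepsilon)^{2}\bigr)\,\phi\!\left\{R_{\varepsilon_0}R_{s}(\lambda-x_0)^{*}\right\}.
\]
Using the polar decomposition $(\lambda-x_0)^{*}=|\lambda-x_0|\,u_\lambda^{*}$, the commutativity of the resolvents of $\widetilde A_\lambda$ with $|\lambda-x_0|=\widetilde A_\lambda^{1/2}$, and $\phi(u_\lambda u_\lambda^{*})\le 1$, the Cauchy--Schwarz inequality $|\phi(XY^*)|^2\le\phi(XX^*)\phi(YY^*)$ applied with $X=R_{\varepsilon_0}R_s\widetilde A_\lambda^{1/2}$ and $Y=u_\lambda$ yields
\[
\bigl|\Phi^{(\varepsilon)}_{\alpha\beta,\gamma}(\lambda)-\Phi_{\alpha\beta,\gamma}(\lambda)\bigr|^{2}\le |\gamma|^{2}(s^{2}-\varepsilon_0^{2})^{2}\,\phi\!\left\{\widetilde A_\lambda R_{\varepsilon_0}^{2}R_{s}^{2}\right\}.
\]

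The key step is to establish the unified bound
\[
(s^{2}-\varepsilon_0^{2})^{2}\,\phi\!\left\{\widetilde A_\lambda R_{\varepsilon_0}^{2}R_{s}^{2}\right\}\le \phi(R_{s})-\phi(R_{\varepsilon_0}).
\]
First, $\varepsilon_0(\lambda,\varepsilon)\ge s(\lambda)$ everywhere: on $\Xi_{\alpha,\beta}$ this follows from Remark \ref{remark:D-inequality} combined with the defining relation $\phi(R_{s(\lambda)})=(\log\alpha-\log\beta)/(\alpha-\beta)$ and the monotonicity of $u\mapsto\phi(R_{u})$; outside $\Xi_{\alpha,\beta}$ it is trivial since $s(\lambda)=0$. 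A direct calculation gives the scalar inequality $(\varepsilon_0^{2}-s^{2})t\le(t+\varepsilon_0^{2})(t+s^{2})$ for $t\ge 0$ and $\varepsilon_0\ge s\ge 0$, which functional calculus turns into $(\varepsilon_0^{2}-s^{2})\widetilde A_\lambda R_{\varepsilon_0}R_{s}\le\unit$. Multiplying this operator inequality by the non-negative element $(\varepsilon_0^{2}-s^{2})R_{\varepsilon_0}R_{s}$ (which lies in the abelian algebra generated by $\widetilde A_\lambda$) and taking $\phi$, the bound follows from the resolvent identity $(\varepsilon_0^{2}-s^{2})\phi(R_{\varepsilon_0}R_{s})=\phi(R_{s})-\phi(R_{\varepsilon_0})$.

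The proof concludes with a Dini-type argument. For each fixed $\varepsilon>0$ the map $\lambda\mapsto\phi(R_{\varepsilon_0(\lambda,\varepsilon)})$ is continuous, and since $\varepsilon_0$ is monotone in $\varepsilon$ by the proof of Lemma \ref{lemma:convergence-uniform-epsilon-0}, this family is monotone in $\varepsilon$ and converges pointwise to $\lambda\mapsto\phi(R_{s(\lambda)})$, which is continuous on $\mathbb{C}$: the limit equals the constant $(\log\alpha-\log\beta)/(\alpha-\beta)$ on $\overline{\Xi_{\alpha,\beta}}$ by definition of $s$, equals $\phi(\widetilde A_\lambda^{-1})$ on the complement, and the two formulas match on $\partial\Xi_{\alpha,\beta}$. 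Dini's theorem upgrades this to uniform convergence on any compact $K\subset\mathbb{C}$, and the required uniform convergence of $\Phi^{(\varepsilon)}_{\alpha\beta,\gamma}$ to $\Phi_{\alpha\beta,\gamma}$ on $K$ follows at once. When $x_0$ is bounded, the estimate $\widetilde A_\lambda\ge(|\lambda|-\|x_0\|)^{2}\unit$ for $|\lambda|>\|x_0\|$ combined with the bound $\varepsilon_0(\lambda,\varepsilon)\le 2\varepsilon$ from the proof of Lemma \ref{lemma:convergence-uniform-epsilon-0} makes $\phi(R_s)-\phi(R_{\varepsilon_0})\to 0$ uniformly outside any fixed ball as well, so the convergence extends to all of $\mathbb{C}$. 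The main obstacle is the unified spectral bound displayed above, which has to hold across $\partial\Xi_{\alpha,\beta}$ where $s$ transitions from zero to positive values and the naive operator-norm bound $\|R_u\|\le u^{-2}$ degenerates; the elementary pointwise inequality chosen is precisely what makes a single clean estimate suffice.
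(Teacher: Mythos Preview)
Your approach is different from the paper's and your key spectral inequality
\[
(\varepsilon_0^{2}-s^{2})^{2}\,\phi\!\left\{\widetilde A_\lambda R_{\varepsilon_0}^{2}R_{s}^{2}\right\}\le \phi(R_{s})-\phi(R_{\varepsilon_0})
\]
is both correct and elegant. The gap is in the Dini step. You assert that the limit function $\lambda\mapsto\phi(R_{s(\lambda)})$ is continuous on $\mathbb{C}$, taking the constant value $c:=(\log\alpha-\log\beta)/(\alpha-\beta)$ on $\overline{\Xi_{\alpha,\beta}}$ and the value $\phi(\widetilde A_\lambda^{-1})$ elsewhere, with the two formulas matching on $\partial\Xi_{\alpha,\beta}$. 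But on $\partial\Xi_{\alpha,\beta}$ one has $s(\lambda)=0$, hence $\phi(R_{s(\lambda)})=\phi(\widetilde A_\lambda^{-1})$, and the definition of $\Xi_{\alpha,\beta}$ only yields $\phi(\widetilde A_\lambda^{-1})\le c$ there, not equality. The map $\lambda\mapsto\phi(\widetilde A_\lambda^{-1})$ is in general only lower semicontinuous (it is the increasing limit of the continuous maps $\lambda\mapsto\phi((\widetilde A_\lambda+\delta)^{-1})$), so nothing rules out a boundary point $\lambda_0\in\partial\Xi_{\alpha,\beta}$ with $\phi(\widetilde A_{\lambda_0}^{-1})<c$ while $\lambda_0$ is approached by points of $\Xi_{\alpha,\beta}$ where the limit equals $c$. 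At such a point the limit function is discontinuous and Dini's theorem fails. The same semicontinuity issue leaves the continuity of $\phi(\widetilde A_\lambda^{-1})$ on $\mathbb{C}\setminus\overline{\Xi_{\alpha,\beta}}$ unjustified as well.

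The paper sidesteps this entirely by proving a Lipschitz estimate
\[
\bigl|p_\lambda^{\bf t_0}(\varepsilon_0(\lambda,\varepsilon_2))-p_\lambda^{\bf t_0}(\varepsilon_0(\lambda,\varepsilon_1))\bigr|\le \frac{\log\alpha-\log\beta}{\alpha-\beta}\,\bigl|\varepsilon_0(\lambda,\varepsilon_2)-\varepsilon_0(\lambda,\varepsilon_1)\bigr|
\]
(adapted from \cite[Lemma~5.3]{Zhong2021Brown}), which reduces the question directly to the uniform convergence $\varepsilon_0(\lambda,\varepsilon)\to s(\lambda)$ already obtained in Lemma~\ref{lemma:convergence-uniform-epsilon-0}. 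That lemma established continuity of $s(\lambda)$ itself, and it is this continuity that makes Dini work there; no claim about $\phi(R_{s(\lambda)})$ is needed. Your nice inequality bounds the squared difference by $\phi(R_s)-\phi(R_{\varepsilon_0})$, but controlling this right-hand side uniformly near $\partial\Xi_{\alpha,\beta}$ is precisely the delicate point you have not addressed; a repair would be to bound it directly in terms of $\varepsilon_0-s$, which essentially brings you back to the paper's route.
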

\begin{proof}
For $\varepsilon_2>\varepsilon_1>0$, apply the same estimation as in \cite[Lemma 5.3]{Zhong2021Brown}, we have 
\[
 \vert p_\lambda^{\bf t_0}(\varepsilon_0(\lambda,\varepsilon_2))
   -p_\lambda^{\bf t_0}(\varepsilon_0(\lambda,\varepsilon_1))\vert \leq \frac{\log\alpha-\log\beta}{\alpha-\beta}\vert \varepsilon_0(\lambda,\varepsilon_2)-\varepsilon_0(\lambda,\varepsilon_1)\vert.
\]
Since $\Phi^{(\varepsilon)}_{\alpha, \beta, \gamma}$ can be written as
\[
  \Phi^{(\varepsilon)}_{\alpha, \beta, \gamma}(\lambda)=\lambda+\gamma p_\lambda^{\bf t_0}(\varepsilon_0(\lambda,\varepsilon))
\]
where ${\bf t_0}=(0,0,0)$.
The result then follows from convergence result of $\varepsilon_0(\lambda,\varepsilon)$ to $s(\lambda)$ showed in Lemma \ref{lemma:convergence-uniform-epsilon-0}. 
\end{proof}

\begin{theorem}
	 \label{thm:Brown-push-forward-property}
	The Brown measure $\mu_{x_0+g_{_{\alpha, \beta,\gamma}}}$ is the push-forward measure of $\mu_{x_0+g_{_{\alpha, \beta,0}}}$ under the map $\Phi_{\alpha, \beta, \gamma}$. Hence, we have the following commutative diagram.
	\begin{center}
		\begin{tikzpicture}
		\matrix (m) [matrix of math nodes,row sep=3em,column sep=4em,minimum width=2em]
		{
			\mu_{x_0+g_{_{\alpha, \beta,0}}}^{(\varepsilon)} & \mu_{x_0+g_{_{\alpha, \beta,\gamma}}}^{(\varepsilon)} \\
			\mu_{x_0+g_{_{\alpha, \beta,0}}} & \mu_{x_0+g_{_{\alpha, \beta,\gamma}}}\\};
		\path[-stealth]
		(m-1-1) edge node [left] {$\varepsilon\rightarrow 0$} (m-2-1)
		edge [double] node [below] {$\Phi_{\alpha, \beta,\gamma}^{(\varepsilon)}$} (m-1-2)
		(m-2-1.east|-m-2-2) edge [double] node [below] {{$\Phi_{\alpha, \beta,\gamma}$}}
		(m-2-2)
		(m-1-2) edge node [right] {$\varepsilon\rightarrow 0$} (m-2-2)
		(m-2-1);
		\end{tikzpicture}
	\end{center}
\end{theorem}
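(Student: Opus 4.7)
The strategy is to obtain the bottom arrow of the commutative diagram by passing to the limit $\varepsilon\to 0^+$ in the regularized pushforward identity of Theorem \ref{thm:Brown-regularized-push-forward-property}. Fix a bounded continuous test function $f\colon\mathbb{C}\to\mathbb{R}$. For each $\varepsilon>0$ that theorem yields
\[
\int_\mathbb{C} f(z)\, d\mu^{(\varepsilon)}_{x_0+g_{_{\alpha\beta,\gamma}}}(z)
=\int_\mathbb{C} f\bigl(\Phi^{(\varepsilon)}_{\alpha\beta,\gamma}(\lambda)\bigr)\, d\mu^{(\varepsilon)}_{x_0+g_{_{\alpha\beta,0}}}(\lambda).
\]
If I can show that each side converges to its $\varepsilon=0$ analogue, I will have
\[
\int_\mathbb{C} f\, d\mu_{x_0+g_{_{\alpha\beta,\gamma}}} = \int_\mathbb{C} f\circ \Phi_{\alpha\beta,\gamma}\, d\mu_{x_0+g_{_{\alpha\beta,0}}}
\]
for every bounded continuous $f$, whence the Portmanteau characterization gives $\mu_{x_0+g_{_{\alpha\beta,\gamma}}}=(\Phi_{\alpha\beta,\gamma})_*\mu_{x_0+g_{_{\alpha\beta,0}}}$, which is exactly the bottom arrow of the diagram.

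The left-hand side converges to $\int f\, d\mu_{x_0+g_{_{\alpha\beta,\gamma}}}$ by the weak convergence of the regularized Brown measure recorded in the introduction. For the right-hand side I combine two ingredients already at hand: the weak convergence $\mu^{(\varepsilon)}_{x_0+g_{_{\alpha\beta,0}}}\to\mu_{x_0+g_{_{\alpha\beta,0}}}$, which delivers tightness of the family $\{\mu^{(\varepsilon)}_{x_0+g_{_{\alpha\beta,0}}}\}_{\varepsilon>0}$ since all members are probability measures; and the preceding lemma, which supplies $\Phi^{(\varepsilon)}_{\alpha\beta,\gamma}\to\Phi_{\alpha\beta,\gamma}$ uniformly on every compact subset of $\mathbb{C}$. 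Given $\eta>0$, tightness furnishes a compact $K\subset\mathbb{C}$ with $\mu^{(\varepsilon)}_{x_0+g_{_{\alpha\beta,0}}}(K^c)<\eta$ uniformly for small $\varepsilon$; a three-$\eta$ estimate on $K$, using uniform continuity of $f$ on the closure of $\Phi_{\alpha\beta,\gamma}(K)\cup\bigcup_\varepsilon\Phi^{(\varepsilon)}_{\alpha\beta,\gamma}(K)$, then passes the integral to the limit.

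The main delicacy I anticipate is verifying that $\bigcup_{0<\varepsilon<\varepsilon^\ast}\Phi^{(\varepsilon)}_{\alpha\beta,\gamma}(K)$ is bounded in $\mathbb{C}$, a prerequisite for invoking uniform continuity of $f$. For bounded $x_0$ this is immediate, since the preceding lemma then supplies global uniform convergence $\Phi^{(\varepsilon)}_{\alpha\beta,\gamma}\to\Phi_{\alpha\beta,\gamma}$ on all of $\mathbb{C}$ and collapses the above argument to one line. In the unbounded case one uses the pointwise expression $\Phi^{(\varepsilon)}_{\alpha\beta,\gamma}(\lambda)-\lambda = \gamma\, p_\lambda^{{\bf t_0}}(\varepsilon_0(\lambda,\varepsilon))$ with ${\bf t_0}=(0,0,0)$, combined with the monotone lower bound $\varepsilon_0(\lambda,\varepsilon)\ge s(\lambda)>0$ on $K\cap\Xi_{\alpha,\beta}$ (Lemma \ref{lemma:convergence-uniform-epsilon-0}) and a Cauchy--Schwarz bound on $K\setminus\Xi_{\alpha,\beta}$ exploiting the uniform control $\phi(|\lambda-x_0|^{-2})\le (\log\alpha-\log\beta)/(\alpha-\beta)$ noted in the proof of Theorem \ref{thm:Brown-formula-gamma-0}; this is where I expect to spend the most care.
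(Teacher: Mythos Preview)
Your proposal is correct and follows essentially the same route as the paper: combine Theorem \ref{thm:Brown-regularized-push-forward-property}, weak convergence of the regularized Brown measures, and the preceding lemma on locally uniform convergence of $\Phi^{(\varepsilon)}_{\alpha\beta,\gamma}$ to $\Phi_{\alpha\beta,\gamma}$, then pass to the limit. The paper states this in three lines; you have simply spelled out the standard tightness-plus-uniform-convergence argument that justifies the passage.

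One remark: the ``main delicacy'' you anticipate is not actually a delicacy. Boundedness of $\bigcup_{0<\varepsilon<\varepsilon^\ast}\Phi^{(\varepsilon)}_{\alpha\beta,\gamma}(K)$ follows immediately from the locally uniform convergence already supplied by the preceding lemma: since $\sup_{\lambda\in K}\bigl|\Phi^{(\varepsilon)}_{\alpha\beta,\gamma}(\lambda)-\Phi_{\alpha\beta,\gamma}(\lambda)\bigr|\to 0$ and $\Phi_{\alpha\beta,\gamma}(K)$ is compact (continuous image of a compact set), for small $\varepsilon$ every $\Phi^{(\varepsilon)}_{\alpha\beta,\gamma}(K)$ lies in a fixed $1$-neighborhood of $\Phi_{\alpha\beta,\gamma}(K)$. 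Your case-by-case argument using $s(\lambda)>0$ on $K\cap\Xi_{\alpha,\beta}$ and Cauchy--Schwarz on $K\setminus\Xi_{\alpha,\beta}$ is therefore unnecessary.
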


\begin{proof}
The regularized Brown measure $\mu_{x_0+g_{_{\alpha, \beta,\gamma}}}^{(\varepsilon)}$ is the push-forward measure of the regularized Brown measure $\mu_{x_0+g_{_{\alpha, \beta,0}}}^{(\varepsilon)}$ under the map $\Phi_{\alpha, \beta, \gamma}^{(\varepsilon)}$. For any $x\in\log^+(\mathcal{A})$, the regularized Brown measure $\mu_{x}^{(\varepsilon)}$ converges to the Brown measure $\mu_x$ weakly as $\varepsilon$ tends to zero. Since $\Phi_{\alpha, \beta, \gamma}^{(\varepsilon)}$ converges to $\Phi_{\alpha, \beta, \gamma}$ uniformly in any compact subset of $\mathbb{C}$, we then conclude the desired result. 
\end{proof}

\begin{example}[The Brown measure of triangular elliptic operator]
	\label{example:tri-elliptic-0}
	Given $\alpha,\beta>0$, set $t=\frac{\alpha-\beta}{\log\alpha-\log\beta}$. When $x_0=0$, the set ${\Xi_{\alpha,\beta}}$ is the circle centered at the origin with radius $\sqrt{t}$,
	\[
	{\Xi_{\alpha,\beta}}=\{ \lambda\in\mathbb{C}: \vert \lambda\vert<t\}
	\]
	and  $s(\lambda)^2+\vert \lambda\vert^2=t$ for $\lambda\in{\Xi_{\alpha,\beta}}$. Using notation in Theorem \ref{thm:Brown-formula-gamma-0},
	\[
	h(\lambda, s)=k(\lambda,s)=\vert \lambda\vert^2+s(\lambda)^2=t.
	\]
	Hence, the density of the Brown measure of $g_{_{\alpha, \beta,0}}$ is 
	\[
	d\mu_{g_{\alpha, \beta,0}}=\frac{1}{\pi}\frac{s^2}{t^2}+\frac{1}{\pi}\frac{\vert \lambda\vert^2}{t^2}=\frac{1}{\pi t} dxdy.
	\]
	In other words, it has the same Brown measure as $c_t$, the circular operator with variance $t$.  
	If $T$ is a quasi-nilpotent DT operator, and $c_\varepsilon$ is a circular operator with variance $\varepsilon$, let $\alpha={1+\varepsilon}, \beta={\varepsilon}$, then $T+c_\varepsilon$ has the same $*$-moments as $g_{_{\alpha, \beta,0}}$. Hence, the Brown measure $T+c_\varepsilon$ is the uniform measure on the circle $\Big\{ \lambda: \vert \lambda\vert\leq  \frac{1}{\sqrt{\log(1+\varepsilon^{-1})}} \Big\}$. This recovers a result of Aagaard-Haagerup \cite[Theorem 4.3]{HaagerupAagaard2004}.
	
	For $\gamma\in\mathbb{C}$ such that $\vert \gamma\vert\leq\sqrt{\alpha\beta}$, we now have 
	\[
	\Phi_{\alpha, \beta,\gamma}(\lambda)=\lambda+\gamma\cdot \frac{\overline{\lambda}}{\vert \lambda\vert^2+s^2},
	\]
	for $\vert \lambda\vert<t$. Hence, the Brown measure of $g_{_{\alpha, \beta,\gamma}}$ is supported in the ellipse with parametrization 
	\[
	\sqrt{t} e^{i\theta}+\frac{\vert \gamma\vert}{\sqrt{t}}e^{i(\psi-\theta)}
	\]
	where $\psi=\arg (\gamma)$. For $0\leq r<\sqrt{t}$, we have
	\[
	\Phi_{\alpha, \beta,\gamma}(re^{i\theta})=re^{i\theta}+\frac{\vert \gamma\vert r}{t}e^{i(\psi-\theta)}.
	\]
	Hence, for $\gamma=\gamma_1+i\gamma_2$, the Jacobian matrix of $\Phi_{\bf t}$ at $\lambda=\lambda_1+i\lambda_2$ is given by
	\[
	\text{Jacobian}(\Phi_{\alpha, \beta,\gamma})=\begin{bmatrix}
	1+\frac{\gamma_1}{t}  & \frac{\gamma_2}{t}\\
	\frac{\gamma_2}{t} & 1-\frac{\gamma_1}{t}
	\end{bmatrix}
	\]
	whose determinant is equal to $1-\frac{\vert \gamma\vert^2}{t}$. Hence, the Brown measure of $g_{_{\alpha, \beta,\gamma}}$ is the uniform measure in the ellipse by Theorem \ref{thm:Brown-push-forward-property}. This recovers the result obtained in \cite[Section 6]{BSS2018}
\end{example}

The interested reader is referred to \cite{HoHall2020Brown, Zhong2021Brown} for more examples of the push-forward maps and explicit Brown measure formulas. 


\section{Some regularity results on the pushforward map}
The push-forward map $\Phi_{\alpha, \beta, \gamma}$ is the limit of the family of homeomorphisms $\Phi^{(\varepsilon)}_{\alpha,\beta, \gamma},$ which could be singular in general. In this section, we study its properties under some regularity assumptions. Recall that $\mathcal{N}$ is $*$-free from $\mathcal{M}$ with amalgamation over $\mathcal{B}$ in the operator-valued $W^*$-probability space $(\mathcal{A}, \mathbb{E}, \mathcal{B})$ constructed in Section \ref{section:tri-elliptic-review}. The unital subalgebra $\mathcal{N}$ is also $*$-free from $\mathcal{B}$ in $(\mathcal{A}, \phi).$

\begin{lemma}
\label{lemma:limit-S-y-0}
Let $y=g_{_{\alpha, \beta,0}} \in \mathcal{M}$ and $x_0 \in \log^+(\mathcal{N})$. The function $\lambda\mapsto S(x_0+y,\lambda,0)=\log\Delta(\vert x_0+y-\lambda\vert^2)$ is a $C^\infty$-function of $\lambda$ in the open set ${\Xi_{\alpha,\beta}}$. 
\end{lemma}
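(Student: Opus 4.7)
The plan is to transfer smoothness through the regularization $\varepsilon\to 0^+$, using the subordination identity \eqref{eqn:p-lambda-t} together with the smoothness of $s(\lambda)$ on $\Xi_{\alpha,\beta}$ established in Lemma~\ref{lemma:limit-s-lambda}. I fix a compact set $K\subset\Xi_{\alpha,\beta}$; on $K$, $s$ is bounded below by some $s_K>0$ (since $s>0$ is continuous on $\Xi_{\alpha,\beta}$), and $\varepsilon_0(\lambda,\varepsilon)\to s(\lambda)$ uniformly as $\varepsilon\to 0^+$ by Lemma~\ref{lemma:convergence-uniform-epsilon-0}.

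First I would observe that for each $\varepsilon>0$, $\lambda\mapsto S(x_0+y,\lambda,\varepsilon)$ is already $C^\infty$ on $\mathbb C$, and the subordination identity \eqref{eqn:p-lambda-t} specialized to $\gamma=0$ (so that $z=\lambda$) gives
\[
\frac{\partial S(x_0+y,\lambda,\varepsilon)}{\partial\lambda}=p_\lambda^{\bf t_0}(\varepsilon_0(\lambda,\varepsilon)),\qquad\frac{\partial S(x_0+y,\lambda,\varepsilon)}{\partial\overline\lambda}=p_{\overline\lambda}^{\bf t_0}(\varepsilon_0(\lambda,\varepsilon)),
\]
with ${\bf t_0}=(0,0,0)$. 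I would then set $g(\lambda):=p_\lambda^{\bf t_0}(s(\lambda))$ and $\overline g(\lambda):=p_{\overline\lambda}^{\bf t_0}(s(\lambda))$, and verify two facts: $(i)$ $g,\overline g$ are $C^\infty$ on $\Xi_{\alpha,\beta}$, and $(ii)$ the partial derivatives above converge to them uniformly on $K$ as $\varepsilon\to 0^+$. For $(i)$, the auxiliary map $(\lambda,r)\mapsto p_\lambda^{\bf t_0}(r)=\phi\bigl((\lambda-x_0)^*[(\lambda-x_0)(\lambda-x_0)^*+r^2]^{-1}\bigr)$ is real-analytic on $\mathbb C\times(0,\infty)$: the resolvent has operator norm at most $r^{-2}$, and its derivatives in $(\lambda,\overline\lambda,r)$ computed via the resolvent identity are operator-norm bounded by polynomial expressions in $r^{-1}$; composing with the $C^\infty$ positive function $s$ given by Lemma~\ref{lemma:limit-s-lambda} and the chain rule yields smoothness of $g,\overline g$. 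For $(ii)$, uniform continuity of $(\lambda,r)\mapsto p_\lambda^{\bf t_0}(r)$ on $K\times[s_K/2,\sup_Ks+1]$ combined with $\varepsilon_0\to s$ uniformly on $K$ gives the convergence.

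Finally, since $S(x_0+y,\lambda,\varepsilon)\to S(x_0+y,\lambda,0)$ pointwise as $\varepsilon\to 0^+$, and its first partials converge locally uniformly on $\Xi_{\alpha,\beta}$ to the $C^\infty$ functions $g,\overline g$, the standard differentiation-under-the-limit theorem forces $S(x_0+y,\cdot,0)\in C^1(\Xi_{\alpha,\beta})$ with partial derivatives $g$ and $\overline g$; these being $C^\infty$, so is $S(x_0+y,\cdot,0)$. The main technical point will be the joint norm-continuity of the resolvent $(\lambda,r)\mapsto[(\lambda-x_0)(\lambda-x_0)^*+r^2]^{-1}$ when $x_0$ is unbounded, which is handled by restricting to the regime $r\geq s_K/2>0$, where the resolvents are uniformly norm-bounded and Lipschitz in $(\lambda,r)$.
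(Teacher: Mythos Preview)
Your argument is correct and takes a genuinely different route from the paper's proof. The paper works with the $\varepsilon$-derivative rather than the $\lambda$-derivative: it writes
\[
S(x_0+y,\lambda,0)=S(x_0+y,\lambda,\delta)-2\int_0^\delta q_u^{\bf t}(\lambda)\,du,
\]
expresses $q_u^{\bf t}(\lambda)$ via the subordination relation as $q_{\varepsilon_0(\lambda,u)}^{\bf t_0}(\lambda)\cdot f(\sigma(\lambda,u))$ for an explicit smooth $f$, and then argues that the integrand extends to a $C^\infty$ function of $(\lambda,u)$ on $\Xi_{\alpha,\beta}\times[0,\delta]$ because $\varepsilon_0(\lambda,u)\to s(\lambda)>0$ as $u\to 0^+$. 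This gives an explicit formula for $S(x_0+y,\lambda,0)$ as the difference of two manifestly $C^\infty$ expressions.

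Your approach instead uses only the $\lambda$-subordination \eqref{eqn:p-lambda-t} and a differentiation-under-the-limit argument, which is more economical: you never need the formula \eqref{eqn:q-epsilon-t} for $q_\varepsilon^{\bf t}$, the auxiliary function $f(\sigma)$, or the explicit expressions for $\varepsilon_1,\varepsilon_2$ from Corollary~\ref{cor:epsilon-1-2-formulas}. The paper's approach has the advantage of producing an explicit integral representation of $S(x_0+y,\lambda,0)$ and of sidestepping the (minor) issue of finiteness of the pointwise limit, which in your argument is implicitly forced by the uniform bound on $\nabla S(x_0+y,\cdot,\varepsilon)$ combined with the fact that $\log\Delta(x_0+y-\cdot)$ is subharmonic and hence not identically $-\infty$ on any open set. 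One small note: for unbounded $x_0$, ``real-analytic'' is stronger than what you actually justify or need; $C^\infty$ in $(\lambda,r)$ on $\mathbb C\times(0,\infty)$ is what your resolvent-identity argument yields and is all that is required.
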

\begin{proof}
By the definition \eqref{eqn:defn-S-x0-y-epsilon} of the function $S$, we observe that
\[
   \frac{1}{2} \frac{d}{d\varepsilon} S(x_0+y,\lambda,\varepsilon)= q_\varepsilon^{\bf t}(\lambda),
\]
where ${\bf t}=(\alpha,\beta,0)$, and by \eqref{eqn:p-lambda-def}
\[
 q_\varepsilon^{\bf t}(\lambda)=\varepsilon\phi \left\{ \left[ (\lambda -x_0-y)^*(\lambda -x_0-y) + \varepsilon^2 \right]^{-1}\right \}.
\]
We now apply Lemma \ref{lemma:subordination-OA-1} to the case $\gamma=0$. In this case, 
the free cumulant $\kappa(y,fy)=0$ for any $f\in\mathcal{B}$, and hence $z=\Phi_{\alpha, \beta,0}^{(\varepsilon)}(\lambda)=\lambda$.
The subordination relation \eqref{eqn:sub-operator-1} yields
\[
   g_{11}(\lambda,\varepsilon_1,\varepsilon_2)=-iQ^{\bf t}_\varepsilon(\lambda),
\]
and by \eqref{eqn:defn-P-Q-derivatives} and \eqref{eqn:defn-g-ij-entries}
\[
{g}_{11}(\lambda,\varepsilon_1,\varepsilon_2) =-i \varepsilon_2{\mathbb{E}}\left\{  \left[ (\lambda-x_0)(\lambda-x_0)^* + \varepsilon_1\varepsilon_2\right]^{-1}\right\}
\]
and
\[
 Q^{\bf t}_\varepsilon(\lambda)=\varepsilon {\mathbb{E}} \left\{ \left[ (\lambda -x_0-y)(\lambda -x_0-y)^* + \varepsilon^2 \right]^{-1}\right \}.
\]
By taking trace, we showed in Section \ref{section:sub-oa-limit} (see \eqref{eqn:phi-epsilon-1} and \eqref{eqn:formula-g11-D}) that
\[
  i\phi(g_{11})
  =\phi(\varepsilon_2) D
  =\phi(\varepsilon_2) \phi \left\{  \left[ (\lambda-x_0)(\lambda-x_0)^* + \varepsilon_1\varepsilon_2\right]^{-1}\right\},
\]
and $\varepsilon_0^2=\varepsilon_1\varepsilon_2$, and 
\[
\phi(\varepsilon_2) = \varepsilon_0\frac{(e^{\sigma}-1)}{\sigma} e^{-\sigma/2},
\]
where $\sigma=\sigma(\lambda,\varepsilon)=(\alpha-\beta)D$. 
We now put
\[
  f(\sigma)=\frac{e^{\sigma/2}+e^{-\sigma/2}}{\sigma}.
\]
Then we can rewrite $ i\phi(g_{11})$ as
\[
   i\phi(g_{11})=\left( \varepsilon_0 \phi \left\{  \left[ (\lambda-x_0)(\lambda-x_0)^* + \varepsilon_0^2\right]^{-1}\right\} \right) \cdot f(\sigma(\lambda,\varepsilon)).
\]
Recall that $\phi(Q_\varepsilon^{\bf t}(\lambda))=q_\varepsilon^{\bf t}(\lambda)$. 
Denote ${\bf t_0}=(0,0,0)$. Following definition \eqref{eqn:p-lambda-def}, the identity $i\phi(g_{11})=\phi(Q_\varepsilon^{\bf t}(\lambda))$ can be rewritten as
\[
    q_{\varepsilon_0(\lambda,\varepsilon)}^{\bf t_0}(\lambda)\cdot f(\sigma(\lambda,\varepsilon))=q_\varepsilon^{\bf t}(\lambda).
\]
Therefore, by taking the integration of the above identity over some interval $[\varepsilon, \delta]$, we obtain
\begin{equation}
  S(x_0+y,\lambda,\delta)- S(x_0+y,\lambda,\varepsilon)=2\int^\delta_\varepsilon f(\sigma(\lambda,u)) q_{\varepsilon_0(\lambda,u)}^{\bf t_0}(\lambda)\, du.
\end{equation}
By Proposition \ref{prop:epsilon0-epsilon-analyticity}, we know that $\varepsilon_0(\lambda,\varepsilon)$ and $D(\lambda, \varepsilon)$ are $C^\infty$-functions of $(\lambda,\varepsilon)$. Hence, $(\lambda,\varepsilon)\rightarrow  f(\sigma(\lambda,\varepsilon))$ is a $C^\infty$-function of $(\lambda,\varepsilon)$ over ${\Xi_{\alpha,\beta}}\times [0,\infty)$. Moreover, by Lemma \ref{lemma:limit-s-lambda}, recall that, for $\lambda\in{\Xi_{\alpha,\beta}}$, we have  $\lim_{\varepsilon\rightarrow 0^+}\varepsilon_0=s(\lambda)>0$ and $s(\lambda)$ is a $C^\infty$-function of $\lambda.$
Therefore, 
\[
  \begin{aligned}
   S(x_0+y,\lambda,0)&=
   S(x_0+y,\lambda,\delta)-\lim_{\varepsilon\rightarrow 0^+} 2\int_\varepsilon^\delta f(\sigma(\lambda,u)) q_{\varepsilon_0(\lambda,u)}^{\bf t_0}(\lambda)\, du\\
   &=S(x_0+y,\lambda,\delta)-2\int_0^\delta f(\sigma(\lambda,u)) q_{\varepsilon_0(\lambda,u)}^{\bf t_0}(\lambda)\, du.
  \end{aligned}
\]
By the above discussion, the right hand side of the above equation is a $C^\infty$-function of $\lambda\in{\Xi_{\alpha,\beta}}$. We remark that it is clear that $S(x_0 + y, \lambda, \delta)$ is a $C^\infty$-function of $\lambda$ due to the regularity of 
$S(x_0 + y, \lambda, \varepsilon)$ for $\varepsilon>0$. Hence, $S(x_0+y,\lambda,0)$ is a $C^\infty$-function of $\lambda$ in ${\Xi_{\alpha,\beta}}$. 
\end{proof}

\begin{proposition}
	 \label{prop:partial-lambda-pushforward}
	 Given ${\bf t(\gamma)}=(\alpha,\beta,\gamma)$ and set  ${\bf t}={\bf t}(0)=(\alpha,\beta, 0)$ and ${\bf t_0}=(0,0,0)$, let $x_0\in\log^+(\mathcal{N})$ be an operator that is $*$-free from $\{g_{_{\alpha, \beta,\gamma}}, g_{_{\alpha, \beta,0}}\}$ with amalgamation over $\mathcal{B}$. 
If the map $\Phi_{\alpha, \beta,\gamma}$ is non-singular at some $\lambda\in{\Xi_{\alpha,\beta}}$, then the map
$(z,\varepsilon)\mapsto S(x_0+y,z,\varepsilon)$
 has a $C^\infty$-extension in some neighborhood of $(\Phi_{\alpha, \beta,\gamma}(\lambda),0)$. Hence, 
\begin{equation}
  \label{eqn:limit-p-z-gamma}
\lim_{\varepsilon\rightarrow 0^+}  p_{\Phi_{\alpha, \beta,\gamma}(\lambda)}^{\bf t(\gamma)}(\varepsilon)
    =\lim_{\varepsilon\rightarrow 0^+}  p_\lambda^{\bf t}(\varepsilon)
    =\lim_{\varepsilon\rightarrow 0^+}  p_\lambda^{{\bf t_0}}(\varepsilon_0(\lambda,\varepsilon)).
\end{equation}

Morever, if the map $\Phi_{\alpha, \beta,\gamma}$ is non-singular at any $\lambda\in{\Xi_{\alpha,\beta}}$, then it is also one-to-one. 
\end{proposition}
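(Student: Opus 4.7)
The plan is to combine two ingredients: first, the $C^\infty$-smoothness of $\varepsilon_0(\lambda,\varepsilon)$, $D(\lambda,\varepsilon)$ (and hence of $\Phi^{(\varepsilon)}_{\alpha\beta,\gamma}$) on $\Xi_{\alpha,\beta}\times[0,\infty)$, which follows from Proposition~\ref{prop:epsilon0-epsilon-analyticity} together with Lemma~\ref{lemma:limit-s-lambda} in the same spirit as the extension already used in the proof of Lemma~\ref{lemma:limit-S-y-0}; second, the subordination identities (\ref{eqn:p-lambda-t}) and (\ref{eqn:q-epsilon-t}), which express the first partial derivatives of $S(x_0+y,z,\varepsilon)$ through quantities defined purely from $x_0$. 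Concretely, I would first use the non-singularity hypothesis at $\lambda\in\Xi_{\alpha,\beta}$ combined with the inverse function theorem (with parameters) and the $C^\infty$-convergence $\Phi^{(\varepsilon)}_{\alpha\beta,\gamma}\to\Phi_{\alpha\beta,\gamma}$ as $\varepsilon\to 0^+$ to produce an open neighborhood $U\ni\lambda$ and some $\varepsilon_*>0$ such that $\Phi^{(\varepsilon)}_{\alpha\beta,\gamma}|_U$ is a diffeomorphism for every $\varepsilon\in[0,\varepsilon_*)$; the local inverses $J^{(\varepsilon)}$ then depend $C^\infty$-smoothly on $(z,\varepsilon)$ in a neighborhood $V\times[0,\varepsilon_*)$ of $(z_0,0)$, where $z_0=\Phi_{\alpha\beta,\gamma}(\lambda)$.

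Next, I would show that all three first partial derivatives of $S_2(z,\varepsilon):=S(x_0+y,z,\varepsilon)$ extend $C^\infty$-smoothly across $\varepsilon=0$ on $V$. From the subordination identity (\ref{eqn:p-lambda-t}),
\[
\partial_z S_2(z,\varepsilon)=p_z^{{\bf t(\gamma)}}(\varepsilon)=p_{J^{(\varepsilon)}(z)}^{{\bf t_0}}\!\bigl(\varepsilon_0(J^{(\varepsilon)}(z),\varepsilon)\bigr),
\]
which is manifestly $C^\infty$ on $V\times[0,\varepsilon_*)$ because $J^{(\varepsilon)}$ and $\varepsilon_0$ extend $C^\infty$-smoothly down to $\varepsilon=0$, and the regulator $\varepsilon_0(J^{(0)}(z),0)=s(J^{(0)}(z))$ stays bounded away from zero on $V$ by Lemma~\ref{lemma:limit-s-lambda}. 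The same argument handles $\partial_{\bar z}S_2$, while (\ref{eqn:q-epsilon-t}) rewritten as $q_\varepsilon^{\bf t(\gamma)}=\varepsilon_0\cdot\tfrac{e^{\sigma/2}-e^{-\sigma/2}}{\sigma}\cdot D$, with $\sigma=(\alpha-\beta)D$ (a product of quantities entire in $\sigma$), yields the same conclusion for $\partial_\varepsilon S_2$. Integrating the gradient along any path from a fixed base point $(z_1,\varepsilon_1)\in V\times(0,\varepsilon_*)$ then recovers $S_2$ as a $C^\infty$-function on $V\times[0,\varepsilon_*)$, and the limit identities (\ref{eqn:limit-p-z-gamma}) follow by continuity of $\partial_z S_2$ at $\varepsilon=0$, together with the $\gamma=0$ specialization of (\ref{eqn:p-lambda-t}) and Lemma~\ref{lemma:limit-S-y-0}.

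For the injectivity assertion I would argue by contradiction: assuming $\Phi_{\alpha\beta,\gamma}$ is non-singular everywhere on $\Xi_{\alpha,\beta}$ but two distinct points $\lambda_0,\lambda_1$ satisfy $\Phi_{\alpha\beta,\gamma}(\lambda_0)=\Phi_{\alpha\beta,\gamma}(\lambda_1)=z_0$, pick disjoint open neighborhoods $U_j\ni\lambda_j$ on which $\Phi_{\alpha\beta,\gamma}$ is a diffeomorphism. By the $C^1$-convergence $\Phi^{(\varepsilon)}_{\alpha\beta,\gamma}\to\Phi_{\alpha\beta,\gamma}$ on each compact $\overline{U_j}$, for all sufficiently small $\varepsilon>0$ each $\Phi^{(\varepsilon)}_{\alpha\beta,\gamma}|_{U_j}$ remains a diffeomorphism, and its image contains a common open ball $B_\delta(z_0)$ whose radius $\delta>0$ may be chosen independently of small $\varepsilon$ (a standard stability property of $C^1$-diffeomorphisms). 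Any $z\in B_\delta(z_0)$ then admits distinct preimages in $U_0$ and $U_1$ under $\Phi^{(\varepsilon)}_{\alpha\beta,\gamma}$, contradicting the global injectivity established in Lemma~\ref{lemma:Phi-varepsilon-alphabeta-injective}. The most delicate technical point throughout is the $C^\infty$-extension of $\varepsilon_0$ and $D$ across $\varepsilon=0$ on $\Xi_{\alpha,\beta}$: this is obtained by applying the implicit function theorem to the system (\ref{eqn:system-D-epsilon}) after the change of variable $\tau=\log(\alpha/\beta)-(\alpha-\beta)D$, under which $\tau\sim\varepsilon(\alpha-\beta)/\bigl(s(\lambda)\sqrt{\alpha\beta}\bigr)$ as $\varepsilon\to 0^+$, producing a non-degenerate linearization at $\varepsilon=0$.
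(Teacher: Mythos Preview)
Your proposal is correct, and the overall architecture---extend $\varepsilon_0$ and $D$ smoothly across $\varepsilon=0$, invert $\Phi^{(\varepsilon)}_{\alpha\beta,\gamma}$ locally via the non-singularity hypothesis, then transport smoothness through the subordination identities---matches the paper's strategy. Two genuine differences are worth noting. First, for the $C^\infty$-extension of $S(x_0+y,z,\varepsilon)$ the paper does not extend each partial derivative separately and integrate; instead it uses the single identity $q^{{\bf t}(\gamma)}_\varepsilon(z)=q^{{\bf t}}_\varepsilon(\lambda)$ (with $z=\Phi^{(\varepsilon)}_{\alpha\beta,\gamma}(\lambda)$) to write $S(x_0+g_{\alpha\beta,\gamma},z,\varepsilon)-S(x_0+g_{\alpha\beta,\gamma},z,1)$ directly in terms of $S(x_0+g_{\alpha\beta,0},\lambda,\cdot)$, and then invokes Lemma~\ref{lemma:limit-S-y-0} (already proved for $\gamma=0$) together with the non-singular change of variables $(\lambda,\varepsilon)\mapsto(z,\varepsilon)$; your route is more hands-on but self-contained, whereas the paper leverages the $\gamma=0$ case as a black box. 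Second, and more significantly, your injectivity argument via $C^1$-stability of diffeomorphisms is considerably heavier than the paper's: once \eqref{eqn:limit-p-z-gamma} is known at every point of $\Xi_{\alpha,\beta}$, the paper simply observes that if $\Phi_{\alpha\beta,\gamma}(\lambda_1)=\Phi_{\alpha\beta,\gamma}(\lambda_2)=z$, then both $p^{\bf t_0}_{\lambda_j}(s(\lambda_j))$ equal the common value $\lim_{\varepsilon\to0}p^{{\bf t}(\gamma)}_z(\varepsilon)$, and since $z=\lambda_j+\gamma\,p^{\bf t_0}_{\lambda_j}(s(\lambda_j))$ this forces $\lambda_1=\lambda_2$. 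This one-line algebraic argument uses the limit identity itself rather than stability of the approximating maps $\Phi^{(\varepsilon)}_{\alpha\beta,\gamma}$, so it does not require the $C^1$-convergence you invoke.
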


\begin{proof}
Again, by the definition \eqref{eqn:defn-S-x0-y-epsilon} of the function $S(x_0+y,z,\varepsilon)$, we observe that
 \begin{equation}
 	\label{eqn:derivative-S-epsilon}
 	 \frac{1}{2} \frac{d}{d\varepsilon} S(x_0+g_{_{\alpha, \beta,\gamma}},z,\varepsilon)= q_\varepsilon^{\bf t(\gamma)}(z).
 \end{equation}
Recall that the subordination relation in \eqref{eqn:sub-operator-1} reads
\[
  g_{11}(\lambda,\varepsilon_1,\varepsilon_2)=-iQ^{\bf t(\gamma)}_\varepsilon(z)
\]
where
\[
z=\Phi_{\alpha, \beta, \gamma}^{(\varepsilon)}(\lambda)
=\lambda+\gamma p_\lambda^{\bf t_0}(\varepsilon_0).
\] 
If $\gamma=0$, then $\Phi_{\alpha, \beta, 0}^{(\varepsilon)}(\lambda)=\lambda$. 
By choosing $\gamma=0$ and an arbitrary eligible $\gamma$ respectively, 
we have
\[
  Q^{\bf t(\gamma)}_\varepsilon(z)=Q^{\bf t}_\varepsilon(\lambda)=ig_{11}(\lambda,\varepsilon_1,\varepsilon_2),
\]
 By taking the trace, we have $q_\varepsilon^{\bf t(\gamma)}(z)=q^{\bf t}_\varepsilon(\lambda)$. Hence,
for $z=\Phi_{\alpha, \beta, \gamma}^{(\varepsilon)}(\lambda)$, by integrating \eqref{eqn:derivative-S-epsilon}, we obtain 
\begin{equation}
\begin{split}
	 \label{eqn:S-y-gamma-identity}
	 S(x_0+g_{_{\alpha, \beta,\gamma}}, z,\varepsilon)&-S(x_0+g_{_{\alpha, \beta,\gamma}}, z,1)\\
	 &=S(x_0+g_{_{\alpha, \beta,0}}, \lambda,\varepsilon)-S(x_0+g_{_{\alpha, \beta,0}}, \lambda,1).
\end{split}
\end{equation}

Lemma \ref{lemma:limit-S-y-0} shows that the right hand side of the above equation is a $C^\infty$-function of $\lambda$ when $\varepsilon$ goes to zero. Recall that, for $\lambda\in{\Xi_{\alpha,\beta}}$, $\lim_{\varepsilon\rightarrow 0^+}\varepsilon_0(\lambda,\varepsilon)=s(\lambda)$. We then have
\[
 \lim_{\varepsilon\rightarrow 0^+}\Phi_{\alpha, \beta, \gamma}^{(\varepsilon)}(\lambda)= \lim_{\varepsilon\rightarrow 0^+} \lambda+\gamma \cdot p_\lambda^{\bf t_0}(\varepsilon_0)
  =\lambda+\gamma  \cdot p_\lambda^{\bf t_0}(s(\lambda))=\Phi_{\alpha, \beta,\gamma}(\lambda).
\]
Thus, the assumption that $\Phi_{\alpha, \beta,\gamma}$ is non-singular at $\lambda$ implies that the map $(\lambda, \varepsilon)\mapsto (z,\varepsilon)$ is non-singular at $(\lambda,0)$. This yields that the analyticity of right hand side of \eqref{eqn:S-y-gamma-identity} implies that the left hand side of \eqref{eqn:S-y-gamma-identity} has a $C^\infty$-extension in some neighborhood of $(\Phi_{\alpha, \beta,\gamma}(\lambda), 0)$. 

Recall that $g_{21}=g_{21}(\lambda,\varepsilon_1,\varepsilon_2)$ is a constant by Lemma \ref{lem:compute-tilde-g}. 
Again by choosing $\gamma=0$ and an arbitrary $\gamma$ respectively in the subordination relation $g_{21}=P_z^{\bf t(\gamma)}(\varepsilon)=p_z^{\bf t(\gamma)}(\varepsilon)$, we have 
\[
   p_z^{\bf t(\gamma)}(\varepsilon)=p_\lambda^{\bf t}(\varepsilon)=p_\lambda^{\bf t_0}(\varepsilon_0),
\]
where $z=\Phi_{\alpha, \beta, \gamma}^{(\varepsilon)}(\lambda)$ and $\varepsilon_0=\varepsilon_0(\lambda,\varepsilon)$ is given by \eqref{eqn:system-D-epsilon}.
Recall that 
\[
p_z^{\bf t(\gamma)}(\varepsilon)=\frac{d}{d\varepsilon} S(x_0+g_{_{\alpha, \beta,\gamma}}, z,\varepsilon).
\]
Recall that, by definitions \eqref{eqn:p-lambda-def} and \eqref{eqn:defn-S-x0-y-epsilon}, 
for $y=g_{\alpha, \beta,\gamma}$, we have
\begin{align*}
  p_z^{{\bf t(\gamma)}}(\varepsilon) &= \phi \left\{ (z-x_0-y)^* \left[ (z -x_0-y)(z -x_0-y)^* + \varepsilon^2 \right]^{-1}\right \}\\
    &=\frac{\partial S(x_0+y,z,\varepsilon)}{\partial{z}}.
\end{align*}
Hence, the regularity of $S(x_0+g_{_{\alpha, \beta,\gamma}},z,\varepsilon)$ in the neighborhood of $(\Phi_{\alpha, \beta,\gamma}(\lambda), 0)$ implies that
\begin{equation}
  \label{eqn:partial-lambda-regularity-limit}
      \lim_{\varepsilon\rightarrow 0^+}  p_{\Phi_{\alpha, \beta,\gamma}(\lambda)}^{\bf t(\gamma)}(\varepsilon)
=\lim_{\varepsilon\rightarrow 0^+}  p_\lambda^{\bf t}(\varepsilon)
   =\lim_{\varepsilon\rightarrow 0^+}  p_\lambda^{{\bf t_0}}(\varepsilon_0(\lambda,\varepsilon)).
\end{equation}

Assume now that the map $\Phi_{\alpha, \beta,\gamma}$ is non-singular at any $\lambda\in{\Xi_{\alpha,\beta}}$, then \eqref{eqn:limit-p-z-gamma} holds at any $\lambda\in{\Xi_{\alpha,\beta}}$. Hence, if $\lambda_1, \lambda_2\in {\Xi_{\alpha,\beta}}$ and  $\Phi_{_{\alpha\beta,\gamma}}(\lambda_1)=\Phi_{_{\alpha\beta,\gamma}}(\lambda_2)=z$, we have 
\[
     z=\lambda_1+\gamma p_{\lambda_1}^{\bf t_0}(s(\lambda_1))=
     \lambda_2+\gamma p_{\lambda_2}^{\bf t_0}(s(\lambda_2)).
\]
Then \eqref{eqn:partial-lambda-regularity-limit} shows
\[
  \lim_{\varepsilon\rightarrow 0^+} p_{\lambda_1}^{\bf t_0}(\varepsilon_0 (\lambda_1, \varepsilon))
    =\lim_{\varepsilon\rightarrow 0^+} p_{\lambda_2}^{\bf t_0}(\varepsilon_0 (\lambda_2, \varepsilon))
    =p_{z}^{\bf t(\gamma)}(0).
\]
That is $p_{\lambda_1}^{\bf t_0}(s(\lambda_1))=p_{\lambda_2}^{\bf t_0}(s(\lambda_2))$. Hence, we deduce that $\lambda_1=\lambda_2$ and the map $\Phi_{_{\alpha\beta,\gamma}}$ is one-to-one in ${\Xi_{\alpha,\beta}}$. 
\end{proof}


\section{The sum of an elliptic operator and an unbounded operator}
We proceed to study the case when $\alpha=\beta$, in which case $g_{\alpha, \beta,\gamma}$ is a twisted elliptic operator.
Consider now an operator $x_0\in \log^+({\mathcal A})$ and a twisted elliptic operator $g_{t,\gamma}$ that is $*$-free from $x_0$. By using operator-valued subordination functions for unbounded operators from Lemma \ref{lemma:subordination-OA-1}, we are able to study the Brown measure of $x_0+g_{t,\gamma}$. Since $x_0$ could be unbounded, we extend main results in \cite{Zhong2021Brown}.

We denote 
\begin{equation}
\label{eqn:X0-Ct}
X=
\begin{bmatrix}
0 & x_0\\
x_0^* & 0
\end{bmatrix}, 
\qquad 
Y=
\begin{bmatrix}
0 & g_{t,\gamma}\\
g_{t,\gamma}^* & 0
\end{bmatrix}.
\end{equation}
Then $X$ is affiliated with ${M}_2(\mathcal{A})$, and $\{X, Y\}$ are free  with amalgamation over $M_2(\mathbb C)$ in the noncommutative probability space $(M_2(\mathcal{A}), M_2(\phi), M_2(\mathbb{C}))$.
By Theorem \ref{thm:subordination-unbounded}, there exist two analytic self-maps $\Omega_1, \Omega_2$
of upper half-plane $\mathbb{H}^+({M}_2(\mathbb{C}))$ of $M_2(\mathbb C)$
such that
\begin{equation}\label{eqn:subord-X0-Ct}
(\Omega_1(b)+\Omega_2(b)-b)^{-1}=G_X(\Omega_1(b))=G_Y(\Omega_2(b))=G_{X+Y}(b),
\end{equation}
for all $b\in M_2(\mathbb C)$ with $\Im b>0$. In other words, subordination relation \eqref{eqn:subord-operator} for $X+Y$ is still available, where $X$ could be unbounded.  

We introduce a new approach via regularized Brown measure to study the distinguished measure $\mu_{x_0+c_t}$. This allows us to prove several regularity results for this Brown measure. In particular, we show that $\mu_{x_0+c_t}$ is absolutely continuous with respect to the Lebesgue measure. 
\subsection{The subordination functions}
We first calculate a formula for $\Omega_1(\Theta(\lambda,\varepsilon))$ and study its entries following the strategy in \cite[Section 3]{Zhong2021Brown}. The next result is \cite[Proposition 3.5]{Zhong2021Brown}, and we provide a direct proof for convenience. 
\begin{lemma}
	 \label{lemma:sub-additive-semicircular}
	Let $\mu_1$ be a symmetric probability measure, not necessarily compactly supported, and let $\mu_2$ be the semicircular distribution with variance $t$. Denote $\mu=\mu_1\boxplus\mu_2$. Let $\omega_1, \omega_2$ be subordination function such that 
	\[
	G_\mu(z)=G_{\mu_1}(\omega_1(z))=G_{\mu_2}(\omega_2(z)).
	\]
	Set $w(\varepsilon)=\Im \omega_1(i\varepsilon)$. Then, $w=w(\varepsilon)$ is the unique solution in $(0,\infty)$ of the following equation
	\begin{equation}
	\label{eqn:fixed-pt-scalar}
	w=\varepsilon+tw\int_{\mathbb{R}}\frac{1}{u^2+w^2}d\mu_1(u).
	\end{equation}
\end{lemma}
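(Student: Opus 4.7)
The plan is to exploit the fact that the semicircle distribution has $R$-transform $R_{\mu_2}(w)=tw$, together with the symmetry of $\mu_1,\mu_2$. Combining the subordination identity $\omega_1(z)+\omega_2(z)-z=1/G_\mu(z)$ with $G_{\mu_2}(\omega_2(z))=G_\mu(z)$ and $K_{\mu_2}(w)=R_{\mu_2}(w)+1/w$ yields $\omega_2(z)=tG_\mu(z)+1/G_\mu(z)$, and therefore
\[
\omega_1(z)=z-tG_\mu(z)=z-tG_{\mu_1}(\omega_1(z)),\qquad z\in\mathbb{H}^+.
\]

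Next I would evaluate at $z=i\varepsilon$. Since $\mu=\mu_1\boxplus\mu_2$ is the free convolution of two symmetric measures it is itself symmetric, so $G_\mu$ is an odd function; hence $\omega_1(z)=z-tG_\mu(z)$ is also odd, and since $\omega_1$ commutes with complex conjugation it satisfies $\omega_1(i\varepsilon)=-\overline{\omega_1(i\varepsilon)}$, forcing $\omega_1(i\varepsilon)$ to be purely imaginary. Writing $\omega_1(i\varepsilon)=iw$ with $w=w(\varepsilon)>0$ and using the symmetry of $\mu_1$ to discard the odd part of the integrand gives
\[
G_{\mu_1}(iw)=-\int_\mathbb{R}\frac{u\,d\mu_1(u)}{u^2+w^2}-iw\int_\mathbb{R}\frac{d\mu_1(u)}{u^2+w^2}=-iw\int_\mathbb{R}\frac{d\mu_1(u)}{u^2+w^2}.
\]
Substituting into the functional equation and dividing both sides by $i$ produces exactly the stated fixed-point relation \eqref{eqn:fixed-pt-scalar}.

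For the uniqueness of the positive solution I would set $f(w):=t\int_\mathbb{R}(u^2+w^2)^{-1}d\mu_1(u)$, which is strictly decreasing on $(0,\infty)$ by monotone convergence with $f(w)\to 0$ as $w\to\infty$, and rewrite the equation as $\psi(w):=w(1-f(w))=\varepsilon$. Any positive solution must lie in the subinterval $\{w>0:f(w)<1\}=(w_0,\infty)$ for some $w_0\ge 0$, since otherwise $\psi(w)\le 0<\varepsilon$. On this interval differentiation gives
\[
\psi'(w)=(1-f(w))+2tw^2\int_\mathbb{R}(u^2+w^2)^{-2}d\mu_1(u)>0,
\]
so $\psi$ is strictly increasing, with $\psi(w_0^+)=0$ and $\psi(w)\to\infty$ as $w\to\infty$. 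Hence $\psi(w)=\varepsilon$ has exactly one solution.

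The main subtlety lies in the symmetry argument producing $\omega_1(i\varepsilon)\in i\mathbb{R}_+$: one must verify that free additive convolution preserves symmetry and that the simple formula $\omega_1(z)=z-tG_\mu(z)$ is valid on all of $\mathbb{H}^+$ (rather than only near infinity, as in some formulations of the $R$-transform for non-compactly supported measures). Once these are in place the remainder is elementary real analysis on the fixed-point function $\psi$, with only minor bookkeeping to handle the case where $\mu_1$ has an atom at $0$ (which forces $w_0>0$).
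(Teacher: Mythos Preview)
Your proposal is correct and follows essentially the same approach as the paper: derive $\omega_1(z)=z-tG_\mu(z)$ from the semicircular $R$-transform, specialize to $z=i\varepsilon$, use symmetry to get purely imaginary values, and then establish uniqueness via monotonicity. Your uniqueness argument through $\psi(w)=w(1-f(w))=\varepsilon$ is just a rearrangement of the paper's version, which rewrites the equation as $k(w,\varepsilon)=\frac{w}{w-\varepsilon}\int(u^2+w^2)^{-1}d\mu_1(u)=1/t$ and observes that $k(\cdot,\varepsilon)$ is strictly decreasing; your handling of the domain (restricting to $\{f<1\}$) is in fact slightly more careful than the paper's, which glosses over the singularity of $k$ at $w=\varepsilon$.
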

\begin{proof}
	Note that the $R$-transform of $\mu_2$ is $R_{\mu_2}(z)=tz$. Since $R_{\mu}=R_{\mu_1}+R_{\mu_2}$, we have 
	\[
	G_\mu^{\langle -1 \rangle}(z)=G^{\langle -1 \rangle}_{\mu_1}(z)+R_{\mu_2}(z)=G^{-1}_{\mu_1}(z)+tz. 
	\]
	By replacing $z$ with $G_\mu(z)$, we obtain $\omega_1(z)=z-tG_\mu(z)$ which holds for all $z\in\mathbb{C}^+$ by analytic continuation. In particular, for $z=i\varepsilon$, we have 
	\begin{equation}
	\label{eqn:8.10-in-proof}
	\omega_1(i\varepsilon)=i\varepsilon-tG_\mu(i\varepsilon)=i\varepsilon-tG_{\mu_1}(\omega_1(i\varepsilon)).
	\end{equation}
	It is clear that $\omega_1(i\varepsilon)$ is a pure imaginary number and hence $\omega_1(i\varepsilon)=iw(\varepsilon)$. Note that, 
	\[
	G_{\mu_1}(iw)=\int_{\mathbb{R}}\frac{1}{iw-u}d\mu_1(u)=-iw\int_{\mathbb{R}}\frac{1}{u^2+w^2}d\mu_1(u)
	\]
	by the symmetry of $\mu_1$. Taking the imaginary part of \eqref{eqn:8.10-in-proof}, we see that $w(\varepsilon)$ satisfy \eqref{eqn:fixed-pt-scalar}. 
	
	We next show that \eqref{eqn:fixed-pt-scalar} has a unique solution in $(0,\infty)$. Denote 
	\begin{equation}
	\label{eqn:function-k-fixed-pt-scalar}
	k(w,\varepsilon)=\frac{w}{w-\varepsilon}\left( \int_{\mathbb{R}}\frac{1}{u^2+w^2}d_{\mu_1}(u) \right).
	\end{equation}
	Then \eqref{eqn:fixed-pt-scalar} is equivalent to $k(w,\varepsilon)=1/t$. Observe that $w\mapsto k(w,\varepsilon)$ is a strictly decreasing function in $(0,\infty)$. Hence, \eqref{eqn:fixed-pt-scalar} has a unique solution in $(0,\infty)$
\end{proof}

\begin{definition}
	\label{eqn:def-mu1-scalar-subordination}
	For any probability measure $\mu$ on $\mathbb{R}$, we denote by $\widetilde{\mu}$ the symmetrization of $\mu$ defined by
	\[
	   \widetilde{\mu}(B)=\frac{1}{2}(\mu(B)+\mu(-B)),
	\]
	for any Borel measurable set $B$ on $\mathbb{R}$. 
	Given $\lambda\in\mathbb{C}$, let $\mu_1=\widetilde{\mu}_{\vert \lambda-x_0\vert}$ and let $\mu_2$ be the semicircular distribution with variance $t$. Let $\omega_1$ be the subordination function (depending on $\lambda$) such that 
	\[
	G_{\mu_1\boxplus\mu_2}(z)=G_{\mu_1}(\omega_1(z)).
	\]
	We denote $w(\varepsilon;\lambda,t)=\Im\omega_1(i\varepsilon)$. 
\end{definition}

We choose
\[
b=\Theta(z,\varepsilon)=
\begin{bmatrix}
i\varepsilon & z\\
\overline{z} & i\varepsilon
\end{bmatrix}
\]
where $\varepsilon>0$ and $z\in \mathbb{C}$.  The following result extends \cite[Theorem 3.8]{Zhong2021Brown} to unbounded operator $x_0$.

\begin{theorem}
	\label{thm:subordination-elliptic-case}
	Let $x_0\in\log^+(\mathcal{A})$ be an operator that is $*$-free from the elliptic operator $g_{t,\gamma}$. 
	For any $\varepsilon>0$ and $z\in\mathbb{C}$, we set
	\begin{equation}
	\label{eqn-z-lambda-subordination}
	\lambda
	=z-\gamma\cdot \phi\bigg( (z-x_0-g_{t,\gamma})^*\big( (z-x_0-g_{t,\gamma})(z-x_0-g_{t,\gamma})^*+\varepsilon^2 \big)^{-1} \bigg).
	\end{equation}
	Then $\Omega_1(\Theta(z,\varepsilon))=\Theta(\lambda, w(\varepsilon;\lambda,t))$.
	That is,
	\begin{align}
	\Omega_1\left( 
	\begin{bmatrix}
	i\varepsilon & z\\
	\overline{z} & i\varepsilon
	\end{bmatrix}\right)
	=\begin{bmatrix}
	i w(\varepsilon;\lambda,t)  & \lambda\\
	\overline{\lambda} & i w(\varepsilon; \lambda,t)
	\end{bmatrix}.
	\end{align}
	The subordination relation $G_{X+Y}(\Theta(z,\varepsilon))=G_X(\Omega_1(\Theta(z,\varepsilon)))$ is expressed as
	\begin{align}\label{eqn:sub-operator-cor3.6} 
	\mathbb{E}\left( \begin{bmatrix}
	i\varepsilon & z-(x_0+g_{t,\gamma})\\
	\overline{z}-(x_0+g_{t,\gamma})^* & i\varepsilon
	\end{bmatrix}^{-1} \right)
	=\mathbb{E}\left( \begin{bmatrix}
	iw(\varepsilon;\lambda,t) & \lambda-x_0\\
	\overline{\lambda}-x_0^* & iw(\varepsilon;\lambda,t)
	\end{bmatrix}^{-1} \right),
	\end{align}
	which is also equivalent to
	\begin{equation}
	\label{eqn:subordination-Cauchy-entries}
	\begin{aligned}
	&\varepsilon\phi\bigg( \big(({z}-x_0-g_{t,\gamma})({z}-x_0-g_{t,\gamma})^*+\varepsilon^2\big)^{-1} \bigg)\\
	&\qquad\qquad=w(\varepsilon;\lambda,t)\phi\bigg( \big( (\lambda-x_0)(\lambda-x_0)^*+w(\varepsilon;\lambda,t)^2 \big)^{-1} \bigg),\\
	&\phi\bigg( ({z}-x_0-g_{t,\gamma})^*\big( ({z}-x_0-g_{t,\gamma})({z}-x_0-g_{t,\gamma})^*+\varepsilon^2\big)^{-1} \bigg)\\
	&\qquad\qquad=\phi\bigg( (\lambda-x_0)^*\big( (\lambda-x_0)(\lambda-x_0)^*+w(\varepsilon;\lambda,t)^2\big)^{-1} \bigg).
	\end{aligned}
	\end{equation}
\end{theorem}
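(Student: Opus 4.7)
The plan is to specialize the operator-valued subordination construction from Lemma \ref{lemma:subordination-OA-1} to the case $\alpha=\beta=t$ and then show that the whole scheme collapses to scalar subordination, at which point the fixed-point characterization from Lemma \ref{lemma:sub-additive-semicircular} identifies the imaginary part of $\Omega_1(\Theta(z,\varepsilon))$ with $w(\varepsilon;\lambda,t)$. First I would verify that Theorem \ref{thm:subordination-unbounded} applies: $g_{t,\gamma}$ is bounded, and exactly the argument at the beginning of the proof of Lemma \ref{lemma:subordination-OA-1} shows that $-\Im \mathbb{E}[(i-X)^{-1}]$ is strictly positive (by faithfulness of $\phi$), so Case two of Theorem \ref{thm:subordination-unbounded} provides subordination functions $\Omega_1,\Omega_2 \colon \mathbb{H}^+(M_2(\mathcal B))\to\mathbb{H}^+(M_2(\mathcal B))$ satisfying \eqref{eqn:subordination-operator-in-thm}.

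Next, I would use the identity $\Omega_1(b)=b-R_Y(G_{X+Y}(b))$ from \eqref{eqn:Omega-composition-formula} together with Proposition \ref{prop:R-transfomr-Y} and the cumulant formulas \eqref{eqn:cumulant-y-4}. The key simplification in the case $\alpha=\beta=t$ is that $\kappa(y,fy^*)(s_0)=\alpha\int_{s_0}^1 f(s)\,ds+\beta\int_0^{s_0} f(s)\,ds=t\phi(f)$, and similarly $\kappa(y^*,fy)(s_0)=t\phi(f)$, while $\kappa(y,fy)=\gamma\phi(f)$ and $\kappa(y^*,fy^*)=\bar\gamma\phi(f)$; so every cumulant becomes a scalar multiple of $\phi(f)$, independent of the position variable in $\mathcal B=L^\infty[0,1]$. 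Feeding $b=\Theta(z,\varepsilon)$ into the matricial expression \eqref{eqn:subordination-formula-3.18} for $\Omega_1(b)$, the off-diagonal entry becomes $\lambda = z-\kappa(y,P_z^{\bf t}(\varepsilon)y)=z-\gamma\phi(P_z^{\bf t}(\varepsilon))$, which is exactly \eqref{eqn-z-lambda-subordination}, and the two diagonal entries become the common constant $w:=\varepsilon+t\phi(Q_\varepsilon^{\bf t}(z))\in\mathcal B$ by comparing \eqref{eqn:formula-epsilon} with the simplified cumulants.

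Finally, I would identify $w$ with $w(\varepsilon;\lambda,t)$. The subordination relation $G_{X+Y}(\Theta(z,\varepsilon))=G_X(\Omega_1(\Theta(z,\varepsilon)))$ together with $\varepsilon_1=\varepsilon_2=w$ gives (exactly as in \eqref{eqn:sub-operator-1}) that
\[
\phi(Q_\varepsilon^{\bf t}(z))= w\cdot\phi\bigl(((\lambda-x_0)(\lambda-x_0)^*+w^2)^{-1}\bigr),
\]
so the defining equation of $w$ reads
\[
w=\varepsilon+tw\cdot\phi\bigl((|\lambda-x_0|^2+w^2)^{-1}\bigr).
\]
By the spectral theorem, $\phi\bigl((|\lambda-x_0|^2+w^2)^{-1}\bigr)=\int_\mathbb{R}(u^2+w^2)^{-1}\,d\widetilde{\mu}_{|\lambda-x_0|}(u)$, so this is precisely the fixed-point equation \eqref{eqn:fixed-pt-scalar} for $\mu_1=\widetilde\mu_{|\lambda-x_0|}$ and $\mu_2$ the semicircle of variance $t$; Lemma \ref{lemma:sub-additive-semicircular} says its unique positive solution is $w(\varepsilon;\lambda,t)$. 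The matrix equality \eqref{eqn:sub-operator-cor3.6} then is just \eqref{eqn:subordination-operator-in-thm} with $b=\Theta(z,\varepsilon)$ and $\Omega_1(b)=\Theta(\lambda,w(\varepsilon;\lambda,t))$, while \eqref{eqn:subordination-Cauchy-entries} follows by reading off the $(1,1)$ and $(2,1)$ matrix entries and using the tracial property.

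The substantive difficulty is not computational but structural: one must confirm that the cumulants degenerate to position-independent scalars when $\alpha=\beta$ (which collapses the auxiliary algebra $\mathcal B$ out of the subordination problem), and one must correctly match the resulting scalar fixed-point equation with the symmetrized version in Lemma \ref{lemma:sub-additive-semicircular}. The symmetrization $\widetilde{\mu}_{|\lambda-x_0|}$ (rather than $\mu_{|\lambda-x_0|}$) is cosmetic since the integrand $(u^2+w^2)^{-1}$ is even, but it is what forces the scalar subordination framework to fit. With these two observations in hand, the rest of the proof is bookkeeping on the matrix form of $\Omega_1$.
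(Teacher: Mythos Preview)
Your proposal is correct and follows essentially the same approach as the paper: specialize Lemma~\ref{lemma:subordination-OA-1} to $\alpha=\beta=t$, observe that the cumulants \eqref{eqn:cumulant-y-4} collapse to scalar multiples of $\phi$ so that $\varepsilon_1=\varepsilon_2$, and identify this common value with $w(\varepsilon;\lambda,t)$ via the fixed-point equation \eqref{eqn:fixed-pt-scalar} from Lemma~\ref{lemma:sub-additive-semicircular}. The paper's proof is terser (it notes $Q_\varepsilon^{\bf t}(z)=\widetilde{Q}_\varepsilon^{\bf t}(z)$ and passes directly to the fixed-point equation for $\varepsilon_1$), but structurally identical; your explicit check that each cumulant reduces to $t\phi(f)$ or $\gamma\phi(f)$ makes the reason for $\varepsilon_1=\varepsilon_2$ a bit more transparent.
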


\begin{proof}
Apply Lemma \ref{lemma:subordination-OA-1} to the case $\alpha=\beta=t$. In this case, 
\[
 Q_\varepsilon^{\bf t}(z)=\widetilde{Q}^{\bf t}_\varepsilon (z)
 \]
 and hence $\varepsilon_1=\varepsilon_2 \in (\varepsilon,\infty)$. Moreover, by \eqref{eqn:sub-operator-1} and the definition \eqref{eqn:defn-g-ij-entries}, then $\varepsilon_1$ can be expressed as
 \[
  \varepsilon_1=\varepsilon+\varepsilon_1
    \phi\bigg( \big( (\lambda-x_0)(\lambda-x_0)^*+\varepsilon_1^2 \big)^{-1} \bigg).
 \]
 This shows that $\varepsilon_1=\varepsilon_2=w(\varepsilon;\lambda,t)$ by \eqref{eqn:fixed-pt-scalar} and 
 Definition \ref{eqn:def-mu1-scalar-subordination}.
\end{proof}

Recall that the open set $\Xi_t$ is defined in \eqref{defn:Xi-t} as
\[
  \Xi_t=\left\{ \lambda\in\mathbb{C} : \phi \left[\big( (x_0-\lambda)^*(x_0-\lambda)\big)^{-1}\right]>\frac{1}{t}  \right\}.
\]
\begin{definition}
	For $\lambda\in\Xi_t$, let $w(0;\lambda,t)$ be the unique solution $w\in(0,\infty)$ to the following equation
	\begin{equation}
	\label{eqn:w-epsiton-0-identity}
	\phi\left[\big( (x_0-\lambda)^*(x_0-\lambda) +w(0; \lambda,t)^2 \big)^{-1}\right]=\frac{1}{t}.
	\end{equation}
	For $\lambda\in \mathbb{C}\backslash\Xi_t$, set $w(0;\lambda,t)=0$. 
\end{definition}

We define the function $\Phi^{(\varepsilon)}_{t,\gamma}$ on $\mathbb{C}$ by
\begin{equation}
\label{defn:Phi-t-gamma-epsitlon}
\Phi^{(\varepsilon)}_{t,\gamma} (\lambda)= \lambda+\gamma\cdot p_\lambda^{(0)}( w(\varepsilon;\lambda,t)  ),
\qquad \lambda\in\mathbb{C}
\end{equation}
where 
\[
p_\lambda^{(0)}( w(\varepsilon;\lambda,t)  ) =
\phi\bigg[ (\lambda-x_0)^*\big( (\lambda-x_0)(\lambda-x_0)^*+w(\varepsilon; \lambda,t)^2 \big)^{-1}\bigg].
\]
This can also be rewritten as 
\[
\Phi^{(\varepsilon)}_{t,\gamma}(\lambda)=\lambda+\gamma\cdot \frac{\partial S}{\partial \lambda}(x_0, \lambda,w(\varepsilon;\lambda,t)).
\]
The map $\Phi_{t,\gamma}$ is defined as
\[
\Phi_{t,\gamma} (\lambda)= \lambda+\gamma\cdot p_\lambda^{(0)}( w(0;\lambda,t)  ),
\qquad \lambda\in\mathbb{C}.
\]

The following result is a special case of Lemma \ref{lemma:Phi-varepsilon-alphabeta-injective} by letting $\alpha=\beta=t$.
\begin{corollary}
	\label{cor:regularization-Phi-one2one-map}
	The map $\Phi^{(\varepsilon)}_{t,\gamma}$ is a homeomorphism of the complex plane for any $\varepsilon >0.$
		Its inverse map is 
	   \begin{equation}
	   	 J^{(\varepsilon)}_{t,\gamma}(z)=z-\gamma\cdot p_z^{(t,\gamma)}(\varepsilon), 
	   \end{equation}
	  where 
	  \[
	    p_z^{(t,\gamma)}(\varepsilon)=\phi\bigg( ({z}-x_0-g_{t,\gamma})^*\big( ({z}-x_0-g_{t,\gamma})({z}-x_0-g_{t,\gamma})^*+\varepsilon^2 \big)^{-1} \bigg).
	  \]
\end{corollary}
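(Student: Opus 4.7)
The plan is to deduce the corollary directly from Lemma \ref{lemma:Phi-varepsilon-alphabeta-injective} by specializing to $\alpha = \beta = t$. The only ingredient that requires care is the identification of the quantity $\varepsilon_0(\lambda,\varepsilon)$ appearing in the general definition \eqref{eqn:defn-Phi-regularization-map} with the scalar subordination quantity $w(\varepsilon;\lambda,t)$ of Definition \ref{eqn:def-mu1-scalar-subordination}. By Theorem \ref{thm:subordination-elliptic-case}, when $\alpha = \beta = t$ the subordination matrix $\Omega_1(\Theta(z,\varepsilon))$ is diagonal with equal imaginary parts, so that $\varepsilon_1 = \varepsilon_2 = w(\varepsilon;\lambda,t)$ and therefore $\varepsilon_0 = \sqrt{\varepsilon_1 \varepsilon_2} = w(\varepsilon;\lambda,t)$. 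Substituting this identification into \eqref{eqn:defn-Phi-regularization-map} recovers exactly the formula \eqref{defn:Phi-t-gamma-epsitlon} defining $\Phi^{(\varepsilon)}_{t,\gamma}$, and likewise the inverse formula $J^{(\varepsilon)}_{\alpha\beta,\gamma}$ from Lemma \ref{lemma:Phi-varepsilon-alphabeta-injective} specializes to $J^{(\varepsilon)}_{t,\gamma}(z) = z - \gamma \, p_z^{(t,\gamma)}(\varepsilon)$.

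With these identifications in hand, I would invoke Lemma \ref{lemma:Phi-varepsilon-alphabeta-injective} directly. Since Theorem \ref{thm:subordination-elliptic-case} is valid for any $x_0 \in \log^+(\mathcal{A})$, the two pillars of the argument transfer unchanged: surjectivity follows from the uniform bound $\|p_z^{(t,\gamma)}(\varepsilon)\| \leq 1/(2\varepsilon)$, which forces $J^{(\varepsilon)}_{t,\gamma}(z) - z$ to be bounded and hence $J^{(\varepsilon)}_{t,\gamma}$ to be a proper continuous self-map of $\mathbb{C}$; injectivity follows from the composition identity $H_1 \circ \Omega_1 = \mathrm{id}$ on the relevant operator upper half-plane, where $H_1(b) = b + R_Y(G_X(b))$, combined with the matching of $\Theta(z,\varepsilon)$ to $\Omega_1(\Theta(z,\varepsilon)) = \Theta(\lambda, w(\varepsilon;\lambda,t))$ provided by Theorem \ref{thm:subordination-elliptic-case}.

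The main obstacle is conceptual rather than computational: one must confirm that the scalar quantity $w(\varepsilon;\lambda,t)$ defined via classical free additive convolution with a semicircular distribution in Definition \ref{eqn:def-mu1-scalar-subordination} genuinely agrees with the operator-valued subordination parameter $\varepsilon_1 = \varepsilon_2$ obtained by setting $\alpha = \beta = t$ in the triangular framework of Lemma \ref{lemma:subordination-OA-1}. This is precisely the content of Theorem \ref{thm:subordination-elliptic-case} together with the fixed-point equation \eqref{eqn:fixed-pt-scalar}, both of which have already been established. Once this identification is recorded, no further work is required and the corollary follows at once.
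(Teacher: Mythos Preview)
Your proposal is correct and follows the same approach as the paper, which simply records that the corollary is the special case $\alpha=\beta=t$ of Lemma~\ref{lemma:Phi-varepsilon-alphabeta-injective}. Your additional care in making explicit the identification $\varepsilon_0=\sqrt{\varepsilon_1\varepsilon_2}=w(\varepsilon;\lambda,t)$ via Theorem~\ref{thm:subordination-elliptic-case} is appropriate, since the formulas for $\varepsilon_0$ in Section~\ref{section:triangular-elliptic} are written assuming $\alpha\neq\beta$; this identification is precisely what allows the general lemma to specialize cleanly.
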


\begin{lemma}
	 \label{lemma:regularity-sub-lambda}
	For any $\varepsilon> 0$, the function $\lambda\mapsto w(\varepsilon;\lambda,t)$ is a $C^\infty$ function. The function $\lambda\mapsto w(0;\lambda,t)$ is a continuous function of $\lambda$ on $\mathbb{C}$ and is a $C^\infty$ function in the open set $\Xi_t$. 
	
   Moreover, the function $w(\varepsilon;\lambda,t)$ converges uniformly to $w(0;\lambda,t)$ in any compact subset of $\mathbb{C}$ as $\varepsilon$ tends to zero. 
\end{lemma}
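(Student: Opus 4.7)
The plan is to treat the three claims in sequence: smoothness via the implicit function theorem, continuity at the boundary of $\Xi_t$ by a pass-to-the-limit contradiction, and uniform convergence via Dini's theorem. The starting point is to rewrite the fixed-point equation of Lemma \ref{lemma:sub-additive-semicircular} applied to $\mu_1=\widetilde\mu_{|\lambda-x_0|}$ as
\[
H(w,\lambda,\bar\lambda,\varepsilon):=w-\varepsilon-tw\,\phi\bigl[\bigl((x_0-\lambda)^*(x_0-\lambda)+w^2\bigr)^{-1}\bigr]=0,
\]
whose unique positive solution is $w=w(\varepsilon;\lambda,t)$. At a solution with $\varepsilon>0$ one has $w(1-t\phi[\cdots])=\varepsilon>0$, so $1-t\phi[\cdots]>0$; at $\lambda\in\Xi_t$ with $\varepsilon=0$ the identity \eqref{eqn:w-epsiton-0-identity} gives $\phi[\cdots]=1/t$ and $w>0$.

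For the two smoothness claims, a direct computation yields
\[
\frac{\partial H}{\partial w}=\bigl(1-t\phi[\cdots]\bigr)+2tw^2\,\phi\bigl[\bigl((x_0-\lambda)^*(x_0-\lambda)+w^2\bigr)^{-2}\bigr]>0
\]
at every $\varepsilon>0$ solution. That $(w,\lambda,\bar\lambda)\mapsto\phi[((x_0-\lambda)^*(x_0-\lambda)+w^2)^{-1}]$ is $C^\infty$ on $\{w>0\}\times\mathbb{C}$ follows by differentiating under $\phi$ using the resolvent identity $A^{-1}-B^{-1}=A^{-1}(B-A)B^{-1}$; for unbounded $x_0$ the term-by-term computation is justified by the strong-operator truncation argument from the proof of Lemma \ref{lem:tech-thm}. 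The implicit function theorem then gives $w(\varepsilon;\lambda,t)\in C^\infty(\mathbb{C}\times(0,\infty))$. The same scheme, applied to the equation $\phi[((x_0-\lambda)^*(x_0-\lambda)+w^2)^{-1}]=1/t$ whose $w$-derivative $-2w\phi[(\cdots)^{-2}]$ is strictly negative for $w>0$, yields $w(0;\cdot,t)\in C^\infty(\Xi_t)$.

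For continuity at a point $\lambda_0\in\mathbb{C}\setminus\Xi_t$, I argue by contradiction: suppose $\lambda_n\to\lambda_0$ with $w(0;\lambda_n,t)\geq\delta>0$. Then
\[
\tfrac1t=\phi\bigl[\bigl((x_0-\lambda_n)^*(x_0-\lambda_n)+w(0;\lambda_n,t)^2\bigr)^{-1}\bigr]\leq\phi\bigl[\bigl((x_0-\lambda_n)^*(x_0-\lambda_n)+\delta^2\bigr)^{-1}\bigr],
\]
and norm continuity in $\lambda$ of $((x_0-\lambda)^*(x_0-\lambda)+\delta^2)^{-1}$ together with normality of $\phi$ let me pass to the limit. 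Since $\lambda_0\notin\Xi_t$ forces $\phi[((x_0-\lambda_0)^*(x_0-\lambda_0))^{-1}]\leq 1/t<\infty$, the spectral measure of $|x_0-\lambda_0|$ has no atom at $0$, and faithfulness of $\phi$ yields the strict inequality
\[
\phi\bigl[\bigl((x_0-\lambda_0)^*(x_0-\lambda_0)+\delta^2\bigr)^{-1}\bigr]<\phi\bigl[\bigl((x_0-\lambda_0)^*(x_0-\lambda_0)\bigr)^{-1}\bigr]\leq\tfrac1t,
\]
contradicting the previous display. Hence $w(0;\lambda_n,t)\to 0=w(0;\lambda_0,t)$, proving continuity on all of $\mathbb{C}$.

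For the uniform-convergence claim, the implicit function formula gives $\partial w/\partial\varepsilon=(\partial H/\partial w)^{-1}>0$, so $\varepsilon\mapsto w(\varepsilon;\lambda,t)$ is strictly increasing on $(0,\infty)$, and a separate pass-to-the-limit in the defining equation (using the uniqueness of solutions to the $\varepsilon=0$ equation, together with the contradiction argument above ruling out a positive limit when $\lambda\notin\Xi_t$) shows $w(\varepsilon;\lambda,t)\searrow w(0;\lambda,t)$ pointwise as $\varepsilon\to 0^+$. Dini's theorem, applied on a compact $K\subset\mathbb{C}$ to this monotone family of continuous functions with continuous pointwise limit $w(0;\cdot,t)$, delivers the desired uniform convergence on $K$. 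The one genuinely delicate step is justifying differentiation under $\phi$ in the second paragraph for affiliated unbounded $x_0$; the remaining ingredients are elementary one-variable analysis.
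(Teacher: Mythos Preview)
Your proposal is correct and follows essentially the same route as the paper: implicit function theorem applied to the fixed-point equation for smoothness (the paper uses the equivalent reformulation $k(w,\varepsilon)=\frac{w}{w-\varepsilon}\phi[\cdots]=1/t$ in place of your $H$), a contradiction argument for continuity at $\partial\Xi_t$, monotonicity in $\varepsilon$, and then Dini's theorem. If anything, you are slightly more thorough than the paper's proof of this particular lemma, since you spell out the boundary-continuity step (the paper handles the analogous step only in the earlier $\alpha\neq\beta$ case, Lemma~\ref{lemma:convergence-uniform-epsilon-0}, and here simply invokes Dini without repeating that argument).
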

\begin{proof}
	By Definition \ref{eqn:def-mu1-scalar-subordination}, for any $\varepsilon>0$, the function $w(\varepsilon;\lambda,t)$ is the imaginary part of the subordination function parameterized by $\lambda$. In particular, $w(\varepsilon;\lambda,t)=\omega_1(i\varepsilon)>\varepsilon$, where $\omega_1$ is the subordination as in Definition \ref{eqn:def-mu1-scalar-subordination}. By Lemma \ref{lemma:sub-additive-semicircular}, the function $w(\varepsilon;\lambda,t)$ is the unique $w$ such that
	\begin{equation}
	\label{eqn:fixed-pt-scalar-lambda}
	  \begin{aligned}
	  w&=\varepsilon+tw\int_{\mathbb{R}}\frac{1}{u^2+w^2}d\,\widetilde{\mu}_{\vert \lambda-x_0\vert}(u)\\
	  &=\varepsilon+tw\phi[((\lambda-x_0)^*(\lambda-x_0)+w^2 )^{-1}].
	  \end{aligned}
	\end{equation}
	Following \eqref{eqn:function-k-fixed-pt-scalar}, we rewrite it as
	\begin{equation}
	   \label{eqn:7.14-in-proof}
		 \begin{aligned}
		 k(w,\varepsilon)&=\frac{w}{w-\varepsilon}\left( \int_{\mathbb{R}}\frac{1}{u^2+w^2}d\,\widetilde{\mu}_{\vert \lambda-x_0\vert}(u) \right)\\
		 &=\frac{w}{w-\varepsilon}\phi[( (\lambda-x_0)^*(\lambda-x_0)+w^2 )^{-1}]
		 =\frac{1}{t}.
		 \end{aligned}
	\end{equation}
	Note that $\frac{\partial}{\partial w}k(w,\varepsilon)<0$ for $w>\varepsilon$ and $\lambda\mapsto k(w,\varepsilon)$ is a smooth function. By implicit function theorem, $\lambda\mapsto w(\varepsilon;\lambda,t)$ is a $C^\infty$ function. Similarly, for $\lambda\in\Xi_t$, the function $w(0;\lambda,t)$ is determined by \eqref{eqn:w-epsiton-0-identity}, which yields that $\lambda\mapsto w(0;\lambda,t)$ is a $C^\infty$ function in the open set $\Xi_t$ by applying the implicit function theorem. 
	
   Recall that $w(\varepsilon;\lambda,t)$ is the unique solution of 
	$k(w,\varepsilon)=1/t$ as in \eqref{eqn:7.14-in-proof}. We observe that
	\[
	k(w(\varepsilon_1;\lambda,t),\varepsilon_1)<k(w(\varepsilon_1;\lambda,t),\varepsilon_2)
	\]
	if $0<\varepsilon_1<\varepsilon_2<w(\varepsilon_1;\lambda,t)$. Hence, $w(\varepsilon_1;\lambda,t)<w(\varepsilon_2;\lambda,t)$ since $\frac{\partial}{\partial w}k(w,\varepsilon)<0$. By Dini's theorem, the convergence of $w(\varepsilon;\lambda,t)$ to $w(0;\lambda,t)$ is uniform in any compact subset of $\mathbb{C}$. 
\end{proof}

\subsection{The Fuglede-Kadison determinant formula}
By using Theorem \ref{thm:subordination-elliptic-case}, we can apply exactly the same methods as in \cite[Lemma 3.11, Theorem 3.12]{Zhong2021Brown} to obtain a formula for $\Delta\big( (x_0+g_{t,\gamma})^*(x_0+g_{t,\gamma})+\varepsilon^2 \big)$. We only state a result for circular operator below (See \cite[Section 5.2]{BercoviciZhong2022} for an alternative proof). 

\begin{theorem}\cite[Theorem 3.12]{Zhong2021Brown}
	\label{thm:main-FK-det-ct-0}
	For $\lambda\in\mathbb{C}$, we have the following Fuglede-Kadison determinant formulas. 	
	\begin{enumerate}[(1)]
		\item If $\lambda\in\Xi_t$, then 
		\begin{equation}
		\label{eqn:main-FK-det-0-V2}
		\begin{aligned}
		\Delta \big(x_0+c_t-\lambda \big)^2
		&={\Delta\big( (x_0-\lambda)^*(x_0-\lambda)+w(0; \lambda,t)^2 \big)}\\ &\qquad\qquad\times{\exp\bigg(-\frac{(w(0;\lambda,t))^2}{t}\bigg)}.
		\end{aligned}
		\end{equation}	
		\item If $\lambda\notin\Xi_t$, then 
		\begin{equation}
		  \label{eqn:main-FK-det-outside}
		 \Delta(x_0+c_t-\lambda)=\Delta(x_0-\lambda).
		\end{equation}
	\end{enumerate}
\end{theorem}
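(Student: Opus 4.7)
The plan is to derive both identities simultaneously from a one-parameter formula in $\varepsilon>0$, obtained by computing $\partial_\varepsilon S(x_0+c_t,\lambda,\varepsilon)$ via the subordination relation of Theorem \ref{thm:subordination-elliptic-case}, and then passing to the limit $\varepsilon\to 0^+$.

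First I specialize Theorem \ref{thm:subordination-elliptic-case} to $\gamma=0$, so that $z=\lambda$. Using traciality, $\phi[(aa^*+\varepsilon^2)^{-1}]=\phi[(a^*a+\varepsilon^2)^{-1}]$, the first identity in \eqref{eqn:subordination-Cauchy-entries} combined with the fixed-point equation \eqref{eqn:fixed-pt-scalar-lambda}, rearranged as $w\phi[((\lambda-x_0)^*(\lambda-x_0)+w^2)^{-1}]=(w-\varepsilon)/t$ with $w=w(\varepsilon;\lambda,t)$, gives
\[
\frac{\partial S(x_0+c_t,\lambda,\varepsilon)}{\partial \varepsilon}=2\varepsilon\phi\!\left[\big((x_0+c_t-\lambda)^*(x_0+c_t-\lambda)+\varepsilon^2\big)^{-1}\right]=\frac{2(w-\varepsilon)}{t}.
\]
Introducing the candidate antiderivative
\[
H(\varepsilon):=S\big(x_0,\lambda,w(\varepsilon;\lambda,t)\big)-\frac{(w(\varepsilon;\lambda,t)-\varepsilon)^2}{t},
\]
a chain-rule computation using $\partial_w S(x_0,\lambda,w)=2w\phi[((\lambda-x_0)^*(\lambda-x_0)+w^2)^{-1}]$ and the same fixed-point relation gives $H'(\varepsilon)=2(w-\varepsilon)/t$, so $S(x_0+c_t,\lambda,\cdot)-H(\cdot)$ is constant on $(0,\infty)$.

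To identify the constant, I take $\varepsilon\to\infty$. The fixed-point estimate $0\le w-\varepsilon\le t/w\le t/\varepsilon$ forces $w/\varepsilon\to 1$ and $(w-\varepsilon)^2/t\to 0$. Writing $S(x,\lambda,\varepsilon)-2\log\varepsilon=\phi(\log(\unit+\varepsilon^{-2}(x-\lambda)^*(x-\lambda)))$ for $x\in\{x_0,x_0+c_t\}$, the integrands are dominated by $\log(\unit+(x-\lambda)^*(x-\lambda))$, which lies in $L^1(\phi)$ because $x_0\in\log^+(\mathcal A)$ and boundedness of $c_t$ imply $x-\lambda\in\log^+(\mathcal A)$. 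Dominated convergence kills both remainders, so the constant is $0$ and $S(x_0+c_t,\lambda,\varepsilon)=H(\varepsilon)$ for every $\varepsilon>0$.

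Finally I pass $\varepsilon\to 0^+$. By Lemma \ref{lemma:regularity-sub-lambda}, $w(\varepsilon;\lambda,t)\to w(0;\lambda,t)$, which is positive on $\Xi_t$ and zero outside; monotone convergence gives $S(x_0+c_t,\lambda,\varepsilon)\to S(x_0+c_t,\lambda,0)=2\log\Delta(x_0+c_t-\lambda)$. For $\lambda\in\Xi_t$, continuity of $w\mapsto S(x_0,\lambda,w)$ at $w(0;\lambda,t)>0$ yields $H(\varepsilon)\to S(x_0,\lambda,w(0;\lambda,t))-w(0;\lambda,t)^2/t$, which is \eqref{eqn:main-FK-det-0-V2} after exponentiation. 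For $\lambda\notin\Xi_t$, H\"older's inequality together with $\phi(|x_0-\lambda|^{-2})\le 1/t$ gives $|x_0-\lambda|^{-\alpha}\in L^1(\phi)$ for every $0\le\alpha\le 2$, hence $\Delta(x_0-\lambda)>0$; monotone convergence in $\int\log(s^2+w^2)\,d\mu_{|x_0-\lambda|}(s)$ as $w\searrow 0$ gives $S(x_0,\lambda,w(\varepsilon;\lambda,t))\to 2\log\Delta(x_0-\lambda)$, establishing \eqref{eqn:main-FK-det-outside}. The main obstacle is the $\varepsilon\to\infty$ asymptotic for unbounded $x_0$: one must show the logarithmic remainders in $S(\cdot)-2\log\varepsilon$ vanish despite the absence of moment bounds on $x_0$, and this is precisely where the $\log^+(\mathcal A)$ hypothesis is essential, paralleling its role in Theorem \ref{thm:subordination-unbounded}.
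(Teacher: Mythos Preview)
Your proof is correct and follows essentially the same route that the paper indicates (via \cite[Lemma 3.11, Theorem 3.12]{Zhong2021Brown}): differentiate $S(x_0+c_t,\lambda,\varepsilon)$ in $\varepsilon$, rewrite the derivative through the subordination relation as $2(w-\varepsilon)/t$, recognize an explicit antiderivative, and pass to the limit $\varepsilon\to0^+$. Your identification of the integration constant via the $\varepsilon\to\infty$ asymptotic, using the $\log^+(\mathcal A)$ hypothesis to dominate $\log(1+\varepsilon^{-2}|x-\lambda|^2)$, is clean and exactly the step where the unbounded extension requires care.
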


The following result is adapted from \cite[Theorem 4.6]{Zhong2021Brown}. We include a proof for the reader's convenience.
\begin{corollary}
	 \label{cor:no-atoms}
For any $\lambda\in\mathbb{C}$, we have 
\[
    \log \Delta(x_0+c_t-\lambda)>-\infty.
\]
The Brown measure $\mu_{x_0+c_t}$ has no atom and the support of $\mu_{x_0+c_t}$ is contained in the closure $\overline{\Xi_t}$. 
\end{corollary}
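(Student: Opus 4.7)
The plan is to deduce all three assertions from the Fuglede--Kadison determinant formulas established in Theorem \ref{thm:main-FK-det-ct-0}, handling finiteness, then the absence of atoms, and finally the support containment.

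For finiteness of $\log\Delta(x_0+c_t-\lambda)$, I would split on whether $\lambda$ lies in $\Xi_t$. If $\lambda\in\Xi_t$, then $w(0;\lambda,t)>0$, and the inner operator in \eqref{eqn:main-FK-det-0-V2} satisfies $(x_0-\lambda)^*(x_0-\lambda)+w(0;\lambda,t)^2\geq w(0;\lambda,t)^2\unit$; combined with the upper finiteness coming from $x_0-\lambda\in\log^+(\mathcal{A})$ (since $\log^+(\mathcal{A})$ is a subspace closed under scalar translation), the determinant factor is finite and bounded below by $w(0;\lambda,t)^2>0$, so the finiteness is immediate from \eqref{eqn:main-FK-det-0-V2}. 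If $\lambda\notin\Xi_t$, formula \eqref{eqn:main-FK-det-outside} reduces the task to $\log\Delta(x_0-\lambda)>-\infty$. I would decompose $\int_0^\infty\log u\,d\mu_{|x_0-\lambda|}(u)$ at $u=1$: the singular tail $\int_0^1|\log u|\,d\mu$ is controlled by the defining inequality $\phi(|x_0-\lambda|^{-2})\leq 1/t$ together with the elementary pointwise bound $|\log u|\leq u^{-1}\leq u^{-2}$ on $(0,1]$, yielding an upper bound of $1/t$, while the tail at infinity is finite because $x_0-\lambda\in\log^+(\mathcal{A})$.

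The absence of atoms follows immediately from the preceding step. Were $\mu_{x_0+c_t}(\{\lambda_0\})=c>0$ for some $\lambda_0\in\mathbb{C}$, the Riesz representation of the subharmonic function $\lambda\mapsto\log\Delta(x_0+c_t-\lambda)$ would contain a $c\log|\lambda-\lambda_0|$ contribution diverging to $-\infty$ at $\lambda_0$, contradicting the finiteness just established.

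For the support statement, I would take distributional Laplacians of the identity $\log\Delta(x_0+c_t-\lambda)=\log\Delta(x_0-\lambda)$ from \eqref{eqn:main-FK-det-outside}, valid on the open set $\mathbb{C}\setminus\overline{\Xi_t}$. This shows that $\mu_{x_0+c_t}$ and $\mu_{x_0}$ agree as distributions on that set, so it remains to verify $\supp(\mu_{x_0})\subset\overline{\Xi_t}$. In the bounded setting this is precisely \cite[Theorem 4.5]{Zhong2021Brown}. The main obstacle, and the only non-routine point in the proof, is extending that support result to $x_0\in\log^+(\mathcal{A})$, which may be unbounded. I expect this extension to follow either by repeating the harmonicity argument of the cited theorem verbatim (the bound $\phi(|x_0-\lambda|^{-2})\leq 1/t$ persists on the open set and makes the resolvent $(x_0-\lambda)^{-1}$ an $L^2$-analytic family in $\lambda$), or by a truncation of $x_0$ through the polar-decomposition approximants $x_0\chi_{[0,N]}(|x_0|)$, combined with weak convergence of the corresponding Brown measures.
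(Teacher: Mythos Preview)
Your treatment of finiteness and the absence of atoms is essentially identical to the paper's: split on $\Xi_t$, use $w(0;\lambda,t)>0$ for the inside, and control $\int_0^1\log u\,d\mu_{|x_0-\lambda|}$ by $\phi(|x_0-\lambda|^{-2})\le 1/t$ for the outside; the no-atom statement then follows from finiteness via the Riesz representation (the paper cites \cite[Propositions~2.14, 2.16]{HaagerupSchultz2007}, but the content is the same).

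Where you diverge is the support step, and here the paper has a cleaner tool that you did not invoke. Rather than extending \cite[Theorem~4.5]{Zhong2021Brown} by truncation or by re-running a harmonicity argument, the paper applies Weil's inequality for operators in $L^p(\mathcal{A})$ \cite[Theorem~2.19]{HaagerupSchultz2007}: since $(x_0-\lambda)^{-1}\in L^2(\mathcal{A},\phi)$ whenever $\lambda\notin\Xi_t$, and since $\mu_{(x_0-\lambda)^{-1}}$ is the push-forward of $\mu_{x_0}$ by $z\mapsto(z-\lambda)^{-1}$, one gets directly
\[
\int_{\mathbb{C}}\frac{1}{|z-\lambda|^{2}}\,d\mu_{x_0}(z)=\int_{\mathbb{C}}|w|^{2}\,d\mu_{(x_0-\lambda)^{-1}}(w)\le \phi\big(|x_0-\lambda|^{-2}\big)\le\frac{1}{t},
\]
valid in a full neighbourhood of any $\lambda\notin\overline{\Xi_t}$. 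This uniform integrability forces $\mu_{x_0}$ to vanish near $\lambda$, so $\lambda\notin\supp\mu_{x_0}$, and then \eqref{eqn:main-FK-det-outside} transfers this to $\mu_{x_0+c_t}$. Your harmonicity sketch is plausible and would likely go through, but the truncation route is risky: Brown measures are notoriously discontinuous under $*$-moment approximation, so weak convergence of $\mu_{x_0\chi_{[0,N]}(|x_0|)}$ to $\mu_{x_0}$ is not automatic and would itself require justification. The Weil-inequality argument bypasses all of this in one line.
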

\begin{proof}
If $\lambda\in\Xi_t$, since $w(0;\lambda,t)>0$, it follows from \eqref{eqn:main-FK-det-0-V2} that $\log \Delta(x_0+c_t-\lambda)>-\infty$. 
If $\lambda\notin\Xi_t$, then $\phi(\vert x_0-\lambda \vert^{-2})\leq 1/t$, and hence 
\begin{align*}
 2\int_0^1 \log t d\mu_{\vert x_0-\lambda\vert}(t)
   &>-\int_0^1\frac{1}{t^2}d\mu_{\vert x_0-\lambda\vert}(t)
   >-\phi(\vert x_0-\lambda \vert^{-2})\geq-1/t. 
\end{align*}
Therefore, $\lambda\notin \Xi_t$, 
\[
  \log \Delta(x_0+c_t-\lambda)=\log\Delta(x_0-\lambda)>-\infty
\]
By \cite[Proposition 2.14, 2.16]{HaagerupSchultz2007}, we deduce that $\mu_{x_0+c_t}(\{\lambda\})=\mu_{x_0+c_t-\lambda}(\{0\})=0$ for any $\lambda\in\mathbb{C}$.

The Brown measure of $\mu_{(x_0-\lambda)^{-1}}$ is the pushforward measure of 
$\mu_{x_0}$ under the map $z\mapsto (z-\lambda)^{-1}$. 
 By Weil's inequality for operators in $L^p(\mathcal{A})$ (see \cite[Theorem 2.19]{HaagerupSchultz2007}), we have 
\[
  \int_\mathbb{C}\frac{1}{\vert z-\lambda\vert^2}d\mu_{x_0}(z)
    =\int_\mathbb{C}{\vert z\vert^2}d\mu_{(x_0-\lambda)^{-1}}(z)
    \leq \Vert (x_0-\lambda)^{-1}\Vert^2=\phi(\vert x_0-\lambda \vert^{-2} ).
\]
Consequently, if $\lambda\notin \overline{\Xi_t}$, then 
\[
  \int_\mathbb{C}\frac{1}{\vert z-\lambda\vert^2}d\mu_{x_0}(z)\leq \frac{1}{t}
\]
in some neighborhood of $\lambda$. This implies that $\lambda$ is not in the support of $\mu_{x_0}$. On the other hand, by \eqref{eqn:main-FK-det-outside}, 
$\mu_{x_0}$ and $\mu_{x_0+c_t}$ coincide when restricting to $\mathbb{C}\backslash \overline{\Xi_t}$. Hence, $\supp(\mu_{x_0+c_t})\subset \overline{\Xi_t}$. 
\end{proof}

The following regularity result extends \cite[Proposition 5]{Sniady2002} to unbounded operators. This is an analogue of \cite[Corollary 4.8]{HaagerupSchultz2009} where we use a circular operator in place of a circular Cauchy operator. 

\begin{proposition}
The function $t\mapsto \Delta(x_0+c_t-\lambda)$ is increasing for $t\geq 0$, and 
\begin{equation}
  \label{eqn:limit-det-circular-sum}
	\lim_{t\rightarrow 0^+}\Delta(x_0+c_t-\lambda)= \Delta(x_0-\lambda)
\end{equation}
for any $\lambda\in \mathbb{C}$. 
\end{proposition}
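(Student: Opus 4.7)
The plan is to exploit the Fuglede-Kadison determinant formula of Theorem~\ref{thm:main-FK-det-ct-0}, both to compute the relevant derivative directly and to pass to the limit as $t\to 0^+$. For fixed $\lambda\in\mathbb{C}$, introduce the threshold $\tau^*:=(\phi[\vert x_0-\lambda\vert^{-2}])^{-1}\in[0,\infty]$, with the conventions $1/0=\infty$ and $1/\infty=0$; equivalently, $\lambda\in\Xi_t$ if and only if $t>\tau^*$. On the range $[0,\tau^*]$ (nontrivial only when $\tau^*>0$), part~(2) of Theorem~\ref{thm:main-FK-det-ct-0} gives $\Delta(x_0+c_t-\lambda)=\Delta(x_0-\lambda)$, which is constant in $t$ and already settles the $t\to 0^+$ limit in that regime.

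On the interval $(\tau^*,\infty)$, part~(1) of Theorem~\ref{thm:main-FK-det-ct-0} yields
\[
2\log\Delta(x_0+c_t-\lambda) = \phi\bigl(\log(\vert x_0-\lambda\vert^2+w^2)\bigr) - \frac{w^2}{t},
\]
where $w=w(0;\lambda,t)$ is determined implicitly by $\phi[(\vert x_0-\lambda\vert^2+w^2)^{-1}]=1/t$. Since the left-hand side of this defining equation is smooth and strictly decreasing in $w$, the implicit function theorem makes $w$ a smooth, strictly increasing function of $t$ on $(\tau^*,\infty)$. An envelope-type calculation---the partial derivative with respect to $w$ of the right-hand side of the displayed formula vanishes on the graph of $w(0;\lambda,t)$ by the defining equation---cancels the $w'(t)$ contribution and leaves
\[
\frac{d}{dt}\bigl(2\log\Delta(x_0+c_t-\lambda)\bigr) = \frac{w^2}{t^2}>0,
\]
so the function is strictly increasing on $(\tau^*,\infty)$; combined with the first step and continuity at $\tau^*$, this establishes the asserted monotonicity.

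It remains to treat the limit $t\to 0^+$ in the remaining case $\tau^*=0$, i.e.\ $\phi[\vert x_0-\lambda\vert^{-2}]=+\infty$. The defining equation forces $w\to 0^+$, and rewriting $w^2/t = \phi[w^2(\vert x_0-\lambda\vert^2+w^2)^{-1}]$ and applying dominated convergence gives $w^2/t\to p:=\mu_{\vert x_0-\lambda\vert}(\{0\})$. When $p=0$---automatic whenever $\Delta(x_0-\lambda)>0$, since positivity of the determinant forces a trivial kernel of $x_0-\lambda$---monotone convergence together with $x_0\in\log^+(\mathcal{A})$ yields $\phi(\log(\vert x_0-\lambda\vert^2+w^2))\to 2\log\Delta(x_0-\lambda)$, giving $\Delta(x_0+c_t-\lambda)\to\Delta(x_0-\lambda)$. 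The main technical obstacle is the atom subcase $p>0$, where $\Delta(x_0-\lambda)=0$ and one must show $\log\Delta(x_0+c_t-\lambda)\to-\infty$: splitting $\phi(\log(\vert x_0-\lambda\vert^2+w^2))$ into atomic and non-atomic pieces, the atomic piece contributes $2p\log w\to-\infty$ while the non-atomic piece stays bounded above (using $x_0\in\log^+(\mathcal{A})$), and combined with the bounded term $-w^2/t\to -p$, this yields the required divergence to $-\infty$, matching $\log\Delta(x_0-\lambda)=-\infty$.
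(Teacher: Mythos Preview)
Your proof is correct and follows the same overall structure as the paper's (invoke the determinant formula from Theorem~\ref{thm:main-FK-det-ct-0} and split according to whether $\lambda\in\Xi_t$), but the monotonicity step is handled by a genuinely different device. The paper substitutes $w^2/t=\int w^2(u^2+w^2)^{-1}\,d\mu_{|x_0-\lambda|}(u)$ to rewrite $2\log\Delta(x_0+c_t-\lambda)$ as $\int\bigl(\log(u^2+w^2)-\frac{w^2}{u^2+w^2}\bigr)\,d\mu_{|x_0-\lambda|}(u)$, checks that this integrand is strictly increasing in $w$ (its $w$-derivative is $2w^3/(u^2+w^2)^2>0$), and then uses that $t\mapsto w(0;\lambda,t)$ is increasing. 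Your envelope computation instead differentiates in $t$ directly and produces the explicit rate $\tfrac{d}{dt}\bigl(2\log\Delta(x_0+c_t-\lambda)\bigr)=w^2/t^2$, which the paper does not obtain; this is arguably cleaner and the quantitative derivative could be independently useful. For the limit $t\to0^+$ in the case $\tau^*=0$, the paper simply passes to the limit in the Part~(1) formula without isolating the atom subcase $\mu_{|x_0-\lambda|}(\{0\})>0$; your split into atomic piece $2p\log w\to-\infty$, non-atomic piece bounded above via $x_0\in\log^+(\mathcal A)$, and $w^2/t\to p$ by dominated convergence is more careful. One small note that applies equally to both arguments: when $\tau^*>0$ the function is constant on $[0,\tau^*]$ and strictly increasing only on $(\tau^*,\infty)$, so the conclusion is really strict increase from $\tau^*$ onward---the paper's proof has exactly the same feature.
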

\begin{proof}
By the definition of the function $w(0;\lambda,t)$ in \eqref{eqn:w-epsiton-0-identity}, we see that
\[
   \lim_{t\rightarrow 0^+} w(0;\lambda,t)=0. 
\]
Moreover, by \eqref{eqn:w-epsiton-0-identity} we have
\[
   \frac{w(0;\lambda,t)^2}{t}
   = w(0;\lambda,t)^2	\phi\left[\big( (x_0-\lambda)^*(x_0-\lambda) +w(0; \lambda,t)^2 \big)^{-1}\right],
\]
which induces that $ \lim_{t\rightarrow 0^+} w(0;\lambda,t)^2/t =0.$

Hence, if $\lambda\in\Xi_{t_0}$ for some $t_0>0$, then by Part (1) of Theorem \ref{thm:main-FK-det-ct-0}, we have 
\begin{align*}
  &\lim_{t\rightarrow 0^+}	\Delta \big(x_0+c_t-\lambda \big)^2\\
  =&\lim_{t\rightarrow 0^+}\left[{\Delta\big( (x_0-\lambda)^*(x_0-\lambda)+w(0; \lambda,t)^2 \big)} \cdot {\exp\bigg(-\frac{(w(0;\lambda,t))^2}{t}\bigg)}\right]\\
  =&\Delta(x_0-\lambda)^2.
\end{align*}
This establishes \eqref{eqn:limit-det-circular-sum}. 

We note that $\Xi_{t_1}\subset \Xi_{t_2}$ for any $t_2>t_1>0$. Hence, if $\lambda\in\Xi_{t}$ for some $t>0$, write $w=w(0;\lambda,t)$, then
\begin{align*}
 \log \Delta \big(x_0+c_t-\lambda \big)^2
   &=\log {\Delta\big( (x_0-\lambda)^*(x_0-\lambda)+w^2 \big)}\\
   &\qquad\qquad-w^2	\phi\left[\big( (x_0-\lambda)^*(x_0-\lambda) +w^2 \big)^{-1}\right]\\
   &=\int_0^\infty \left( \log(u^2+w^2)-\frac{w^2}{u^2+w^2} \right)d\mu_{\vert x_0-\lambda\vert}(u).
\end{align*}
Note that the integrand on the right hand side of the above equation is a strictly increasing function of $w \in \mathbb{R}^+$. From the defining equation \eqref{eqn:w-epsiton-0-identity}, we see that $w(0;\lambda,t_2)>w(0;\lambda,t_1)$ 
for any $t_2>t_1$ and $\lambda\in \Xi_{t_2}$. It follows that $\Delta(x_0+c_{t_2}-\lambda)>\Delta(x_0+c_{t_1}-\lambda)$ for any $t_2>t_1$ and $\lambda\in \Xi_{t_2}$. 
If $\lambda \notin \Xi_{t_2},$
then by Theorem \ref{thm:main-FK-det-ct-0} we have
\begin{equation*}
\Delta(x_0+c_{t_2}-\lambda) = \Delta(x_0+c_{t_1}-\lambda) = \Delta(x_0 -\lambda).
\end{equation*}
This established the monotonicity of the function $t\mapsto \Delta(x_0+c_t-\lambda)$ for any $\lambda\in\mathbb{C}$ fixed. 
\end{proof}

\subsection{The Brown measure formulas}
In this section, we study the Brown measures $\mu_{x_0+c_t}$ and $\mu_{x_0+g_{t,\gamma}}$. 
Biane-Lehner proposed the question about Brown measures $\mu_{x_0+c_t}$ in their pioneering work \cite{BianeLehner2001}. 
Our first result strengthens \cite[Theorem 1.4]{Bordenave-Chafai-circular} and \cite[Theorem 4.2]{Zhong2021Brown} to unbounded operator $x_0$ with an arbitrary distribution. We also address the issue of the nonexistence of singular continuous part, which was left open in \cite{Bordenave-Chafai-circular} and \cite{Zhong2021Brown}. The upper bound result generalizes \cite[Theorem 3.14]{HoZhong2020Brown} for an arbitrary operator, not necessarily self-adjoint. 

\begin{theorem}
	\label{thm:BrownFormula-x0-ct-general}
	The Brown measure of $x_0+c_t$ is absolutely continuous with respect to the Lebesgue measure. It is supported in the closure set $\overline{\Xi_t}$.
	The density function $\rho_{t,x_0}$ of the Brown measure at $\lambda\in \Xi_t$ is given by
	\begin{equation}
	\label{eqn:Brown-density-circular-positive}
 \rho_{t,x_0}(\lambda)=	\frac{1}{\pi}\left( \frac{\vert\phi((h^{-1})^2(\lambda-x_0))\vert^2}{\phi((h^{-1})^2)}
	+w^2\phi(h^{-1}k^{-1})  \right)
	\end{equation}
	where $w=w(0; \lambda,t)$ is determined by 
	\[
	\phi ( (x_0-\lambda)^*(x_0-\lambda)+w^2)^{-1} )=\frac{1}{t},
	\]
	and 
	$h=h(\lambda,w(0;\lambda,t))$ and $k=k(\lambda,w(0;\lambda,t))$ for 
	\[
	h(\lambda,w)=(\lambda-x_0)^*(\lambda-x_0)+w^2
	\]
	and
	\[
	k(\lambda,w)=(\lambda-x_0)(\lambda-x_0)^*+w^2.
	\]
	The density of the Brown measure of $x_0+c_t$ is strictly positive in the set $\Xi_t$. 
	Moreover, 
	\[
	  0<\rho_{t,x_0}(\lambda)\leq \frac{1}{\pi t}
	\]
	for any $\lambda\in\Xi_t,$ and $\rho_{t, x_0} (\lambda) = 1/ (\pi t)$ if and only if $x_0 = \lambda_0 \unit$ for some $\lambda_0 \in \mathbb{C}.$
\end{theorem}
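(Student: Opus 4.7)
The plan is to combine the Fuglede--Kadison determinant formula of Theorem \ref{thm:main-FK-det-ct-0} with the Cauchy--Schwarz inequality in the GNS inner product, together with a regularization argument to secure absolute continuity on all of $\mathbb{C}$.

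The containment $\mathrm{supp}(\mu_{x_0+c_t})\subset\overline{\Xi_t}$ is already in Corollary \ref{cor:no-atoms}. On $\Xi_t$, Lemma \ref{lemma:regularity-sub-lambda} guarantees that $w=w(0;\lambda,t)$ is $C^\infty$, hence by \eqref{eqn:main-FK-det-0-V2} the function $\log\Delta(x_0+c_t-\lambda)$ is $C^\infty$ there. To derive \eqref{eqn:Brown-density-circular-positive}, I will differentiate
\[
2\log\Delta(x_0+c_t-\lambda)=\log\Delta(h)-w^2/t
\]
twice, treating $w$ as an implicit function of $(\lambda,\bar\lambda)$ determined by $\phi(h^{-1})=1/t$. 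Using Lemma \ref{lem:tech-thm} (valid for unbounded $x_0\in\log^+(\mathcal{N})$), implicit differentiation of $\phi(h^{-1})=1/t$ yields $2w\partial_{\bar\lambda}w=-\phi((h^{-1})^2(\lambda-x_0))/\phi((h^{-1})^2)$, and the $w$-derivative contributions from $\log\Delta(h)$ cancel exactly against those from $w^2/t$ when computing $\partial_{\bar\lambda}$. The remaining $\partial_\lambda$ step invokes the identity $(\lambda-x_0)h^{-1}(\lambda-x_0)^*=1-w^2k^{-1}$ (Lemma \ref{lem:tech-thm}(1)) to rewrite $\phi((\lambda-x_0)h^{-1}(\lambda-x_0)^*h^{-1})=\phi(h^{-1})-w^2\phi(h^{-1}k^{-1})$, producing \eqref{eqn:Brown-density-circular-positive}. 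Strict positivity in $\Xi_t$ follows because $w>0$ there and $\phi(h^{-1}k^{-1})=\phi(h^{-1/2}k^{-1}h^{-1/2})>0$ by faithfulness of $\phi$.

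For the upper bound $\rho_{t,x_0}\le 1/(\pi t)$ on $\Xi_t$, I apply the Cauchy--Schwarz inequality $|\phi(a^*b)|^2\le\phi(a^*a)\phi(b^*b)$ with $a=h^{-1}$ and $b=(\lambda-x_0)h^{-1}$, giving
\[
|\phi((\lambda-x_0)(h^{-1})^2)|^2\le\phi((h^{-1})^2)\cdot\phi\bigl(h^{-1}(\lambda-x_0)^*(\lambda-x_0)h^{-1}\bigr).
\]
Substituting $(\lambda-x_0)^*(\lambda-x_0)=h-w^2$ and using $\phi(h^{-1})=1/t$ yields $|\phi((\lambda-x_0)(h^{-1})^2)|^2/\phi((h^{-1})^2)\le 1/t-w^2\phi((h^{-1})^2)$. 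Since tracility forces $\phi((h^{-1})^n)=\phi((k^{-1})^n)$ for every $n$, a second Cauchy--Schwarz step gives $\phi(h^{-1}k^{-1})\le\sqrt{\phi((h^{-1})^2)\phi((k^{-1})^2)}=\phi((h^{-1})^2)$. Adding the two controlled summands,
\[
\pi\rho_{t,x_0}(\lambda)\le w^2\phi(h^{-1}k^{-1})+1/t-w^2\phi((h^{-1})^2)\le 1/t.
\]

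To upgrade to absolute continuity on all of $\mathbb{C}$, I pass through the regularized Brown measures $\mu_{x_0+c_t}^{(\varepsilon)}=\tfrac{1}{4\pi}\nabla^2 S(x_0+c_t,\lambda,\varepsilon)$, which are absolutely continuous with $C^\infty$ densities $\rho_{t,x_0}^{(\varepsilon)}$. The same computation scheme applied with the regularized subordination function $w_\varepsilon=w(\varepsilon;\lambda,t)$, whose defining equation (Lemma \ref{lemma:sub-additive-semicircular}) now reads $\phi(h_\varepsilon^{-1})=(1-\varepsilon/w_\varepsilon)/t$, produces the regularized density formula; the $\varepsilon$-correction contributes only an extra positive term of order $\varepsilon/(tw_\varepsilon^3)$ in the denominator of the second summand, which weakens it. Since $1/t-\varepsilon/(tw_\varepsilon)\le 1/t$, the Cauchy--Schwarz chain above still produces the uniform bound $\rho_{t,x_0}^{(\varepsilon)}(\lambda)\le 1/(\pi t)$. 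Because $\mu_{x_0+c_t}^{(\varepsilon)}\to\mu_{x_0+c_t}$ weakly, the uniform density bound transfers: for any open $U$, $\mu_{x_0+c_t}(U)\le\liminf_\varepsilon\mu_{x_0+c_t}^{(\varepsilon)}(U)\le m(U)/(\pi t)$, and outer regularity extends this to all Borel sets, giving both absolute continuity and the bound $\rho_{t,x_0}\le 1/(\pi t)$ everywhere. The main technical obstacle will be justifying differentiation under $\phi$ when $x_0$ is unbounded and verifying that the $\varepsilon$-regularized density obeys the same Cauchy--Schwarz bound once one tracks the correction $\varepsilon/(tw_\varepsilon)$ in the defining equation for $w_\varepsilon$.
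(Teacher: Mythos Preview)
Your proposal is correct and shares the paper's overall architecture: establish the uniform bound $\rho_{t,x_0}^{(\varepsilon)}\le 1/(\pi t)$ on the regularized densities, then use weak convergence $\mu_{x_0+c_t}^{(\varepsilon)}\to\mu_{x_0+c_t}$ and outer regularity to deduce absolute continuity and the bound for the limiting measure. Two points of comparison are worth recording.

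\emph{Density formula on $\Xi_t$.} You differentiate the Fuglede--Kadison determinant identity \eqref{eqn:main-FK-det-0-V2} directly and observe the cancellation $2w\,\partial_{\bar\lambda}w\cdot\phi(h^{-1})=\partial_{\bar\lambda}(w^2/t)$ coming from $\phi(h^{-1})=1/t$; this delivers $\partial_\lambda\log\Delta(x_0+c_t-\lambda)=\tfrac12\phi(h^{-1}(\lambda-x_0)^*)$ in one stroke. The paper instead derives the regularized density formula (Lemma \ref{lemma:density-regularized-circular}) from the subordination relation \eqref{eqn:subordination-Cauchy-entries} and then passes to the limit $\varepsilon\to 0$. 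Your route is slightly more direct but requires differentiating $\log\Delta(h)$ in $\lambda$ for unbounded $x_0$; this is not literally covered by Lemma \ref{lem:tech-thm} (which treats $\phi(h^{-1})$ and $\phi(h^{-1}(\lambda-x_0)^*)$), though the same resolvent-difference argument extends to it without difficulty once $h\ge w^2>0$.

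\emph{Cauchy--Schwarz for the upper bound.} Your factorization $a=h^{-1}$, $b=(\lambda-x_0)h^{-1}$ gives $|\phi((h^{-1})^2(\lambda-x_0))|^2\le\phi((h^{-1})^2)\,\phi(h^{-1}(h-w^2)h^{-1})=\phi((h^{-1})^2)\bigl(\phi(h^{-1})-w^2\phi((h^{-1})^2)\bigr)$, and then $\phi(h^{-1}k^{-1})\le\phi((h^{-1})^2)$ finishes. The paper first rewrites $\phi((h^{-1})^2(\lambda-x_0))=\phi(k^{-1}(\lambda-x_0)h^{-1})$ via Lemma \ref{lem:tech-thm}(1) and applies Cauchy--Schwarz with the pair $h^{-1/2}k^{-1/2}$ and $k^{-1/2}(\lambda-x_0)h^{-1/2}$, obtaining $|\phi((h^{-1})^2(\lambda-x_0))|^2\le\phi(h^{-1}k^{-1})\,\phi((h^{-1})^2(\lambda-x_0)^*(\lambda-x_0))$; both chains terminate at $\phi(h^{-1})<1/t$, but yours avoids the preliminary $k^{-1}$ rewriting and is a shade cleaner.
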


\begin{lemma}
	 \label{lemma:density-regularized-circular}
	The regularized Brown measure $\mu_{x_0+c_t}^{(\varepsilon)}$ of $x_0+c_t$ is absolutely continuous with respect to the Lebesgue measure. 
The density function $\rho_{t,x_0}^{(\varepsilon)}$ of the Brown measure at $\lambda\in \mathbb{C}$ is given by
\begin{equation}
\label{eqn:Brown-density-circular-positive-regular}
\rho_{t,x_0}^{(\varepsilon)}(\lambda)=	\frac{1}{\pi}\left( \frac{\vert\phi((h^{-1})^2(\lambda-x_0))\vert^2}{\phi((h^{-1})^2)+{\varepsilon}/({2tw^3})}
+w^2\phi(h^{-1}k^{-1})  \right)
\end{equation}
where $w=w(\varepsilon; \lambda,t)$ is determined by 
\[
 w=\varepsilon+tw\phi[((\lambda-x_0)^*(\lambda-x_0)+w^2 )^{-1}],
\]
and 
$h=h(\lambda,w(\varepsilon;\lambda,t))$ and $k=k(\lambda,w(\varepsilon;\lambda,t))$ for 
\[
h(\lambda,w)=(\lambda-x_0)^*(\lambda-x_0)+w^2
\]
and
\[
k(\lambda,w)=(\lambda-x_0)(\lambda-x_0)^*+w^2.
\]
Moreover, 
\begin{equation}
 \label{eqn:density-regular-bound}
	0<\rho_{t,x_0}^{(\varepsilon)}(\lambda)<\frac{1}{\pi t}
\end{equation}
for any $\lambda\in\mathbb{C}$. 
\end{lemma}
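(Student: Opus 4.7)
The plan is to specialize Theorem \ref{thm:subordination-elliptic-case} to $\gamma=0$, in which case \eqref{eqn-z-lambda-subordination} yields $z=\lambda$ and the second identity in \eqref{eqn:subordination-Cauchy-entries} becomes
\begin{equation*}
p^{(t,0)}_\lambda(\varepsilon)= \phi\bigl[(\lambda-x_0)^{*}h^{-1}\bigr],
\end{equation*}
where $h=h(\lambda,w(\varepsilon;\lambda,t))$ and $w=w(\varepsilon;\lambda,t)$ is the unique positive solution to the fixed point equation $w=\varepsilon+tw\phi(h^{-1})$. Since $p^{(t,0)}_\lambda(\varepsilon)=\partial_\lambda S(x_0+c_t,\lambda,\varepsilon)$, the regularized Brown measure is $\mu^{(\varepsilon)}_{x_0+c_t}=\frac{1}{\pi}\partial_{\bar\lambda}\phi\bigl[(\lambda-x_0)^{*}h^{-1}\bigr]$ in the distributional sense. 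Because $w>\varepsilon>0$, the right-hand side is a smooth function of $\lambda\in\mathbb{C}$ by Lemma \ref{lemma:regularity-sub-lambda}, so the regularized Brown measure is absolutely continuous.

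Next I would compute this partial derivative explicitly. Applying the unbounded version of Lemma \ref{lem:tech-thm}(2), whose proof goes through verbatim when $x_0\in\log^+(\mathcal{A})$, yields
\begin{equation*}
\pi\rho^{(\varepsilon)}_{t,x_0}(\lambda)=\phi\bigl[h^{-1}\bigl(\mathbf 1-(\lambda-x_0)h^{-1}(\lambda-x_0)^{*}\bigr)\bigr]-2w\,\partial_{\bar\lambda}w\cdot\phi\bigl[(\lambda-x_0)^{*}(h^{-1})^{2}\bigr].
\end{equation*}
The operator identity $(\lambda-x_0)h^{-1}(\lambda-x_0)^{*}=\mathbf 1-w^{2}k^{-1}$ (Lemma \ref{lem:tech-thm}(1) applied to $\lambda-x_0$) simplifies the first term to $w^{2}\phi(h^{-1}k^{-1})$. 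To determine $\partial_{\bar\lambda}w$, I would rewrite the fixed point equation as $\phi(h^{-1})=(w-\varepsilon)/(tw)$ and implicitly differentiate both sides in $\bar\lambda$, obtaining
\begin{equation*}
-\phi\bigl[(\lambda-x_0)(h^{-1})^{2}\bigr]-2w\,\partial_{\bar\lambda}w\cdot\phi\bigl[(h^{-1})^{2}\bigr]=\frac{\varepsilon}{tw^{2}}\,\partial_{\bar\lambda}w,
\end{equation*}
which solves for $2w\,\partial_{\bar\lambda}w=-\phi\bigl[(\lambda-x_0)(h^{-1})^{2}\bigr]/\bigl(\phi((h^{-1})^{2})+\varepsilon/(2tw^{3})\bigr)$. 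Substituting this back and using $\phi\bigl[(\lambda-x_0)^{*}(h^{-1})^{2}\bigr]=\overline{\phi\bigl[(\lambda-x_0)(h^{-1})^{2}\bigr]}$ produces the claimed density formula \eqref{eqn:Brown-density-circular-positive-regular}. Positivity of the density is immediate since both terms are nonnegative and $w^{2}\phi(h^{-1}k^{-1})=w^{2}\phi(h^{-1/2}k^{-1}h^{-1/2})>0$.

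For the upper bound, the Cauchy--Schwarz inequality for the tracial state gives
\begin{equation*}
\bigl|\phi\bigl[(\lambda-x_0)(h^{-1})^{2}\bigr]\bigr|^{2}\le\phi\bigl[(\lambda-x_0)h^{-2}(\lambda-x_0)^{*}\bigr]\,\phi(h^{-2})=\bigl(\phi(h^{-1})-w^{2}\phi(h^{-2})\bigr)\phi(h^{-2}),
\end{equation*}
where the last equality uses traciality together with $(\lambda-x_0)^{*}(\lambda-x_0)=h-w^{2}$. Similarly, Cauchy--Schwarz together with $\phi(h^{-2})=\phi(k^{-2})$ (since $(\lambda-x_0)(\lambda-x_0)^{*}$ and $(\lambda-x_0)^{*}(\lambda-x_0)$ share the same $\phi$-distribution) gives $\phi(h^{-1}k^{-1})\le\phi(h^{-2})$. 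Denoting $A=\phi(h^{-2})$ and $C=\varepsilon/(2tw^{3})$, the density is bounded by
\begin{equation*}
\pi\rho^{(\varepsilon)}_{t,x_0}(\lambda)\le\frac{A\bigl(\phi(h^{-1})-w^{2}A\bigr)}{A+C}+w^{2}A=\frac{A\bigl(\phi(h^{-1})+w^{2}C\bigr)}{A+C},
\end{equation*}
and substituting $\phi(h^{-1})=(w-\varepsilon)/(tw)$ yields $\phi(h^{-1})+w^{2}C=\frac{1}{t}(1-\varepsilon/(2w))$, so the ratio equals $\frac{1}{t}\cdot\frac{A(1-\varepsilon/(2w))}{A+C}<\frac{1}{t}$ because $1-\varepsilon/(2w)<1$ and $A<A+C$. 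This is the desired strict bound \eqref{eqn:density-regular-bound}.

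The only step requiring some care is the extension of the derivative computation in Lemma \ref{lem:tech-thm}(2) to the unbounded setting; this was already flagged in that lemma's proof by the truncation argument $x_{N}=u|x|\chi_{[0,N]}(|x|)$, so no new difficulty arises here. The rest is bookkeeping.
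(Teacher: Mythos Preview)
Your derivation of the density formula is essentially identical to the paper's: specialize the subordination to $\gamma=0$, differentiate $\phi(h^{-1}(\lambda-x_0)^*)$ in $\bar\lambda$ via Lemma~\ref{lem:tech-thm}, simplify the first term using $(\lambda-x_0)h^{-1}(\lambda-x_0)^*=\mathbf 1-w^2k^{-1}$, and find $\partial_{\bar\lambda}w$ by implicitly differentiating the fixed point equation.

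For the upper bound your route diverges slightly from the paper's. The paper first rewrites $\phi\bigl((h^{-1})^2(\lambda-x_0)\bigr)=\phi\bigl(k^{-1}(\lambda-x_0)h^{-1}\bigr)$ and applies Cauchy--Schwarz to the factorization $(h^{-1/2}k^{-1/2})(k^{-1/2}(\lambda-x_0)h^{-1/2})$, yielding the bound $\phi(h^{-1}k^{-1})\cdot\phi(h^{-2}(\lambda-x_0)^*(\lambda-x_0))$; it then drops the $\varepsilon/(2tw^3)$ from the denominator and combines the two terms into $\phi(h^{-1}k^{-1})\phi(h^{-1})/\phi(h^{-2})$ before invoking $\phi(h^{-1})<1/t$ and $\phi(h^{-1}k^{-1})\le\phi(h^{-2})$. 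Your approach instead applies Cauchy--Schwarz directly to the splitting $((\lambda-x_0)h^{-1})(h^{-1})$, giving $|\phi((h^{-1})^2(\lambda-x_0))|^2\le A(\phi(h^{-1})-w^2A)$ with $A=\phi(h^{-2})$, and then bounds $\phi(h^{-1}k^{-1})\le A$ separately; the resulting algebraic combination $\frac{A(\phi(h^{-1})+w^2C)}{A+C}=\frac{1}{t}\cdot\frac{A(1-\varepsilon/(2w))}{A+C}$ is clean and keeps the $C$ term in the denominator rather than discarding it. Both arguments are correct; yours is a little more direct since it avoids the preliminary rewriting through $k^{-1}$, while the paper's factorization makes the role of $\phi(h^{-1}k^{-1})$ appear symmetrically in both terms before the final step.
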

\begin{proof}
Recall that $\lambda\mapsto w(\varepsilon;\lambda,t)$ is a $C^\infty$ function by Lemma \ref{lemma:regularity-sub-lambda}. Put
\[
  g(\lambda)=\log\Delta( (\lambda-x_0-c_t)^*(\lambda-x_0-c_t)+\varepsilon^2 ).
\]
Then, the density of the regularized Brown measure can be calculated as
\[
   \rho_{t,x_0}^{(\varepsilon)}(\lambda)
    =\frac{1}{\pi}\frac{\partial^2}{\partial\overline{\lambda}\partial\lambda}g(\lambda).
\]
By choosing $\gamma=0$, the subordination relation \eqref{eqn:subordination-Cauchy-entries} can be rewritten as
\begin{equation*}
 \begin{aligned}
  \frac{\partial}{\partial \lambda}g(\lambda)
 &=\phi\bigg( ({\lambda}-x_0-c_{t})^*\big( ({\lambda}-x_0-c_{t})({\lambda}-x_0-c_{t})^*+\varepsilon^2\big)^{-1} \bigg)\\
 &=\phi\bigg( (\lambda-x_0)^*\big( (\lambda-x_0)(\lambda-x_0)^*+w(\varepsilon;\lambda,t)^2\big)^{-1} \bigg)\\
 &=\phi\bigg( \big( (\lambda-x_0)^*(\lambda-x_0)+w(\varepsilon;\lambda,t)^2\big)^{-1}(\lambda-x_0)^* \bigg)\\
 &=\phi\left(h^{-1} (\lambda-x_0)^* \right).
 \end{aligned}
\end{equation*}

Note that $g(\lambda)$ is a $C^\infty$ function of $\lambda$. We apply Lemma \ref{lem:tech-thm} to get 
 	\begin{align}
 \frac{\partial^2}{\partial\overline{\lambda
 	}\partial\lambda}g(\lambda)
 &=\frac{\partial}{\partial\overline{\lambda}}\phi\left(h^{-1} (\lambda-x_0)^*\right)\nonumber\\
 &=\phi\left( -h^{-1} \left( (\lambda -x_0)+2w \frac{\partial {w}}{\partial\overline{\lambda}}  \right) h^{-1} (\lambda-x_0)^*  + h^{-1}  \right) \nonumber\\
 &=\phi\left( -h^{-1} (\lambda-x_0) h^{-1} (\lambda-x_0)^*  + h^{-1}  \right)
 -2w \frac{\partial {w}}{\partial\overline{\lambda}} 
 \phi\left((\lambda-x_0)^* (h^{-1})^2  \right).\label{eqn:deriative-second-terms12}
 \end{align}
We now apply the identity $x(x^*x+a)^{-1}x^*=(xx^*+a)^{-1}xx^*$
to $x=\lambda-x_0$ and $a=w^2$, we find that
\begin{align*}
1- (\lambda-x_0) h^{-1} (\lambda-x_0)^* 
&=1-x(x^*x+w^2)^{-1}x^*\\
&=1-(xx^*+w^2)^{-1}xx^*\\
&=w^2 (xx^*+w^2)^{-1}\\
&=w^2 k^{-1}.
\end{align*}
Therefore, the first term in the right hand side of \eqref{eqn:deriative-second-terms12} can be rewritten as
\begin{equation}
 \label{eqn:derivative-term1}
	 \phi\left( -h^{-1} (\lambda-x_0) h^{-1} (\lambda-x_0)^*  + h^{-1}  \right) =w^2\phi(h^{-1}k^{-1}). 
\end{equation}

We next calculate $\frac{\partial w}{\partial\overline{\lambda}}$ by taking implicit differentiation. Recall that $w=w(\varepsilon;\lambda,t)$ satisfies \eqref{eqn:fixed-pt-scalar-lambda}, which can be rewritten as
\begin{equation}
	w-\varepsilon=tw \phi(h^{-1}). 
\end{equation}
By taking the derivative for both sides, we obtain
\begin{align*}
  \frac{\partial w}{\partial\overline{\lambda}}
   &=-tw\left[\phi\left( h^{-1}\left( (\lambda-x_0)+2w \frac{\partial {w}}{\partial\overline{\lambda}}  \right) h^{-1}   \right) \right]
     +t\phi(h^{-1})\frac{\partial w}{\partial\overline{\lambda}},
\end{align*}
which yields
\begin{align*}
    \phi\big((h^{-1})^2 (\lambda-x_0) \big)
   &=-2w\frac{\partial w}{\partial\overline{\lambda}}
     \left( \phi((h^{-1})^2)-\frac{1}{2w^2}\phi(h^{-1})+\frac{1}{2w^2t} \right)\\
   &=-2w\frac{\partial w}{\partial\overline{\lambda}}
   \left( \phi((h^{-1})^2)+\frac{\varepsilon}{2tw^3} \right),
\end{align*}
where we used $\phi(h^{-1})=\frac{w-\varepsilon}{tw}$ to derive the last identity above. 
 Therefore, for the second term in the right hand side of \eqref{eqn:deriative-second-terms12} we have
 \begin{equation}
  \label{eqn:derivative-term2}
  \begin{aligned}
 -2w \frac{\partial {w}}{\partial\overline{\lambda}} \phi((h^{-1})^{2} (\lambda-x_0)^* )
 =\frac{\vert \phi((h^{-1})^{2} (\lambda-x_0) )\vert ^2}{ \phi((h^{-1})^2)+{\varepsilon}/({2tw^3})}.
  \end{aligned}
 \end{equation}
By plugging \eqref{eqn:derivative-term1} and \eqref{eqn:derivative-term2} to \eqref{eqn:deriative-second-terms12}, we obtain the density formula as desired. 

We now turn to prove the upper bound of the density function. We write $x_\lambda=\lambda-x_0$ and recall that $h=x_\lambda^*x_\lambda+w^2$. Again by using Lemma \ref{lem:tech-thm}, we have  $x(x^*x+a)^{-1}=(xx^*+a)^{-1}x$ for any $x$ and $a\in (0,\infty)$, it follows that
\begin{align*}
 \phi((h^{-1})^{2} (\lambda-x_0) )&=\phi((h^{-1})^{2} x_\lambda)\\
    &=\phi(x_\lambda h^{-1}h^{-1})\\
    &=\phi(k^{-1}x_\lambda h^{-1}). 
\end{align*}
By the Cauchy-Schwarz inequality, we have 
	\begin{align}
 \vert\phi(k^{-1}x_\lambda h^{-1})\vert^2&=
 \vert \phi\big( (h^{-1/2}k^{-1/2} ) (k^{-1/2}x_\lambda h^{-1/2}) \big)\vert^2\nonumber\\
 &\leq \phi(h^{-1}k^{-1})\cdot \phi(k^{-1/2}x_\lambda h^{-1} x_\lambda^* k^{-1/2}  )\nonumber\\
 &=\phi(h^{-1}k^{-1})\cdot \phi(x_\lambda h^{-1} x_\lambda^* k^{-1} )\nonumber\\
 &=\phi(h^{-1}k^{-1})\cdot \phi(x_\lambda h^{-1}  h^{-1}x_\lambda^* )\nonumber\\
 &=\phi(h^{-1}k^{-1})\cdot \phi(h^{-1}  h^{-1}x_\lambda^* x_\lambda ).\label{eqn:saturate-bound}
\end{align}
Therefore, 
\begin{align}
&\frac{\vert\phi((h^{-1})^2(\lambda-x_0))\vert^2}{\phi((h^{-1})^2)+{\varepsilon}/({2tw^3})}
+w^2\phi(h^{-1}k^{-1})\nonumber\\
 &<  \frac{\phi(h^{-1}k^{-1})\cdot \phi(h^{-1}  h^{-1}x_\lambda^* x_\lambda )}{\phi((h^{-1})^2)} 
+w^2\phi(h^{-1}k^{-1})\nonumber\\
&=\frac{\phi(h^{-1}k^{-1})\cdot \phi (h^{-1}) }{\phi((h^{-1})^2)}
=\frac{\phi(h^{-1}k^{-1}) }{\phi((h^{-1})^2)}\cdot \frac{w-\varepsilon}{wt}
<\frac{1}{t}\frac{\phi(h^{-1}k^{-1}) }{\phi((h^{-1})^2)}. \label{eqn:saturate-bound-1}
\end{align}
Finally, since $\phi((h^{-1})^2)=\phi((k^{-1})^2)$, by the Cauchy-Schwarz inequality, we have 
\[\phi(h^{-1}k^{-1})\leq\phi((h^{-1})^2).\] 
This establishes that $0<\rho_{t,x_0}^{(\varepsilon)}(\lambda)<1/(\pi {t})$ for any $\lambda \in \mathbb{C}$. 
\end{proof}

We are now ready to prove Theorem \ref{thm:BrownFormula-x0-ct-general}.

\begin{proof}[Proof of Theorem \ref{thm:BrownFormula-x0-ct-general}]
Let $E$ be any Borel measurable set in $\mathbb{C}$ with zero Lebesgue measure. Let $U$ be an open set such that $E\subset U$. Since $\mu_{x_0+c_t}^{(\varepsilon)}$ converges to $\mu_{x_0+c_t}$ weakly as $\varepsilon$ tends to zero, it follows that
\[
   \mu_{x_0+c_t}(U)\leq \liminf_{\varepsilon\rightarrow 0}\mu_{x_0+c_t}^{(\varepsilon)}(U)\leq \frac{m(U)}{\pi {t}},
\]
by \eqref{eqn:density-regular-bound}, where $m(U)$ denotes the Lebesgue measure of $U$. Hence, $\mu_{x_0+c_t}(E)=0$ and the Brown measure $\mu_{x_0+c_t}$ is absolutely continuous. 

By Lemma \ref{lemma:regularity-sub-lambda}, $w(\varepsilon;\lambda,t)\rightarrow w(0;\lambda,t)$ uniformly in any compact set of $\mathbb{C}$. Since $w(0;\lambda,t)>0$ for $\lambda\in \Xi_t$. Hence, the formula for the density function $\rho_{t,x_0}$ follows from \eqref{eqn:Brown-density-circular-positive-regular} by letting $\varepsilon$ go to zero. In addition, by the upper bound for the regularized Brown measure, we have $\rho_{t,x_0}(\lambda)\leq 1/(\pi {t})$. Now we move to the condition that saturates the bound $1/(\pi t).$ Similar to the proof for the upper bound in Lemma \ref{lemma:density-regularized-circular}. We note that in \eqref{eqn:saturate-bound}, the equality occurs if and only if there exists some $z_0 \in \mathbb{C}$ 
such that 
\[ h^{-1/2}k^{-1/2} = z_0 h^{-1/2} x_\lambda^* k^{-1/2},
\]
which is equivalent to $x_0 = \lambda_0 \unit$ for some $\lambda_0 \in \mathbb{C}.$ Hence, if $x_0 = \lambda_0 \unit,$ then it is clear that $\rho_{t, x_0}(\lambda) = 1/ (\pi t)$ for any $\lambda \in \Xi_t.$ On the other hand, if $x_0$ is not proportional to the identity, by applying a similar estimate as \eqref{eqn:saturate-bound-1} for $\varepsilon$ equals to zero, we have for any $\lambda \in \Xi_t,$ 
\begin{align*}
\rho_{t,x_0}(\lambda)& =\frac{1}{\pi}\left( \frac{\vert\phi((h^{-1})^2(\lambda-x_0))\vert^2}{\phi((h^{-1})^2)}
	+w^2\phi(h^{-1}k^{-1})  \right)\\
&<  \frac{1}{\pi}\left( \frac{\phi(h^{-1}k^{-1})\cdot \phi(h^{-1}  h^{-1}x_\lambda^* x_\lambda )}{\phi((h^{-1})^2)} 
+w^2\phi(h^{-1}k^{-1}) \right)\\
&=\frac{1}{\pi}\left( \frac{\phi(h^{-1}k^{-1})\cdot \phi (h^{-1}) }{\phi((h^{-1})^2)} \right)
=\frac{1}{\pi}\left( \frac{\phi(h^{-1}k^{-1}) }{\phi((h^{-1})^2)}\cdot \frac{w}{wt} \right)
<\frac{1}{\pi t}. 
\end{align*}
\end{proof}

\begin{remark}
After we submitted our paper, Erd\H{o}s and Ji posted 
an interesting paper at arXiv \cite{erdos2023density}, where they studied the edge behavior of the Brown measure $\mu_{x_0 + c_t}$ at the boundary of $\Xi_t$.
\end{remark}

\begin{theorem}
	\label{thm:Brown-pushforwd-elliptic-case}
	For any $\varepsilon>0$, the regularized Brown measure $\mu_{x_0+g_{t,\gamma}}^{(\varepsilon)}$ is the push-forward measure of the regularized Brown measure of $\mu_{x_0+c_t}^{(\varepsilon)}$ under that map $\Phi^{(\varepsilon)}_{t,\gamma}$. 
	The map $\Phi^{(\varepsilon)}_{t,\gamma}$ converges uniformly to $\Phi_{t,\gamma}$ in any compact subset of $\mathbb{C}$ as $\varepsilon$ tends to zero. Consequently, the Brown measure of $\mu_{x_0+g_{t,\gamma}}$ is the push-forward measure of the Brown measure $\mu_{x_0+c_t}$ under the map $\Phi_{t,\gamma}$. 
	
	Moreover, if the map $\Phi_{t, \gamma}$ is non-singular at any $\lambda\in\Xi_t$, then it is also one-to-one in $\Xi_t$. 
\end{theorem}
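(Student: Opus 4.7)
The plan is to mirror the strategy used for the triangular case in Theorems \ref{thm:Brown-regularized-push-forward-property} and \ref{thm:Brown-push-forward-property}, with the operator-valued subordination of Lemma \ref{lemma:subordination-OA-1} replaced by the scalar-valued version from Theorem \ref{thm:subordination-elliptic-case}. For fixed $\varepsilon>0$, set $S_1(\lambda)=S(x_0+c_t,\lambda,\varepsilon)$ and $S_2(z)=S(x_0+g_{t,\gamma},z,\varepsilon)$. Both are $C^\infty$ subharmonic functions on $\mathbb{C}$, and $\Phi^{(\varepsilon)}_{t,\gamma}$ is a homeomorphism of $\mathbb{C}$ by Corollary \ref{cor:regularization-Phi-one2one-map}. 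Applying the second identity in \eqref{eqn:subordination-Cauchy-entries} successively with $\gamma=0$ (which makes $z=\lambda$) and with the given $\gamma$ (so that $z=\Phi^{(\varepsilon)}_{t,\gamma}(\lambda)$) yields
\begin{equation*}
\frac{\partial S_1(\lambda)}{\partial\lambda}=p_\lambda^{(0)}\bigl(w(\varepsilon;\lambda,t)\bigr)=\frac{\partial S_2(z)}{\partial z},
\end{equation*}
together with the conjugate identity for the $\bar\lambda$- and $\bar z$-derivatives. Lemma \ref{lemma:subharmonic-pushforward} then gives the first assertion: $\mu^{(\varepsilon)}_{x_0+g_{t,\gamma}}$ is the push-forward of $\mu^{(\varepsilon)}_{x_0+c_t}$ under $\Phi^{(\varepsilon)}_{t,\gamma}$.

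For the limit $\varepsilon\to 0^+$, I will verify that $\Phi^{(\varepsilon)}_{t,\gamma}\to\Phi_{t,\gamma}$ uniformly on compact subsets of $\mathbb{C}$. Lemma \ref{lemma:regularity-sub-lambda} provides the uniform-on-compacta convergence $w(\varepsilon;\lambda,t)\to w(0;\lambda,t)$, and a resolvent-identity Lipschitz estimate analogous to \cite[Lemma 5.3]{Zhong2021Brown} lifts this to convergence of $\lambda\mapsto p_\lambda^{(0)}(w(\varepsilon;\lambda,t))$, hence of $\Phi^{(\varepsilon)}_{t,\gamma}$. Since $\mu_x^{(\varepsilon)}\to\mu_x$ weakly for every $x\in\log^+(\mathcal{A})$, passing to the limit in $\int f\,d\mu_{x_0+g_{t,\gamma}}^{(\varepsilon)}=\int f\circ\Phi^{(\varepsilon)}_{t,\gamma}\,d\mu_{x_0+c_t}^{(\varepsilon)}$ for arbitrary $f\in C_c(\mathbb{C})$ yields $\mu_{x_0+g_{t,\gamma}}=(\Phi_{t,\gamma})_\ast\mu_{x_0+c_t}$.

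The one-to-one statement is obtained by adapting Proposition \ref{prop:partial-lambda-pushforward} to this setting. Theorem \ref{thm:main-FK-det-ct-0} together with the smoothness of $\lambda\mapsto w(0;\lambda,t)$ on $\Xi_t$ (Lemma \ref{lemma:regularity-sub-lambda}) shows that $(\lambda,\varepsilon)\mapsto S(x_0+c_t,\lambda,\varepsilon)$ extends $C^\infty$-smoothly across $\varepsilon=0$ on $\Xi_t$. Integrating the $\varepsilon$-derivative form of the first identity in \eqref{eqn:subordination-Cauchy-entries} and invoking the implicit function theorem at a non-singular point of $\Phi_{t,\gamma}$ transfers this regularity to $S(x_0+g_{t,\gamma},\cdot,\cdot)$ in a neighborhood of the image point, so that $\lim_{\varepsilon\to 0^+}p_z^{(t,\gamma)}(\varepsilon)=p_\lambda^{(0)}(w(0;\lambda,t))$ holds. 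If $\Phi_{t,\gamma}(\lambda_1)=\Phi_{t,\gamma}(\lambda_2)=z$ for $\lambda_1,\lambda_2\in\Xi_t$, the limiting identity forces $p_{\lambda_1}^{(0)}(w(0;\lambda_1,t))=p_{\lambda_2}^{(0)}(w(0;\lambda_2,t))$, and combining with $\Phi_{t,\gamma}(\lambda_i)=\lambda_i+\gamma p_{\lambda_i}^{(0)}(w(0;\lambda_i,t))$ forces $\lambda_1=\lambda_2$.

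The main technical obstacle is the unbounded regime: since $x_0\in\log^+(\mathcal{A})$ need not be bounded, the Brown measures involved are not a priori compactly supported, and uniform-on-$\mathbb{C}$ convergence of the family $\Phi^{(\varepsilon)}_{t,\gamma}$ is in general unavailable. The remedy is to combine uniform-on-compacta convergence of the maps with tightness of $\{\mu^{(\varepsilon)}_{x_0+c_t}\}_{\varepsilon>0}$, which can be extracted from the log-moment control built into $\log^+(\mathcal{A})$ together with \eqref{eqn:main-FK-det-outside}; alternatively, one may first prove the result for bounded truncations $x_0^{(N)}=u|x_0|\chi_{[0,N]}(|x_0|)$ (as done in the proof of Lemma \ref{lemma:D-epsilon-product-det}) and then check continuity of both sides of the push-forward identity as $N\to\infty$.
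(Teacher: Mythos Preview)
Your proposal is correct and follows essentially the same route as the paper's proof, which invokes Lemma \ref{lemma:subharmonic-pushforward} via the subordination identities \eqref{eqn:subordination-Cauchy-entries}, the Lipschitz bound from \cite[Lemma 5.3]{Zhong2021Brown} combined with Lemma \ref{lemma:regularity-sub-lambda}, and then refers to the argument of Proposition \ref{prop:partial-lambda-pushforward} for injectivity. The tightness concern you raise in the last paragraph is not explicitly addressed in the paper but is in fact automatic: weak convergence of probability measures on $\mathbb{C}$ to a probability measure already implies tightness of the family $\{\mu_{x_0+c_t}^{(\varepsilon)}\}$, so uniform-on-compacta convergence of $\Phi^{(\varepsilon)}_{t,\gamma}$ suffices and no truncation argument is needed.
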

\begin{proof}
	Apply the proof for \cite[Theorem 5.2]{Zhong2021Brown} (see also Theorem \ref{thm:Brown-regularized-push-forward-property}) to get the first part. We set  $S_1(\lambda)=S(x_0+c_t, \lambda,\varepsilon)$ and $S_2(z)=S(x_0+g_{t,\gamma}, z,\varepsilon)$, and $S_0(\lambda)=S(x_0,\lambda,\varepsilon_0)$,
	where $\varepsilon_0=w(\varepsilon;\lambda,t)$ and $z=\Phi_{t, \gamma}(\lambda)$. 
	Similar to the proof for Theorem \ref{thm:Brown-regularized-push-forward-property}, the map $\Phi^{(\varepsilon)}_{t, \gamma}$ can be rewritten as
	\[
	\Phi^{(\varepsilon)}_{t, \gamma}(\lambda)=\lambda+\gamma\frac{\partial S_0(\lambda)}{\partial\lambda}=\lambda+\gamma \frac{\partial S_1(\lambda)}{\partial\lambda},
	\]
	by choosing $\gamma=0$ in the subordination relation \eqref{eqn:subordination-Cauchy-entries}. 
	The map $\Phi^{(\varepsilon)}_{t, \gamma}$ is a homeomorphism by Corollary \ref{cor:regularization-Phi-one2one-map}. 
	In addition, the subordination relation \eqref{eqn:subordination-Cauchy-entries} shows 
	\[
	\frac{\partial S_0(\lambda)}{\partial\lambda}= \frac{\partial S_1(\lambda)}{\partial\lambda}=\frac{\partial S_2(z)}{\partial z}
	\]
	and 
	\[
	\frac{\partial S_0(\lambda)}{\partial\overline\lambda}= \frac{\partial S_1(\lambda)}{\partial\overline\lambda}=\frac{\partial S_2(z)}{\partial \overline z}. 
	\]
	Hence, conditions in Lemma \ref{lemma:subharmonic-pushforward} are satisfied. The pushforward result for regularized Brown measures then follows.  s
	
	 The  proof of \cite[Lemma 5.3]{Zhong2021Brown} shows 
	\[
	|p_\lambda^{(0)}(w(\varepsilon_2;\lambda,t))-p_\lambda^{(0)}(w(\varepsilon_1;\lambda,t))|
	\leq \frac{2}{t}(w(\varepsilon_2;\lambda,t)-w(\varepsilon_1;\lambda,t)).
	\]
	Hence, local uniform convergence of $w(\varepsilon;\lambda,t)$ in Lemma \ref{lemma:regularity-sub-lambda} implies that $\Phi^{(\varepsilon)}_{t,\gamma}$ converges uniformly to $\Phi_{t,\gamma}$ in any compact subset of $\mathbb{C}$ as $\varepsilon$ tends to zero. We then conclude that push-forward connection holds after passing to the limit. 
	
	Finally, if the map $\Phi_{t, \gamma}$ is non-singular, then the last claim follows from a similar argument for the proof of Proposition \ref{prop:partial-lambda-pushforward}. 
\end{proof}

\begin{remark}
By Theorem \ref{thm:BrownFormula-x0-ct-general} and Theorem \ref{thm:Brown-pushforwd-elliptic-case}, there is no problem to extend examples from \cite{Zhong2021Brown} to unbounded case when $x_0$ is self-adjoint or $R$-diagonal. Hence, all results from \cite[Section 6 and 7]{Zhong2021Brown} still hold for unbounded $x_0$. In particular, the pushforward result obtained in \cite[Theorem 6.4, Corollary 6.9]{Ho2021unbounded} are very special examples of Theorem \ref{thm:Brown-pushforwd-elliptic-case}. 
\end{remark}


\bigskip

\noindent {\bf Acknowledgements}: We would like to thank the referees for their careful readings and valuable suggestions. This work was supported by the National Natural Science Foundation of China [grant number 12031004]; Collaboration Grants for Mathematicians from Simons Foundation; and National Science Foundation award LEAPS-MPS-2316836.

\bibliographystyle{acm}
 \bibliography{BrownMeasure}

\end{document}